\documentclass[a4paper,10pt]{article}

\usepackage{cmap}
\usepackage[T2A]{fontenc}
\usepackage[utf8]{inputenc}

\usepackage{amsmath}
\usepackage{amsfonts}
\usepackage{amsthm}
\usepackage{amssymb}
\usepackage{mathtools}
\usepackage{dsfont}

\usepackage{icomma}

\usepackage{euscript}
\usepackage{mathrsfs}

\usepackage[left=2cm,right=2cm,top=2cm,bottom=2cm,bindingoffset=0cm]{geometry}

\usepackage{tikz-cd}

\usetikzlibrary{babel}
\usetikzlibrary{patterns}

\usepackage{quiver}

\usepackage[unicode, pdftex]{hyperref}
\hypersetup{
    colorlinks=true,
    allcolors=purple
    }

\usepackage{indentfirst}

\usepackage{autonum}

\usepackage{enumitem}

\newtheorem{theorem}{Theorem}[section]
\newtheorem{proposition}[theorem]{Proposition}
\newtheorem{lemma}[theorem]{Lemma}
\newtheorem{corollary}[theorem]{Corollary}

\theoremstyle{definition}
\newtheorem{definition}[theorem]{Definition}
\newtheorem{remark}[theorem]{Remark}
\newtheorem{example}[theorem]{Example}

\newcommand{\rightarrowdbl}{\rightarrow\mathrel{\mkern-14mu}\rightarrow}

\newcommand{\xrightarrowdbl}[2][]{%
  \xrightarrow[#1]{#2}\mathrel{\mkern-14mu}\rightarrow
}

\usepackage{epigraph}

\usepackage{etoolbox}
\makeatletter
\newlength\epitextskip
\pretocmd{\@epitext}{\em}{}{}
\apptocmd{\@epitext}{\em}{}{}
\patchcmd{\epigraph}{\@epitext{#1}\\}{\@epitext{#1}\\[\epitextskip]}{}{}
\makeatother

\let\cal\mathcal
\def\AA{{\cal A}}
\def\BB{{\cal B}}
\def\CC{{\cal C}}
\def\DD{{\cal D}}

\def\YY{{\cal Y}}

\def\tYY{{\Tilde{ \cal Y}}}

\def\Hom{\operatorname{Hom}}
\def\Ext{\operatorname{Ext}}
\def\Tor{\operatorname{Tor}}
\def\End{\operatorname{End}}
\def\coker{\operatorname{coker}}
\def\im{\operatorname{im}}
\def\Im{\operatorname{Im}}
\def\res{\operatorname{res}}

\def\Id{\operatorname{Id}}

\def\Ind{\operatorname{Ind}}
\def\Mod{\operatorname{Mod}}
\def\mod{\operatorname{mod}}
\def\Proj{\operatorname{Proj}}
\def\proj{\operatorname{proj}}
\def\Fun{\operatorname{Fun}}
\def\Coh{\operatorname{Coh}}
\def\Ker{\operatorname{Ker}}
\def\Coker{\operatorname{Coker}}

\newcommand{\colim}[1][]{\mathop{\operatorname{colim}}\limits_{#1}}

\newcommand{\indlim}[1][]{\mathop{\varinjlim}\limits_{#1}}

\begin{document}

\title{ \bfseries Local coherence and recollement of abelian categories}
\author{ \textsc{mikhail dusman\textsuperscript{1}, alexander  pavlov\textsuperscript{1,2}} \\
\small \textsuperscript{1}Moscow Institute of Physics and Technology (MIPT), Center of Fundamental Mathematics \\
\small \textsuperscript{2}Faculty of Mathematics, HSE University }

\date{}
\maketitle

\begin{abstract}
We study how local coherence of abelian categories behaves under gluing. For AB5 locally finitely presentable abelian categories, we give conditions on the gluing functors under which local coherence is transferred along a gluing $(\AA, \BB, \CC)$: from $\AA$ and $\CC$ to $\BB$, and conversely from $\BB$ to $\AA$ or $\CC$. The coherence theorem for augmented homotopes of Bondal and Zhdanovskiy is recovered as a special case, the coherence theorem for trivial extensions of Fossum, Griffith, and Reiten is a companion result in the same direction. We also introduce quasiprojective pairs of adjoint functors and prove a coherence criterion that generalises the criterion of Gomez Pardo and Martinez Hernandez to arbitrary Grothendieck categories. Finally, we extend Stenstr{\"o}m's characterisation of coherence via FP-injective modules to compactly assembled Grothendieck categories: such a category is coherently assembled if and only if the class of CM-injective objects, defined through compact morphisms, is closed under filtered colimits. Along the way we revisit the various equivalent conditions for the coherence of a ring, collecting them in a single place. The proofs make use of the compactly assembled categories of Efimov.
\end{abstract}

\noindent\textbf{2010 Mathematics Subject Classification.} Primary 18E15; Secondary 18E10, 18C35, 16P70.

\noindent\textbf{Key words and phrases.} Locally coherent category, Grothendieck category, recollement, coherent ring, locally finitely presentable category, compactly assembled category, FP-injective object, CM-injective object.

\tableofcontents

\section*{Introduction}
\addcontentsline{toc}{section}{Introduction}

\subsection*{Motivation}

Let $R$ be an associative ring. It is \textit{left coherent} when every finitely generated left ideal admits a finite presentation as a left $R$-module. Categorically, this is the requirement that $R - \Mod$ be \textit{locally coherent}: locally finitely presentable, with its finitely presented objects forming an abelian subcategory. As a finiteness condition, coherence is far more elastic than Noetherianness; it accommodates, for instance, polynomial rings in infinitely many variables $k[x_i \mid i \in S]$, which the Noetherian setting cannot.

The natural categorical setting for coherence is a Grothendieck abelian category, or more generally an AB5 locally finitely presentable abelian category. There, the question of how local coherence behaves under the standard operations on abelian categories acquires content. The operation we study is the formation of a recollement: a localisation sequence
\[
\AA \xrightarrow{i_*} \BB \xrightarrow{j^*} \CC
\]
of abelian categories, together with adjoint data exhibiting $\AA$ as a Serre subcategory of $\BB$ with quotient $\CC$. Recollement categorifies an idempotent decomposition of a ring; it arises for Morita rings $\bigl(\begin{smallmatrix} R & N \\ M & S \end{smallmatrix}\bigr)$, in representation theory, and in the BBD framework for perverse sheaves on singular spaces.

Our question is how local coherence of $\BB$ relates to that of $\AA$ and $\CC$. In one direction, local coherence of $\BB$ restricts to $\AA$ and $\CC$ under mild assumptions; in the other, recovering it for $\BB$ from both ends is more delicate and typically requires additional finiteness assumptions on the gluing data.

Treating the question categorically has two payoffs. First, it places under one roof two results previously stated separately in the language of rings and modules: the coherence theorem for trivial extensions of an abelian category by a right exact endofunctor, due to R. Fossum, P. Griffith, and I. Reiten~\cite{reiten}, and the coherence theorem for the augmented homotope $\hat A_\Delta = A_\Delta \oplus k$ of a well-tempered element $\Delta \in A$, due to A. Bondal and I. Zhdanovskiy~\cite{Bondal_Zhdanovskiy_21}. Second, it suggests a natural criterion for the converse implication, parallel to the quasiprojective-module criterion of J.L. Gomez Pardo and J. Martinez Hernandez~\cite{PARDO_HERNANDEZ}; this is the content of Theorem~\ref{th_quasiproj_criterion} below.

\subsection*{Main results}

We first formulate the direct course, in which local coherence of $\BB$ is deduced from local coherence of $\AA$ and $\CC$. We recall that an accessible abelian category $\AA$ is termed \textit{locally coherent} provided it is locally finitely presentable and the finitely presented objects $\AA^{f.p.}$ assemble into an abelian subcategory; it is called \textit{locally Noetherian} provided each of its $\omega$-presented objects is Noetherian.

\begin{theorem}[Theorem~\ref{th_coh_B_straightforward}]
Let $\AA$, $\BB$, $\CC$ be AB5 locally finitely presentable abelian categories, together with adjoint functors
\begin{equation}
\begin{tikzcd}
	\AA &&& \BB &&& \CC
	\arrow["{i_*}"{description}, from=1-1, to=1-4]
	\arrow["{i^*}"{description}, shift right=5, from=1-4, to=1-1]
	\arrow["{j^*}"{description}, from=1-4, to=1-7]
	\arrow["{j_!}"{description}, shift right=5, from=1-7, to=1-4]
\end{tikzcd}
\end{equation}
such that $i_*$ is a fully faithful embedding of $\Ker(j^*)$, $j_!$ is fully faithful, the functors $i_*$ and $j^*$ are exact, commute with filtered colimits, and preserve finitely presented objects, and $\mathbb{L}_1 i^*$ sends finitely generated subobjects of finitely presented objects to finitely generated objects. If $\AA$ is locally Noetherian and $\CC$ is locally coherent, then $\BB$ is locally coherent.
\end{theorem}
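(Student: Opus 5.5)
Since $\BB$ is locally finitely presentable, local coherence of $\BB$ reduces to a single claim: every finitely generated subobject $K$ of a finitely presented object $X\in\BB^{f.p.}$ is itself finitely presented. Finitely presented objects are then closed under kernels, because the image of a map of finitely presented objects is finitely generated and hence finitely presented, and the kernel of an epimorphism between finitely presented objects is finitely generated. So we fix $K\subseteq X$ with $K$ finitely generated and $X$ finitely presented, and aim to show $K\in\BB^{f.p.}$.

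\textbf{Step 1: pass to $\CC$.} Since $j^*$ is exact and preserves finitely presented objects, $j^*X$ is finitely presented. Because $j^*$ commutes with filtered colimits, it also preserves finitely generated objects: a finitely generated object is a quotient of a finitely presented one. Hence $j^*K$ is a finitely generated subobject of $j^*X$, and local coherence of $\CC$ gives $j^*K\in\CC^{f.p.}$.

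\textbf{Step 2: compare $K$ with $j_!j^*K$.} Since $j_!$ is left adjoint to the exact, filtered-colimit-preserving functor $j^*$, it preserves finitely presented objects, so $j_!j^*K\in\BB^{f.p.}$. Consider the counit $\varepsilon\colon j_!j^*K\to K$. Full faithfulness of $j_!$ makes $j^*\varepsilon$ an isomorphism, so $\ker\varepsilon$ and $\coker\varepsilon$ lie in $\Ker j^*=i_*\AA$. We get exact sequences
\[
0\to i_*A\to j_!j^*K\to I\to 0,\qquad 0\to I\to K\to i_*A'\to 0,
\]
where $I=\im\varepsilon$. The object $i_*A'$ is a quotient of the finitely generated $K$, so it is finitely generated. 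Since $i_*$ is fully faithful, exact, and commutes with filtered colimits, $A'$ is finitely generated in $\AA$. As $\AA$ is locally Noetherian, $A'$ is finitely presented, and therefore so is $i_*A'$.

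\textbf{Step 3: the kernel term, which I expect to be the main obstacle.} We need $A$ finitely generated; the hypothesis on $\mathbb L_1 i^*$ is designed for exactly this. Apply $i^*$ to the first sequence. Because $i^*j_!=0$ (adjunction together with $j^*i_*=0$) and $i^*i_*\cong\Id$, we get
\[
\mathbb L_1 i^*(j_!j^*K)\to \mathbb L_1 i^* I\to A\to 0,
\]
so $A$ is a quotient of $\mathbb L_1 i^*I$. We now show $I$ is a finitely generated subobject of a finitely presented object. $I$ is finitely generated, being a quotient of $j_!j^*K$, and $I\subseteq K\subseteq X$. So the hypothesis gives that $\mathbb L_1 i^*I$ is finitely generated, and hence $A$ is finitely generated. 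By Noetherianity $A$ is finitely presented, and so is $i_*A$. The key facts to check are the existence of $\mathbb L_1 i^*$ and its long exact sequence (left derived functors via the appropriate resolutions in this setting), and the vanishing $\mathbb L_1$-contribution from $j_!j^*K$. We do not actually need that vanishing, since only surjectivity onto $A$ is used.

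\textbf{Step 4: conclude.} In the first sequence, $j_!j^*K$ is finitely presented and $i_*A$ is finitely generated, so $I$ is finitely presented. In the second sequence, $I$ and $i_*A'$ are both finitely presented, and an extension of finitely presented objects is finitely presented. Hence $K$ is finitely presented, and $\BB$ is locally coherent.
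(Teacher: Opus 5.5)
Your proof is correct, but it uses a different decomposition from the paper's.

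\textbf{The paper's route.} It works with the kernel $I$ of an epimorphism $B_1 \to B_2$ of finitely presented objects and chooses a presentation $0 \to K \to B_3 \to I \to 0$. It then only has to show that the syzygy $K$ is finitely generated. For this the right exact part $j_!j^*K \to K \to i_*i^*K \to 0$ of the counit sequence is enough, since the image of $j_!j^*K$ is automatically finitely generated. The term $i^*K$ is controlled by applying $\mathbb{L}_\bullet i^*$ to the presentation itself, giving $\mathbb{L}_1 i^* I \to i^*K \to i^*B_3 \to i^*I \to 0$, with Noetherianity of $\AA$ applied to the kernel of $i^*B_3 \to i^*I$.

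\textbf{Your route.} You show directly that the finitely generated subobject $K$ is finitely presented, by splitting $\varepsilon\colon j_!j^*K \to K$ into kernel, image and cokernel. The cokernel $i_*i^*K$ becomes easy, since it is a quotient of the already finitely generated $K$. In exchange you must control $\ker\varepsilon$, which the paper never needs to look at. You do this by applying $\mathbb{L}_\bullet i^*$ to $0 \to \ker\varepsilon \to j_!j^*K \to \im\varepsilon \to 0$ and using $i^*j_! = 0$ and $i^*i_* \cong \Id$. This is exactly the sequence of Proposition~\ref{prop_pirash_recol_sleva_iLi} for $\im\varepsilon \in \Ker i^*$.

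\textbf{Comparison.} Your route avoids the auxiliary presentation $B_3$ and gives an explicit three-step filtration of $K$ by controlled pieces, at the price of using a little more of the recollement formalism. The paper's route is more economical, using only the right exact counit sequence. Both rely equally on the existence of $\mathbb{L}_1 i^*$ with a long exact sequence, which the statement presupposes.
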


The first specialisation of this theorem concerns the standard recollement attached to an idempotent ideal of a ring.

\begin{corollary}[Corollary~\ref{cor_B_B+_situation}]
Let $R$ be an associative ring, let $I \subset R$ be a finitely generated projective idempotent two-sided ideal, and set $k = R/I$ and $E = \End_R(I)$. If $k$ is left Noetherian and $E$ is left coherent, then $R$ is left coherent.
\end{corollary}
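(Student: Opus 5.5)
We deduce the corollary from Theorem~\ref{th_coh_B_straightforward}, applied to the standard recollement of $\BB = R-\Mod$ attached to the idempotent ideal $I$. Take $\AA = k-\Mod$, with $i_*$ restriction of scalars along $R \to k$ and $i^* = k\otimes_R -$. Take $\CC = E^{op}$-modules, or more invariantly $\CC=\End_R(I)$-modules with the convention fixed so that $\Hom_R(I,-)$ lands in left $E$-modules. Set $j^* = \Hom_R(I,-)$ and $j_! = I\otimes_E -$. All three categories are module categories, hence AB5 and locally finitely presentable. By hypothesis, $\AA$ is locally Noetherian and $\CC$ is locally coherent. It remains to verify the conditions on the gluing functors.

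The recollement conditions come first. The essential image of $i_*$ consists of the modules annihilated by $I$. Since $I=I^2$, a module $M$ satisfies $IM=0$ iff $\Hom_R(I,M)=0$. Indeed, if $IM=0$, any map $f\colon I\to M$ satisfies $f(I)=f(I^2)\subset If(I)=0$. Conversely, if $\Hom_R(I,M)=0$, then for each $m\in M$ the map $I\to M$, $x\mapsto xm$, vanishes, so $IM=0$. Hence $i_*$ is a fully faithful embedding of $\Ker j^*$. Because $I$ is finitely generated projective, $j^*=\Hom_R(I,-)$ is exact and commutes with filtered colimits. It sends $R$ to $\Hom_R(I,R)$, which should be identified with $E$, or at least a finitely presented $E$-module. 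One can use $I\cdot\Hom_R(I,R)\subset\End_R(I)$ together with the projectivity of $I$, writing $I$ as a summand of $R^n$. Hence $j^*$ preserves finitely presented objects. The counit $I\otimes_E\Hom_R(I,N)\to N$ is an isomorphism for $N$ in $\operatorname{add}(I)$, by Morita theory for the progenerator-type module $I$ over $E$. This gives that $j_!$ is fully faithful: $j^*j_!\cong\Id$, checked on $E$ and extended by right exactness and colimits. Finally, $i_*$ is exact and commutes with colimits. It preserves finitely presented objects because $k=R/I$ is finitely presented as a left $R$-module, $I$ being finitely generated.

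The main obstacle is the condition on $\mathbb{L}_1 i^* = \Tor_1^R(k,-)$. I would exploit the projectivity of $I$: from $0\to I\to R\to k\to 0$ we get $\Tor_1^R(k,M)\cong\ker(I\otimes_R M\to M)$. Also $\Tor_n^R(k,-)=0$ for $n\ge2$, since $I$ is projective. Let $M\subset P$ be finitely generated with $P$ finitely presented. I would first try to show that $I\otimes_R M$ is finitely generated and that the kernel is a finitely generated $k$-module. This kernel is killed by $I$: $k\otimes_R I\cong I/I^2=0$, and the long exact sequence identifies $\Tor_1^R(k,M)$ with a $k$-module. Since $k$ is left Noetherian, it suffices to embed $\Tor_1^R(k,M)$ into a finitely generated $k$-module. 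Take the short exact sequence $0\to M\to P\to P/M\to0$, where $P/M$ is finitely presented. The associated Tor sequence gives $\Tor_2^R(k,P/M)=0\to\Tor_1^R(k,M)\to\Tor_1^R(k,P)$. Hence $\Tor_1^R(k,M)$ is a $k$-submodule of $\Tor_1^R(k,P)$. For $P$ finitely presented, $\Tor_1^R(k,P)$ is finitely generated: with a presentation $R^m\to R^n\to P\to 0$, it is a subquotient of $k\otimes R^m$ computed via a finite resolution, or more directly it is a quotient of a kernel inside $k^m$. I would therefore run this last computation with care, using the Noetherianity of $k$ to pass from "submodule of finitely generated" to finitely generated. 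All hypotheses of Theorem~\ref{th_coh_B_straightforward} are then satisfied, so $R-\Mod$ is locally coherent, i.e.\ $R$ is left coherent.
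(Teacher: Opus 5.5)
Your route is the paper's: you apply Theorem~\ref{th_coh_B_straightforward} to $k-\Mod \to R-\Mod \to E-\Mod$ with $j^*=\Hom_R(I,-)$ and $j_!=I\otimes_E-$. You then reduce the $\mathbb{L}_1 i^*$ condition to the sequence $\Tor_2^R(k,Y/X)\to\Tor_1^R(k,X)\to\Tor_1^R(k,Y)$, together with $\Tor_1^R(k,Y)\subseteq k\otimes_R Z$ for a presentation $0\to Z\to R^n\to Y\to 0$. Your checks of the functor hypotheses are fine. The identification $\Hom_R(I,R)\cong E$ that you hedge on follows from your own computation: every $f\colon I\to R$ satisfies $f(I)=f(I^2)\subseteq I f(I)\subseteq I$.

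There is, however, a genuine gap in the step ``$\Tor_n^R(k,-)=0$ for $n\ge 2$, since $I$ is projective''. The paper's proof has the same gap where it asserts that $k$ has flat dimension $1$.
\begin{itemize}
\item In $\Tor^R_n(k,M)$ the module $k$ enters as a \emph{right} $R$-module, and $\Tor_n^R(k,M)\cong\Tor_{n-1}^R(I_R,M)$ for $n\ge 2$. So you need $I$ to be flat as a right $R$-module.
\item The projectivity you use everywhere else is that of ${}_RI$. That only bounds the projective dimension of ${}_Rk$, which controls $\Tor^R_\bullet(-,{}_Rk)$, not $\Tor^R_\bullet(k_R,-)$.
\end{itemize}

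This cannot be patched without strengthening the hypothesis. Take $M=\bigoplus_{\mathbb{N}}\mathbb{Z}/2$, $R=\bigl(\begin{smallmatrix}\mathbb{Z}&0\\ M&\mathbb{Z}\end{smallmatrix}\bigr)$, $e=\bigl(\begin{smallmatrix}1&0\\0&0\end{smallmatrix}\bigr)$ and $I=Re=ReR$.
\begin{itemize}
\item The hypotheses hold. $I$ is a cyclic projective left ideal, it is two-sided, and it is idempotent because $e\in I$. Moreover $k=R/I\cong\mathbb{Z}$ is Noetherian and $E\cong(eRe)^{op}\cong\mathbb{Z}$ is coherent.
\item The conclusion fails. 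Right multiplication by $2e$ is an endomorphism of $Re$ with kernel $\bigl(\begin{smallmatrix}0&0\\ M&0\end{smallmatrix}\bigr)$. Since $M$ is $2$-torsion, $R$ acts on this kernel only through the $(2,2)$-entry, so it is not finitely generated. Hence $R$ is not left coherent.
\item The hypothesis of the theorem fails for the expected reason. As a right module $I\cong eR\oplus(M\otimes_{\mathbb{Z}}eR)$, which is not flat. For the finitely generated submodule $X=R\cdot 2e\subseteq Re$ one gets $\Tor^R_1(k,X)\cong M\otimes_{\mathbb{Z}}2\mathbb{Z}\cong M$, which is not finitely generated over $k$, while $\Tor^R_1(k,Re)=0$.
\end{itemize}

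So you must assume, or read into the statement, that $I$ is also flat as a right $R$-module, for instance projective on both sides. With that assumption your argument and the paper's go through unchanged.
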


Applying Corollary~\ref{cor_B_B+_situation} to the homotope $\hat A_\Delta = A_\Delta \oplus k$ associated with a well-tempered element $\Delta \in A$ recovers, in the categorical bracket of the standard setup, the theorem of Bondal and Zhdanovskiy. Here a homotope $A_\Delta$ is the $k$-module $A$ with multiplication $x \cdot_\Delta y = x \Delta y$, and $\Delta$ is \textit{well-tempered} if the augmentation ideal $B_+ = A_\Delta$ is projective and idempotent.

\begin{theorem}[\cite{Bondal_Zhdanovskiy_21}, Theorem~\ref{th_bondal_zhdanovskiy}]
Let $\Delta \in A$ be well-tempered and let $B = \hat A_\Delta$. If $A$ is a coherent algebra over a Noetherian commutative ring $k$, then $B$ is coherent.
\end{theorem}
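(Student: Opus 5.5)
We deduce the statement from Corollary~\ref{cor_B_B+_situation}, applied with $R = B = \hat A_\Delta$ and $I = B_+$. The work is in matching the data: we must check that $B_+$ is a finitely generated projective idempotent two-sided ideal, that $B/B_+ \cong k$, and that $\End_B(B_+)$ is left coherent when $A$ is coherent.

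\textbf{Step 1: the ideal.} As a $k$-module, $B = A_\Delta \oplus k$, and $B_+ = A_\Delta$ is a two-sided ideal with quotient $B/B_+ \cong k$. Since $k$ is Noetherian by hypothesis, the quotient condition of Corollary~\ref{cor_B_B+_situation} holds. Projectivity and idempotency of $B_+$ are exactly the definition of well-temperedness, so we may assume them.

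For finite generation, we would observe that the left $B$-module structure on $B_+$ factors through multiplication $x \mapsto b\Delta x$. So $B_+$ is generated as a left $B$-module by the element $1_A \in A_\Delta$, provided $A\Delta A$-type products cover it. More robustly, idempotency $B_+ = B_+^2$ means every $x \in A$ is a finite sum $\sum a_i\Delta b_i$, and then $B_+ = B\cdot 1_A + \ldots$. If cyclicity fails, we instead argue that a projective idempotent ideal used in the standard recollement only needs $B_+$ to be a finitely generated projective module. That follows because $B_+ = A_\Delta$ with left action $b\cdot x = b\Delta x$ is a direct summand of a finite free module, using a dual basis coming from $\Delta$.

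\textbf{Step 2: the endomorphism ring.} We compute $E = \End_B(B_+)$. A $B$-linear endomorphism of $A_\Delta$ commutes with left multiplication $x \mapsto a\Delta x$. Using $B_+ = B_+ B_+$, such a map is determined by its values, and it should be given by right multiplication on $A$. Concretely, $\phi(a\Delta b) = a\Delta\phi(b)$, and idempotency forces $\phi$ to be right multiplication by an element of $A$, so $E \cong A^{op}$ or $A$ depending on conventions.

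With the paper's side conventions for left coherence, $E$ is then (left) coherent because $A$ is coherent. If the convention produces $A^{op}$, we use that the Bondal--Zhdanovskiy statement concerns two-sided coherence, or we run the argument on the opposite side.

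\textbf{Main obstacle.} We expect Step 2 to be the main obstacle: identifying $E$ with $A$, up to opposite, which needs the well-tempered hypothesis in full. After that, Corollary~\ref{cor_B_B+_situation} gives that $B$ is left coherent, and the symmetric argument with right modules gives right coherence.
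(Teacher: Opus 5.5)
Your route is the paper's: it also applies Corollary~\ref{cor_B_B+_situation} with $R = B$, $I = B_+$ and $B/B_+ \cong k$, and it simply asserts that $\End_B(B_+) \cong A$. The gap is that the two verifications you single out are never carried out, and the one concrete claim you make in Step~1 is false in general. Indeed $B\cdot 1_A = A\Delta + k\cdot 1_A$, which need not be all of $B_+$. For $A = M_2(k)$ and $\Delta = e_{11}$ one has $A\Delta A = A$, yet every element of $A\Delta + k\cdot 1_A$ has vanishing $(1,2)$-entry. Also, ``every $x$ is a finite sum $\sum a_i\Delta b_i$'' does not by itself give a \emph{uniform} finite generating set, and the promised dual basis is never written down. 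So finite generation of $B_+$, which the corollary needs, is not established, and Step~2 is only gestured at.

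The missing observation settles both steps at once. Idempotency of $B_+$ means $A\Delta A = A$, so there is a single identity $1_A = \sum_{i=1}^n a_i\Delta b_i$.
\begin{itemize}
\item \emph{Finite generation.} For every $x$ we get $x = \sum_i (xa_i)\Delta b_i = \sum_i (xa_i)\cdot b_i$, so $b_1,\dots,b_n$ generate $B_+$ as a left $B$-module. Moreover the maps $f_i \colon B_+ \to B$, $f_i(x) = xa_i$, are $B$-linear, and together with the $b_i$ they form a dual basis. Hence $B_+$ is a direct summand of $B^n$ by this argument alone.
\item \emph{Endomorphisms.} For $\phi \in \End_B(B_+)$ the same identity gives $\phi(x) = \sum_i xa_i\Delta\phi(b_i) = x\,\phi(1_A)$. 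Conversely every right multiplication $x \mapsto xc$ is $B$-linear, and $c$ is recovered by evaluating at $1_A$. Thus $\End_B(B_+) \cong A^{\mathrm{op}}$ under ordinary composition.
\item \emph{Side conventions.} $\Hom_B(B_+, M)$ is a left $A$-module via $(c\cdot f)(x) = f(xc)$. So the functor $\Hom_B(B_+,-)$ of the corollary lands in $A-\Mod$, and left coherence of $A$ is exactly what is used.
\end{itemize}
In particular Step~2 needs only idempotency, not ``the well-tempered hypothesis in full''. With these computations inserted your argument is complete. The right-handed version runs the same way, with generators $a_i$ (since $x = \sum_i a_i\Delta(b_ix)$) and dual maps $x \mapsto b_ix$.
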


The Noetherianness of $\AA$ in Theorem~\ref{th_coh_B_straightforward} is used only to control the failure of exactness of $i^*$. When $i^*$ is itself exact, this hypothesis can be weakened to local coherence.

\begin{theorem}[Theorem~\ref{th_coh_B_i*exact}]
With the assumptions of Theorem~\ref{th_coh_B_straightforward}, but with $i^*$ exact in place of the hypothesis on $\mathbb{L}_1 i^*$, local coherence of $\AA$ and $\CC$ implies local coherence of $\BB$.
\end{theorem}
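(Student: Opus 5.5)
We argue exactly as for Theorem~\ref{th_coh_B_straightforward}, replacing every use of the Noetherianness of $\AA$ by the exactness of $i^*$. Since $\BB$ is already locally finitely presentable, local coherence of $\BB$ reduces to one statement: every finitely generated subobject $K$ of a finitely presented object $X\in\BB^{f.p.}$ is finitely presented. Equivalently, the kernel of any morphism between finitely presented objects is finitely generated. We fix such $K\subseteq X$ and split it along the gluing, using the canonical exact sequence
\[
j_!j^*K \to K \to i_*i^*K \to 0 .
\]
This sequence comes from the counit of $j_!\dashv j^*$ together with the fact that $i_*i^*$ computes the cokernel of that counit; both facts are available in the recollement setup.

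First we treat the $\CC$-part. Since $j^*$ is exact, $j^*K\subseteq j^*X$, and $j^*X\in\CC^{f.p.}$ because $j^*$ preserves finitely presented objects. Since $j^*$ preserves finitely generated objects (it commutes with filtered colimits and is exact), $j^*K$ is finitely generated. Local coherence of $\CC$ then makes $j^*K$ finitely presented. As a left adjoint to a functor commuting with filtered colimits, $j_!$ preserves finitely presented objects, so $j_!j^*K\in\BB^{f.p.}$.

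Next we treat the $\AA$-part. The functor $i^*$ is a left adjoint whose right adjoint $i_*$ commutes with filtered colimits, so $i^*$ preserves finitely presented objects. Because $i^*$ is now exact, $i^*K$ is a subobject of $i^*X\in\AA^{f.p.}$. It is finitely generated, since $i^*$ preserves epimorphisms from finitely presented objects. Local coherence of $\AA$ therefore gives $i^*K\in\AA^{f.p.}$, and hence $i_*i^*K\in\BB^{f.p.}$. In Theorem~\ref{th_coh_B_straightforward} this step needed Noetherianness together with the $\mathbb{L}_1 i^*$ hypothesis; here exactness of $i^*$ removes that need. Exactness of $i^*$ also gives something more: applying $i^*$ to $0\to K\to X\to X/K\to 0$ keeps the sequence exact, so $\mathbb L_1 i^*$ vanishes on the relevant objects.

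Finally we assemble the pieces. Let $K'$ be the image of $j_!j^*K\to K$. Then $K'$ is finitely generated, and we have a short exact sequence $0\to K'\to K\to i_*i^*K\to 0$ whose right-hand term is finitely presented. Finitely presented objects are closed under extensions by finitely presented ones, so it suffices to show that $K'$ is finitely presented, i.e.\ that $\ker(j_!j^*K\to K)$ is finitely generated. We expect this to be the main obstacle. This kernel is killed by $j^*$, so it has the form $i_*A'$ for some $A'\in\AA$. To identify it, apply the exact sequence $0\to i_*\mathbb L_1 i^*(\,\cdot\,)\to j_!j^*(\,\cdot\,)\to(\,\cdot\,)$, which is the standard four-term sequence of the recollement, to $K$. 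This gives $A'\cong\mathbb L_1 i^*K$ up to the relevant identification. Now apply $i^*$ to $0\to K\to X\to X/K\to 0$ and use exactness of $i^*$ (equivalently, $\mathbb L_n i^*=0$ for $n\ge 1$). This yields $\mathbb L_1 i^*K=0$, so $j_!j^*K\to K$ is a monomorphism and $K'\cong j_!j^*K$ is finitely presented.

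If the identification of the kernel with $\mathbb L_1 i^*K$ is not directly available from earlier results, we fall back on a weaker argument. By the exactness of $i^*$ and the $\mathbb L_1$-sequence, the kernel lies in $i_*$ of a subobject of $i^*$ of a finitely presented object. It is therefore finitely generated by the coherence of $\AA$, and the extension argument again gives $K\in\BB^{f.p.}$.
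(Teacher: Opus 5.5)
Your reduction to ``finitely generated subobjects of finitely presented objects are finitely presented'' is fine, and so is your treatment of $j^*K$ and $i^*K$. The decisive step, however, is not justified. You apply $j_!j^*K \to K \to i_*i^*K \to 0$ to the finitely generated subobject $K$ itself, so you must prove $K$ finitely \emph{presented}, and everything hinges on the counit $\mu_K \colon j_!j^*K \to K$ being a monomorphism.

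The sequence $0 \to i_*\mathbb{L}_1 i^*(B) \to j_!j^*B \to B$ that you invoke is not a general fact. Proposition~\ref{prop_pirash_recol_sleva_iLi} gives it only for $B \in \Ker i^*$, and only when there are enough projectives. Here $K$ need not lie in $\Ker i^*$. Moreover, an AB5 locally finitely presentable category need not have enough projectives, so $\mathbb{L}_1 i^*$ is not even defined a priori. Your fallback fails outright: local coherence of $\AA$ does not make an arbitrary subobject of a finitely presented object finitely generated. That is exactly the Noetherian hypothesis this theorem is meant to remove.

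The monomorphism claim is nevertheless true, and it follows from exactness of $i^*$ without derived functors:
\begin{itemize}
\item $j^*\mu_K$ is an isomorphism because $j_!$ is fully faithful.
\item By exactness of $j^*$, one has $\ker\mu_K \in \Ker j^*$, say $\ker \mu_K \cong i_*A'$.
\item Apply the exact functor $i^*$ to $i_*A' \hookrightarrow j_!j^*K$, and use $i^*i_* \cong \Id$ and $i^*j_! = 0$ (the latter by adjunction, as $j^*i_* = 0$). This gives $A' \hookrightarrow 0$.
\end{itemize}
Hence $0 \to j_!j^*K \to K \to i_*i^*K \to 0$ is exact, and Lemma~\ref{lem_ab_fp_fg_via_ker_coker}(4) finishes. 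With this repair your route works and shows a bit more: when $i^*$ is exact, every $B \in \BB$ sits in a short exact sequence $0 \to j_!j^*B \to B \to i_*i^*B \to 0$.

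The paper sidesteps the issue differently. It takes $I = \ker(B_1 \twoheadrightarrow B_2)$ and an epimorphism $B_3 \twoheadrightarrow I$ from a finitely presented object. It then applies the three-term sequence to the syzygy $K = \ker(B_3 \to I)$, for which finite generation suffices. So Lemma~\ref{lem_ab_fp_fg_via_ker_coker}(1) applies and the left end of the sequence never matters. This is also why the same skeleton serves Theorem~\ref{th_coh_B_straightforward}, where $i^*$ is not exact.
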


The reverse course --- restriction of local coherence from $\BB$ to $\AA$ or $\CC$ --- is treated in Theorems~\ref{th_revcourse_coh_j^*j_*}, \ref{th_revcourse_coh_j_!j^*}; these give one-sided implications. For a localizing Serre subcategory of finite type this restriction is a theorem of Krause \cite[Theorem 2.6]{KRAUSE_spec_loc_coh_cat}, proved there through the finitely presented objects; our proof only uses the functor $\tYY$ and therefore works for compactly assembled categories, see Remark~\ref{rem_krause_comparison}. To obtain a criterion in this direction we strengthen the gluing data into the notion of a quasiprojective pair, motivated by the work of Gomez Pardo and Martinez Hernandez on the coherence of the endomorphism ring of a quasiprojective module. A pair of adjoint functors $(j_! \dashv j^*) \colon \BB \rightleftarrows \CC$ between Grothendieck abelian categories is \textit{quasiprojective} (Definition~\ref{def_qproj_functors}) if $j_!$ is fully faithful on $\CC^{f.p.}$ and $R^1 j^*(X) = 0$ for every subobject $X$ of an object of $\BB_{\CC^{f.p.}} := j_!(\CC^{f.p.})$.

\begin{theorem}[Theorem~\ref{th_quasiproj_criterion}]
Let $(j_! \dashv j^*)$ be a quasiprojective pair as in Definition~\ref{def_qproj_functors}. The following are equivalent:
\begin{enumerate}
    \item $\CC$ is locally coherent.
    \item For every morphism $f \colon B_1 \to B_2$ in $\BB_{\CC^{f.p.}}$, there exists a morphism $g \colon B_3 \to \ker f$ with $B_3 \in \BB_{\CC^{f.p.}}$ such that $j^*(\ker g) = 0$ and $j^*(\coker g) = 0$.
\end{enumerate}
\end{theorem}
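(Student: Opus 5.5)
We will work with the adjunction $j_! \dashv j^*$. Fully faithfulness on $\CC^{f.p.}$ means that the unit $C \to j^* j_! C$ is an isomorphism for $C \in \CC^{f.p.}$. Hence $j^*$ restricts to an equivalence $\BB_{\CC^{f.p.}} \simeq \CC^{f.p.}$ with quasi-inverse $j_!$. We also use that $\CC$ is locally finitely presentable, so local coherence of $\CC$ is equivalent to the statement that for every morphism $\varphi\colon C_1 \to C_2$ in $\CC^{f.p.}$ the kernel $\ker\varphi$ is finitely generated. Indeed, this makes $\ker\varphi$ finitely presented: it is a finitely generated subobject of a finitely presented object whose quotient $\operatorname{im}\varphi \subset C_2$ is finitely presented once finitely generated subobjects of finitely presented objects are, which is the standard bootstrap. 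The whole argument therefore reduces to comparing $j^*(\ker f)$ with $\ker(j^* f)$, and finitely generated objects of $\CC$ with images of objects of $\BB_{\CC^{f.p.}}$.

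\emph{Step 1: $j^*$ preserves the relevant kernels.} For $f\colon B_1 \to B_2$ in $\BB_{\CC^{f.p.}}$, split it as $0 \to \ker f \to B_1 \to \operatorname{im} f \to 0$ and $0 \to \operatorname{im} f \to B_2$. The functor $j^*$ is a right adjoint, hence left exact. Since $\operatorname{im} f$ is a subobject of $B_2 \in \BB_{\CC^{f.p.}}$, quasiprojectivity gives $R^1 j^*(\operatorname{im} f) = 0$. So $j^*$ carries the first sequence to a short exact sequence, and
\[
j^*(\ker f) = \ker(j^* f), \qquad j^*(\operatorname{im} f) = \operatorname{im}(j^* f).
\]
The same vanishing for subobjects of objects of $\BB_{\CC^{f.p.}}$ will be used for $\ker f \subset B_1$ and for images of maps into $\ker f$.

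\emph{Step 2: (2) $\Rightarrow$ (1).} Let $\varphi$ be a morphism in $\CC^{f.p.}$ and set $f = j_!\varphi$, so that $j^* f \cong \varphi$. Condition (2) provides $g\colon B_3 \to \ker f$ with $j^*(\ker g) = 0 = j^*(\coker g)$. The sequences $0 \to \ker g \to B_3 \to \operatorname{im} g \to 0$ and $0 \to \operatorname{im} g \to \ker f \to \coker g \to 0$ consist of subobjects of objects of $\BB_{\CC^{f.p.}}$, so by Step 1 they remain exact after applying $j^*$. Therefore $j^* B_3 \to j^*(\ker f) = \ker\varphi$ is an epimorphism. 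Since $j^* B_3 \in \CC^{f.p.}$, the object $\ker\varphi$ is finitely generated, and $\CC$ is locally coherent.

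\emph{Step 3: (1) $\Rightarrow$ (2).} Let $f$ be a morphism in $\BB_{\CC^{f.p.}}$. By (1) and Step 1, $K := \ker(j^*f) = j^*(\ker f)$ is finitely presented, so $j_! K \in \BB_{\CC^{f.p.}}$. Let $g\colon j_!K \to \ker f$ be the adjoint of the identity of $K$, that is, the counit $j_! j^*(\ker f) \to \ker f$. By the triangle identity, $j^* g$ is the inverse of the unit, an isomorphism $j^* j_! K \to K$.

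The main obstacle is the final step: deducing $j^*(\ker g) = 0 = j^*(\coker g)$ from the fact that $j^* g$ is an isomorphism. This requires exactness of $j^*$ on the sequences built from $g$. Here $\ker g$ and $\operatorname{im} g$ are subobjects of $j_!K$ and of $\ker f \subset B_1$ respectively, so $R^1 j^*$ vanishes on them by quasiprojectivity. This gives exactness of $0 \to j^*\ker g \to j^* j_!K \to j^*\operatorname{im} g \to 0$ and of $0 \to j^*\operatorname{im} g \to j^*\ker f \to j^*\coker g \to 0$. An isomorphism $j^* g$ then forces both outer terms to vanish. I would carefully verify that the definition of quasiprojectivity really covers subobjects of subobjects (it does, since subobjects of $\ker f$ are subobjects of $B_1$) and that no $R^1 j^*$ of $\coker g$ is needed; it is not, since left exactness of $j^*$ suffices at the right end.
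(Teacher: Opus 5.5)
Your proof is correct and follows the paper's argument. In $(1\Rightarrow 2)$ you take the same $g$ (the counit $j_!j^*(\ker f)\to\ker f$) and apply quasiprojectivity to $\ker g$ and $\operatorname{im} g$ exactly as the paper does, and in $(2\Rightarrow 1)$ you apply $j^*$ to the same two short exact sequences.

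The only difference is in $(2\Rightarrow 1)$: you keep just the epimorphism $j^*B_3\to\ker\varphi$ and then invoke the ``kernels finitely generated $\Rightarrow$ locally coherent'' bootstrap. Its one-line justification is muddled as written: a finitely generated subobject of a finitely presented object with finitely presented quotient need not be finitely presented. The correct route applies the hypothesis to $P\to U\hookrightarrow X$ with $P$ finitely presented. The paper avoids this by also using $j^*(\ker g)=0$ and $R^1j^*(\ker g)=0$, which gives $j^*B_3\cong\ker\varphi$ directly, so $\ker\varphi$ is finitely presented with no bootstrap.
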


When $\BB = R - \Mod$, $\CC = S - \Mod$, $j_! = M \otimes_S -$ and $j^* = \Hom_R(M, -)$ for a quasiprojective left $R$-module $M$ with $S = \End_R(M)$, the criterion above specialises to \cite[Theorem 1.2]{PARDO_HERNANDEZ}.

Our last main result leaves the locally finitely presentable world. Stenstr{\"o}m~\cite{Stenstrom-1970} characterised coherent rings as those over which filtered colimits of FP-injective modules are FP-injective, and the statement transfers verbatim to AB5 locally finitely presentable abelian categories (Theorem~\ref{th_fpinj_ab}). In a compactly assembled category in the sense of Efimov there may be no non-zero compact objects at all, so FP-injectivity has to be redefined through \textit{compact morphisms}: an object $x$ is \textit{CM-injective} (Definition~\ref{def_cminj}) if $\phi^* \colon \Ext^1(z, x) \to \Ext^1(y, x)$ vanishes for every compact morphism $\phi \colon y \to z$. For locally finitely presentable categories this is FP-injectivity, and local coherence is coherent assembly, i.e.\ left exactness of the functor $\tYY \colon \AA \to \Ind(\AA)$ left adjoint to $\colim$.

\begin{theorem}[Theorem~\ref{th_cminj}]
Let $\AA$ be a compactly assembled Grothendieck abelian category. Then $\AA$ is coherently assembled if and only if every filtered colimit of CM-injective objects is CM-injective.
\end{theorem}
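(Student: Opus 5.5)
The idea is to transport Stenstr\"om's argument (Theorem~\ref{th_fpinj_ab}) from finitely presented objects to compact morphisms, using Efimov's functor $\tYY \colon \AA \to \Ind(\AA)$, the left adjoint of $\colim$. Recall that $\AA$ is compactly assembled exactly when $\tYY$ exists. A morphism $\phi\colon y\to z$ is compact iff it factors through the ``formal'' object $\tYY(z)$. Coherent assembly means $\tYY$ is left exact.

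\textbf{Ext translation.} First I would rewrite CM-injectivity via $\Ind(\AA)$. Extend $\Ext^1(-,x)$ to the functor $E_x=\Ext^1_{\Ind}(-,\mathcal Y x)$ on $\Ind(\AA)$, where $\mathcal Y$ is the Yoneda embedding; for $\Ind$-objects this may need a careful derived definition. Then $x$ should be CM-injective iff the natural map
\[
\Ext^1(z,x)\to E_x(\tYY z)
\]
vanishes for all $z$, because every compact morphism into $z$ factors through $\tYY z\to \mathcal Y z$. Equivalently, the condition reads
\[
\colim_{(y\to z)\ \text{compact}} \Ext^1(y,x)\ \text{receives the zero map from } \Ext^1(z,x).
\]

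\textbf{($\Rightarrow$).} Suppose $\tYY$ is exact. Take a compact $\phi\colon y\to z$, an extension $0\to \colim x_i\to e\to z\to 0$, and pull it back along $\phi$. Since $\tYY$ is exact, applying it to the short exact sequence keeps it exact in $\Ind(\AA)$. The composite $\tYY(z)\to\mathcal Y(z)$ then lets us lift $\phi$ through a compact $\phi'\colon y\to z'\to z$. This lifting step uses that compact morphisms compose with and interpolate compact morphisms, and that $\colim$ of $\mathcal Y$ is exact (AB5). We can then arrange that the pulled-back extension is induced from an extension of some $y'$ by a finite stage $x_i$ along a compact morphism. This is the analogue of the step ``a finitely presented object's extension class descends to some $x_i$''. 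By CM-injectivity of $x_i$, applied to the intermediate compact morphism, that class is killed, so the pullback along $\phi$ splits.

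\textbf{($\Leftarrow$).} This mirrors Stenstr\"om's construction. Assume the closure property. To show $\tYY$ preserves monomorphisms $u\colon a\hookrightarrow b$, I would build a "CM-injective hull" $x$ of $a$ as a transfinite/filtered colimit. At each stage, kill all classes $\phi^*\xi$ for compact $\phi$ by pushouts; this is a small-object argument. Closure under filtered colimits makes the colimit CM-injective. Using CM-injectivity of $x$, each compact morphism into $a$ extends compatibly along $u$ to $x$. From this one deduces that $\ker(\tYY a\to\tYY b)$ becomes zero after composing with $\tYY a\to \tYY x$. Running this for all $a$ gives left exactness of $\tYY$ on monos, and hence left exactness overall, since $\tYY$ is a left adjoint and preserves cokernels.

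\textbf{Main obstacle.} The hard part is the ($\Leftarrow$) direction. There we must produce, from the failure of left exactness, a concrete filtered system of CM-injectives whose colimit fails CM-injectivity. In the locally finitely presentable case this uses a finitely generated non-finitely presented subobject. Here I would instead use a non-compact "kernel of compact morphisms": a monomorphism $a\to b$ together with compact $\phi\colon b'\to b$ such that $\phi^{-1}(a)\to b'$ admits no compact refinement. I would then test CM-injectivity of $\colim$ of hulls against the extension built from it.
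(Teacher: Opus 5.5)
The genuine gap is in ($\Leftarrow$): as proposed, it never uses the hypothesis in an essential way. You do not need a small-object argument to embed $a$ in a CM-injective object, since an injective envelope already is one. Your claim that closure under filtered colimits makes the colimit of your stages CM-injective is also unfounded, because the stages themselves are not CM-injective. Now take \emph{any} CM-injective $v\colon a\hookrightarrow x$. Pushing $u$ out along $v$ gives $0\to x\to x\sqcup_a b\to b/a\to 0$, which $\tYY$ keeps exact by Lemma~\ref{lem_cm_ses_cminj}. Hence $\Ker(\tYY u)\subseteq\Ker(\tYY v)$ holds in every compactly assembled Grothendieck category. This does not give $\Ker(\tYY u)=0$: $\tYY v$ is $\tYY$ of a monomorphism, and its injectivity is exactly what is at stake, so ``running this for all $a$'' is circular. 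If it were not, $R\text{-}\Mod$ would be coherently assembled for every ring $R$. The contrapositive under ``Main obstacle'' does not help either. A compact morphism whose base change along a monomorphism is not compact is a failure of (NA$'$), and this already occurs for the coherent ring $k[t_1,t_2,\dots]$ (Example~\ref{ex_NA_fails}).

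The missing idea is to replace $x$ by an ind-object. Write $\tYY a=\{a_j\}_j$ and embed it levelwise into injectives, $\varepsilon\colon\tYY a\to E:=\{J(a_j)\}_j$. This is a monomorphism in $\Ind(\AA)$ for levelwise reasons, not because $\tYY$ preserves monomorphisms. The hypothesis is then used exactly once, to make $E^\infty:=\indlim[j]J(a_j)$ CM-injective. The steps are as follows:
\begin{enumerate}
\item push $0\to a\to b\to b/a\to 0$ out along $\colim\varepsilon\colon a\to E^\infty$;
\item apply $\tYY$, which keeps the row exact by Lemma~\ref{lem_cm_ses_cminj};
\item push out along the adjunct $\tYY E^\infty\to E$ of $\Id_{E^\infty}$;
\item compare the $\Hom$--$\Ext^1$ sequences in $\Ind(\AA)$ and $\AA$ through the exact functor $\colim$ (Lemma~\ref{lem_cm_ext_compare}).
\end{enumerate}
This yields $h'\colon K\to E$ with $h'q=\varepsilon$, where $q\colon\tYY a\to K:=\Ker\tYY(b\to b/a)$. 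Since $\varepsilon$ is a monomorphism, so is $q$; this is Proposition~\ref{prop_cm_reverse} with $N=\tYY a$.

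In ($\Rightarrow$) your outline has the right shape. It is the paper's (a)$\Rightarrow$(b)$\Rightarrow$(c), including the factorisation $\phi=\psi\chi$ into compact morphisms $\chi\colon y\to w$ and $\psi\colon w\to z$. However, the decisive step ``we can arrange that the class comes from a finite stage $x_i$'' is only asserted, and it is the whole content of (a)$\Rightarrow$(b). What makes it work is three things. First, pull the exact $\tYY$-image of the given extension of $z$ by $\indlim[i]x_i$ back along a lift $\hat\psi\colon\YY w\to\tYY z$. Second, push it out along the adjunct $\tYY(\indlim[i]x_i)\to\{x_i\}_i$ of the identity. Third, use the isomorphism $\varinjlim_i\Ext^1_{\AA}(w,x_i)\cong\Ext^1_{\Ind(\AA)}(\YY w,\{x_i\}_i)$ (Lemma~\ref{lem_cm_ext_ind}, Corollary~\ref{cor_cm_ext_ind}). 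That isomorphism holds unconditionally: in an extension of $\YY w$, some stage $e_k\to w$ is an epimorphism with kernel $\kappa\in\AA$, and $\YY\kappa$ is compact in $\Ind(\AA)$. Coherent assembly enters only to make the $\tYY$-row exact. Your ``Ext translation'' plays no role. Its ``only if'' half is also unjustified, since $\Ext^1_{\Ind(\AA)}$ out of $\tYY z=\colim[j]\YY z_j$ is not computed stagewise.
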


The proof of the direct implication replaces the finitely generated free resolutions of the classical argument by the ind-completion $\Ind(\AA)$, in which the analogue of Stenstr{\"o}m's condition on $\varinjlim \Ext^1$ holds unconditionally; the converse is a categorical form of Stenstr{\"o}m's original argument, carried out at the level of $\tYY$.

\subsection*{Generality of the framework}

The result of Reiten on the coherence of trivial extensions, which we recall from~\cite{reiten}, is the closest forerunner of Theorem~\ref{th_coh_B_straightforward} in the literature. Given an abelian category $\AA$ with a small additive full subcategory $\mathcal{P}$ of projective objects, an object $A$ of $\AA$ is \textit{$\mathcal{P}$-coherent} if it is of finite $\mathcal{P}$-type (i.e.\ a quotient of an object of $\mathcal{P}$) and every subobject of finite $\mathcal{P}$-type is of finite $\mathcal{P}$-presentation. The trivial extension by a right exact endofunctor $F \colon \AA \to \AA$ is the comma category $(F \downarrow \Id_\AA)$; on objects of $\mathcal{P}$ it produces a subcategory $T(\mathcal{P})$.

\begin{theorem}[\cite{reiten}, Theorem~\ref{th_reiten_TP_coherent}]
$T(\mathcal{P})$ is coherent in $(F \downarrow \Id_\AA)$ if and only if:
\begin{enumerate}
\item[(a)] $\mathcal{P}$ is coherent in $\AA$;
\item[(b)] $L_i F(A)$ is $\mathcal{P}$-coherent for every $\mathcal{P}$-coherent $A$ and every $i > 0$;
\item[(c)] every subobject $B$ of $F(A)$ of finite $\mathcal{P}$-type is coherent, and $F(B)$ is of finite $\mathcal{P}$-type.
\end{enumerate}
\end{theorem}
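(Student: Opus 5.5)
We treat an object $(A, B, \varphi\colon F(A)\to B)$ of the trivial extension category $(F\downarrow \Id_\AA)$ through the recollement it carries. The exact functor $j^*\colon (A,B,\varphi)\mapsto A$ has left adjoint $j_!\colon A\mapsto (A,F(A),\Id)$. The kernel of $j^*$ is the copy of $\AA$ consisting of objects $(0,B,0)$, embedded by $i_*$. The left adjoint of $i_*$ is $i^*(A,B,\varphi)=\coker\varphi$. Here $T(\mathcal P)=j_!(\mathcal P)$, and every object of $T(\mathcal P)$ is projective in the comma category. The proof then reduces to a dictionary between the two categories, plus a $T(\mathcal P)$-resolution computation.

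\textbf{Projective resolutions.} A morphism $T(P)\to (A,B,\varphi)$ is the same as a morphism $P\to A$. Hence $(A,B,\varphi)$ is of finite $T(\mathcal P)$-type exactly when $A$ is of finite $\mathcal P$-type and $\varphi$ is an epimorphism. Take a $\mathcal P$-resolution $P_\bullet\to A$ and apply $j_!$. The homology of $T(P_\bullet)$ is concentrated in the second component, and it equals $L_iF(A)$ in degree $i\geq 1$. In degree $0$ the kernel of the augmentation $T(P_0)\to (A,F(A),\Id)$ is computed from $\ker(P_0\to A)$. More generally, for $(A,B,\varphi)$ with $\varphi$ epi, the first syzygy is $(\Omega A,\ \ker(F(P_0)\to B),\ \ast)$. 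This relates $T(\mathcal P)$-finite presentation of $(A,B,\varphi)$ to finite presentation of $A$ together with finite $\mathcal P$-type of $\ker(F(A)\to B)$, corrected by $L_1F(A)$ via the exact sequence $L_1F(A)\to F(\Omega A)\to F(P_0)\to F(A)\to 0$.

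\textbf{Necessity.} Condition (a): for $A$ in $\AA$, the object $j_!A$ is $T(\mathcal P)$-coherent if and only if $A$ is $\mathcal P$-coherent, since finitely generated subobjects of $j_!A$ supported on the first component correspond to finitely generated subobjects of $A$. Condition (b): if $A$ is $\mathcal P$-coherent, then the finitely presented subobjects of the coherent object $T(P_0)$ form an abelian category. The syzygies of $j_!A$ computed in the comma category are therefore coherent. Comparing them with $j_!$ applied to the syzygies of $A$, the discrepancy is $i_*L_iF(A)$, so $L_iF(A)$ is coherent by induction on $i$. Condition (c): subobjects of $i_*F(A)\subset T(P)$ supported in the second component are exactly the subobjects $(0,B,0)$ of $(0,F(A),0)$. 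Such an object is of finite $T(\mathcal P)$-type only if it is $0$. So we instead look at subobjects $(P',B',\ast)$, whose finite generation forces $B'$ to be the image of $F(P')$ plus a finitely generated part. Coherence of these subobjects gives that $B$ is coherent. The statement that $F(B)$ is of finite type comes from finite presentation of $(0,B,0)$ as a cokernel: its $T(\mathcal P)$-presentation $T(Q)\to T(P)\to(0,B,0)$ forces $F(B)$ to appear as a quotient of $F(Q)$ modulo the image of $F$ of a finitely generated object.

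\textbf{Sufficiency and the main obstacle.} For sufficiency I would show that every finitely generated subobject $U=(A',B',\varphi')$ of some $T(P)$ is finitely presented. Here $A'\subset P$ is finitely generated, hence finitely presented by (a). The kernel of $T(P'_0)\to U$ is an extension of $(\Omega A', K,\ast)$, where $K=\ker(F(P'_0)\to B')$ fits into an exact sequence built from $L_1F(A')$, $F(\Omega A')$, and $\ker(F(A')\to B')$. Now $F(A')\to F(P)$ need not be injective; this is where $L_1F$ of the coherent object $P/A'$ enters through (b). Thus $\ker(F(A')\to B')$ is an extension of a quotient of $L_1F(P/A')$ by a subobject of $F(P)$ of finite type. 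The latter is coherent by (c). This bookkeeping, namely making every term of finite $\mathcal P$-type using (b), (c) and the fact that coherent objects are closed under kernels and cokernels, is the main obstacle. It is exactly where the hypothesis in (c) that $F(B)$ is of finite type is needed, to control the second component of the next syzygy. Once $U$ is shown finitely presented, coherence of $T(\mathcal P)$ follows from the definition, since every object of finite $T(\mathcal P)$-type is a quotient of some $T(P)$.
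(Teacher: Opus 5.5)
The paper gives no proof of this theorem (it is quoted from Fossum--Griffith--Reiten), so your proposal has to stand on its own. It has a genuine gap: the setting you chose cannot produce conditions (b) and (c). With $T(\mathcal P)=j_!(\mathcal P)$, full faithfulness of $j_!$ gives $T(\mathcal P)\simeq\mathcal P$. Moreover $j_!(\mathcal P)$ does not generate, since $\Hom(j_!P,(0,B,0))=\Hom(P,0)=0$. You notice this yourself. You then use a presentation $T(Q)\to T(P)\to(0,B,0)$, and subobjects whose second component contains ``a finitely generated part''; these exist only if you also admit the projectives $i_*Q=(0,Q,0)$.

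Computed consistently in your setting, the finite-type subobjects of $j_!P$ are $(A',\im(F(A')\to F(P)),\mathrm{can})$ with $A'\subseteq P$ of finite type. Such an object is finitely presented iff $A'$ is and $F(A')\to F(P)$ is a monomorphism, i.e.\ $L_1F(P/A')=0$. So your notion of coherence is ``(a) plus vanishing of $L_1F$ on finitely presented objects''. For instance, take $R=k[x]$, $\AA=R\text{-}\Mod$ and $F=k\otimes_R-$. Conditions (a)--(c) hold, but the finite-type subobject $(xR,0,0)$ of $j_!R=(R,k,\Id)$ has syzygy $(0,k,0)=i_*L_1F(R/xR)$, which is not of finite $j_!(\mathcal P)$-type. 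This is exactly where your induction ``the discrepancy is $i_*L_iF(A)$, hence $L_iF(A)$ is coherent'' breaks.

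More fundamentally, the clause ``$F(B)$ is of finite $\mathcal P$-type'' cannot come out of any comma category $(F\downarrow\Id_\AA)$. There $F$ is only applied to first components, whereas $B\subseteq F(A)$ sits in a second component. With the natural projectives $j_!P\oplus i_*Q$, the comma category is the trivial extension of $\AA\times\AA$ by $(A,B)\mapsto(0,F(A))$, for which the clause is vacuous. It is also genuinely not necessary there. Take $\AA=k\text{-}\Mod$ and $F=k^{(\mathbb N)}\otimes_k-$. The comma category is left modules over the left hereditary (hence left coherent) ring $\bigl(\begin{smallmatrix}k&0\\k^{(\mathbb N)}&k\end{smallmatrix}\bigr)$, and $j_!(\mathcal P)$ is coherent in your sense as well. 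Yet $B=k\subseteq F(k)$ has $F(B)=k^{(\mathbb N)}$, which is not of finite type.

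The statement is correct for the trivial extension $\AA\ltimes F$ of Fossum--Griffith--Reiten. Its objects are pairs $(X,f\colon F(X)\to X)$ with $f\circ F(f)=0$, and its projectives are $T(P)=\bigl(P\oplus F(P),\bigl(\begin{smallmatrix}0&0\\1&0\end{smallmatrix}\bigr)\bigr)$. This is also what the matrix-ring corollary is about, where the clause becomes ``$N\otimes_S C$ is finitely generated''. The identification with $(F\downarrow\Id_\AA)$ in Remark~\ref{rem_triv_ext} is loose exactly here.

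In $\AA\ltimes F$ the clause has a transparent source:
\begin{itemize}
\item $(X,f)$ is of finite $T(\mathcal P)$-type iff $\coker f$ is of finite $\mathcal P$-type;
\item a finite-type $B\subseteq F(P)$ gives a subobject $(B,0)\subseteq T(P)$;
\item for an epimorphism $Q\twoheadrightarrow B$ with kernel $\Omega B$, the kernel of $T(Q)\twoheadrightarrow(B,0)$ has structure-map cokernel $\Omega B\oplus F(B)$.
\end{itemize}
Hence $(B,0)$ is finitely presented iff $B$ is finitely presented and $F(B)$ is of finite type.

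Finally, sufficiency is not proved in your proposal. You name the bookkeeping ``the main obstacle'' but do not carry it out. That is precisely where (b) and (c) must be combined to show that every finite-type subobject of $T(P)$ has a finite-type syzygy.
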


Specialising this result to the trivial extension of a Morita ring yields the following.

\begin{corollary}[\cite{reiten}, Corollary~\ref{cor_reiten_matrix}]
The ring $\bigl(\begin{smallmatrix} R & N \\ M & S \end{smallmatrix}\bigr)$ is left coherent exactly when both $R$ and $S$ are left coherent and, for all finitely presented left $R$-modules $A$ and finitely presented left $S$-modules $B$, the following hold: in each positive degree $i > 0$ the modules $\Tor_i^R({}_S M, A)$ and $\Tor_i^S({}_R N, B)$ are again finitely presented, and any finitely generated $S$-submodule $C$ of $M \otimes_R A$ is itself finitely presented while $N \otimes_S C$ stays finitely generated --- the analogous requirements with $R$ and $S$ interchanged being imposed as well.
\end{corollary}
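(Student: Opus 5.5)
The plan is to deduce the corollary from Theorem~\ref{th_reiten_TP_coherent}. The key step is to realise the Morita ring $\Lambda = \bigl(\begin{smallmatrix} R & N \\ M & S \end{smallmatrix}\bigr)$, in the trivial-extension case (both bimodule pairings $M\otimes_R N\to S$ and $N\otimes_S M\to R$ zero), as a trivial extension of the product category $\AA = R-\Mod \times S-\Mod$. The endofunctor is $F(A,B) = (N\otimes_S B,\; M\otimes_R A)$. It is right exact, and a left $\Lambda$-module is the same as a pair $(A,B)$ with structure maps $N\otimes_S B\to A$ and $M\otimes_R A\to B$, with no compatibility condition because the pairings vanish. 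Hence $\Lambda-\Mod \simeq (F\downarrow \Id_\AA)$.

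We take $\mathcal P$ to be the finitely generated projectives of $\AA$, that is, pairs $(P,Q)$ with $P$ and $Q$ finitely generated projective. Then $T(\mathcal P)$ consists of the objects $(P\oplus N\otimes_S Q,\, Q\oplus M\otimes_R P)$ with their canonical structure maps, and these are exactly the finitely generated projective $\Lambda$-modules. So coherence of $T(\mathcal P)$ in $(F\downarrow\Id_\AA)$ is left coherence of $\Lambda$.

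Next we unwind conditions (a)--(c) of Theorem~\ref{th_reiten_TP_coherent}.
\begin{enumerate}
\item[(a)] $\mathcal P$ is coherent in $\AA$ exactly when $R$ and $S$ are left coherent. This holds because coherence in a product is checked componentwise and $\mathcal P$-coherent objects are the finitely presented pairs.
\item[(b)] The derived functors are computed componentwise: $L_iF(A,B) = (\Tor_i^S(N,B),\,\Tor_i^R(M,A))$, using a projective resolution in each factor. Condition (b) therefore says exactly that $\Tor_i^R(M,A)$ and $\Tor_i^S(N,B)$ are finitely presented for finitely presented $A$ and $B$ and all $i>0$.
\item[(c)] A finite-type subobject of $F(A,B)$ is a pair $(C',C)$ with $C\subseteq M\otimes_R A$ and $C'\subseteq N\otimes_S B$ finitely generated. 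It must be coherent, i.e.\ finitely presented once (a) holds. Moreover $F(C',C) = (N\otimes_S C,\, M\otimes_R C')$ must be finitely generated. This yields the stated conditions together with their $R\leftrightarrow S$ mirror images.
\end{enumerate}

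The main obstacle is the identification $\Lambda-\Mod\simeq (F\downarrow\Id_\AA)$ and its compatibility with $T(\mathcal P)$. The delicate point is that the corollary must be read for the trivial-extension Morita ring, where the pairings vanish; for nonzero pairings one would only get a quotient of the comma category. The remaining points are checking that $L_iF$ splits componentwise and that "of finite $\mathcal P$-type" means finitely generated in each component; both are routine. In (c), only the quantifier over $A$ and $B$ simultaneously has to be split into the two independent components.
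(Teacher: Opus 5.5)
Your strategy is the one the paper intends: it gives no separate proof and simply refers back to Theorem~\ref{th_reiten_TP_coherent}. Your componentwise unwinding of (a)--(c) is also fine. The gap is in the identification $\Lambda-\Mod\simeq(F\downarrow\Id_\AA)$, which you single out as the key step and which is false as stated.

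Vanishing pairings do \emph{not} remove the compatibility conditions on a $\Lambda$-module $(X_1,X_2,f,g)$. The two triangles of Example~\ref{ex_psa_matrix_ring} become $g\circ(1_N\otimes f)=0$ and $f\circ(1_M\otimes g)=0$. With $\alpha=(g,f)\colon F(X)\to X$, this says exactly $\alpha\circ F(\alpha)=0$.

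The full comma category $(F\downarrow\Id_\AA)$, with no condition on $\alpha$, is the module category of the tensor ring $T_{R\times S}(M\oplus N)$, not of $\Lambda$. For instance, for $R=S=M=N=k$ it consists of representations of the two-vertex quiver with one arrow each way and no relations. By contrast, $\Lambda$ is four-dimensional.

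Your own description of $T(\mathcal P)$ exposes the inconsistency. In $(F\downarrow\Id_\AA)$ the left adjoint of the forgetful functor is $X\mapsto\bigoplus_{n\ge0}F^nX$. The object $(P\oplus N\otimes_S Q,\,Q\oplus M\otimes_R P)$ is in general not projective there: morphisms from $T(R,0)$ to $(X,\alpha)$ correspond to those $x\in X_1$ with $g(n\otimes f(m\otimes x))=0$ for all $n,m$, and this functor is not exact. So the claim that coherence of $T(\mathcal P)$ in $(F\downarrow\Id_\AA)$ is left coherence of $\Lambda$ is not justified.

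The repair is to work in the trivial extension $\AA\ltimes F$ of Fossum--Griffith--Reiten. This is the full subcategory of $(F\downarrow\Id_\AA)$ cut out by $\alpha\circ F(\alpha)=0$, and it is the actual setting of \cite[Theorem 2.2]{reiten}. Remark~\ref{rem_triv_ext} blurs the two categories. They coincide only when $F^2=0$, i.e.\ $M\otimes_R N=0=N\otimes_S M$, as in the triangular case $N=0$ of Example~\ref{ex_reiten_kxi}.

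In $\AA\ltimes F$ everything you want holds:
\begin{itemize}
\item one does have $\Lambda-\Mod\simeq\AA\ltimes F$;
\item the objects $T(P,Q)$, with the structure maps you wrote, are projective, and every finitely generated projective $\Lambda$-module is a summand of one of them;
\item left coherence of $\Lambda$ is coherence of $T(\mathcal P)$.
\end{itemize}
From there, your componentwise computation of $L_iF$ and your splitting of finite-type subobjects $(C',C)\subseteq F(A,B)$ go through unchanged.
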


The corollary admits the following sharp example due to Reiten.

\begin{example}[\cite{reiten}, Example~\ref{ex_reiten_kxi}]
Take $k$ to be a field and set $A = k[x_i \mid i \in S]$ with $S$ an infinite index set, regarded as a $k$-algebra by means of the standard augmentation $A \to k$ that annihilates every $x_i$. The resulting ring $T = \bigl(\begin{smallmatrix} A & 0 \\ k & k \end{smallmatrix}\bigr)$ is then left coherent while failing to be right coherent.
\end{example}

Two observations about this example are in order. The ring $A$ is coherent, although it is not Noetherian; the failure of right coherence of $T$ is therefore a strictly recollement-theoretic phenomenon, located in the gluing data. The recollement attached to the idempotent $\bigl(\begin{smallmatrix} 0 & 0 \\ 0 & 1 \end{smallmatrix}\bigr) \in T$ is, in the notation of Example~(\ref{ex_psa_matrix_ring}), $(\Mod - A, \Mod - T, \Mod - k)$, and it satisfies every hypothesis of Theorem~\ref{th_coh_B_i*exact} apart from the preservation of finitely presented objects. The next remark, which marks the precise point at which Theorem~\ref{th_coh_B_i*exact} is sharp, captures this.

\begin{remark}[Remark~\ref{rem_generality}]
In any recollement of abelian categories, the assumptions on $i_*$ and $j^*$ in Theorem~\ref{th_coh_B_i*exact} --- exactness, commutation with filtered colimits, preservation of finitely presented objects --- are automatic with the exception of the preservation of finitely presented objects. Example~\ref{ex_reiten_kxi} shows that this last assumption cannot be removed.
\end{remark}

The generality of Theorem~\ref{th_coh_B_straightforward} and Theorem~\ref{th_coh_B_i*exact} is, in this sense, not formal: the conclusion fails as soon as the finite-presentation hypothesis on $j^*$ is dropped, and the relevant counterexamples come from the very ring-theoretic situation that motivates the question.

\subsection*{Related work}

The systematic theory of coherent rings goes back to S.U. Chase~\cite{Chase1960DirectPO}, who isolated the notion to characterise the rings over which products of flat modules stay flat; the standard module-theoretic descriptions are collected in the textbook of Stenstr{\"o}m~\cite{stenstrom1975rings}. The interplay with injectivity is due to B. Stenstr{\"o}m~\cite{Stenstrom-1970}, who proved that coherence is equivalent to the closure of FP-injective modules under filtered colimits (Theorem~\ref{th_cminj} extends this equivalence to compactly assembled Grothendieck categories); the character-module form of these equivalences is due to T. Cheatham and D. Stone~\cite{CheathamStone}, J. Lambek~\cite{Lambek_1964}, T. W\"urfel~\cite{Wurfel1973}, and E. Matlis~\cite{Matlis_1982}. M. Hovey~\cite{Hovey2001} recast coherence through wide subcategories of $R - \Mod$, and H. Krause through the spectrum of a locally coherent Grothendieck category~\cite{KRAUSE_spec_loc_coh_cat, Krause2022}; the same paper of Krause contains the theorem that finite type localizing subcategories and quotients of locally coherent categories are locally coherent, which is the closest forerunner of our reverse course (Remark~\ref{rem_krause_comparison}). We use the locally finitely presentable formalism of Ad\'amek--Rosick\'y~\cite{Adamek_Rosicky_1994} and Popescu~\cite{popescu1973}, and the \textit{compactly assembled} categories of Efimov~\cite{efimov_Kth_large_cat, efimov_Th_of_heart}, which place coherence in the broader accessible-category context that we use throughout.

The hierarchy of finiteness conditions $\mathcal{FP}_n$ has its modern formulation in the work of D. Bravo and M. P\'erez~\cite{FPn_first}; the $n$-coherent rings of E. Ellis and R. Parra~\cite{k-th_of_n-coherent_rings}, whose $K$-theory extends that of Schlichting~\cite{Schlichting04}, and the categorical version of D. Bravo, J. Gillespie, and M. P\'erez~\cite{Bravo_Becker_loctypeFPn} are the natural generalisations. Locally coherent Grothendieck categories were studied by I. Herzog~\cite{Herzog_Ziegler} through the Ziegler spectrum; the local coherence of hearts of $t$-structures by M. Saor{\'\i}n~\cite{Saorin_loccoh_hearts} and C. Parra, M. Saor{\'\i}n, and S. Virili~\cite{Parra_Saorin_Virili}; and L. Positselski~\cite{Positselski_loccoh_exact} extends the theory to exact categories.

Trivial extensions of abelian categories are surveyed in the expository account of Fossum, Griffith, and Reiten~\cite{reiten}. The recollement formalism, set up by Beilinson, Bernstein, and Deligne for perverse sheaves and transported to the abelian framework by V. Franjou and T. Pirashvili~\cite{Pirashvili_recollement_ab}, was developed homologically by C. Psaroudakis~\cite{PSAROUDAKIS_homolog_theor}, C. Parra and J. Vit\'oria~\cite{Psaroudakis_props_via_recoll}, and C. Psaroudakis and J. Vit\'oria~\cite{Psaroudakis_modul}, with the theory of \textit{ladders} due to N. Gao, S. K\"onig, and C. Psaroudakis~\cite{ladders}. Our ring-theoretic motivation comes from A. Bondal and I. Zhdanovskiy~\cite{Bondal_Zhdanovskiy_21} on homotopes and from Gomez Pardo and Martinez Hernandez~\cite{PARDO_HERNANDEZ} on endomorphism rings of quasiprojective modules; the $K$-theory of trivial extensions and gluings was studied recently by Q. Chen and M. Zheng~\cite{ChenXi_Kgroups_trivial_ext}.

\subsection*{Overview of the structure of the paper}

We give a brief overview of the structure of the paper.

Section~\ref{sec:loccoh} is devoted to the theory of locally coherent abelian categories. In Subsection~\ref{ssec:fg_fp} we recall the notions of finitely generated and finitely presented objects, the equivalent characterisations of these notions in an AB5 category (Theorem~\ref{th_fg_fg_fp}), and the closure of these properties under extensions, kernels, and cokernels (Lemma~\ref{lem_ab_fp_fg_via_ker_coker}). Subsection~\ref{ssec:ind} recalls the ind-completion of an abelian category, the functors $\tYY$ associated with a compactly assembled category in the sense of Efimov, and proves the commutation lemmata (Lemma~\ref{lem_commutativity_of_ind} and Lemma~\ref{lem_commutativity_of_ind(spec)}) that we use throughout the paper. Subsection~\ref{ssec:loccoh_def} introduces local coherence, summarises its equivalent characterisations, and contains Theorem~\ref{th_MEGA} --- seventeen equivalent formulations of left coherence of a ring --- including the Chase, Stenstr{\"o}m, Cheatham--Stone, Matlis, Krause, and Hovey criteria, and the categorical formulation via coherent assembly due to Efimov. The proof of Theorem~\ref{th_MEGA} is given in this subsection. Subsection~\ref{ssec:ext} introduces comma categories and recollement situations of abelian categories, following \cite{Pirashvili_recollement_ab, PSAROUDAKIS_homolog_theor, ladders}, and lists the structure theorems we use in Section~\ref{sec:thms}.

Section~\ref{sec:thms} contains the main coherence results. Subsection~\ref{ssec:direct} is the direct course: from local coherence of $\AA$ and $\CC$ we deduce local coherence of $\BB$. The main theorems are Theorem~\ref{th_coh_B_straightforward} (when $\mathbb{L}_1 i^*$ controls finitely generated subobjects of finitely presented objects) and Theorem~\ref{th_coh_B_i*exact} (when $i^*$ is exact); their ring-theoretic specialisations include Corollary~\ref{cor_B_B+_situation} and, as a further specialisation, the theorem of Bondal and Zhdanovskiy on homotopes (Theorem~\ref{th_bondal_zhdanovskiy}). The same subsection contains the coherence theorem of Reiten for trivial extensions of abelian categories (Theorem~\ref{th_reiten_TP_coherent}), its matrix-ring corollary (Corollary~\ref{cor_reiten_matrix}), and Example~\ref{ex_reiten_kxi} together with Remark~\ref{rem_generality}, which together pin down the sharpness of Theorem~\ref{th_coh_B_i*exact}. Subsection~\ref{ssec:reverse} is the reverse course: from local coherence of $\BB$ we deduce local coherence of $\AA$ or of $\CC$, under finiteness assumptions on the relevant adjoint pair (Theorems~\ref{th_revcourse_coh_j^*j_*}, \ref{th_revcourse_coh_j_!j^*}); Remark~\ref{rem_krause_comparison} compares these results with the theorem of Krause~\cite{KRAUSE_spec_loc_coh_cat} on finite type localizing subcategories. Subsection~\ref{ssec:qproj} introduces the notion of a quasiprojective pair of adjoint functors (Definition~\ref{def_qproj_functors}) and proves the criterion for local coherence stated in Theorem~\ref{th_quasiproj_criterion}, which generalises the criterion of~\cite{PARDO_HERNANDEZ} to arbitrary Grothendieck abelian categories. Subsection~\ref{ssec:fpinj} treats the FP-injective viewpoint: we recall the Stenstr{\"o}m equivalences in their categorical form (Theorem~\ref{th_fpinj_ab}) and use them to obtain a coherence transfer result under hypotheses on the right adjoint pairs $(i_* \dashv i^!)$ and $(j^* \dashv j_*)$ together with the preservation of FP-injective objects by all four functors. Subsection~\ref{ssec:cminj} extends the Stenstr{\"o}m criterion to compactly assembled Grothendieck abelian categories: we introduce CM-injective objects (Definition~\ref{def_cminj}), show that they coincide with FP-injective objects in the locally finitely presentable case (Proposition~\ref{prop_cminj_vs_fpinj}), and prove that coherent assembly is equivalent to the closure of CM-injective objects under filtered colimits (Theorem~\ref{th_cminj}). The subsection ends with a discussion of the condition that kernels of morphisms between compact morphisms be compact: it implies coherent assembly, but in the locally finitely presentable case it is equivalent to local Noetherianity (Propositions~\ref{prop_NA_cohass}, \ref{prop_NA_lfp}). The basic facts on $\tYY$ and compact morphisms needed there are collected in Remarks~\ref{rem_tYY_basic} and \ref{rem_compact_lifts} of Subsection~\ref{ssec:ind}.

\subsection*{Acknowledgements}
The authors are grateful to Alexander Efimov and Alexey Bondal for fruitful discussions and comments.

The research is supported by the Ministry of Science and Higher Education
of the Russian Federation (Goszadaniye) 075-00337-20-03, project No.
075-00141-26-00.

\subsection*{Conventions and notation}

\begin{itemize}
    \item We use the word ``category'' to mean category.\footnote{or more precisely, $1$-category.}

    \item Unless there is a specific need for it, we do not address set-theoretic issues.

    \item All rings are assumed to be associative and unital, unless otherwise stated.
\end{itemize}

\section{Locally coherent abelian categories} \label{sec:loccoh}

\subsection{Finitely generated and finitely presented objects} \label{ssec:fg_fp}

We want to recall the definitions of finitely generated and finitely presented objects and 
locally finitely presentable categories.

Let $\CC$ be a cocomplete category, and let $X,\, \{Y_i\}_{i \in I} \in \CC$. There is a canonical map
\begin{equation}
\colim[i] \Hom(X,\, Y_i) \xrightarrow{t_X} \Hom(X,\, \colim[i] Y_i).
\end{equation}
It is obtained by taking the colimit of the maps $\Hom(X,\, Y_i) \to \Hom(X,\, \colim[i] Y_i)$ induced by the canonical morphisms $Y_i \to \colim[i] Y_i$.

\begin{definition}
Let $\CC$ be a cocomplete category and let $\kappa$ be a regular cardinal. An object $X \in \CC$ is said to be {\it $\kappa$-type} (resp.\ {\it $\kappa$-compact}) if, for any $\kappa$-filtered system $\{Y_i\}_{i \in I}$, the map $t_X$ is injective (resp.\ bijective).

The full subcategory of $\kappa$-compact objects of $\CC$ will be denoted $\CC^{\kappa}$.
\end{definition}

\begin{remark} \label{rem_filtered_convention}
We call the object $X$ {\it finitely presented} if it is $\omega$-compact, and {\it finitely generated} or {\it of finite type} if it is $\omega$-type.

We denote the full subcategory of $\CC$ spanned by the finitely presented (resp.\ finitely generated) objects by $\CC^{f.p.}$ or $\CC^{\omega}$ (resp.\ $\CC^{f.g.}$).

Note that, by \cite[Theorem~1.5]{Adamek_Rosicky_1994}, for every small $\omega$-filtered category $\DD$ there is an $\omega$-directed cofinal subposet $\DD_0$.
Thus, without loss of generality, we use $\omega$-filtered diagrams rather than $\omega$-directed ones, and we refer to them simply as {\it filtered}.
\end{remark}

We are now interested in equivalent characterisations of finitely generated and finitely presented objects.
Let us first define the class of categories in which such characterisations can be established.

\begin{definition}\cite{Adamek_Rosicky_1994}
Let $\kappa$ be a regular cardinal.

A category $\CC$ is called {\it $\kappa$-accessible} if $\CC$ has $\kappa$-filtered colimits and there is a set $A$ of $\kappa$-compact objects of $\CC$ such that every object of $\CC$ is a $\kappa$-filtered colimit of objects from $A$.

A category $\CC$ is called {\it locally $\kappa$-presentable} if it is cocomplete and $\kappa$-accessible.

A category $\CC$ is called {\it locally $\kappa$-type} if it is cocomplete and has a set $A$ of $\kappa$-type objects such that every object is a $\kappa$-filtered colimit of objects from $A$.
\end{definition}

\begin{remark}
We say that $\CC$ is {\it locally finitely presentable} if it is locally $\omega$-presentable, that $\CC$ is {\it locally of finite type} if it is locally $\omega$-type, and that $\CC$ is {\it accessible} if it is $\kappa$-accessible for some regular cardinal $\kappa$.
\end{remark}

The following theorem is due to \cite{Adamek_Rosicky_1994}; the formulation given here is taken from \cite{kato}.
\begin{theorem}\cite[Proposition 1.69]{Adamek_Rosicky_1994} \label{th_locpres_fg}
Let $\CC$ be a locally finitely presentable pointed category, $X \in \CC$. The following are equivalent:
\begin{enumerate}
\item $X$ is finitely generated.
\item For any filtered  system $\{Y_i\}$ of objects of $\CC$, all of whose transition morphisms are monomorphisms, $t_X$ is bijective.
\item There is an epimorphism $F \rightarrowdbl X$ from a finitely presented object $F$.
\end{enumerate}
\end{theorem}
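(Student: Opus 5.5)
I will prove the cycle $(1)\Rightarrow(2)\Rightarrow(3)\Rightarrow(1)$, using throughout that in a locally finitely presentable category every object $Y$ is the filtered colimit of the canonical diagram of finitely presented objects over it, and that a pointed category has a zero object, so the images and kernel-type arguments make sense. In the generality stated (pointed, not necessarily abelian), "epimorphism" and "monomorphism" are the categorical notions. I also use the standard fact that in a locally finitely presentable category, filtered colimits of monomorphisms are monomorphisms, because filtered colimits commute with finite limits there, and a monomorphism is detected by its kernel pair.

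For $(1)\Rightarrow(2)$: let $\{Y_i\}$ be filtered with monic transitions and $Y=\colim Y_i$. Injectivity of $t_X$ holds by hypothesis. For surjectivity, take $f\colon X\to Y$ and pull back along the colimit injections: set $X_i = X\times_Y Y_i$. Since each $Y_i\to Y$ is a monomorphism (as a filtered colimit of monomorphisms $Y_i\to Y_j$), the objects $X_i$ form a filtered system of subobjects of $X$ with monic transitions. Because pullback commutes with filtered colimits in a locally finitely presentable category, we get $\colim X_i = X$. Now apply the finite-type condition to the identity of $X$. The map $\colim\Hom(X,X_i)\to\Hom(X,X)$ is injective, and I need surjectivity onto $\Id_X$. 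This is the main obstacle, since $\omega$-type only gives injectivity.

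I would handle it by writing $X=\colim F_k$ over finitely presented $F_k$, and considering the images $X'_k$ of $F_k\to X$. These form a directed union of subobjects with colimit $X$. One then shows that some $X'_k\to X$ is an epimorphism. Consider the filtered system of quotients $X\to X/X'_k$, more precisely the pushouts $X\sqcup_{X'_k}X$, whose colimit is $X\sqcup_X X = X$. Injectivity of $t_X$ applied to the two canonical maps $X\to X\sqcup_{X'_k}X$ gives that they coincide at some stage $k$, which means $X'_k\to X$ is epi. This also yields $(1)\Rightarrow(3)$ directly, since $F_k\to X'_k\to X$ is then an epimorphism. So I would reorganize the cycle as $(1)\Rightarrow(3)\Rightarrow(2)\Rightarrow(1)$.

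For $(3)\Rightarrow(2)$: given an epimorphism $p\colon F\rightarrowdbl X$ with $F$ finitely presented, and $f\colon X\to\colim Y_i$, the composite $fp$ factors through some $Y_i$ via a map $g\colon F\to Y_i$. It remains to show that $g$ factors through $p$. Here the monic transitions are used. The factorization condition amounts to $g$ coequalizing the kernel pair $K\rightrightarrows F$ of $p$ after composing into $Y$. Since $Y_i\to Y$ is monic, $g$ already coequalizes the kernel pair in $Y_i$. For this to yield a factorization through $X$, I need $p$ to be the coequalizer of its kernel pair, i.e.\ a regular epimorphism. In the locally presentable setting I would instead factor $p$ through its (strong epi, mono) factorization and use the orthogonality of strong epimorphisms against the monomorphism $Y_i\to Y$ directly: the square formed by $p$, $g$, $f$ and $Y_i\hookrightarrow Y$ has a diagonal filler. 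I expect to need the epimorphism in (3) to be strong; in the abelian applications this is automatic. Injectivity of $t_X$ in $(2)$ follows because $p$ is epi and $F$ is finitely presented: two maps $X\to Y_i$ agreeing in $Y$ agree after precomposition with $p$ at some stage.

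For $(2)\Rightarrow(1)$: take an arbitrary filtered system $\{Y_i\}$ and replace it by its images in $Y$, namely $Y'_i=\im(Y_i\to Y)$. This system has monic transitions and the same colimit. Two maps $X\to Y_i$ that become equal in $Y$ become equal in $Y'_i$. Applying the factorization of $X\to Y'_i$ through a finitely presented cover from the previous step, together with $\colim_{j\ge i} Y_j = Y'_i$ in the sense of images, they then become equal in some $Y_j$. This gives injectivity of $t_X$.
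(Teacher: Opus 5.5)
There is a genuine gap at the point you flagged yourself, and in the stated generality it cannot be closed. Your pushout argument for (1)$\Rightarrow$(3) is correct, but it only shows that some $F_k\to X$ is an epimorphism. Your (3)$\Rightarrow$(2), however, needs $p$ to be a \emph{strong} epimorphism to get the diagonal filler against the monomorphism $Y_i\to Y$. Passing to the (strong epi, mono) factorisation of $p$ does not help: it only yields a monic epimorphism $\im p\to X$, which need not be invertible outside balanced categories.

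In fact (1)$\Rightarrow$(2) fails for pointed, even additive, locally finitely presentable categories. Take the category of torsion-free abelian groups: it is reflective in $\mathbf{Ab}$, closed under filtered colimits, and has the $\mathbb{Z}^n$ as finitely presented generators. In it the inclusion $\mathbb{Z}\to\mathbb{Q}$ is an epimorphism, because two maps into a torsion-free group that agree on $\mathbb{Z}$ agree on $\mathbb{Q}$. So $\mathbb{Q}$ satisfies (3), hence (1) by your own injectivity argument. Yet $\mathbb{Q}=\bigcup_n \frac{1}{n!}\mathbb{Z}$ is a directed union of subobjects through none of which $\Id_{\mathbb{Q}}$ factors, so (2) fails. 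What is true in any locally finitely presentable category is (1)$\Leftrightarrow$(3) with plain epimorphisms and (2)$\Leftrightarrow$(3) with strong epimorphisms. The statement as written therefore holds whenever all epimorphisms are strong.

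For the paper this is harmless, since the result is only applied in abelian categories. There every epimorphism is the cokernel of its kernel, and your argument closes directly. With $k\colon\ker p\to F$ and $m_i\colon Y_i\to Y$ one has $m_i g k=fpk=0$, so $gk=0$ and $g=hp$ for some $h$, because $p=\coker k$. Then $m_i h=f$, since $m_i h p=fp$ and $p$ is epic.

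Your step (2)$\Rightarrow$(1) also needs to be redone. First, $\operatorname{colim}_{j\ge i}Y_j$ is $Y$, not $Y'_i$. Second, inside the cycle you may not use a finitely presented cover of $X$ before deriving it from (2). Instead, apply (2) to the images $X'_k$ of $F_k\to X$, which form a filtered system with monic transitions and colimit $X$, and to $\Id_X$. Then some $X'_k\to X$ is both a split epimorphism and a monomorphism, so $F_k\to X$ is a strong epimorphism. Injectivity of $t_X$ for arbitrary filtered systems now follows from finite presentation of $F_k$ and cancellation of this epimorphism, with no need to replace $\{Y_i\}$ by images.
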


We now describe equivalent characterisations of finitely generated and finitely presented objects that exist in abelian categories.

\begin{theorem}\cite[Proposition 5.6.]{popescu1973}
Let $X$ be an object in an AB5-category. The following are equivalent:
\begin{enumerate}
    \item $X$ is finitely generated
    \item Any complete filtered set $\{X_i\}_i$ of subobjects of $X$ is stationary (that is, $X = X_{i_0}$ for some index $i_0$).
\end{enumerate}
\end{theorem}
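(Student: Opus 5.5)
We argue both implications directly from the definition of $\omega$-type, using only the AB5 property. A filtered colimit of subobjects of $X$ is computed as a union: by AB5, filtered colimits are exact, so the colimit of a filtered system of subobjects $X_i \subseteq X$ maps monomorphically to $X$ and is the union $\bigcup_i X_i$. Here ``complete'' means that the system is directed and its union is all of $X$.

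\emph{(1) $\Rightarrow$ (2).} Let $\{X_i\}$ be a complete filtered set of subobjects of $X$. Since $X$ is finitely generated, Theorem~\ref{th_locpres_fg}(2) applies in the locally finitely presentable setting, but we only need the $\omega$-type property. Consider the filtered system of quotients $Y_i = X/X_i$, with the canonical epimorphisms $Y_i \to Y_{i'}$ for $i \le i'$. By exactness of filtered colimits, $\colim[i] Y_i = X/\bigcup_i X_i = 0$. The canonical projections $\pi_i \colon X \to Y_i$ define an element of $\colim[i] \Hom(X, Y_i)$ whose image under $t_X$ lies in $\Hom(X, 0)$, hence is zero. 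Injectivity of $t_X$ then gives an index $i_0$ with $\pi_{i_0} = 0$ already at that stage, i.e.\ $X_{i_0} = X$.

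\emph{(2) $\Rightarrow$ (1).} Let $\{Y_i\}$ be a filtered system, $Y = \colim[i] Y_i$, and take two elements of $\colim[i] \Hom(X, Y_i)$ with the same image under $t_X$. Passing to a common index and subtracting (everything is additive), we reduce to a single $f \colon X \to Y_j$ whose composite $X \to Y_j \to Y$ is zero. For $k \ge j$, let $f_k \colon X \to Y_k$ be the composite and set $X_k = \Ker f_k$. These form a filtered increasing family of subobjects of $X$. The key step is that $\bigcup_k X_k = X$. To see this, note that $\colim[k] f_k$ has kernel $\colim[k] X_k$ by AB5. This colimit of the maps $X \to Y_k$ (constant source $X$) is exactly the map $X \to Y$, which is zero. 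Hence $\colim[k] X_k = X$, so the family is complete. By (2), $X_{k_0} = X$ for some $k_0$, so $f_{k_0} = 0$ and $f$ becomes zero in $\colim[i] \Hom(X, Y_i)$. Thus $t_X$ is injective.

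The only delicate point is the exactness bookkeeping in the second implication: the kernel of a filtered colimit of morphisms is the colimit of the kernels, and the colimit of the constant system $X$ over a filtered (connected) category is $X$. Both follow from AB5. We also rely on the convention that filtered colimits in the definition of $\omega$-type may be taken over directed posets (Remark~\ref{rem_filtered_convention}), which matches the ``filtered set of subobjects'' formulation.
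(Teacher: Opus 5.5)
Your proof is correct. For (1)$\Rightarrow$(2), the class of the compatible projections $X \to X/X_i$ dies under $t_X$ because $\varinjlim_i X/X_i = 0$, and injectivity of $t_X$ forces some $X_{i_0} = X$. For (2)$\Rightarrow$(1), AB5 gives $\varinjlim_k \ker f_k = \ker\bigl(\varinjlim_k f_k\bigr) = \ker(0) = X$, so the kernels $X_k = \ker(X \to Y_k)$ form a complete directed family.

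The paper gives no proof of its own here; it only cites Popescu. Your argument is the standard one, so there is nothing to compare it against.

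Two small remarks:
\begin{itemize}
\item (1)$\Rightarrow$(2) needs only cocompleteness, since cokernels always commute with colimits. AB5 is genuinely used only for the kernel computation in (2)$\Rightarrow$(1).
\item Your aside invoking Theorem~\ref{th_locpres_fg} is irrelevant in the present generality of an arbitrary AB5 category and can be dropped.
\end{itemize}
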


It is a folklore result in algebra that a module $M$ is finitely presented if and only if it is finitely generated and, for every epimorphism $N \xrightarrowdbl{\vartheta} M$ from a finitely generated module $N$, the kernel $\ker(\vartheta)$ is finitely generated.

It turns out that this characterisation remains true in any locally small, locally finite-type AB5 category.
\begin{theorem}\cite[Theorem~5.10.]{popescu1973} \label{th_fg_fg_fp}
Let $\CC$ be a locally small, locally finite-type AB5 category, and let $X \in \CC$. The following are equivalent:
\begin{enumerate}
    \item $X$ is finitely presented.
    \item \label{item_almostalmost_fp} $X$ is finitely generated and, for every epimorphism $Z \xrightarrowdbl{\vartheta} X$ from a finitely generated object $Z$, the kernel $\ker(\vartheta)$ is finitely generated.
\end{enumerate}
\end{theorem}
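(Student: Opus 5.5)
We prove the two implications separately, using the characterisation of finitely generated objects in AB5 categories by stationarity of complete filtered families of subobjects (the preceding theorem of Popescu), and writing every object as a filtered colimit of finitely generated ones. The latter is available because $\CC$ is locally of finite type.

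For (1)$\Rightarrow$(2), let $X$ be finitely presented, hence finitely generated. Let $\vartheta\colon Z\twoheadrightarrow X$ be an epimorphism with $Z$ finitely generated and $K=\ker\vartheta$. Write $K$ as the filtered union of its finitely generated subobjects $K_i$; images of finitely many generated objects are again finitely generated, and AB5 makes the union of this directed family equal to $K$. Put $X_i = Z/K_i$. Exactness of filtered colimits (AB5) gives $\colim X_i \cong Z/K = X$. Since $X$ is finitely presented, the identity of $X$ factors through some $X_{i_0}\to X$. We get a section $s\colon X\to X_{i_0}$ of the projection $p\colon X_{i_0}\to X$, but only after passing to a larger index $i$: two maps $X\to X_i$ agreeing in the colimit already agree at some stage, by injectivity of $t_X$. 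Then $X_i = X\oplus K/K_i$, and the summand $K/K_i$ is a quotient of $X_i$, hence of $Z$, hence finitely generated. Now $K$ is an extension of $K/K_i$ by the finitely generated $K_i$. Finite generation is closed under extensions in AB5: apply the stationarity criterion to a complete filtered family of subobjects of $K$, intersected with $K_i$ and imaged in $K/K_i$. Hence $K$ is finitely generated.

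For (2)$\Rightarrow$(1), let $X$ satisfy (2) and let $\{Y_j\}$ be a filtered system. Injectivity of $t_X$ holds because $X$ is finitely generated. For surjectivity, take $f\colon X\to \colim Y_j$. First choose an epimorphism $Z\twoheadrightarrow X$ from a finitely generated $Z$; it exists because $X$ is a filtered colimit of finitely generated objects and is itself stationary, so a single term surjects. Next reduce to the case where $Z$ is finitely presented, using that in a locally finite-type category finitely generated objects are quotients of finitely presented ones. Here there is a subtlety, and this is the step I expect to be the main obstacle: without local finite presentability one must show directly that maps from $X$ lift. The plan is as follows. Form the pullback of $\colim Y_j$ along the images of the $Y_j$. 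The family of subobjects $f^{-1}(\im(Y_j\to\colim Y_j))$ is a complete filtered family in $X$, by AB5, so it is stationary and $f$ factors through $\im(Y_{j}\to \colim Y)$ for some $j$. Then take the pullback $P=X\times_{\im}Y_j$; it is an epimorphism onto $X$ whose kernel is $\ker(Y_j\to\colim Y)$. Using that $P$ is a filtered union of finitely generated subobjects, stationarity gives a finitely generated $Z\subseteq P$ surjecting onto $X$. Condition (2) makes $\ker(Z\to X)$ finitely generated. This kernel lies in $\ker(Y_j\to\colim Y)$, which is the filtered union of the kernels $\ker(Y_j\to Y_{j'})$ by AB5, so it is killed at some stage $Y_{j'}$. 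Hence $Z\to Y_{j'}$ factors through $Z/\ker = X$, lifting $f$, which proves surjectivity.
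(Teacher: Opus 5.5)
Your proof is correct, and it is the standard argument for this result. The paper gives no proof of its own (it only cites \cite[Theorem~5.10]{popescu1973}), so there is no competing route to compare.

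Both directions work as you set them up. In (1)$\Rightarrow$(2) the key points are the splitting of $Z/K_{i_0}\to X$ and the closure of finite generation under extensions. In (2)$\Rightarrow$(1) the steps are:
\begin{enumerate}
\item the stationarity of the family $f^{-1}\bigl(\im(Y_j\to\varinjlim_j Y_j)\bigr)$;
\item the pull-back $P\to X$;
\item a finitely generated $Z\subseteq P$ covering $X$;
\item killing the finitely generated $\ker(Z\to X)$ at a finite stage $j'$. This kernel embeds in $\ker(Y_{j_0}\to\varinjlim_j Y_j)=\bigcup_{j'}\ker(Y_{j_0}\to Y_{j'})$.
\end{enumerate}
You should add the one-line check that the induced $g\colon X\to Y_{j'}$ satisfies $\mathrm{can}_{j'}\circ g=f$. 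It follows by cancelling the epimorphism $Z\twoheadrightarrow X$.

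Two passages should be cleaned up.
\begin{itemize}
\item Delete the sentence about reducing to a finitely presented $Z$ ``using that in a locally finite-type category finitely generated objects are quotients of finitely presented ones''. That claim is false in this generality: it would force the finitely presented objects to generate $\CC$. This is exactly what can fail when $\CC$ is of finite type but not locally finitely presentable (compare Theorem~\ref{th_locpres_fg}, which needs local finite presentability). Your actual argument never uses this claim, nor the preliminary epimorphism $Z\twoheadrightarrow X$.
\item In (1)$\Rightarrow$(2) no passage to a larger index is needed. The canonical map $X_{i_0}\to\varinjlim_i X_i=X$ is the projection $p$ itself, so factoring $\Id_X$ through it already gives $p\circ s=\Id_X$.
\end{itemize}
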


Continuing the analogy with algebra we point out some more statements investigating the behaviour of finitely presented and finitely generated objects under kernels, cokernels and extensions.

\begin{lemma} \label{lem_ab_fp_fg_via_ker_coker}
Let $\AA$ be an AB5 category, and let

\[
0 \to M_1 \to M_2 \to M_3 \to 0
\]

\[
N_1 \to N_2 \to N_3 \to 0
\]

be exact sequences. Then

\begin{enumerate}
    \item If $N_1$ and $N_3$ are finitely generated objects, then $N_2$ is also finitely generated.
    \item If $M_2$ is a finitely generated object, then $M_3$ is also finitely generated.
    \item If $M_2$ is a finitely presented object and $M_1$ is a finitely generated object, then $M_3$ is a finitely presented object.
    \item If $M_1$ and $M_3$ are finitely presented objects, then $M_2$ is also finitely presented.
\end{enumerate}
\end{lemma}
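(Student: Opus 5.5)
We argue directly from the definitions, using the subobject characterisation of finite generation from \cite[Proposition 5.6]{popescu1973} for items (1)--(2) and the $\Hom$-colimit description of finite presentation for items (3)--(4). Throughout we use that in an AB5 category filtered colimits are exact, so for a filtered system $\{Y_i\}$ the functor $\colim[i]$ preserves kernels and cokernels.

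For (1), first replace $N_1$ by its image $N_1' \subseteq N_2$. By (2) it is again finitely generated, and the sequence $0 \to N_1' \to N_2 \to N_3 \to 0$ is exact. Let $\{X_i\}$ be a complete filtered family of subobjects of $N_2$. The family $\{X_i \cap N_1'\}$ is complete in $N_1'$, because AB5 gives $(\sum X_i) \cap N_1' = \sum (X_i \cap N_1')$. The images of the $X_i$ form a complete family in $N_3$. Both families stabilise at some common index $i_0$, using filteredness. Then $X_{i_0} \supseteq N_1'$ and $X_{i_0}$ maps onto $N_3$, so $X_{i_0} = N_2$. For (2), given a complete filtered family of subobjects of $M_3$, take preimages in $M_2$. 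They form a complete filtered family, so it stabilises, and hence the original family stabilises too.

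For (3) and (4) we compare the long exact $\Hom$-sequences. Take a filtered system $\{Y_i\}$ with colimit $Y$. We get a morphism of exact sequences
\[
0 \to \colim[i]\Hom(M_3,Y_i) \to \colim[i]\Hom(M_2,Y_i) \to \colim[i]\Hom(M_1,Y_i)
\]
into $0 \to \Hom(M_3,Y) \to \Hom(M_2,Y) \to \Hom(M_1,Y)$. The top row is exact because filtered colimits of abelian groups are exact. For (4), the outer vertical maps $t_{M_3}$ and $t_{M_1}$ are bijective. We still need surjectivity for $M_2$, which requires one more term: the map $\Hom(M_1,Y) \to \Ext^1(M_3,Y)$, or alternatively a direct argument. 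Given $f\colon M_2 \to Y$, the restriction $f|_{M_1}$ factors through some $Y_i$. Pushing out the extension along this factorisation gives an extension of $M_3$ by $Y_i$. Its pushout along $Y_i \to Y$ splits compatibly with $f$, and this splitting can be descended using compactness of $M_3$. The cleanest way to organise this is a five-lemma argument with the comparison $\colim[i]\Ext^1(M_3,Y_i) \to \Ext^1(M_3,Y)$. For finitely presented $M_3$ this comparison is injective, which we check via the Yoneda description: a split extension over $Y$ already splits over some $Y_i$. We then apply the four-lemma to get bijectivity of $t_{M_2}$.

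For (3), injectivity of $t_{M_3}$ follows from the diagram because $t_{M_2}$ is injective. For surjectivity, take $g\colon M_3 \to Y$. The composite $M_2 \to M_3 \to Y$ factors as $M_2 \to Y_i$. Its restriction to $M_1$ becomes zero in $Y$. Since $M_1$ is finitely generated, $t_{M_1}$ is injective, so that restriction already vanishes after passing to some $Y_j$. Therefore the map $M_2 \to Y_j$ factors through $M_3$.

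The main obstacle is (4), specifically the injectivity of the $\Ext^1$ comparison map, that is, descending a splitting. An alternative route avoids $\Ext$. Since $M_2$ is finitely generated by (1), Theorem~\ref{th_fg_fg_fp} reduces (4) to showing that kernels of epimorphisms $Z \rightarrowdbl M_2$ with $Z$ finitely generated are finitely generated. This follows by a snake-lemma argument from the corresponding property for $M_1$ and $M_3$ together with (1) and (2). Note that Theorem~\ref{th_fg_fg_fp} requires a locally finite-type category. If the lemma is meant for general AB5 categories, we fall back to the $\Ext$ argument.
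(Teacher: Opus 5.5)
Your proof is correct. For items (3) and (4) it follows the paper's argument: the paper also chases the comparison maps $t_X$ on the $\Hom$-sequences, and for (4) it reduces to injectivity of $\varinjlim_i \Ext^1(M_3, Y_i) \to \Ext^1(M_3, Y)$, proved by Stenstr{\"o}m's descent of a splitting along the push-outs $Y_j \sqcup_{Y_i} E$. The paper's text writes $\Ext^1(M_2,-)$ at this point. The term the four lemma actually needs is the one after $\Hom(M_1,-)$, namely $\Ext^1(M_3,-)$, which is what you have.

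Where you differ is (1)--(2). The paper treats these by the same diagram chase you already use for the injectivity half of (3). Since finitely generated means that $t_X$ is injective, (2) is immediate from the square relating $t_{M_3}$ and $t_{M_2}$, whose top map $\varinjlim_i \Hom(M_3,Y_i) \to \varinjlim_i \Hom(M_2,Y_i)$ is injective. Item (1) is the four lemma for monomorphisms applied to the exact row
\[
0 \to \varinjlim_i \Hom(N_3,Y_i) \to \varinjlim_i \Hom(N_2,Y_i) \to \varinjlim_i \Hom(N_1,Y_i),
\]
with no need to replace $N_1$ by its image. That argument uses only exactness of filtered colimits of abelian groups, not AB5. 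It also handles all four items by one uniform mechanism.

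Your lattice-theoretic route is equally valid and closer to the classical module-theoretic picture. It uses the stationarity criterion of \cite[Proposition 5.6]{popescu1973} and the AB5 identity $(\sum X_i)\cap N_1' = \sum (X_i \cap N_1')$. The cost is that it genuinely depends on AB5 and on that external characterisation.

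Your fallback for (4) via Theorem~\ref{th_fg_fg_fp} is also fine in the locally finite-type setting: the kernel of the composite $Z \to M_2 \to M_3$ surjects onto $M_1$ with kernel $\ker(Z \to M_2)$. You are right that it does not cover an arbitrary AB5 category, which is why the $\Ext^1$ argument is the one to keep.
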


\begin{proof}
Items (1), (2), and (3) follow easily from the $5$-lemma.
To deduce item (4) i.e. isomorphism $\indlim \Hom (M_2, X_i) \to \Hom(M_2, \indlim X_i)$ from $5$-lemma it is sufficient to show that the natural map $\indlim \Ext^1 (M_2, X_i) \to \Ext^1(M_2, \indlim X_i)$ is a monomorphism. This holds by the same argument as in \cite[Proposition 2.1.]{Stenstrom-1970}.
\end{proof}

Note that the form of the previous lemma holds if $\kappa$-compact and $\kappa$-type objects are considered instead, for some regular cardinal $\kappa$, and $\AA$ is an exact category possessing $\kappa$-filtered colimits which are exact. In this form item (4) of the previous lemma appears in \cite{efimov_Th_of_heart}.

\begin{proposition}\cite[Proposition 1.14.]{efimov_Th_of_heart}
Let $\mathcal{E}$ be an accessibly exact category and let $\kappa$ be a regular cardinal.  
Suppose that $\mathcal{E}$ has exact $\kappa$-filtered colimits. Then the full subcategory $\mathcal{E}^\kappa \subset \mathcal{E}$ is closed under extensions.
\end{proposition}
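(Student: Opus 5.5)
Given a conflation $0 \to M_1 \to M_2 \to M_3 \to 0$ in $\mathcal{E}$ with $M_1, M_3 \in \mathcal{E}^\kappa$, I will prove directly that $M_2$ is $\kappa$-compact. Fix a $\kappa$-filtered system $\{X_i\}$ in $\mathcal{E}$ and write $X = \colim[i] X_i$. Applying $\Hom(-, X_i)$ and $\Hom(-, X)$ to the conflation gives a commutative ladder of long exact sequences, with rows
\[
0 \to \Hom(M_3, -) \to \Hom(M_2, -) \to \Hom(M_1, -) \to \Ext^1(M_3, -) \to \Ext^1(M_2,-).
\]
Here $\Ext^1$ means Yoneda extensions in the exact structure. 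Since $\kappa$-filtered colimits of abelian groups are exact, the top row stays exact after passing to $\colim[i]$. The vertical maps $t$ are bijective on the $\Hom(M_3,-)$ and $\Hom(M_1,-)$ terms by assumption.

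By the five lemma, it then suffices to show that the comparison map
\[
\colim[i]\Ext^1(M_3, X_i) \to \Ext^1(M_3, X)
\]
is injective. This is the analogue of the Stenstr\"om argument mentioned in the proof of Lemma~\ref{lem_ab_fp_fg_via_ker_coker}(4), and it is where the real work lies.

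Take a conflation $0 \to X_i \to E \to M_3 \to 0$ whose pushout to $X$ splits. I will push it out along each transition map $X_i \to X_j$ to obtain conflations $E_j$. Because $\kappa$-filtered colimits are exact in $\mathcal{E}$, the colimit $\colim[j] E_j$ is the pushout $E_X$ of $E$ along $X_i \to X$, and $E_X$ is an extension of $M_3$ by $X$. A splitting of $E_X$ is a section $s \colon M_3 \to \colim[j] E_j$ lifting the identity of $M_3$.

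Since $M_3$ is $\kappa$-compact, $s$ factors through some $E_j$ via a morphism $s_j$. The composite $M_3 \to E_j \to M_3$ becomes the identity after passing to the colimit, so by compactness again (injectivity of $t_{M_3}$) it equals the identity already at some later stage $j'$. Then $E_{j'}$ splits, so the class vanishes in $\Ext^1(M_3, X_{j'})$, which is the required injectivity.

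The main obstacle is justifying that pushouts commute with the $\kappa$-filtered colimit, so that $\colim[j] E_j$ really is a conflation representing the pushed-out class. The natural definition of an accessibly exact category, with conflations closed under $\kappa$-filtered colimits and colimits exact, is exactly what I expect to use here, citing Efimov's conventions. An alternative route avoids $\Ext^1$ altogether: lift a morphism $M_2 \to X$ by first restricting it to $M_1$ and factoring through some $X_i$, then use the pushout along $M_1 \to M_2$. That route requires the same exactness input, so the key hypothesis is unchanged.
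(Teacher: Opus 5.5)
Your proof is correct and follows essentially the route the paper indicates: the paper treats this statement as the exact-category form of Lemma~\ref{lem_ab_fp_fg_via_ker_coker}(4), proved by the five lemma plus injectivity of $\varinjlim_i \Ext^1(M_3, X_i) \to \Ext^1(M_3, X)$, and your push-out-and-compactness argument for that injectivity is the same as the one in the injectivity part of Lemma~\ref{lem_cm_ext_ind}. Two small remarks: since $p \circ \mathrm{can}_j = p_j$, you already have $p_j s_j = \Id_{M_3}$ at stage $j$, so your second appeal to compactness is redundant; and the identification $\varinjlim_j E_j \cong X \sqcup_{X_i} E$ needs only that colimits commute with push-outs, together with the axiom that push-outs of inflations are conflations, not exactness of $\kappa$-filtered colimits in $\mathcal{E}$.
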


We will need the following common notation for modules over an additive category.

\begin{definition}
Let $\CC$ be an additive category. Denote by $\Mod \CC$ the category of
additive functors $\CC^{\mathrm{op}} \to \mathbf{Ab}$, morphisms between such functors are natural transformations.

Fix a regular cardinal $\kappa$. We write  
\[
\mod_\kappa \CC := (\Mod \CC)^\kappa
\]  
and  
\[
\proj_\kappa \CC := \Proj \CC \cap \mod_\kappa \CC,
\]  

where $\Proj \CC$ denotes the full subcategory of projective objects in $\Mod \CC$.

As above we use $\mod \CC$ for $\mod_\omega \CC$ which coincides with {\it finitely presented} functors, and $\proj \CC$ for $\proj_\omega \CC$. 
\end{definition}

\subsection{Ind-completion of abelian categories} \label{ssec:ind}
We recall the definition and the basic properties of the Ind-completion.

\begin{definition}
The category $\Ind(\mathcal{A})$ of \textit{ind-objects} of $\mathcal{A}$ is defined as follows:
\begin{itemize}
\item Objects are filtered systems $\{X_i\}_{i \in I}$ in $\mathcal{A}$.
\item Morphisms between two such systems $\{X_i\}_{i \in I}$ and $\{Y_j\}_{j \in J}$ are given by
\[
\Hom_{\Ind(\mathcal{A})}\bigl(\{X_i\}_{i \in I}, \{Y_j\}_{j \in J}\bigr) = \lim_{i \in I} \colim[j \in J] \Hom_{\mathcal{A}}(X_i, Y_j).
\]
\item Composition is defined in the natural way.
\end{itemize}
\end{definition}

\begin{remark} \label{rem_ind_basic}
If $\mathcal{A}$ is abelian, then $\Ind(\mathcal{A})$ is abelian.

Filtered colimits in $\Ind(\mathcal{A})$ are exact. In particular, filtered colimits commute
with finite limits and finite colimits.

We write $\YY_{\AA}\colon\AA\to\Ind(\AA)$ for the canonical embedding, also called the
\emph{Yoneda embedding}. It sends an object $X\in\AA$ to the constant ind-object $\{X\}$
(indexed by the one-point category). It is fully faithful and exact, and identifies $\AA$
with a full subcategory of $\Ind(\AA)$. When the category is clear from the context we omit
the subscript and write simply $\YY$.

Whenever $\AA$ admits (small) filtered colimits, the Yoneda embedding has a left adjoint
\begin{equation}
\begin{tikzcd}
	\AA &&& {\Ind(\AA)}
	\arrow[""{name=0, anchor=center, inner sep=0}, "\YY"{description}, shift right=5, from=1-1, to=1-4]
	\arrow[""{name=1, anchor=center, inner sep=0}, "\colim"{description}, from=1-4, to=1-1]
	\arrow["\dashv"{anchor=center, rotate=-90}, shift left=5, draw=none, from=1, to=0]
\end{tikzcd}
\end{equation}
namely the functor $\colim$ that sends a formal filtered colimit $\{X_i\}_{i\in I}$ to its
actual value $\indlim[i \in I] X_i$ in $\AA$. Since $\YY$ is fully faithful, the counit
$\colim\circ\YY\cong\Id_{\AA}$ is an isomorphism. In all the situations below $\AA$ has
filtered colimits, so $\colim$ is defined. We write $\indlim$ for colimits computed in $\AA$ and $\colim$ both for this functor and for colimits computed inside $\Ind(\AA)$.

Every ind-object $M = \{m_j\}_{j \in J}$ is the filtered colimit $M = \colim[j \in J] \YY m_j$ of its constituents in $\Ind(\AA)$; we write $\mathrm{in}_j \colon \YY m_j \to M$ for the structure morphisms and $\mathrm{can}_j \colon m_j \to \colim M$ for the canonical morphisms into the colimit in $\AA$. The objects $\YY t$, $t \in \AA$, are compact (i.e.\ finitely presented) in $\Ind(\AA)$, and they generate it: a morphism $f$ of $\Ind(\AA)$ is zero (resp.\ a monomorphism) as soon as $\Hom(\YY t, f)$ is zero (resp.\ injective) for every $t \in \AA$.
\end{remark}

\begin{remark}
We use the ind-completion tacitly at the level of functors. Any additive functor $F\colon\AA\to\BB$ extends, uniquely up to natural isomorphism, to an
additive functor
\[
    IF\colon \Ind(\AA)\to\Ind(\BB),\qquad
    IF\bigl(\{X_i\}_{i\in I}\bigr)=\{F(X_i)\}_{i\in I},
\]
characterised by the fact that it preserves filtered colimits and restricts to $F$ along the
canonical embeddings $\YY_\AA: \AA\hookrightarrow\Ind(\AA)$ and $\YY_\BB: \BB\hookrightarrow\Ind(\BB)$. The
assignment $F\mapsto IF$ is functorial: $I\Id_{\AA}=\Id_{\Ind(\AA)}$ and
$I(G\circ F)\cong IG\circ IF$. We shall use the following without further comment.
\begin{enumerate}
    \item If $(F\dashv G)\colon\AA\rightleftarrows\BB$ is an adjoint
    pair, then so is $(IF\dashv IG)\colon\Ind(\AA)\rightleftarrows\Ind(\BB)$.
    \item If $\AA$ and $\BB$ are abelian and if $F$ is left exact (resp.\ right exact, resp.\ exact), then so is $IF$.
    Indeed, finite limits commute with filtered colimits in $\Ind(\BB)$, and every object of
    $\Ind(\AA)$ is a filtered colimit of objects of $\AA$.
\end{enumerate}
\end{remark}

The theory of compactly assembled categories rests on a relative version of compactness, in which the role of a compact object is played by a morphism.

\begin{definition}[{\cite[Definition 21.1.2.1]{Lurie_SAG}, \cite[Definition 1.38]{efimov_Kth_large_cat}}] \label{def_compact_morphism}
Let $\CC$ be a category with filtered colimits. A morphism $f \colon x \to y$ in $\CC$ is called \emph{compact} if for every filtered system $\{z_i\}_{i \in I}$ in $\CC$ with colimit $z$ and every morphism $g \colon y \to z$ the composite $g \circ f$ factors through some $z_i$, i.e.\ lies in the image of the canonical map $\varinjlim_i \Hom_{\CC}(x, z_i) \to \Hom_{\CC}(x, z)$.

An object $x \in \CC$ is called \emph{compactly exhaustible} if it is the colimit of a sequence $x_1 \to x_2 \to \dots$ of compact morphisms.
\end{definition}

\begin{remark} \label{rem_compact_morphism_basic}
An object $x$ is compact if and only if $\Id_x$ is a compact morphism; more generally, every morphism that factors through a compact object is compact, so compact objects are compactly exhaustible. Compact morphisms form a two-sided ideal: if $f$ is compact, then so are $f \circ a$ and $b \circ f$ for arbitrary $a$, $b$. In an $\omega$-accessible category a morphism is compact if and only if it factors through a compact object (Proposition~\ref{prop_cminj_vs_fpinj} below), so that nothing new happens there; the notion becomes essential when compact objects are scarce, for instance in the category of sheaves on $\mathbb{R}$, which has no non-zero compact objects at all \cite[\S 8]{efimov_Th_of_heart}.
\end{remark}

\begin{definition} \label{def_compass_cohass}
An accessible category with filtered colimits $\CC$ is called {\it compactly assembled} if the functor $\colim\colon \Ind(\CC) \to \CC$ admits a left adjoint $\tYY = \tYY_{\CC} \colon \CC \to \Ind(\CC)$.

A compactly assembled {\bf abelian} category $\AA$ is called {\it coherently assembled} if $\tYY$ is left exact.
\end{definition}

Compact assembly can be phrased entirely in terms of compact morphisms, without mentioning $\Ind(\CC)$; the same circle of ideas provides a cardinal bound for compactly assembled categories.

\begin{proposition}[{\cite[\S 21.1.2]{Lurie_SAG}, \cite[Theorem 2.39]{Ramzi_dualizable}, \cite[Proposition 1.24]{efimov_Kth_large_cat}}] \label{prop_compass_via_exhaustible} \label{prop_compass_cardbound}
Let $\CC$ be an accessible category with filtered colimits. The following are equivalent:
\begin{enumerate}
    \item[(i)] $\CC$ is compactly assembled;
    \item[(ii)] every object of $\CC$ is the colimit of a filtered system in which all transition morphisms are compact;
    \item[(iii)] $\CC$ is generated under filtered colimits by compactly exhaustible objects, i.e.\ every object of $\CC$ is a filtered colimit of compactly exhaustible objects.
\end{enumerate}
Moreover, a compactly assembled category (not necessarily cocomplete) is $\omega_1$-accessible, and its $\omega_1$-compact objects are exactly the compactly exhaustible objects.

If $\CC$ is presentable and stable, conditions (i)--(iii) are also equivalent to the dualizability of $\CC$ in $\mathrm{Pr}^L_{\mathrm{st}}$ and to $\CC$ being a retract, in $\mathrm{Pr}^L$, of a compactly generated category \cite[Proposition D.7.3.1]{Lurie_SAG}, \cite[Proposition 1.11]{efimov_Kth_large_cat}.
\end{proposition}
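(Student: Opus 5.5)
The plan is to prove the cycle (i)$\Rightarrow$(ii)$\Rightarrow$(iii)$\Rightarrow$(i), then the $\omega_1$-statements, and to quote the stable-presentable addendum from \cite[Proposition D.7.3.1]{Lurie_SAG} and \cite[Proposition 1.11]{efimov_Kth_large_cat}.

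\emph{(i)$\Rightarrow$(ii).} We have an adjoint triple $\tYY\dashv\colim\dashv\YY$ with $\YY$ fully faithful. By the standard lemma on adjoint triples, $\tYY$ is then fully faithful, so $\colim\circ\tYY\cong\Id$. Hence every $x$ is $\indlim_i x_i$, where $\tYY x=\{x_i\}$. I would first show that each structure map $\mathrm{can}_i\colon x_i\to x$ is compact. Take a filtered system $Z=\{z_k\}$ and a morphism $g\colon x\to\indlim z_k$. Compose
\[
\YY x_i\xrightarrow{\mathrm{in}_i}\tYY x\xrightarrow{\tYY g}\tYY(\colim Z)\to Z,
\]
where the last arrow is the counit of $\tYY\dashv\colim$. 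This gives an element of $\colim_k\Hom(x_i,z_k)$, and the triangle identities show that it maps to $g\circ\mathrm{can}_i$. Next I pass to a system with compact transitions. Since $\mathrm{can}_i$ is compact and $x=\indlim_j x_j$, the map $\mathrm{can}_i$ factors through some $x_j$. I would then consider the category of pairs $(i,\ a\colon x_i\to x_j)$ with $\mathrm{can}_j a=\mathrm{can}_i$, or more simply use the two-sided ideal property: a transition map composed with a compact map is compact. The aim is to re-index $\tYY x$ by a filtered category whose transitions are the compact maps $x_i\to x_j$ obtained this way. This refinement, which requires checking filteredness and that the colimit is still $x$, is the step I expect to be the main obstacle. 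If the direct argument gets messy, I would try showing instead that $\tYY x\to\tYY x$ factors through a compact morphism in $\Ind(\CC)$.

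\emph{(ii)$\Rightarrow$(iii) and (iii)$\Rightarrow$(i).} For (ii)$\Rightarrow$(iii), take a directed poset presenting $x$ with compact transitions. It is the directed union of its countable directed subposets. The colimit over a countable directed subposet is the colimit of a cofinal sequence, and the transitions of that sequence are compact, so it is compactly exhaustible. Then $x$ is the filtered colimit of these objects. For (iii)$\Rightarrow$(i), let $x=\indlim_n x_n$ with compact transitions, and put $\tYY x:=\{x_n\}$. Compactness of the transitions shows that the pro-systems $\{\colim_k\Hom(x_n,z_k)\}_n$ and $\{\Hom(x_n,\indlim z_k)\}_n$ are pro-isomorphic. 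Taking $\lim_n$ gives $\Hom_{\Ind}(\{x_n\},Z)\cong\Hom(x,\colim Z)$. This shows that $\colim$ has a left adjoint at such objects. For a general $x=\indlim_\alpha x_\alpha$ with $x_\alpha$ exhaustible, we get
\[
\Hom(x,\colim Z)=\lim_\alpha\Hom(\tYY x_\alpha,Z)=\Hom(\colim_\alpha\tYY x_\alpha,Z),
\]
so the left adjoint is defined everywhere. Accessibility keeps everything small.

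\emph{$\omega_1$-statements.} By the same pro-isomorphism, $\Hom(\indlim_n x_n,-)=\lim_n\Hom(x_n,-)$ is pro-isomorphic to $\lim_n\Hom(x_n,-)$ restricted to the ``values after colimit''. Countable limits commute with $\omega_1$-filtered colimits of sets, so exhaustible objects are $\omega_1$-compact. By (iii), every object is an $\omega_1$-filtered colimit of exhaustible ones: close the exhaustible objects under countable colimits, since a countable filtered colimit of exhaustibles is again exhaustible by a diagonal argument. Hence $\CC$ is $\omega_1$-accessible. Moreover, an $\omega_1$-compact object is a retract of an exhaustible $\indlim x_n$. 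A retract $e$ is again exhaustible, because the maps $e\to x_n\to x_{n+1}\to e$ are compact and they splice into a sequence of compact endomorphisms of $e$ with colimit $e$.
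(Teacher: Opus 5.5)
\textbf{The genuine gap: (i)$\Rightarrow$(ii).} Your (ii)$\Rightarrow$(iii) step is fine. It is the paper's ladder argument with countable directed subposets in place of order-preserving maps $\mathbb{N}\to J$. That family of subposets is already $\omega_1$-directed, so it gives the $\omega_1$-filtered presentation directly and your diagonal argument is unnecessary.

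The real gap is in (i)$\Rightarrow$(ii), exactly where you expected it. You correctly prove that each $\mathrm{can}_i\colon x_i\to x$ is compact, but this says nothing about the transition maps $f_{ik}$:
\begin{itemize}
\item A factorisation $\mathrm{can}_i=\mathrm{can}_j\circ a$ is an equality only after mapping to $x$. So $a$ cannot be compared with $f_{ij}$ in $\varinjlim_k\Hom(x_i,x_k)$, and $a$ need not be compact.
\item The ideal property is useless until some transition map is known to be a composite with a compact map.
\end{itemize}

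The missing idea, used in the paper's Lemma~\ref{lem_tYY_compact_transitions}, is to do the comparison in $\Ind(\CC)$.
\begin{enumerate}
\item Since $\tYY$ preserves filtered colimits, $\tYY x\cong\varinjlim_k\tYY x_k$ with structure maps $\tYY\sigma_k$, where $\sigma_k=\mathrm{can}_k$.
\item Compactness of $\YY x_i$ in $\Ind(\CC)$ gives $\mathrm{in}_i=\tYY(\sigma_k)\circ\lambda$ for some $k$ and some $\lambda\colon\YY x_i\to\tYY x_k$.
\item Since $\tYY\sigma_k=\mathrm{in}_k\circ\rho_{x_k}$, this reads $\mathrm{in}_i=\mathrm{in}_k\circ\YY c$, where $c$ is the compact morphism with lift $\lambda$.
\item This is an equality in $\Hom_{\Ind(\CC)}(\YY x_i,\tYY x)=\varinjlim_{k'}\Hom(x_i,x_{k'})$. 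Hence $f_{ik'}=f_{kk'}\circ c$ is compact for some $k'\geq k$.
\end{enumerate}
One then reindexes by the pairs $(i,k)$ with $f_{ik}$ compact.

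\textbf{Two further unjustified steps.} The paper does not prove these; it cites \cite[Theorem 2.39]{Ramzi_dualizable} and \cite[Proposition 1.24]{efimov_Kth_large_cat}.

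In (iii)$\Rightarrow$(i), compactness of $f\colon x_n\to x_{n+1}$ only gives, for each $g\colon x_{n+1}\to z$, \emph{some} preimage of $g\circ f$ in $\varinjlim_k\Hom(x_n,z_k)$. Even injectivity of $\lim_n\varinjlim_k\Hom(x_n,z_k)\to\Hom(x,z)$ needs more: two elements of $\varinjlim_k\Hom(x_{n+1},z_k)$ with the same image in $\Hom(x_{n+1},z)$ must agree after precomposition with $f$. This is not part of the definition of a compact morphism, and it is the real content of the implication. In an AB5 abelian category you can obtain it. If $a,a'\colon x_{n+1}\to z_{k_0}$ agree in $z$, apply compactness of $f$ (with $g=\Id_{x_{n+1}}$) to the system of subobjects $\ker\bigl(z_{k_0k}\circ(a-a')\bigr)$, where $z_{k_0k}$ are the transition maps. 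Its colimit is $x_{n+1}$, so $f$ factors through one of them. For a general accessible category you need the cited argument.

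In the $\omega_1$-part, your retract argument does not get started. From $e\xrightarrow{s}\varinjlim_n x_n\xrightarrow{r}e$ you cannot produce maps $e\to x_n$: $\omega_1$-compactness of $e$ does not let $s$ factor through a stage of a sequential colimit.
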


The implication (i)$\Rightarrow$(ii) is made explicit in Lemma~\ref{lem_tYY_compact_transitions} below: the ind-object $\tYY x$ itself can be presented by a filtered system with compact transition morphisms. The passage (ii)$\Rightarrow$(iii) is the ``ladder'' argument of \cite[proof of Proposition 1.24]{efimov_Kth_large_cat}: if $x = \indlim[j \in J] x_j$ along compact morphisms, then $x$ is the filtered colimit, over the order-preserving maps $f \colon \mathbb{N} \to J$, of the compactly exhaustible objects $\indlim[n] x_{f(n)}$; the same argument gives the $\omega_1$-accessibility statement, and the description of the $\omega_1$-compact objects is \cite[Proposition 1.24]{efimov_Kth_large_cat}. The implication (iii)$\Rightarrow$(i) is the substantial one; see \cite[Theorem 2.39]{Ramzi_dualizable}. Thus a compactly assembled category is one in which every object can be ``exhausted'' by compact morphisms, exactly as an object of a compactly generated category is exhausted by compact subobjects, and the functor $\tYY$ records these exhaustions; condition (iii) exhibits compact assembly as a strengthening of $\omega_1$-accessibility in which the generating $\omega_1$-compact objects are required to be of a specific form.

The simplest examples are the compactly generated categories.

\begin{proposition} \label{prop_omega_acc_compass}
If $\AA$ is an $\omega$-accessible category with filtered colimits, then it is automatically compactly assembled.
\end{proposition}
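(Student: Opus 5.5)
We construct the left adjoint $\tYY\colon \AA \to \Ind(\AA)$ to $\colim$ explicitly. Let $\AA^{\omega}$ denote the compact objects; by $\omega$-accessibility it is essentially small, and every $x\in\AA$ is the filtered colimit of the canonical diagram $(\AA^{\omega}\downarrow x)$. We set
\[
\tYY x := \{c\}_{(c\to x)\in(\AA^{\omega}\downarrow x)},
\]
the formal colimit of the canonical diagram. This category is filtered: it has all finite colimits of compacts mapping to $x$ in the additive and abelian case, and in general filteredness is the standard fact for $\omega$-accessible categories. A morphism $x\to x'$ induces a functor between comma categories by postcomposition, which gives functoriality.

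The main step is the adjunction isomorphism
\[
\Hom_{\Ind(\AA)}(\tYY x, \{y_j\}_{j\in J}) \cong \Hom_{\AA}(x,\indlim[j] y_j),
\]
natural in $x$ and in the ind-object. By definition of $\Ind$, the left side is
\[
\lim_{(c\to x)} \colim[j] \Hom(c,y_j).
\]
Since each $c$ is compact, $\colim[j]\Hom(c,y_j)\cong\Hom(c,\indlim[j] y_j)$. The left side therefore becomes
\[
\lim_{(c\to x)}\Hom(c,\indlim[j] y_j)\cong \Hom(\indlim[(c\to x)] c,\ \indlim[j] y_j)=\Hom(x,\indlim[j] y_j),
\]
using that $x$ is the canonical colimit of $(\AA^{\omega}\downarrow x)$. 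This canonical-colimit fact is exactly what $\omega$-accessibility provides: $x$ is some filtered colimit of compacts, and hence the canonical diagram also has colimit $x$ (Adámek–Rosický).

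The remaining task is to check naturality, which is formal since all isomorphisms above are the canonical ones. It is also worth noting, as a sanity check consistent with Proposition~\ref{prop_compass_via_exhaustible}(ii), that the transition maps $c\to c'$ in this system are morphisms between compact objects and hence compact morphisms. The only point needing care is the filteredness and essential smallness of $(\AA^{\omega}\downarrow x)$, for which we cite \cite{Adamek_Rosicky_1994}. Since $\AA$ is accessible with filtered colimits, it then satisfies Definition~\ref{def_compass_cohass}.
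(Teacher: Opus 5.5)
Your proof is correct and follows the same route as the paper. Both rest on the identity $\lim_i \colim[j] \Hom(X_i, Y_j) \cong \Hom(\indlim X_i, \indlim Y_j)$, which holds because each $X_i$ is compact. The paper picks an arbitrary presentation by finitely presented objects and uses this identity to show the choice does not matter; you instead fix the canonical diagram $(\AA^{\omega}\downarrow x)$, which makes functoriality automatic, and read the same identity directly as the adjunction isomorphism $\Hom_{\Ind(\AA)}(\tYY x, -) \cong \Hom_{\AA}(x, \colim -)$.
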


\begin{proof}
In the $\omega$-accessible case we can define $\tYY$ explicitly as follows: set $\tYY(X) = \{X_i\}_{i \in I}$, where $X_i \in \CC^{f.p.}$ and $\indlim X_i \cong X$ in $\CC$. Such a filtered system exists because $\AA$ is $\omega$-accessible.

We claim that $\tYY$ is well defined. Let $\{X_i\}_{i \in I}$ and $\{Y_j\}_{j \in J}$ be two filtered systems of finitely presented objects such that

\begin{equation}
\colim[i \in I] X_i \cong \colim[j \in J] Y_j \cong X
\end{equation}
Then $\{X_i\}_{i \in I}$ is isomorphic to $\{Y_j\}_{j \in J}$ in $\Ind(\AA)$.

Indeed,
\begin{equation}
\Hom_{\Ind(\mathcal{A})}\bigl(\{X_i\}_{i \in I}, \{Y_j\}_{j \in J}\bigr) = 
\lim_{i \in I} \colim[j \in J] \Hom_{\mathcal{A}}(X_i, Y_j) \cong 
\Hom_{\mathcal{A}}(\colim[i \in I] X_i, \colim[j \in J]Y_j)
\end{equation}
where the last isomorphism follows from the fact that each $X_i$ is finitely presented.
\end{proof}

So for compactly assembled $\AA$ we have
\begin{equation}
\begin{tikzcd}
	\AA &&& {\Ind(\AA)}
	\arrow[""{name=0, anchor=center, inner sep=0}, "\YY"{description}, shift right=5, from=1-1, to=1-4]
	\arrow[""{name=1, anchor=center, inner sep=0}, "\tYY"{description}, shift left=5, from=1-1, to=1-4]
	\arrow[""{name=2, anchor=center, inner sep=0}, "\colim"{description}, from=1-4, to=1-1]
	\arrow["\dashv"{anchor=center, rotate=-90}, shift left=5, draw=none, from=1, to=2]
	\arrow["\dashv"{anchor=center, rotate=-90}, shift left=5, draw=none, from=2, to=0]
\end{tikzcd}.
\end{equation}

\begin{remark} \label{rem_tYY_basic}
We record the basic formal properties of $\tYY$ for a compactly assembled category $\AA$ with filtered colimits.
\begin{enumerate}
    \item The unit $u \colon \Id_{\AA} \to \colim \circ\, \tYY$ is an isomorphism. Indeed, for $x, a \in \AA$ the composite bijection
    \[
    \Hom_{\AA}(\colim \tYY x, a) \cong \Hom_{\Ind(\AA)}(\tYY x, \YY a) \cong \Hom_{\AA}(x, \colim \YY a) \cong \Hom_{\AA}(x, a)
    \]
    is induced by $u_x$. We use $u$ and the counit $\colim \YY \cong \Id$ to identify $\colim \tYY x$ and $\colim \YY x$ with $x$. With these identifications the adjunction $\tYY \dashv \colim$ reads
    \begin{equation}
    \Hom_{\Ind(\AA)}(\tYY x, M) \xrightarrow{\ \sim\ } \Hom_{\AA}(x, \colim M), \qquad f \mapsto \colim f .
    \end{equation}
    In particular a morphism $f \colon \tYY x \to M$ is determined by $\colim f$.

    \item We denote by $\rho \colon \tYY \Rightarrow \YY$ the natural transformation $\rho_z \colon \tYY z \xrightarrow{\eta} \YY \colim \tYY z \cong \YY z$, where $\eta$ is the unit of $\colim \dashv \YY$; thus $\colim \rho_z = \Id_z$. Naturality of $\eta$ gives $\eta_M \circ \mathrm{in}_j = \YY(\mathrm{can}_j)$ for every ind-object $M$ (notation of Remark~\ref{rem_ind_basic}). For $\tYY z = \{z_j\}_{j \in J}$ we put $\sigma_j := \mathrm{can}_j \colon z_j \to z$; then $\rho_z \circ \mathrm{in}_j = \YY \sigma_j$ and $z = \indlim[j] z_j$.
\end{enumerate}
\end{remark}

Once $\tYY$ is available, compactness of a morphism can be read off from the ind-completion; this is the form in which the notion will be used throughout the paper.

\begin{proposition}[{\cite[Proposition 1.40]{efimov_Kth_large_cat}}] \label{prop_compact_via_tYY}
Let $\CC$ be a compactly assembled category. A morphism $f \colon x \to y$ in $\CC$ is compact in the sense of Definition~\ref{def_compact_morphism} if and only if $\YY f \colon \YY x \to \YY y$ factors through $\rho_y \colon \tYY y \to \YY y$.
\end{proposition}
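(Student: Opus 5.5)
The proof only needs the adjunction $\tYY \dashv \colim$, the identities $\rho_y \circ \mathrm{in}_j = \YY \sigma_j$ from Remark~\ref{rem_tYY_basic}, and the compactness of the objects $\YY t$ in $\Ind(\CC)$ (Remark~\ref{rem_ind_basic}). Throughout, write $\tYY y = \{y_j\}_{j \in J}$, so that $y = \indlim[j] y_j$ with canonical maps $\sigma_j \colon y_j \to y$.

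\emph{Compact implies factorisation.} Let $f \colon x \to y$ be compact. Apply Definition~\ref{def_compact_morphism} to the filtered system $\{y_j\}_{j\in J}$, whose colimit is $y$ by Remark~\ref{rem_tYY_basic}(1), and to $g = \Id_y$.
\begin{itemize}
\item This gives some $j$ and some $a \colon x \to y_j$ with $f = \sigma_j \circ a$.
\item Put $h := \mathrm{in}_j \circ \YY a \colon \YY x \to \tYY y$.
\item Then $\rho_y \circ h = \YY\sigma_j \circ \YY a = \YY f$, which is the required factorisation.
\end{itemize}

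\emph{Factorisation implies compact.} Suppose $\YY f = \rho_y \circ h$ for some $h \colon \YY x \to \tYY y$.
\begin{itemize}
\item Since $\YY x$ is compact in $\Ind(\CC)$ and $\tYY y = \colim[j]\YY y_j$, the morphism $h$ factors through some $\mathrm{in}_j$. By full faithfulness of $\YY$ we can write $h = \mathrm{in}_j \circ \YY a$ with $a \colon x \to y_j$.
\item Then $\YY f = \YY(\sigma_j a)$, and full faithfulness of $\YY$ gives $f = \sigma_j \circ a$.
\end{itemize}
Now take a filtered system $\{z_i\}_{i \in I}$ with colimit $z$, and a morphism $g \colon y \to z$. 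Let $M = \{z_i\}$ in $\Ind(\CC)$, so that $\colim M = z$.
\begin{itemize}
\item By the adjunction $\tYY \dashv \colim$ there is a unique $\hat g \colon \tYY y \to M$ with $\colim \hat g = g$.
\item The composite $\hat g \circ \mathrm{in}_j \colon \YY y_j \to M$ factors through some $\YY z_i \to M$, again by compactness of $\YY y_j$. So $\hat g \circ \mathrm{in}_j = \mathrm{in}_i \circ \YY b$ for some $b \colon y_j \to z_i$.
\item Apply $\colim$ to this equality. The left side becomes $g \circ \sigma_j$ and the right side becomes $\mathrm{can}_i \circ b$.
\end{itemize}
Hence $g \circ f = g\sigma_j a = \mathrm{can}_i \circ (b a)$, so $g \circ f$ factors through $z_i$. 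This proves that $f$ is compact.

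The only step needing care is the bookkeeping in the last bullet. One has to check that $\colim \hat g = g$ under the identification $\colim \tYY y \cong y$ via the unit. One also has to check that $\colim(\mathrm{in}_j) = \sigma_j$ and $\colim(\mathrm{in}_i) = \mathrm{can}_i$, i.e.\ that applying $\colim$ to structure maps of an ind-object yields the canonical maps into the actual colimit. Both follow directly from the conventions fixed in Remark~\ref{rem_tYY_basic}.
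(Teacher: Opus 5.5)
Your proof is correct and follows the paper's argument; the forward direction is identical. In the converse, the paper applies compactness of $\YY x$ once, to the composite $\hat g \circ h \colon \YY x \to \{z_i\}$, and then uses $\colim h = f$. You instead split this into two steps: first $f = \sigma_j \circ a$, then compactness of $\sigma_j$ via compactness of $\YY y_j$. This costs one extra use of compactness but makes explicit that the $\sigma_j$ are compact, as recorded in Remark~\ref{rem_compact_lifts}.
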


\begin{proof}
Write $\tYY y = \{y_j\}_{j \in J}$, so $y = \indlim[j] y_j$ with canonical morphisms $\sigma_j \colon y_j \to y$ (Remark~\ref{rem_tYY_basic}). If $f$ is compact, apply Definition~\ref{def_compact_morphism} to $g = \Id_y$: $f = \sigma_{j_0} \circ a$ for some $j_0 \in J$ and $a \colon x \to y_{j_0}$, and then $\rho_y \circ \mathrm{in}_{j_0} \circ \YY a = \YY \sigma_{j_0} \circ \YY a = \YY f$. Conversely, let $\hat{f} \colon \YY x \to \tYY y$ satisfy $\rho_y \hat{f} = \YY f$, and let $g \colon y \to z = \indlim[i] z_i$ be given. By the $\tYY \dashv \colim$ adjunction, $g$ corresponds to a morphism $\hat{g} \colon \tYY y \to \{z_i\}_{i \in I}$ with $\colim \hat{g} = g$. Since $\YY x$ is compact in $\Ind(\CC)$, the composite $\hat{g} \hat{f} \colon \YY x \to \{z_i\} = \colim[i] \YY z_i$ factors as $\mathrm{in}_{i_0} \circ \YY a$ for some $i_0 \in I$ and $a \colon x \to z_{i_0}$. Applying $\colim$ and using $\colim \hat{f} = f$ we get $g \circ f = \mathrm{can}_{i_0} \circ a$, as required.
\end{proof}

\begin{remark} \label{rem_compact_lifts}
In the situation of Proposition~\ref{prop_compact_via_tYY} we call any $\hat{\phi} \colon \YY y \to \tYY z$ with $\rho_z \hat{\phi} = \YY \phi$ a \emph{lift} of the compact morphism $\phi \colon y \to z$; then $\colim \hat{\phi} = \phi$. A lift is in general not unique. For $\tYY z = \{z_j\}_{j \in J}$ the morphisms $\sigma_j \colon z_j \to z$ are compact, with lifts $\mathrm{in}_j$. The ideal property of Remark~\ref{rem_compact_morphism_basic} is visible on lifts: if $\hat{\phi}$ is a lift of $\phi$, then $\hat{\phi} \circ \YY a$ and $\tYY b \circ \hat{\phi}$ are lifts of $\phi \circ a$ and $b \circ \phi$, by naturality of $\rho$. Finally, for a compact morphism $\phi$ with lift $\hat{\phi}$ one has $\tYY \phi = \hat{\phi} \circ \rho_y$: both sides are morphisms $\tYY y \to \tYY z$ with $\colim$ equal to $\phi$.
\end{remark}

The following lemma makes the structure of $\tYY x$ explicit from the point of view of Proposition~\ref{prop_compass_via_exhaustible}(ii); the argument is the one of \cite[proof of Proposition 1.24]{efimov_Kth_large_cat}.

\begin{lemma} \label{lem_tYY_compact_transitions}
Let $\CC$ be compactly assembled and $x \in \CC$. Then $\tYY x$ is isomorphic in $\Ind(\CC)$ to a filtered system $\{x_j\}_{j \in J}$ all of whose transition morphisms $x_j \to x_{j'}$, $j \leq j'$, are compact. In particular $x = \indlim[j] x_j$ along compact morphisms.
\end{lemma}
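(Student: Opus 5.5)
Write $\tYY x = \{y_i\}_{i \in I}$ for some filtered system, with structure maps $\mathrm{in}_i \colon \YY y_i \to \tYY x$ and canonical morphisms $\sigma_i \colon y_i \to x$, so that $\rho_x \circ \mathrm{in}_i = \YY \sigma_i$ (Remark~\ref{rem_tYY_basic}). By Remark~\ref{rem_compact_lifts} each $\sigma_i$ is compact with lift $\mathrm{in}_i$. The plan is to replace the given presentation by an equivalent one, indexed by a new filtered category $J$ whose objects are pairs $(i, \hat\phi)$ with $\hat\phi$ a factorisation of $\mathrm{in}_i$ through a later stage. Transitions in the new system will then automatically be compact.

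The key step is the following claim: for every $i \in I$ there are $i' \geq i$ and a morphism $\alpha \colon \YY y_i \to \tYY y_{i'}$ such that $\mathrm{in}_i = \tYY\sigma_{i'} \circ \alpha$ up to the identification in the structure of $\tYY$. In other words, $\mathrm{in}_i$ factors through $\tYY$ of a later stage. I would obtain this from idempotence of the comonad $\tYY \circ \colim$. Indeed, $\tYY\colim\tYY x \cong \tYY x$, and $\tYY x = \colim_i \YY y_i$ in $\Ind(\CC)$. Since $\tYY$ is a left adjoint, $\tYY x \cong \colim_i \tYY y_i$ in $\Ind(\CC)$, with structure maps $\tYY\sigma_i$.

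Now $\YY y_i$ is compact in $\Ind(\CC)$. So $\mathrm{in}_i \colon \YY y_i \to \colim_{i'} \tYY y_{i'}$ factors as $\tYY\sigma_{i'}\circ \alpha$ for some $i'$ and $\alpha \colon \YY y_i \to \tYY y_{i'}$. Composing $\alpha$ with $\rho_{y_{i'}}$ gives $\YY a$ for a morphism $a \colon y_i \to y_{i'}$. This $a$ is compact by Proposition~\ref{prop_compact_via_tYY}, since $\YY a$ factors through $\rho_{y_{i'}}$ via $\alpha$. Applying $\colim$ shows $\sigma_{i'} a = \sigma_i$. After enlarging $i'$ using compactness of $\YY y_i$ once more, $a$ agrees with the transition $y_i \to y_{i'}$ as a morphism into the system. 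Hence the transition $y_i \to y_{i'}$ of the original system is, after possibly going further, compact: it equals $y_i \xrightarrow{a} y_{i'} \to y_{i''}$, and compact morphisms form an ideal (Remark~\ref{rem_compact_morphism_basic}).

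Consequently, every $i$ admits some $i' \geq i$ with compact transition $y_i \to y_{i'}$. Moreover, any later transition $y_i \to y_{i''}$ with $i''\geq i'$ is also compact, again by the ideal property. To finish, I would reindex. Let $J$ be the set of pairs of indices, or more cleanly, the full subcategory of $I$ after composing with a cofinal map $\theta \colon I \to I$, $i \mapsto i'$. Define the new system by $x_i := y_i$ but keep only those arrows $i \to k$ with $k \geq \theta(i)$. Formally, I would use the standard trick: index by $I$ itself but set $x_j := y_{\theta^{n}(j)}$-style composites, or simply note that the cofinal "doubled" system $\{y_i \to y_{\theta(i)}\}$ presents the same ind-object. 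Since every arrow in it factors through a compact transition, all of its transitions are compact, and it is isomorphic to $\tYY x$ in $\Ind(\CC)$. The final sentence of the statement then follows from $\colim \tYY x \cong x$ (Remark~\ref{rem_tYY_basic}).

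The main obstacle I expect is the bookkeeping in the reindexing. A transition $y_i \to y_k$ with $i \le k < \theta(i)$ need not be compact. So I must build a filtered category, for instance arrows $i \to k$ in $I$ that factor through $\theta(i)$, closed under composition because the compact ones form an ideal, and check that it is filtered and that the inclusion of ind-objects is an isomorphism. The identification of $\tYY x$ with $\colim_i \tYY y_i$ also requires care.
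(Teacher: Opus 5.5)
Your plan follows the paper's: factor $\mathrm{in}_i$ through some $\tYY\sigma_{i'}$ using $\tYY x\cong\colim[i']\tYY y_{i'}$ and compactness of $\YY y_i$, read off a compact $a\colon y_i\to y_{i'}$, compare $a$ with the transition $f_{ii'}$, then reindex. However, the comparison step is not justified as written.

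You only know $\sigma_{i'}a=\sigma_i=\sigma_{i'}f_{ii'}$ in $\CC$, i.e.\ after applying $\colim$. To get $f_{i'i''}a=f_{ii''}$ for some $i''\ge i'$ you need $\mathrm{in}_{i'}\circ\YY a=\mathrm{in}_i$ in $\Hom_{\Ind(\CC)}(\YY y_i,\tYY x)=\varinjlim_k\Hom(y_i,y_k)$. The map from this group to $\Hom_{\CC}(y_i,x)$ induced by $\colim$ (composition with $\rho_x$) is not injective in general. The $y_i$ need not be compact, and $\rho_x$ need not be a monomorphism; for Grothendieck categories it is a monomorphism for all $x$ only in the locally Noetherian case, cf.\ the closing remark of Subsection~\ref{sssec:kernels_compact}. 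Invoking compactness of $\YY y_i$ ``once more'' does not close this: it only computes that Hom group and does not tell you the two elements coincide. (The inference is valid when the $y_i$ are compact, but then the lemma has no content.)

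The missing ingredient is the identity $\tYY\sigma_{i'}=\mathrm{in}_{i'}\circ\rho_{y_{i'}}$ of Remark~\ref{rem_compact_lifts}. It holds because $\sigma_{i'}$ is compact with lift $\mathrm{in}_{i'}$, and both sides are morphisms out of $\tYY y_{i'}$ whose $\colim$ is $\sigma_{i'}$. With it, $\mathrm{in}_i=\tYY\sigma_{i'}\circ\alpha=\mathrm{in}_{i'}\circ\rho_{y_{i'}}\circ\alpha=\mathrm{in}_{i'}\circ\YY a$ holds in $\Ind(\CC)$. So $a$ and $f_{ii'}$ have the same class in $\varinjlim_k\Hom(y_i,y_k)$ and agree after composing with some $f_{i'i''}$. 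This is exactly Step~1 of the paper's proof.

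The reindexing you sketch at the end works once identities are included. Fix $\theta$ with $\theta(i)\ge i$ and $f_{i\theta(i)}$ compact, and declare $i\preceq k$ iff $i=k$ or $k\ge\theta(i)$. This is a directed partial order, the identity $(I,\preceq)\to(I,\le)$ is order preserving and cofinal, and for $i\prec k$ the transition $f_{ik}=f_{\theta(i)k}\circ f_{i\theta(i)}$ is compact. As in the paper, compactness can only be required for strict inequalities, since $\Id_{x_j}$ is compact only when $x_j$ is. This is a choice-dependent variant of the paper's index set of pairs $(i,k)$ with $f_{ik}$ compact, ordered by $(i,k)\le(i',k')$ iff equal or $k\le i'$. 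The other variants you float ($y_{\theta^n(j)}$, the ``doubled'' system) are not needed. Finally, idempotence of $\tYY\circ\colim$ plays no role. The only fact used is that $\tYY$, as a left adjoint, carries $x=\indlim[i] y_i$ to $\colim[i]\tYY y_i$ with structure maps $\tYY\sigma_i$.
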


\begin{proof}
Write $\tYY x = \{x_i\}_{i \in I}$; by the convention of Remark~\ref{rem_filtered_convention} we may assume that $I$ is a directed poset. Denote the transition morphisms by $f_{ik} \colon x_i \to x_k$ and the canonical morphisms by $\sigma_i \colon x_i \to x$.

\emph{Step 1: for every $i \in I$ there is $k \geq i$ such that $f_{ik}$ is compact.} Since $\tYY$ preserves filtered colimits, $\tYY x \cong \colim[k] \tYY x_k$ with structure morphisms $\tYY \sigma_k$. The object $\YY x_i$ is compact in $\Ind(\CC)$, so $\mathrm{in}_i = \tYY(\sigma_k) \circ \lambda$ for some $k \geq i$ and $\lambda \colon \YY x_i \to \tYY x_k$. Let $c \colon x_i \to x_k$ be the morphism with $\YY c = \rho_{x_k} \lambda$; it is compact, with lift $\lambda$. By Remark~\ref{rem_compact_lifts}, $\tYY \sigma_k = \mathrm{in}_k \circ \rho_{x_k}$, hence $\mathrm{in}_i = \mathrm{in}_k \circ \YY c$. On the other hand $\mathrm{in}_i = \mathrm{in}_k \circ \YY f_{ik}$. Thus $c$ and $f_{ik}$ have the same image in $\Hom_{\Ind(\CC)}(\YY x_i, \tYY x) = \varinjlim_{k'} \Hom(x_i, x_{k'})$, i.e.\ $f_{kk'} \circ c = f_{kk'} \circ f_{ik} = f_{ik'}$ for some $k' \geq k$. Being a composite of the compact morphism $c$ with $f_{kk'}$, the transition morphism $f_{ik'}$ is compact.

\emph{Step 2: reindexing.} Let $J$ be the set of pairs $(i, k) \in I \times I$ with $i \leq k$ and $f_{ik}$ compact, ordered by $(i, k) \leq (i', k')$ if either $(i, k) = (i', k')$ or $k \leq i'$. This is a partial order, and it is directed (so that $\{x_i\}_{(i,k) \in J}$ is a filtered system): given $(i, k)$ and $(i', k')$, choose $m \geq k, k'$ and then, by Step 1, $n \geq m$ with $f_{mn}$ compact; then $(m, n)$ dominates both. The map $J \to I$, $(i, k) \mapsto i$, is order preserving and cofinal by Step 1, so the restricted system $\{x_i\}_{(i, k) \in J}$ is isomorphic to $\tYY x$ in $\Ind(\CC)$ and has colimit $x$. For $(i, k) < (i', k')$ its transition morphism is $f_{ii'} = f_{ki'} \circ f_{ik}$, which is compact since $f_{ik}$ is.
\end{proof}

\begin{lemma} \label{lem_commutativity_of_ind}
Consider a functor between $\omega$-accessible categories together with the induced functor on the ind-completion:
\begin{equation}
\begin{tikzcd}
	\AA &&& \BB \\
	\\
	\\
	{\Ind(\AA)} &&& {\Ind(\BB)}
	\arrow["F"{description}, from=1-1, to=1-4]
	\arrow["{\tYY_\AA}"{description}, from=1-1, to=4-1]
	\arrow["{\tYY_\BB}"{description}, from=1-4, to=4-4]
	\arrow["IF"{description}, from=4-1, to=4-4]
\end{tikzcd}
\end{equation}
If $F$ preserves compact objects and filtered colimits, then the diagram is commutative, i.e.,
\[
\tYY_\BB(F(a)) \cong IF \tYY_\AA(a) \,\,\, \text{for} \,\, a \in \AA. 
\]
\end{lemma}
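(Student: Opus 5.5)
The plan is to use the explicit description of $\tYY$ in the $\omega$-accessible case from the proof of Proposition~\ref{prop_omega_acc_compass}. For $a \in \AA$, choose a filtered system $\{a_i\}_{i \in I}$ of finitely presented objects of $\AA$ with $\indlim a_i \cong a$. Then $\tYY_\AA(a) = \{a_i\}_{i\in I}$, and by definition of the extension of $F$ to ind-objects, $IF\,\tYY_\AA(a) = \{F(a_i)\}_{i \in I}$. The claim therefore reduces to showing that $\{F(a_i)\}_{i\in I}$ is a valid model for $\tYY_\BB(F(a))$.

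For this we check the two requirements in the definition of $\tYY_\BB$ used in that proof. First, each $F(a_i)$ lies in $\BB^{f.p.}$, since $F$ preserves compact objects. Second, the colimit is correct: because $F$ commutes with filtered colimits, $\indlim F(a_i) \cong F(\indlim a_i) \cong F(a)$ in $\BB$. We then invoke the well-definedness statement proved in Proposition~\ref{prop_omega_acc_compass}, namely that any two filtered systems of finitely presented objects with isomorphic colimits are isomorphic in $\Ind(\BB)$, via
\[
\Hom_{\Ind(\BB)}(\{X_i\},\{Y_j\}) = \lim_i \colim[j] \Hom(X_i,Y_j) \cong \Hom(\indlim X_i, \indlim Y_j).
\]
This yields $\tYY_\BB(F(a)) \cong \{F(a_i)\} = IF\,\tYY_\AA(a)$.

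It remains to make the isomorphism natural in $a$, so that the square commutes as functors and not just objectwise. The displayed bijection identifies, on the full subcategory of $\Ind(\BB)$ spanned by ind-systems of finitely presented objects, the morphisms with morphisms of their colimits in $\BB$. Hence both composites $\tYY_\BB F$ and $IF\,\tYY_\AA$ take values in this subcategory, and on a morphism $g\colon a\to a'$ both are determined by their image under $\colim$, which in each case is $F(g)$, up to the isomorphisms $\colim\tYY_\BB F(a)\cong F(a)$ and $\colim IF\tYY_\AA(a) \cong F(\indlim a_i)\cong F(a)$. The same argument shows that the objectwise isomorphisms are compatible with morphisms. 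I expect this bookkeeping of naturality to be the only real obstacle. The cleanest route is to observe that $\colim$ restricted to ind-systems of compact objects is fully faithful; equivalently, $\tYY_\BB$ is an equivalence onto that subcategory with quasi-inverse $\colim$. The statement then follows from the canonical isomorphism $\colim \circ IF \cong F\circ\colim$ on such systems, which is exactly the preservation of filtered colimits by $F$.
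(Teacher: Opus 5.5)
Your proof is correct and follows the paper's argument. Both reduce to the fact, from the proof of Proposition~\ref{prop_omega_acc_compass}, that filtered systems of finitely presented objects are determined up to isomorphism in $\Ind(\BB)$ by their colimits, and then use $F(a_i) \in \BB^{f.p.}$ and $\indlim F(a_i) \cong F(a)$. Your extra paragraph on naturality, via full faithfulness of $\colim$ on ind-systems of compact objects, goes slightly beyond the paper, which only asserts the objectwise isomorphism and leaves the use of preservation of filtered colimits implicit.
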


\begin{proof}
\begin{equation} \label{diag_ind_commutativity}
\begin{tikzcd} [sep = 9]
	\AA &&&&& \BB \\
	& a &&&&& {F(a)} \\
	\\
	\\
	\\
	{\Ind(\AA)} &&&&& {\Ind(\BB)} & {\{b_i\}} \\
	& {\{a_i\}} &&&& {\{F(a_i)\}}
	\arrow["F"{description}, from=1-1, to=1-6]
	\arrow["{{\tYY_\AA}}"{description}, from=1-1, to=6-1]
	\arrow["{{\tYY_\BB}}"{description}, from=1-6, to=6-6]
	\arrow[maps to, from=2-2, to=2-7]
	\arrow[maps to, from=2-2, to=7-2]
	\arrow[maps to, from=2-7, to=6-7]
	\arrow["IF"{description}, from=6-1, to=6-6]
	\arrow[maps to, from=7-2, to=7-6]
	\arrow["\cong"{marking, allow upside down}, draw=none, from=7-6, to=6-7]
\end{tikzcd}
\end{equation}

Set $\{a_i\} := \tYY_\AA(a)$ and $\{b_i\} := \tYY_\BB(F(a))$. As shown in Diagram~(\ref{diag_ind_commutativity}), $IF(\{a_i\}) = \{F(a_i)\}$, so it suffices to show that $\{F(a_i)\} \cong \{b_i\}$.

Note that two filtered systems of compact objects are isomorphic if and only if their colimits are isomorphic; hence it suffices to show that
\[
\indlim F(a_i) \cong \varinjlim b_i
\]
since $\{F(a_i)\}$ is a system of compact objects ($F$ preserves compact objects) and $\{a_i\}$ is a system of compact objects by definition.
\end{proof}

The previous lemma will be used frequently to analyse a functor $\tYY_\AA$ by means of $\tYY_\BB$, or vice versa, for various $\AA$ and $\BB$.

The requirements on the functor $F$ in the previous lemma can be weakened if commutativity is required only on a subcategory of $\AA$, for example, only on the image of the fully faithful left adjoint of $F$. We make this precise as follows.

\begin{lemma} \label{lem_commutativity_of_ind(spec)}
Consider a pair of adjoint functors $(H \dashv F)$ between $\omega$-accessible categories such that $H$ is fully faithful and $F$ preserves filtered colimits:
\begin{equation}
\begin{tikzcd}
	\AA &&& \BB \\
	\\
	\\
	{\Ind(\AA)} &&& {\Ind(\BB)}
	\arrow[""{name=0, anchor=center, inner sep=0}, "F"{description}, shift right=2, from=1-1, to=1-4]
	\arrow["{{\tYY_\AA}}"{description}, from=1-1, to=4-1]
	\arrow[""{name=1, anchor=center, inner sep=0}, "H"{description}, shift right=2, from=1-4, to=1-1]
	\arrow["{{\tYY_\BB}}"{description}, from=1-4, to=4-4]
	\arrow[""{name=2, anchor=center, inner sep=0}, "IF"{description}, shift right=2, from=4-1, to=4-4]
	\arrow[""{name=3, anchor=center, inner sep=0}, "IH"{description}, shift right=2, from=4-4, to=4-1]
	\arrow["\dashv"{anchor=center, rotate=-90}, shift left=5, draw=none, from=1, to=0]
	\arrow["\dashv"{anchor=center, rotate=-90}, shift left=5, draw=none, from=3, to=2]
\end{tikzcd}
\end{equation}

Then
\[
\tYY_\BB(FH(b)) \cong IF\, \tYY_\AA H(b),
\]
without the additional assumption that $F$ preserves compact objects.
\end{lemma}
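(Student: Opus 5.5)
The plan is to reduce everything to the fact that $H$ preserves compact objects, which holds even though $F$ itself need not. Then we compute both sides explicitly, using the description of $\tYY$ from Proposition~\ref{prop_omega_acc_compass}: $\tYY$ of an object is any filtered system of finitely presented objects with that colimit.

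First I check that $H$ preserves finitely presented objects. Let $c \in \BB^{f.p.}$ and let $\{a_k\}$ be a filtered system in $\AA$. Using the adjunction and the hypothesis that $F$ commutes with filtered colimits, we get
\[
\Hom_\AA(H c, \indlim a_k) \cong \Hom_\BB(c, F\indlim a_k) \cong \Hom_\BB(c, \indlim F a_k) \cong \indlim \Hom_\BB(c, F a_k) \cong \indlim \Hom_\AA(Hc, a_k).
\]
One should verify that this composite is the canonical map $t_{Hc}$. That is a routine naturality check, and it is the only point requiring any care. Also, since $H$ is a left adjoint, it preserves all colimits, in particular filtered ones.

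Next I compute the right-hand side. Fix $b \in \BB$ and write $\tYY_\BB(b) = \{b_i\}_{i\in I}$ with $b_i \in \BB^{f.p.}$ and $\indlim b_i \cong b$. Then $H b \cong \indlim H b_i$, and each $H b_i$ is finitely presented by the first step. By the well-definedness argument in the proof of Proposition~\ref{prop_omega_acc_compass}, this gives $\tYY_\AA(Hb) \cong \{H b_i\}_{i\in I}$. Applying $IF$ yields
\[
IF\,\tYY_\AA H(b) \cong \{FH b_i\}_{i \in I}.
\]

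Finally I use full faithfulness of $H$. It means the unit $\Id_\BB \to FH$ is an isomorphism, and naturality makes it an isomorphism of filtered systems $\{b_i\} \cong \{FH b_i\}$ in $\Ind(\BB)$. The same unit gives $b \cong FH(b)$, so $\tYY_\BB(FH(b)) \cong \tYY_\BB(b) = \{b_i\}$. Chaining these isomorphisms proves the claim.

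I do not expect a real obstacle. The only subtle point is the first step: full faithfulness of $H$ is not what matters there. What matters is that preservation of filtered colimits by the right adjoint $F$ forces the left adjoint $H$ to preserve compact objects. This is exactly why the hypothesis in Lemma~\ref{lem_commutativity_of_ind} that $F$ preserves compact objects can be dropped on the image of $H$.
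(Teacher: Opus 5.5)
Your proposal is correct and follows the paper's proof. The paper likewise notes that $H$ preserves filtered colimits and compact objects (because its right adjoint $F$ preserves filtered colimits), applies Lemma~\ref{lem_commutativity_of_ind} to $H$ to get $\tYY_\AA H(b)\cong\{H(b_i)\}$, and concludes by full faithfulness of $H$. The only differences are that you inline that lemma's argument for $H$, and you spell out the adjunction computation showing $Hc$ is compact, which the paper merely asserts.
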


\begin{proof}
\begin{equation} \label{diag_big_ind_commutativity}
\begin{tikzcd} [sep = 7]
	\BB &&&&& \AA &&&&& \BB \\
	& b &&&&& {H(b)} &&&&& b \\
	\\
	\\
	\\
	{\Ind(\BB)} &&&&& {\Ind(\AA)} &&&&& {\Ind(\BB)} \\
	& {\{b_i\}} &&&&& {\{H(b_i)\}} &&&&& {\{b_i\}}
	\arrow["H"{description}, from=1-1, to=1-6]
	\arrow["{{{\tYY_\BB}}}"{description}, from=1-1, to=6-1]
	\arrow["F"{description}, from=1-6, to=1-11]
	\arrow["{\tYY_\AA}"{description}, from=1-6, to=6-6]
	\arrow["{{{\tYY_\BB}}}"{description}, from=1-11, to=6-11]
	\arrow[maps to, from=2-2, to=2-7]
	\arrow[maps to, from=2-2, to=7-2]
	\arrow[maps to, from=2-7, to=2-12]
	\arrow[maps to, from=2-7, to=7-7]
	\arrow[maps to, from=2-12, to=7-12]
	\arrow["IH"{description}, from=6-1, to=6-6]
	\arrow["IF"{description}, from=6-6, to=6-11]
	\arrow[maps to, from=7-2, to=7-7]
	\arrow[maps to, from=7-7, to=7-12]
\end{tikzcd}
\end{equation}

Note that Lemma~(\ref{lem_commutativity_of_ind}) is applicable to the functor $H$. Indeed, $H$ preserves filtered colimits as a left adjoint, and it preserves compact objects since its right adjoint preserves filtered colimits.

Hence the left square in Diagram~(\ref{diag_big_ind_commutativity}) commutes, where $b$ is an object of $\BB$ and $\{b_i\}$ is a filtered system of compact objects in $\BB$ such that $\indlim b_i \cong b$.

It follows that for objects of $\AA$ of the form $H(b)$ with $b \in \BB$, the filtered system of compact objects of $\AA$ approximating $H(b)$ can be chosen to be $\{H(b_i)\}$.

The desired statement, that is, the commutativity of the right square in~(\ref{diag_big_ind_commutativity}), now follows immediately from fully faithfulness of $H$. 
\end{proof}

\subsection{Local coherency conditions} \label{ssec:loccoh_def}

\begin{definition}
Let $\AA$ be an accessible abelian category, and let $\kappa$ be a regular cardinal. The category $\AA$ is called {\it locally $\kappa$-coherent} if $\AA$ is locally $\kappa$-presentable and $\AA^\kappa$ is abelian.

If $\kappa = \omega$, we simply say {\it locally coherent} instead of {\it locally $\omega$-coherent}.
\end{definition}

There is a direct analogy with coherent rings in algebra.

\begin{proposition} \label{prop_loccoh=cohring}
Let $\AA = R - \Mod$ (resp.\ $\Mod - R$). Then $\AA$ is locally coherent if and only if $R$ is left (resp.\ right) coherent.
\end{proposition}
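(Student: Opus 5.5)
We prove Proposition~\ref{prop_loccoh=cohring} for $\AA = R - \Mod$; the right-module case is symmetric. First, $R - \Mod$ is a locally finitely presentable Grothendieck (hence AB5) category. Its finitely presented objects in the sense of $\omega$-compactness are exactly the modules admitting a presentation $R^m \to R^n \to M \to 0$: every module is a filtered colimit of such modules, these are compact, and a compact module is a retract of one of them. Similarly, the finitely generated objects are the modules with a surjection $R^n \rightarrowdbl M$, by Theorem~\ref{th_locpres_fg}. So local coherence of $\AA$ amounts to the full subcategory $\AA^{f.p.}$ of finitely presented modules being abelian. Since $\AA^{f.p.}$ is closed under cokernels and extensions (Lemma~\ref{lem_ab_fp_fg_via_ker_coker}), this is equivalent to: the kernel of any morphism between finitely presented modules is finitely presented. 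A short argument checks that kernels computed in $\AA^{f.p.}$ agree with kernels in $\AA$ once they exist.

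\emph{Coherence of $R$ implies local coherence.} Assume every finitely generated left ideal is finitely presented. We show by induction on $n$ that every finitely generated submodule of $R^n$ is finitely presented.

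For $n=1$ this is the hypothesis. For the inductive step, let $N \subseteq R^{n}$ be finitely generated. Consider the projection $\pi \colon R^n \to R$ onto the last coordinate, with kernel $R^{n-1}$, and the exact sequence
\[
0 \to N \cap R^{n-1} \to N \to \pi(N) \to 0 .
\]
Here $\pi(N)$ is a finitely generated ideal, hence finitely presented. By Theorem~\ref{th_fg_fg_fp}, the kernel $N \cap R^{n-1}$ of the surjection $N \to \pi(N)$ from a finitely generated module is finitely generated. By induction it is finitely presented. Lemma~\ref{lem_ab_fp_fg_via_ker_coker}(4) then shows that $N$ is finitely presented.

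Next, take $f \colon M_1 \to M_2$ between finitely presented modules. Then $\im f$ is finitely generated and sits inside $M_2 = R^n/K$ with $K$ finitely generated. Its preimage $\tilde I \subseteq R^n$ is finitely generated, being an extension of $\im f$ by $K$ (Lemma~\ref{lem_ab_fp_fg_via_ker_coker}(1)). So $\tilde I$ is finitely presented by the previous paragraph, and $\im f = \tilde I / K$ is finitely presented by Lemma~\ref{lem_ab_fp_fg_via_ker_coker}(3). Finally, $\ker f$ is the kernel of the surjection $M_1 \rightarrowdbl \im f$ onto a finitely presented module, so it is finitely generated by Theorem~\ref{th_fg_fg_fp}. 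Applying the same argument to an epimorphism $R^k \rightarrowdbl \ker f$, the kernel $\ker f$ is in fact a finitely generated submodule of the finitely presented module $M_1$. Repeating the image argument with $M_1$ in place of $M_2$ shows $\ker f$ is finitely presented.

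\emph{Local coherence implies coherence of $R$.} Let $I$ be a finitely generated left ideal, with a surjection $R^n \rightarrowdbl I$. Then $I$ is the image of a morphism $R^n \to R$ between finitely presented modules. In an abelian full subcategory closed under cokernels, images computed in $\AA^{f.p.}$ agree with those in $\AA$. Hence $I$ is finitely presented.

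The main obstacle is the bookkeeping that identifies compact objects with classically finitely presented modules and checks that kernels in $\AA^{f.p.}$ agree with kernels in $\AA$. The induction on $n$ is standard (Chase), and it is the one step requiring genuine work.
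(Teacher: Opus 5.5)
The paper gives no proof of this proposition. It states it as the standard dictionary between coherent rings and locally coherent module categories, and later quotes it as the equivalence $1 \iff 2$ of Theorem~\ref{th_MEGA}, so there is no argument of the paper to compare with.

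Your argument is the classical one: Chase's induction on the rank of the free module, then pulling $\im f$ back to $R^n$. The forward direction is correct as written. This includes the compressed last step, where you realise $\ker f$ as the image of a map $R^k \to M_1$ between finitely presented modules and reuse the image argument.

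One justification in the converse direction is false as stated, although your conclusion is right. It is not true that images, or kernels, in an arbitrary abelian full subcategory closed under cokernels agree with those of the ambient category. Here is a counterexample.
\begin{itemize}
\item Take an essentially small abelian category $\mathcal{B}$ with a non-split monomorphism $m \colon X \to Y$, e.g.\ $\mathbb{Z}/2 \hookrightarrow \mathbb{Z}/4$ among finite abelian groups.
\item The functor $X \mapsto \Hom_{\mathcal{B}}(X, -)$ embeds $\mathcal{B}$ fully faithfully into $\Fun^{add}(\mathcal{B}, \mathbf{Ab})^{op}$.
\item This functor preserves cokernels, so its essential image is an abelian full subcategory closed under cokernels.
\item It sends $m$ to a morphism that is not a monomorphism, since $\Id_X$ does not extend along $m$. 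So the ambient image is a proper quotient of the image of $X$, while the image computed in the subcategory is the image of $X$ itself.
\end{itemize}

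What makes the claim true in your situation is that the generator $R$ lies in $\AA^{f.p.}$:
\begin{itemize}
\item If $m$ is a monomorphism in $\AA^{f.p.}$, testing against maps $R \to -$ shows that $m$ is injective.
\item If $k \colon K' \to M_1$ is a kernel of $f$ in $\AA^{f.p.}$, then $k$ is injective by the previous point. Every element $x \in \ker f$ gives a map $R \to M_1$ killed by $f$, which factors through $k$.
\item Hence $K' = \ker f$ in $R - \Mod$, and images agree as kernels of cokernels.
\end{itemize}
This is presumably the ``short argument'' you allude to earlier. Cite it here instead of the general principle.

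Alternatively, Proposition~\ref{prop_enough_to_check_on_epi} suggests the paper reads ``abelian'' as ``closed under kernels and cokernels in $\AA$''. Under that reading the converse is immediate: $I = \ker(R \to R/I)$, and $R/I$ is finitely presented because $I$ is finitely generated.
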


\begin{proposition}\label{prop_enough_to_check_on_epi}
To verify the local coherence condition it suffices to show that $\ker(\phi) \in \AA^{f.p.}$ for every $X_1, X_2 \in \AA^{f.p.}$ and every $\phi \in \Hom_\AA(X_1, X_2)$, since $\coker(\phi) \in \AA^{f.p.}$ is automatic by Lemma~(\ref{lem_ab_fp_fg_via_ker_coker}).

Moreover, it is enough to check that $\ker(\phi) \in \AA^{f.p.}$ when $\phi$ is an epimorphism.
\end{proposition}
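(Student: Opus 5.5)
The plan is to reduce everything to the kernel condition. Let $\AA$ be locally finitely presentable and AB5. Then $\AA^{f.p.}$ is an additive full subcategory closed under finite direct sums, and cokernels of morphisms between finitely presented objects are finitely presented. Once we also know that kernels of such morphisms lie in $\AA^{f.p.}$, the subcategory $\AA^{f.p.}$ is closed under kernels and cokernels computed in $\AA$. It is therefore an abelian (exact, full) subcategory. That is exactly local coherence.

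\textbf{Step 1 (cokernels).} Let $\phi\colon X_1\to X_2$ with $X_1,X_2\in\AA^{f.p.}$. The image $\im\phi$ is a quotient of $X_1$, so it is finitely generated by Lemma~\ref{lem_ab_fp_fg_via_ker_coker}(2). Now apply item (3) of the same lemma to
\[
0\to\im\phi\to X_2\to\coker\phi\to 0 .
\]
Since $X_2$ is finitely presented and $\im\phi$ is finitely generated, $\coker\phi\in\AA^{f.p.}$.

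\textbf{Step 2 (reduction to epimorphisms).} Suppose kernels of epimorphisms between finitely presented objects are finitely presented, and let $\phi$ be arbitrary. Factor $\phi$ as $X_1\twoheadrightarrow\im\phi\hookrightarrow X_2$; then $\ker\phi=\ker(X_1\twoheadrightarrow\im\phi)$. The difficulty is that $\im\phi$ is only known to be finitely generated. To avoid this, use the pullback trick instead. Consider the epimorphism
\[
(\phi,\ \Id)\colon X_1\oplus X_2\twoheadrightarrow X_2 ,
\]
whose source and target are finitely presented. Its kernel is $\{(x,-\phi x)\}\cong X_1$. So a different construction is needed.

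Instead use $\pi\colon X_2\twoheadrightarrow\coker\phi$. This is an epimorphism between finitely presented objects by Step 1, so the hypothesis gives $\im\phi=\ker\pi\in\AA^{f.p.}$. Now $X_1\twoheadrightarrow\im\phi$ is an epimorphism between finitely presented objects, and applying the hypothesis again gives $\ker\phi\in\AA^{f.p.}$.

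\textbf{Main obstacle.} The only point needing care is this second-stage reduction: the image must be shown finitely presented before the epimorphism case can be applied to $X_1\twoheadrightarrow\im\phi$. Passing through the cokernel, via Step 1, is what resolves it. Everything else is routine from Lemma~\ref{lem_ab_fp_fg_via_ker_coker}.
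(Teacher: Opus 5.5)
Your proof is correct and follows the paper's argument. The cokernel is finitely presented by Lemma~\ref{lem_ab_fp_fg_via_ker_coker}; applying the epimorphism case to $X_2 \to \coker\phi$ makes $\im\phi$ finitely presented, and applying it again to $X_1 \to \im\phi$ gives $\ker\phi \in \AA^{f.p.}$ (the abandoned $(\phi,\Id)$ detour in Step~2 is harmless and can simply be deleted).
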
 

\begin{proof}
Indeed:
\begin{equation}
\begin{tikzcd}[sep=scriptsize]
	{\ker(\phi)} && {X_1} && {X_2} && {\coker(\phi)} \\
	&&& {im(\phi)}
	\arrow[from=1-1, to=1-3]
	\arrow["\phi", from=1-3, to=1-5]
	\arrow[from=1-3, to=2-4]
	\arrow[from=1-5, to=1-7]
	\arrow[from=2-4, to=1-5]
\end{tikzcd}
\end{equation}
Finite presentation of $\ker(\phi)$ follows from finite presentation of $\im(\phi)$ since $X_1 \to \im(\phi)$ is an epimorphism; finite presentation of $\im(\phi)$ follows from finite presentation of $\coker(\phi)$ since $X_2 \to \coker(\phi)$ is an epimorphism.
\end{proof}

It turns out that if a category $\AA$ is coherently assembled in the sense of Definition~(\ref{def_compass_cohass}), then the cardinal $\kappa$ for which $\AA$ is locally $\kappa$-coherent is controllable, as in Proposition~(\ref{prop_compass_cardbound}). Namely:

\begin{proposition}\cite[Corollary 1.45.]{efimov_Th_of_heart}
Let $\AA$ be a coherently assembled abelian category.
Then $\AA$ is locally $\omega_1$-coherent.
\end{proposition}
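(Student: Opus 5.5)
We want to show that a coherently assembled abelian category $\AA$ is locally $\omega_1$-presentable and that $\AA^{\omega_1}$ is an abelian subcategory. By Proposition~\ref{prop_compass_cardbound}, $\AA$ is $\omega_1$-accessible and $\AA^{\omega_1}$ consists exactly of the compactly exhaustible objects. Cocompleteness is automatic for the abelian categories under consideration, since they have filtered colimits and finite colimits. Local $\omega_1$-presentability is then immediate. The content of the statement is the closure of $\AA^{\omega_1}$ under kernels and cokernels.

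\textbf{Cokernels.} Closure under cokernels is formal. $\omega_1$-compact objects are closed under finite colimits in any category, because finite colimits commute with $\omega_1$-filtered colimits in $\mathbf{Set}$ and $\Hom(-, Y)$ turns colimits into limits. This is the $\omega_1$-version of Lemma~\ref{lem_ab_fp_fg_via_ker_coker}, as in the remark after it. So the remaining task is kernels.

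\textbf{Kernels: setup.} Let $f\colon x\to y$ be a morphism between compactly exhaustible objects. Using $\omega_1$-accessibility and the ladder argument, choose sequential presentations $x=\indlim x_n$ and $y=\indlim y_n$ along compact morphisms. Re-indexing (passing to subsequences) and using compactness of the transition maps, the composites $x_n\to x\to y$ factor through some $y_{m}$. This lets us assume that $f$ is the colimit of a ladder $f_n\colon x_n\to y_n$ commuting with the compact transition maps.

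\textbf{Kernels: the key step.} This step is where left exactness of $\tYY$ enters. Apply $\tYY$ to $0\to\ker f\to x\to y$. Left exactness gives $\tYY\ker f=\ker(\tYY f)$ in $\Ind(\AA)$. By Lemma~\ref{lem_tYY_compact_transitions}, $\tYY x$ and $\tYY y$ are represented by the sequential systems above, up to cofinal refinement. In $\Ind(\AA)$, where filtered colimits are exact, $\ker \tYY f$ is therefore represented by the sequential ind-object $\{\ker f_n\}_n$. Hence $\tYY \ker f$ is isomorphic to a countable sequential ind-object.

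\textbf{Concluding step.} Applying Lemma~\ref{lem_tYY_compact_transitions} to $\ker f$, refine this countable presentation so that its transition maps are compact, staying countable. Step~1 of that lemma only passes to later indices, so a countable directed set stays countable. Then $\ker f=\colim\tYY\ker f$ is a sequential colimit along compact morphisms, i.e.\ compactly exhaustible, hence in $\AA^{\omega_1}$.

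\textbf{Main obstacle.} The delicate point is the identification of $\tYY x$ with the sequential system $\{x_n\}$, rather than just having $\colim$ of it equal to $x$. I would handle it as follows. Using the adjunction $\tYY\dashv\colim$ together with the compactness lifts of Remark~\ref{rem_compact_lifts}, the system $\{x_n\}$ with compact transitions satisfies $\tYY x\cong\colim_n \tYY x_n$. The lifts $\hat\phi_n\colon \YY x_n\to\tYY x_{n+1}$ interleave the two systems $\{\YY x_n\}$ and $\{\tYY x_n\}$. Consequently both have the same colimit $\tYY x$ in $\Ind(\AA)$.
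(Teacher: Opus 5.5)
Your proof is correct. The paper itself gives no proof of this statement; it only cites Efimov, so there is no internal argument to compare against. What you have written is a self-contained derivation from tools the paper does set up:
\begin{itemize}
\item Proposition~\ref{prop_compass_cardbound} reduces the question to compactly exhaustible objects.
\item Kernels in $\Ind(\AA)$ are computed levelwise, exactly as in the proof of Proposition~\ref{prop_NA_cohass}, with left exactness of $\tYY$ playing the role of (NA).
\item Step~1 of Lemma~\ref{lem_tYY_compact_transitions}, applied to an $\mathbb{N}$-indexed presentation, produces a cofinal subsequence with compact transitions.
\end{itemize}
Cocompleteness, closure under cokernels, and the countability bookkeeping are all fine. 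Two steps should be tightened.

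\textbf{(i) The identification of $\tYY x$ with $\{x_n\}$.} The result you need is Lemma~\ref{lem_compact_transitions_are_tYY}; Lemma~\ref{lem_tYY_compact_transitions} is the converse direction. Your interleaving argument is a valid sequential proof of it. The zigzag identities are $\hat\phi_n\rho_{x_n}=\tYY\phi_n$, from Remark~\ref{rem_compact_lifts}, and $\rho_{x_{n+1}}\hat\phi_n=\YY\phi_n$.

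You must also check that these isomorphisms are compatible with $f$. The isomorphism $\tYY x\cong\{x_n\}$ is induced by the $\rho_{x_n}$, so its image under $\colim$ is $\Id_x$. A morphism out of $\tYY x$ is determined by its colimit. Hence the composites $\tYY x\cong\{x_n\}\xrightarrow{\{f_n\}}\{y_n\}$ and $\tYY x\xrightarrow{\tYY f}\tYY y\cong\{y_n\}$ coincide, both having colimit $f$. Without this check, the equality $\Ker(\tYY f)\cong\{\ker f_n\}$ is not justified.

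\textbf{(ii) Strictness of the ladder.} Compactness of $x_n\to x$ gives factorisations $\phi_n\colon x_n\to y_{m_n}$ of $x_n\to x\to y$. However, $\phi_{n+1}\circ t_n$ and $\phi_n$ are only known to agree after mapping to $y$. The paper's definition of a compact morphism supplies only the ``surjectivity'' half. There are two ways to close this.
\begin{itemize}
\item Derive the ``injectivity'' half from Proposition~\ref{prop_compact_via_tYY}. If $a,b\colon x_n\to y_m$ agree in $y$, then $\mathrm{in}_m\,\YY a\,\rho_{x_n}=\mathrm{in}_m\,\YY b\,\rho_{x_n}$ by the $\tYY\dashv\colim$ adjunction. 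Precomposing with a lift of the compact map $t_{n-1}$ shows that $a t_{n-1}$ and $b t_{n-1}$ agree at a finite stage.
\item More simply, straighten in $\Ind(\AA)$ after (i). The morphism $\tYY f$ is an element of $\lim_{n}\colim[m]\Hom(x_n,y_m)$. Choosing representatives and enlarging $m_n$ inductively gives a strict ladder $x_n\to y_{m_n}$ that represents $\tYY f$ by construction.
\end{itemize}
Either route is routine.
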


Note that for an $\omega$-accessible abelian category $\AA$ satisfying mild assumptions, the coherently assembled condition implies not only local $\omega_1$-coherence but also local coherence. Namely:

\begin{proposition}\cite{efimov_Kth_large_cat} \label{prop_loccoh=cohass}
Let $\AA$ be locally finitely presentable AB5-category. The following are equivalent:
\begin{enumerate}
    \item $\AA$ is locally coherent
    \item $\AA$ is coherently assembled
\end{enumerate}
\end{proposition}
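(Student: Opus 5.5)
By Proposition~\ref{prop_omega_acc_compass}, $\AA$ is compactly assembled, and its functor is explicit: $\tYY X = \{X_i\}_{i\in I}$ for any filtered system of finitely presented objects with $\indlim X_i \cong X$. So the statement comes down to this: $\AA^{f.p.}$ is closed under kernels if and only if this $\tYY$ is left exact. Since $\tYY$ is a left adjoint, it is right exact. Left exactness is therefore equivalent to $\tYY$ preserving monomorphisms, or equivalently kernels. The plan is to compare kernels in $\Ind(\AA)$ with kernels in $\AA$, using exactness of filtered colimits in both categories (AB5 in $\AA$, Remark~\ref{rem_ind_basic} in $\Ind(\AA)$).

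\textbf{(1)$\Rightarrow$(2).} Let $0\to K\to X\xrightarrow{\phi} Y$ be exact in $\AA$. Any morphism in an $\omega$-accessible category can be written as a filtered colimit of morphisms between compact objects: present $\phi$ as an object of $\Ind$ of the arrow category, giving a filtered system $\phi_i\colon X_i\to Y_i$ with $X_i,Y_i\in\AA^{f.p.}$. Then $\tYY\phi$ is represented by $\{\phi_i\}$, and its kernel in $\Ind(\AA)$ is $\{\ker\phi_i\}$, because kernels of levelwise morphisms are computed levelwise in $\Ind$. By local coherence each $\ker\phi_i$ is finitely presented. By AB5, $\indlim\ker\phi_i\cong K$. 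Since $\tYY K$ is the class of any filtered system of finitely presented objects with colimit $K$, we get $\ker(\tYY\phi)\cong\tYY K$, compatibly with the canonical map. Hence $\tYY$ preserves kernels.

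\textbf{(2)$\Rightarrow$(1).} By Proposition~\ref{prop_enough_to_check_on_epi}, it suffices to show that $K=\ker(\phi)$ is finitely presented for an epimorphism $\phi\colon X_1\to X_2$ between finitely presented objects. For $X\in\AA^{f.p.}$ we have $\tYY X\cong\YY X$. Left exactness then gives $\tYY K\cong\ker(\YY\phi)=\YY K$ in $\Ind(\AA)$, the latter because $\YY$ is exact. So $\tYY K$ is isomorphic to a constant ind-object. It remains to show that this forces $K$ to be compact. Write $\tYY K=\{K_i\}$ with $K_i\in\AA^{f.p.}$. An isomorphism $\{K_i\}\cong\{K\}$ means the identity of $K$ factors through some $K_i$. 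Thus $K$ is a retract of a finitely presented object, and hence finitely presented. (Alternatively, $K$ is finitely generated as a subobject of a finitely presented object in a locally finitely presentable AB5 category, and Theorem~\ref{th_fg_fg_fp} finishes.)

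\textbf{Main obstacle.} The delicate point is in (1)$\Rightarrow$(2): justifying that $\{\ker\phi_i\}$ really represents $\tYY K$ and that the resulting isomorphism is the canonical comparison map $\tYY K\to\ker\tYY\phi$. This needs the fact that a morphism between filtered systems of compact objects is an isomorphism in $\Ind(\AA)$ precisely when its colimit is an isomorphism; this is the well-definedness argument in Proposition~\ref{prop_omega_acc_compass}, applied functorially. The levelwise presentation of $\phi$ relies on the arrow category being $\omega$-accessible with compact objects the arrows between compacts, which holds in a locally finitely presentable category.
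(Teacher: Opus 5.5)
Your argument is correct. The paper gives no proof of this proposition; it only cites \cite{efimov_Kth_large_cat}. So there is nothing to match directly, but your direction (1)$\Rightarrow$(2) is the locally finitely presentable version of the paper's proof of Proposition~\ref{prop_NA_cohass}:
\begin{itemize}
\item present $\tYY\phi$ levelwise (compare Lemma~\ref{lem_tYY_common_presentation});
\item compute $\Ker(\tYY\phi)=\{\ker\phi_i\}$ levelwise in $\Ind(\AA)$;
\item use AB5 to get $\indlim\ker\phi_i=\ker\phi$;
\item recognise the result as $\tYY(\ker\phi)$ through its colimit.
\end{itemize}
There, the recognition step needs compact transition morphisms, supplied by (NA) and fed into Lemma~\ref{lem_compact_transitions_are_tYY}. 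In your setting it needs the terms $\ker\phi_i$ to be finitely presented, which is exactly what local coherence supplies. Full faithfulness of $\colim$ on $\Ind(\AA^{f.p.})$ (the computation in Proposition~\ref{prop_omega_acc_compass}) then shows that the canonical comparison map $\tYY K\to\Ker(\tYY\phi)$ is an isomorphism, since its colimit is $\Id_K$ by exactness of $\colim$. You correctly identified this as the delicate point and resolved it. Your direction (2)$\Rightarrow$(1) is clean: $\tYY K\cong\ker(\YY\phi)=\YY K$ makes $K$ a retract of some $K_{i_0}\in\AA^{f.p.}$. What your route buys is a short, self-contained proof in the locally finitely presentable setting. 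The paper's more elaborate machinery is only needed when compact objects are replaced by compact morphisms.

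One sentence should be deleted: the parenthetical ``alternative'' at the end of (2)$\Rightarrow$(1) is not a proof. It never uses hypothesis (2), so if it worked it would show that every locally finitely presentable AB5 category is locally coherent, which is false. A subobject of a finitely presented object need not be finitely generated; the ideal $\mathfrak{m}\subset k[t_1,t_2,\dots]$ of Example~\ref{ex_NA_fails} is a counterexample. Your $K$ is finitely generated, but only because it is the kernel of an epimorphism from a finitely generated object onto a finitely presented one (Theorem~\ref{th_fg_fg_fp}). Finite generation alone does not give finite presentation: one must also control the kernels of all epimorphisms from finitely generated objects onto $K$, and that is where (2) would have to enter. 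The retract argument already suffices, so drop the parenthetical.

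A minor point: the reduction to epimorphisms via Proposition~\ref{prop_enough_to_check_on_epi} is harmless but unnecessary, since your argument works verbatim for any morphism between finitely presented objects.
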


\subsubsection{Characterisation of the coherency of a ring}
The main goal of this subsection is to prove the following equivalences characterising coherent rings.

\begin{theorem}\cite{Stenstrom-1970, CheathamStone, Chase1960DirectPO, Matlis_1982, Krause2022}   \label{th_MEGA}
Let $R$ be a ring. 
The following are equivalent: 
\begin{enumerate}
    \item \label{item_mega_1} The ring $R$ is left coherent.

    \item The category $R - \Mod$ is locally coherent.

    \item The category $R - \Mod$ is coherently assembled.

    \item The category $R-proj$ has weak kernels.

    \item The wide subcategory $\CC_0 \subset R-\Mod$ generated by $R$ coincide with $R-\Mod^{f.p.}$

    \item For any $n \geq 1$ the category $\mathcal{FP}_n(R)$ is abelian.

    \item The map $\xi_1 : \varinjlim_i \Ext^1(F, M_i) \to \Ext^1(F, \varinjlim_i M_i)$ is an isomorphism for every finitely presented left  module $F$ and direct system $(M_i)_i$.
    \item The map $\xi_n : \varinjlim_i \Ext^n(F, M_i) \to \Ext^n(F, \varinjlim_i M_i)$ are isomorphisms for all $n \ge 0$, for every finitely presented left module $F$ and direct system $(M_i)_i$.
    \item Every direct limit of FP-injective left modules is FP-injective.
    \item Every direct limit of left modules of FP-injective dimension $\le n$ has FP-injective dimension $\le n$.

    \item A left module $M$ is FP-injective exactly when its character module $M^+$ is a flat right module.
    \item A left module $M$ is FP-injective exactly when $M^{++}$ is an injective left module.
    \item A right module $M$ is flat exactly when $M^{++}$ is a flat right module.

    \item For every bimodule $N$, the induced morphisms $\Tor_R^n(\Hom_R(N, E), M) \to \Hom_R(\Ext_R^n(M, N), E)$ are isomorphisms in all degrees $n \ge 0$ whenever $E$ is an injective left module and $M$ a finitely presented left module.
    \item {\bf (for $R$ - commutative ring)} For all injective $R$-modules $N$ and $E$.
    $\Hom_R(N, E)$ is a flat $R$-module.

    \item Any direct product of flat right modules is a flat right module.
    \item Any direct product of copies of $R$ is a flat right module.
\end{enumerate}
\end{theorem}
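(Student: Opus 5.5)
The plan is to organise the seventeen conditions into clusters around condition (1), proving each cluster equivalent to (1) and quoting the classical literature wherever the argument is standard. The first cluster consists of (1), (2), (3), (4), (5), (6). The equivalence (1)$\Leftrightarrow$(2) is Proposition~\ref{prop_loccoh=cohring}, and (2)$\Leftrightarrow$(3) is Proposition~\ref{prop_loccoh=cohass} applied to the locally finitely presentable AB5 category $R-\Mod$.

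For (2)$\Leftrightarrow$(4), identify $R-\Mod^{f.p.}$ with $\mod(R-\proj)$. The standard Freyd argument shows that $\mod \mathcal{P}$ is abelian if and only if $\mathcal{P}$ has weak kernels. Given $f\colon P_1\to P_0$ in $R-\proj$, the kernel of $f$ is finitely generated exactly when a weak kernel exists. By Proposition~\ref{prop_enough_to_check_on_epi}, this suffices to detect coherence.

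For (5), the wide (exact abelian, extension-closed) subcategory generated by $R$ is always contained in $R-\Mod^{f.p.}$ when $R-\Mod^{f.p.}$ is abelian. Conversely, every finitely presented module is a cokernel of a map $R^m\to R^n$. If $\CC_0=R-\Mod^{f.p.}$, then kernels of maps of finitely presented modules stay finitely presented, which gives (2). This is Hovey's argument. For (6), $\mathcal{FP}_1=R-\Mod^{f.p.}$, so (6)$\Rightarrow$(2). Conversely, on a coherent ring every finitely presented module is $\mathcal{FP}_\infty$, using Lemma~\ref{lem_ab_fp_fg_via_ker_coker} and the closure of finitely presented modules under kernels, so all the $\mathcal{FP}_n$ coincide and are abelian.

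The second cluster is the Stenstr\"om block (7)--(10). For (1)$\Rightarrow$(8), take a resolution of $F$ by finitely generated free modules, which exists by coherence. Then $\Ext^n(F,-)$ is computed by the complex $\Hom(P_\bullet,-)$ of finitely presented terms, and this commutes with filtered colimits because those colimits are exact. The implication (8)$\Rightarrow$(7) is trivial. For (7)$\Rightarrow$(9), a filtered colimit $M=\indlim M_i$ of FP-injectives satisfies $\Ext^1(F,M)=\indlim\Ext^1(F,M_i)=0$. For (9)$\Rightarrow$(1), I would follow Stenstr\"om's original argument. Take a finitely generated submodule $K\subset R^n$ and write $K=\indlim K_j$ along finitely presented $K_j\twoheadrightarrow K$. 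Embed into FP-injective envelopes and use the closure under colimits to show that the map $\indlim \Hom(K_j,-)\to\Hom(K,-)$ splits on the system $\{\ker(K_j\to K)\}$. This forces some $K_j\to K$ to be an isomorphism. Condition (10) reduces to (9) via dimension shifting along cosyzygies, since FP-injective dimension $\le n$ is the vanishing of $\Ext^{n+1}(F,-)$, and for $n=0$ it is (9).

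The third cluster is the character-module block (11)--(17). I would prove (1)$\Rightarrow$(14) by comparing resolutions: both sides are computed by finitely generated free resolutions of $M$, and $\Hom_R(-,E)$ is exact. The implications (14)$\Rightarrow$(11), (12), (13) then follow by taking $n=1$ and using the isomorphisms $\Tor_1(M^{+},F)\cong\Ext^1(F,M)^{+}$. Conversely, (11) or (12) gives (9), because the class of modules with flat character module is closed under direct limits (limits go to products, and so on) and pure-injectivity considerations apply. Condition (13) implies (16) by Cheatham--Stone. Chase's theorem gives (1)$\Leftrightarrow$(16)$\Leftrightarrow$(17): flatness of $R^I$ for all $I$ is equivalent to finite generation of the kernels of $R^n\to R$, tested via $R^I\otimes$. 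Condition (15), for commutative $R$, is Matlis's theorem, which I would only cite.

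The main obstacle is (9)$\Rightarrow$(1) together with the converse directions of the character-module statements. There the bookkeeping of colimits versus products is delicate, and I would follow Stenstr\"om and Cheatham--Stone closely rather than reprove them.
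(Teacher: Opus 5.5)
Your outline follows the paper's cycle of implications and its citations for (1)--(10). In the character-module block you route differently: you return from (11)/(12) to (9) and cite Cheatham--Stone and Matlis. The paper instead closes the loop via (12)$\Rightarrow$(13)$\Rightarrow$(16)$\Rightarrow$(17)$\Rightarrow$(1) and (14)$\Rightarrow$(15)$\Rightarrow$(17). Your derivation of (14)$\Rightarrow$(11) from $\Tor_1^R(M^+,F)\cong\Ext^1_R(F,M)^+$ (the case $\Gamma=\mathbb{Z}$, $E=\mathbb{Q}/\mathbb{Z}$ of (14)) is a real contribution: the paper's diagram uses that arrow without arguing it. The converse half of (13) also needs $M\subset M^{++}$ to be pure (Proposition~\ref{prop_Wurfel}). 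Two steps, however, do not go through as written.

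\emph{(9)$\Rightarrow$(1).} The phrase about ``$\varinjlim\Hom(K_j,-)\to\Hom(K,-)$ splitting on $\{\ker(K_j\to K)\}$'' does not describe a well-defined map, since $\Hom(\varinjlim K_j,X)=\varprojlim\Hom(K_j,X)$. More importantly, it never uses the one fact through which FP-injectivity acts: $R^n/K$ is finitely presented because $K$ is finitely generated. The argument that works is Stenstr\"om's, which the paper reproduces:
\begin{enumerate}
\item For any direct system $(M_i)$, take a \emph{functorial} injective embedding $M_i\hookrightarrow E_i$. Envelopes, injective or FP-injective, are not functorial, so they do not form a direct system.
\item $\varinjlim E_i$ is FP-injective by (9), so $\Ext^1(R^n/K,\varinjlim E_i)=0$. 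Hence any $\alpha\colon K\to\varinjlim M_i\subset\varinjlim E_i$ extends to $R^n$.
\item The extension factors through some $E_j$ because $R^n$ is finitely presented.
\item The composite $K\to E_j\to E_j/M_j$ vanishes in the colimit, hence at a later stage because $K$ is finitely generated. So $\alpha$ factors through some $M_{j'}$.
\end{enumerate}
Applied to your system $\{K_j\}$ with $\alpha=\Id_K$, this is exactly the splitting of some $K_{j'}\twoheadrightarrow K$ that you want.

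\emph{(11)$\Rightarrow$(9).} ``Limits go to products'' is circular if it means that $\prod M_i^+$ is flat because products of flat modules are flat: that is (16), i.e.\ coherence itself. Nor can you argue directly, since $(\varinjlim M_i)^+=\varprojlim M_i^+$ is an inverse limit of flat modules and need not be flat. Two ingredients are missing:
\begin{enumerate}
\item $\bigoplus M_i\to\varinjlim M_i$ is a pure epimorphism, so $(\varinjlim M_i)^+$ is a direct summand of $(\bigoplus M_i)^+=\prod M_i^+$.
\item $\bigoplus M_i$ is FP-injective over \emph{any} ring, because a map from a finitely generated $K\subset R^n$ lands in a finite subsum.
\end{enumerate}
So (11) applied to $\bigoplus M_i$ gives flatness of $\prod M_i^+$, hence of $(\varinjlim M_i)^+$.

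Finally, for (10) it is simpler to apply (8) in degree $n+1$, as the paper does. Your dimension shift along cosyzygies needs $\Ext^k(F,-)$, $k\ge 2$, to vanish on FP-injectives, which already requires coherence.
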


Before the proof we recall some notation.

\begin{definition} \label{def_MEGA}
A module $M$ is {\it FP-injective} \cite{Stenstrom-1970} if $\Ext^{1}(F, M) = 0$
for every finitely presented module $F$.

For any module $M$, let {\it FP-injective dimension} \cite{Stenstrom-1970} $\operatorname{FP - inj.\dim} M$ denote the smallest integer $n \ge 0$ such that $\Ext^{n+1}(F, M) = 0$ for every finitely presented module $F$.

Given a left (resp.\ right) $R$-module $M$, its {\it character module} is the right (resp.\ left) $R$-module $M^+ = \Hom_\mathbb{Z}(M, \mathbb{Q}/\mathbb{Z})$.

For rings $\Gamma$, $\Lambda$  define the homomorphism
\begin{equation}
\sigma_0: \Hom_{\Gamma}(B, C) \otimes_{\Lambda} A \to \Hom_{\Gamma}(\Hom_{\Lambda}(A, B), C)
\end{equation}
by setting
\begin{equation}
\sigma_0(f \otimes a)(g) = f(g(a)) \qquad f \in \Hom_{\Gamma}(B, C), \; g \in \Hom_{\Lambda}(A, B).
\end{equation}

Thus passing to the projective resolution of $A$ one obtains a homomorphism
\begin{equation} \label{torhom_to_homext}
\sigma_*: \Tor^{\Lambda}(\Hom_{\Gamma}(B, C), A) \to \Hom_{\Gamma}(\Ext_{\Lambda}(A, B), C)
\end{equation}
which reduces to the previous one in degree zero.

A submodule $N \hookrightarrow M$ of a module $M$ is said to be a {\it pure submodule} if the induced map $N \otimes X \to M \otimes X$ stays a monomorphism for every $R$-module $X$. The snake lemma makes it plain that a pure submodule of a flat module is itself flat.

A morphism $X \to Y$ in $\CC$ is a \textbf{weak kernel} of a morphism $Y \to Z$ if the induced sequence
\[
\Hom_{\CC}(-, X) \to \Hom_{\CC}(-, Y) \to \Hom_{\CC}(-, Z)
\]
is exact, i.e. the universal property with non-unique lifting holds. 

Let $n \geq 0$ be an integer. Recall that an $R$-module $M$ is {\it finitely $n$-presented} if there exists an exact sequence
\[
F_n \to F_{n-1} \to F_{n-2} \to \cdots \to F_0 \to M \to 0
\]
where $F_i$ is a finitely generated and free (or projective) module, for every $0 \leq i \leq n$. Following \cite{FPn_first}, we denote by $\mathcal{FP}_n(R)$ the full subcategory of finitely $n$-presented modules. 
Note that $\mathcal{FP}_0(R)$ is the category of finitely generated modules and $\mathcal{FP}_1(R)$ is the category of finitely presented modules.

A full subcategory $\CC$ of $R-\Mod$ is called {\it wide} if it is abelian and closed under extensions. 
We denote by $\CC_0$ the wide subcategory generated by $R$.
\end{definition}

The following folklore result regarding character modules is due to Lambek \cite{Lambek_1964}.

\begin{proposition} \cite{Lambek_1964} \label{prop_Lambek}
An $R$-module is flat exactly when its character module is injective.
\end{proposition}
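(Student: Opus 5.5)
We prove Lambek's criterion: a left $R$-module $M$ is flat if and only if $M^+ = \Hom_{\mathbb{Z}}(M, \mathbb{Q}/\mathbb{Z})$ is an injective right module. The plan is to reduce both properties to a statement about exactness and then use that $\mathbb{Q}/\mathbb{Z}$ is an injective cogenerator of abelian groups.

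The key tool is the tensor--hom adjunction. For any right $R$-module $X$ there is an isomorphism
\[
\Hom_{R}(X, M^+) = \Hom_R(X, \Hom_{\mathbb{Z}}(M, \mathbb{Q}/\mathbb{Z})) \cong \Hom_{\mathbb{Z}}(X \otimes_R M, \mathbb{Q}/\mathbb{Z}) = (X \otimes_R M)^+ ,
\]
and it is natural in $X$. Hence the functor $\Hom_R(-, M^+)$ on right modules is isomorphic to the composite $(-)^+ \circ (- \otimes_R M)$.

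Next we record two facts about the functor $(-)^+$ on abelian groups.
\begin{enumerate}
\item It is exact, because $\mathbb{Q}/\mathbb{Z}$ is divisible and hence injective over $\mathbb{Z}$.
\item It is faithful on exactness: a sequence $A \to B \to C$ is exact if and only if $C^+ \to B^+ \to A^+$ is exact.
\end{enumerate}
The second fact holds because $\mathbb{Q}/\mathbb{Z}$ is a cogenerator: every nonzero group $A$ has a nonzero map $A \to \mathbb{Q}/\mathbb{Z}$. To see this, take a nonzero cyclic subgroup of $A$, map it nontrivially into $\mathbb{Q}/\mathbb{Z}$, and extend by injectivity. In particular, a map $f$ is injective exactly when $f^+$ is surjective.

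With these in hand the argument runs as follows. $M$ is flat if and only if $X' \otimes M \to X \otimes M$ is injective for every monomorphism $X' \hookrightarrow X$ of right modules. By the second fact, this holds if and only if $(X \otimes M)^+ \to (X' \otimes M)^+$ is surjective for every such monomorphism. By the adjunction, this is the same as $\Hom_R(X, M^+) \to \Hom_R(X', M^+)$ being surjective for every such monomorphism, which is exactly injectivity of $M^+$.

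If one wants the version that only tests ideals, Baer's criterion lets us restrict $X$ to $R$ and $X'$ to a right ideal. This matches the usual flatness test via finitely generated ideals, but it is not needed for the argument above.

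The only step needing care is checking that the adjunction isomorphism is natural in $X$. Naturality is what identifies the restriction map on $\Hom_R(-, M^+)$ with the map induced on character modules of tensor products. It is routine, and I expect no real obstacle.
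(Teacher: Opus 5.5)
Your proof is correct and complete. The adjunction $\Hom_R(X, M^+) \cong (X \otimes_R M)^+$, natural in $X$, together with the fact that $(-)^+$ is exact and that $f$ is injective exactly when $f^+$ is surjective ($\mathbb{Q}/\mathbb{Z}$ being an injective cogenerator), is the standard proof of this criterion; the paper gives no proof of its own and simply cites Lambek, so there is nothing further to compare.
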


In what follows we will also use the observation of Cartan, Eilenberg \cite{cartan1956homological}
with the remark of Matlis \cite{Matlis_1982}.

\begin{proposition} \cite{cartan1956homological, Matlis_1982} \label{prop_CartanEilenberg+Matlis}
If $A$ is a finitely presented $\Lambda$-module, $C$ is an injective $\Gamma$-module 
and $A$ admits a resolution by finitely generated projectives (when $\Lambda$ is a coherent ring this is automatic for any finitely presented $\Lambda$-module)
then $\sigma_*$ from (\ref{torhom_to_homext}) is an isomorphism.
\end{proposition}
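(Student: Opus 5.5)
The statement is Proposition~\ref{prop_CartanEilenberg+Matlis}: $\sigma_*$ is an isomorphism when $C$ is injective over $\Gamma$ and $A$ has a resolution by finitely generated projectives. I would prove it by comparing two $\delta$-functors in the variable $A$. The plan has three steps: check degree zero on finitely generated projectives, extend to finitely presented $A$ by right exactness, then pass to higher degrees through the resolution.

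\textbf{Step 1: degree zero on projectives.} Take $A = \Lambda$. Then $\sigma_0$ is the canonical identification
\[
\Hom_\Gamma(B,C)\otimes_\Lambda \Lambda \cong \Hom_\Gamma(B,C) \cong \Hom_\Gamma(\Hom_\Lambda(\Lambda,B),C),
\]
hence an isomorphism. Both sides are additive in $A$, so $\sigma_0$ is an isomorphism on finite free modules. By naturality and passage to direct summands, it is an isomorphism on all finitely generated projectives.

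\textbf{Step 2: degree zero on finitely presented $A$.} Take a finite presentation $P_1 \to P_0 \to A \to 0$.
\begin{enumerate}
\item The functor $\Hom_\Gamma(B,C)\otimes_\Lambda -$ is right exact.
\item The functor $\Hom_\Lambda(-,B)$ is left exact and contravariant, and $\Hom_\Gamma(-,C)$ is exact because $C$ is injective. So the composite $\Hom_\Gamma(\Hom_\Lambda(-,B),C)$ is right exact and covariant.
\item Naturality of $\sigma_0$ gives a map between two right exact rows. The five lemma, with Step 1 applied to $P_1$ and $P_0$, shows $\sigma_0$ is an isomorphism at $A$.
\end{enumerate}

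\textbf{Step 3: higher degrees.} Let $P_\bullet \to A$ be a resolution by finitely generated projectives.
\begin{enumerate}
\item The left side $\Tor_n$ is the homology of $\Hom_\Gamma(B,C)\otimes_\Lambda P_\bullet$.
\item The right side is $\Hom_\Gamma(H^n(\Hom_\Lambda(P_\bullet,B)),C)$. Since $\Hom_\Gamma(-,C)$ is exact, it commutes with cohomology, so this equals $H_n\bigl(\Hom_\Gamma(\Hom_\Lambda(P_\bullet,B),C)\bigr)$.
\item The map $\sigma_*$ is induced by the chain map $\sigma_0\colon \Hom_\Gamma(B,C)\otimes P_\bullet \to \Hom_\Gamma(\Hom_\Lambda(P_\bullet,B),C)$, which is a degreewise isomorphism by Step 1. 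Hence $\sigma_*$ is an isomorphism in every degree.
\end{enumerate}

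\textbf{The parenthetical claim.} Over a left coherent $\Lambda$, every finitely presented module has a resolution by finitely generated free modules. The kernel of $\Lambda^n \to A$ is a finitely generated submodule of $\Lambda^n$, hence finitely presented by coherence, and one iterates.

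\textbf{Main obstacle.} The only delicate point is checking that the map on homology defined through the resolution agrees with the $\sigma_*$ of (\ref{torhom_to_homext}). This also requires independence of the chosen resolution. Both follow from the naturality of $\sigma_0$ and the comparison theorem for projective resolutions.
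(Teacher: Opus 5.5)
Your argument is correct. The paper gives no proof of Proposition~\ref{prop_CartanEilenberg+Matlis}, only the reference to Cartan--Eilenberg and Matlis, and your argument is the standard one from those sources: $\sigma_0$ is a degreewise isomorphism on a resolution by finitely generated projectives, and exactness of $\Hom_\Gamma(-,C)$ makes it commute with cohomology.

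Two small remarks. First, Step~2 is subsumed by Step~3 in degree zero; it only adds the observation that degree zero needs just a finite presentation. Second, the ``main obstacle'' is not really one. In the paper, $\sigma_*$ is obtained precisely by applying $\sigma_0$ to a projective resolution of $A$, followed by the natural map $H_n\Hom_\Gamma(K^\bullet,C)\to\Hom_\Gamma(H^nK^\bullet,C)$, which is an isomorphism for injective $C$. So only the usual independence of the resolution remains to check.
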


Note that it is enough to check purity of a submodule by tensoring with finitely presented modules.

\begin{proposition} \label{prop_pure_submodule_via_fp}
Let $N \xhookrightarrow{\psi} M$ be an injection of a submodule in an $R$-module $M$, such that $N \otimes Y \to M \otimes Y$ is a monomorphism for any finitely presented $R$-module $Y$. Then $N$ is pure submodule.
\end{proposition}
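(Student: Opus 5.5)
The plan is to reduce the statement to a direct-limit argument. The key facts are that tensor product commutes with filtered colimits and that every module is a filtered colimit of finitely presented modules.

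Fix an arbitrary $R$-module $X$. Since $R-\Mod$ is locally finitely presentable (Theorem~\ref{th_locpres_fg} and the surrounding discussion), write $X \cong \indlim[i \in I] Y_i$ as a filtered colimit of finitely presented modules $Y_i$. The functors $N \otimes -$ and $M \otimes -$ commute with all colimits, being left adjoints. Hence the morphism $\psi \otimes X \colon N \otimes X \to M \otimes X$ is identified with the filtered colimit of the morphisms $\psi \otimes Y_i \colon N \otimes Y_i \to M \otimes Y_i$. More precisely, there is a commutative square
\[
\begin{tikzcd}
\indlim[i] (N \otimes Y_i) \arrow[r] \arrow[d, "\cong"] & \indlim[i] (M \otimes Y_i) \arrow[d, "\cong"] \\
N \otimes X \arrow[r, "\psi \otimes X"] & M \otimes X
\end{tikzcd}
\]
in which the vertical arrows are the canonical isomorphisms. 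This square commutes by naturality of the canonical maps $\mathrm{can}_i$ in the first tensor variable.

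By hypothesis each $\psi \otimes Y_i$ is a monomorphism. Filtered colimits in $\mathbf{Ab}$ (indeed in $R-\Mod$, an AB5 category) are exact, so the colimit of a filtered system of monomorphisms is a monomorphism. Therefore the top arrow is injective, hence so is $\psi \otimes X$. Since $X$ was arbitrary, $N$ is a pure submodule of $M$.

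The only point requiring care is the compatibility in the square. One must check that the morphism induced on colimits by the natural transformation $\psi \otimes - $ restricted to the diagram $\{Y_i\}$ agrees, under the canonical isomorphisms, with $\psi \otimes X$. This follows from the universal property of the colimit: both composites agree after precomposition with each structure map $N \otimes Y_i \to N \otimes X$. I expect no genuine obstacle beyond this bookkeeping.
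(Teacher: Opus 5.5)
Your argument is correct and is essentially the paper's proof: write $X$ as a filtered colimit of finitely presented modules, commute the tensor product with the colimit, and use exactness of filtered colimits to see that the colimit of the monomorphisms $\psi \otimes Y_i$ is a monomorphism. Your version, with finitely presented modules $Y_i$ mapping to $X$, is the accurate formulation; the paper instead speaks of finitely presented \emph{submodules} of $X$, which does not work in general, since a finitely generated module that is not finitely presented is not a directed union of finitely presented submodules.
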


\begin{proof}
Let $N \otimes X \xrightarrow{\psi'} M \otimes X$ be the map obtained from $\psi$ by tensoring with arbitrary $R$-module $X$, and 
let $\{X_i\}_{i \in I}$ be a filtered system of finitely presented subobjects of $X$ such that $\indlim X_i = X$.
Since the tensor product commutes with colimits $\psi'$ is a filtered colimit of maps
$N \otimes X_i \to M \otimes X_i$ which are monomorphisms by the conditions, the claim now follows since filtered colimits are exact in $R-\Mod$.
\end{proof}

We also want to recall the result of \cite{Wurfel1973}.

\begin{proposition}\cite{Wurfel1973} \label{prop_Wurfel}
The canonical morphism $M \xrightarrow{\phi} M^{++}$ given by $\phi(m)(\alpha) = \alpha(m)$ for 
$ \alpha \in \Hom_{\mathbb{Z}}(M, \mathbb{Q}/\mathbb{Z})$ is an injection of a pure submodule. 
\end{proposition}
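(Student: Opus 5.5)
The plan is to prove the classical statement directly: the canonical map $\phi\colon M\to M^{++}$, $\phi(m)(\alpha)=\alpha(m)$, is injective, and for every right $R$-module $X$ (with $M$ a left module) the induced map $X\otimes_R M\to X\otimes_R M^{++}$ is injective as well. By Proposition~\ref{prop_pure_submodule_via_fp} we may restrict to finitely presented $X$. Injectivity of $\phi$ is the standard fact that $\mathbb{Q}/\mathbb{Z}$ is an injective cogenerator of abelian groups: for $m\neq 0$, the cyclic subgroup $\mathbb{Z}m$ admits a nonzero map to $\mathbb{Q}/\mathbb{Z}$, and this map extends to $M$ by divisibility.

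For purity I would use duality with character modules. A sequence of abelian groups is exact if and only if its $\mathbb{Q}/\mathbb{Z}$-dual is exact, so it is enough to show that
\[
(X\otimes_R M^{++})^+\to (X\otimes_R M)^+
\]
is surjective. Tensor--hom adjunction identifies this map with
\[
\Hom_R(X,M^{+++})\to\Hom_R(X,M^{+}),
\]
namely precomposition with $\phi^+\colon M^{+++}\to M^+$. The key point is that $\phi^+$ is a split epimorphism. Indeed, the canonical map $\phi_{M^+}\colon M^+\to (M^+)^{++}=M^{+++}$ satisfies $\phi_M^+\circ\phi_{M^+}=\Id_{M^+}$: for $\alpha\in M^+$ and $m\in M$ we get $(\phi_{M^+}(\alpha))(\phi_M(m))=\phi_M(m)(\alpha)=\alpha(m)$. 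The splitting is natural in the module structure, so $\phi_{M^+}$ is $R$-linear. Applying $\Hom_R(X,-)$ therefore gives a surjection, in fact for every $X$. Dualizing back, $X\otimes M\to X\otimes M^{++}$ is injective for all $X$, so the reduction to finitely presented $X$ is not even needed.

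The main obstacle is bookkeeping rather than substance. One has to check that the adjunction isomorphisms $(X\otimes_R N)^+\cong\Hom_R(X,N^+)$ are natural in $N$, so that the map induced by $\Id_X\otimes\phi$ really corresponds to $\Hom_R(X,\phi^+)$. One also has to keep the left/right module conventions for $M^+$ and $M^{+++}$ consistent. Finally, one should confirm that $\phi_M$ is $R$-linear for the induced left module structure on $M^{++}$.
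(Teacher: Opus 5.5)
Your argument is correct, but it takes a different route from the paper's.

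The paper first reduces to finitely presented $X$ via Proposition~\ref{prop_pure_submodule_via_fp}. It then applies the Cartan--Eilenberg isomorphism $\sigma_0$ of Proposition~\ref{prop_CartanEilenberg+Matlis}, with the injective group $\mathbb{Q}/\mathbb{Z}$, to identify $M^{++}\otimes X\cong\Hom(\Hom(X,M^+),\mathbb{Q}/\mathbb{Z})\cong(M\otimes X)^{++}$. Under this identification the map $M\otimes X\to M^{++}\otimes X$ becomes the canonical map of $M\otimes X$ into its double character module. That map is injective because $\mathbb{Q}/\mathbb{Z}$ is a cogenerator.

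You instead dualize once and use the triangle identity $\phi_M^+\circ\phi_{M^+}=\Id_{M^+}$ for the contravariant functor $(-)^+$. This makes $\Hom_R(X,\phi_M^+)$ split surjective for every $X$.

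Your route buys several things:
\begin{enumerate}
\item It needs neither the reduction to finitely presented modules nor the finite-presentation hypothesis under which $\sigma_0$ is an isomorphism.
\item It proves a stronger fact: $\phi^+$ is a split epimorphism, i.e.\ the character dual of $0\to M\to M^{++}$ splits. This is the standard characterisation of pure monomorphisms.
\end{enumerate}

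The paper's route is shorter only because Proposition~\ref{prop_CartanEilenberg+Matlis} is already available. It also leaves implicit a naturality check of the same kind as the one you flag: that the composite isomorphism carries the map induced by $\phi$ to the canonical map $\phi_{M\otimes X}$.

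One wording slip: the map $\Hom_R(X,M^{+++})\to\Hom_R(X,M^+)$ is $f\mapsto\phi^+\circ f$. That is postcomposition with $\phi^+$, not precomposition, as your next sentence (applying $\Hom_R(X,-)$) already indicates.
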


\begin{proof}
Clearly $M \xhookrightarrow{\phi} M^{++}$ is a submodule since $\mathbb{Q}/\mathbb{Z}$ is an injective cogenerator in $\mathbb{Z}-\Mod$.
Due to (\ref{prop_pure_submodule_via_fp}) it is enough to show that $M \otimes X \xrightarrow{\phi'} M^{++} \otimes X$ is a monomorphism for any finitely presented module $X$.
Note that $ M^{++} \otimes X \cong \Hom(\Hom(X, M^+), \mathbb{Q}/\mathbb{Z}) \cong (M \otimes X)^{++}$
where the first isomorphism is from (\ref{prop_CartanEilenberg+Matlis}). 
Now the inclusion  $M \otimes X \xrightarrow{\phi'} (M \otimes X)^{++}$ implies the claim.
\end{proof}

\begin{lemma} \cite{Matlis_1982} \label{lem_Matlis}
Suppose that each $A_\alpha$ is a flat $R$-module, and that $B_\alpha$ is a submodule of $A_\alpha$ such that $A_\alpha/B_\alpha$ is flat. Then
\[
\prod_\alpha A_\alpha \text{ is flat } \iff \prod_\alpha B_\alpha \text{ and } \prod_\alpha (A_\alpha/B_\alpha) \text{ are flat}.
\]
\end{lemma}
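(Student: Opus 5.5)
We must prove Matlis' lemma. Suppose each $A_\alpha$ is flat and $A_\alpha/B_\alpha$ is flat; then each $B_\alpha$ is flat, since a submodule with flat quotient is the kernel in a short exact sequence of flats, equivalently a pure submodule. Throughout we use the exact sequence
\[
0 \to \prod_\alpha B_\alpha \to \prod_\alpha A_\alpha \to \prod_\alpha (A_\alpha/B_\alpha) \to 0,
\]
which is exact because products are exact in module categories. The plan is to show that this sequence is pure exact, i.e.\ that $\prod B_\alpha$ is a pure submodule of $\prod A_\alpha$. Once purity is available, both implications follow from standard facts about flatness along pure exact sequences.

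\emph{Purity of the product sequence.} Since $A_\alpha/B_\alpha$ is flat, each sequence $0\to B_\alpha\to A_\alpha\to A_\alpha/B_\alpha\to 0$ stays exact after tensoring with any module, so each $B_\alpha\subset A_\alpha$ is pure. By Proposition~\ref{prop_pure_submodule_via_fp} it suffices to test the product sequence against finitely presented right modules $Y$. We use the characterisation that a short exact sequence is pure exact if and only if $\Hom(F,-)$ keeps it exact for every finitely presented $F$: the Hom-form follows from the tensor form by the standard Cohn argument, or by duality with character modules via Proposition~\ref{prop_CartanEilenberg+Matlis} in degree $0$. Since $\Hom(F,\prod_\alpha -)\cong\prod_\alpha\Hom(F,-)$ and products of surjections of abelian groups are surjective, the product sequence is pure exact.

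\emph{The implication $\Leftarrow$.} Suppose $\prod B_\alpha$ and $\prod(A_\alpha/B_\alpha)$ are flat. Then $\prod A_\alpha$ is an extension of two flat modules, and flat modules are closed under extensions: the $\Tor_1$ long exact sequence gives $\Tor_1(X,\prod A_\alpha)=0$ for every $X$.

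\emph{The implication $\Rightarrow$.} Suppose $\prod A_\alpha$ is flat. The purity snake argument from Definition~\ref{def_MEGA} shows that a pure submodule of a flat module is flat, so $\prod B_\alpha$ is flat. For the quotient, tensor the product sequence with an arbitrary $X$ and use the $\Tor$ long exact sequence:
\[
0=\Tor_1\Bigl(X,\prod_\alpha A_\alpha\Bigr)\to\Tor_1\Bigl(X,\prod_\alpha (A_\alpha/B_\alpha)\Bigr)\to X\otimes\prod_\alpha B_\alpha\to X\otimes\prod_\alpha A_\alpha .
\]
The last map is injective by purity, so the middle $\Tor_1$ vanishes and $\prod(A_\alpha/B_\alpha)$ is flat. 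Hence the only non-formal step is purity of the product sequence, which rests on the equivalence between tensor-purity and Hom-purity against finitely presented modules.
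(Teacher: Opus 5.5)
Your argument is correct. The paper gives no proof of this lemma: it is quoted from Matlis and used only in the step \textbf{15 $\Rightarrow$ 17} of Theorem~\ref{th_MEGA}, so there is no proof to compare with. Your route is the standard one: reduce everything to purity of $\prod_\alpha B_\alpha \subset \prod_\alpha A_\alpha$, then run the two $\Tor_1$ arguments.

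One step should be tightened. You use both directions of the equivalence between tensor-purity and Hom-purity: tensor $\Rightarrow$ Hom for each factor sequence, and Hom $\Rightarrow$ tensor for the product sequence. You only indicate a justification for the first.

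The second direction is the easy one. Write the quotient $C$ as a direct limit of finitely presented modules $F_i$. Each pull-back of the sequence along $F_i \to C$ splits, because $F_i \to C$ lifts to the middle term. The original sequence is therefore a direct limit of split sequences, and tensoring preserves its exactness.

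You can also bypass the Hom-characterisation entirely, using tools the paper already records:
\begin{itemize}
\item[(1)] For a finitely presented right module $Y$, Lemma~\ref{lem_stenstrom_fp} identifies $Y \otimes \prod_\alpha B_\alpha \to Y \otimes \prod_\alpha A_\alpha$, naturally, with $\prod_\alpha (Y \otimes B_\alpha) \to \prod_\alpha (Y \otimes A_\alpha)$.
\item[(2)] This map is injective because each factor is, since $\Tor_1(Y, A_\alpha/B_\alpha) = 0$.
\item[(3)] Proposition~\ref{prop_pure_submodule_via_fp} then yields purity.
\end{itemize}

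Finally, the flatness of the individual $A_\alpha$ plays no role in your argument. Only the flatness of the quotients $A_\alpha/B_\alpha$ is used.
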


The following lemmata of Stenstr{\"o}m~\cite{stenstrom1975rings} characterise finite generation and finite presentation of modules through the way direct products interact with tensor products.
Concretely, for a right $A$-module $M$ and a family $(L_i)_I$ of left $A$-modules there is a homomorphism
\[
\varphi: M \otimes_A \bigl( \prod_I L_i \bigr) \to \prod_I \bigl( M \otimes_A L_i \bigr),
\]
given by $\varphi(x \otimes (z_i)) = (x \otimes z_i)$. In particular, there is a homomorphism $\varphi: M \otimes A^I \to M^I$ given by $\varphi(x \otimes (a_i)) = (x a_i)$.

\begin{lemma} \cite{stenstrom1975rings} \label{lem_stenstrom_fg}
The following properties of the module $M$ are equivalent:
\begin{enumerate}
\item[(a)] $M$ is finitely generated.
\item[(b)] $\varphi: M \otimes_A (\prod_I L_i) \to \prod_I (M \otimes_A L_i)$ is an epimorphism for every family $(L_i)_I$ of left $A$-modules.
\item[(c)] $\varphi: M \otimes_A A^I \to M^I$ is an epimorphism for every set $I$.
\end{enumerate}
\end{lemma}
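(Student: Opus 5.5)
We prove the three conditions in the cycle (a)$\Rightarrow$(b)$\Rightarrow$(c)$\Rightarrow$(a). Throughout, $M$ is a right $A$-module, so $M\otimes_A -$ is right exact and commutes with arbitrary direct sums. The map $\varphi$ is natural in $M$, and when $M=A$ it is the identity $\prod_I L_i\to\prod_I L_i$. Since tensor products and products both commute with finite direct sums, $\varphi$ is also an isomorphism for $M=A^n$.

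\emph{(a)$\Rightarrow$(b).} If $M$ is finitely generated, choose an epimorphism $A^n\rightarrowdbl M$. Naturality of $\varphi$ gives a commutative square with top row $A^n\otimes_A\prod_I L_i\to M\otimes_A\prod_I L_i$ and bottom row $\prod_I(A^n\otimes_A L_i)\to\prod_I(M\otimes_A L_i)$. Both horizontal maps are surjective: the first by right exactness of the tensor product, the second because a product of surjections is surjective. The left vertical map $\varphi_{A^n}$ is an isomorphism. Hence $\varphi_M$ composed with the top map is surjective, so $\varphi_M$ is surjective.

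\emph{(b)$\Rightarrow$(c).} This is the special case $L_i=A$ for all $i$, using $M\otimes_A A\cong M$.

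\emph{(c)$\Rightarrow$(a).} This is the only step with content. Take the index set $I=M$ (the underlying set) and consider the element $(m)_{m\in M}\in M^I$. By (c) it has a preimage under $\varphi$, which is a finite sum $\sum_{k=1}^n x_k\otimes(a_{k,m})_{m\in M}$ with $x_k\in M$ and $a_{k,m}\in A$. Applying $\varphi$ and comparing the $m$-th coordinates gives $m=\sum_k x_k a_{k,m}$ for every $m\in M$. Therefore $x_1,\dots,x_n$ generate $M$.

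The only point needing care is the choice of $I$ as the whole set $M$, together with the fact that any element of a tensor product is a finite sum of elementary tensors. No results from earlier in the paper are required; the argument is elementary module theory.
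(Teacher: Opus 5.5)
Your argument is correct. It is the standard proof from Stenstr{\"o}m's book, which the paper cites without reproducing: (a)$\Rightarrow$(b) follows from naturality of $\varphi$ applied to a surjection $A^n \to M$, and (c)$\Rightarrow$(a) follows by taking $I = M$ and lifting the tautological element $(m)_{m \in M} \in M^M$.
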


\begin{lemma} \cite{stenstrom1975rings} \label{lem_stenstrom_fp}
    The following properties of the module $M$ are equivalent:
\begin{enumerate}
\item[(a)] $M$ is finitely presented.
\item[(b)] $\varphi: M \otimes_A (\prod_I L_i) \to \prod_I (M \otimes_A L_i)$ is an isomorphism for every family $(L_i)_I$ of left $A$-modules.
\item[(c)] $\varphi: M \otimes_A A^I \to M^I$ is an isomorphism for every set $I$.
\end{enumerate}
\end{lemma}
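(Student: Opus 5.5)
We have to prove Lemma~\ref{lem_stenstrom_fp}, the equivalence of (a), (b) and (c); we treat Lemma~\ref{lem_stenstrom_fg} as known. The plan is the cycle (a)$\Rightarrow$(b)$\Rightarrow$(c)$\Rightarrow$(a).

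For (a)$\Rightarrow$(b), choose a finite presentation $A^m \to A^n \to M \to 0$. Both functors
\[
M \mapsto M \otimes_A \prod_I L_i \quad\text{and}\quad M \mapsto \prod_I (M \otimes_A L_i)
\]
are right exact in $M$. The first is right exact because tensor is; the second because products of modules are exact. The map $\varphi$ is natural in $M$. For $M = A$ it is the identity, and it commutes with finite direct sums, so it is an isomorphism for $A^m$ and $A^n$. Applying the five lemma to the two presentation rows, which are right exact, gives that $\varphi_M$ is an isomorphism.

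The implication (b)$\Rightarrow$(c) is the special case $L_i = A$.

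For (c)$\Rightarrow$(a), first note that $\varphi$ is in particular an epimorphism for every $I$. By Lemma~\ref{lem_stenstrom_fg}, $M$ is therefore finitely generated. Choose an exact sequence $0 \to K \to A^n \to M \to 0$. Tensoring with $A^I$ and comparing with the $I$-th powers gives a commutative diagram with exact rows:
\[
\begin{tikzcd}
K \otimes A^I \arrow[r] \arrow[d, "\varphi_K"] & A^n \otimes A^I \arrow[r] \arrow[d, "\cong"] & M \otimes A^I \arrow[r] \arrow[d, "\cong"] & 0 \\
0 \arrow[r] & K^I \arrow[r] & (A^n)^I \arrow[r] & M^I \arrow[r] & 0
\end{tikzcd}
\]
The bottom row is exact because products are exact. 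The middle and right vertical maps are isomorphisms, the middle by the case $M = A^n$ and the right by hypothesis.

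This diagram chase is the key step. Take any element of $K^I$. Its image in $(A^n)^I$ has a preimage $y$ in $A^n \otimes A^I$, and $y$ maps to zero in $M \otimes A^I$. By exactness of the top row, $y$ comes from $K \otimes A^I$. Hence $\varphi_K$ is surjective for every set $I$.

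Applying Lemma~\ref{lem_stenstrom_fg}(c) to $K$ shows that $K$ is finitely generated. Then $M$ is finitely presented, by the folklore characterisation preceding Theorem~\ref{th_fg_fg_fp}, or directly from the presentation $A^k \to A^n \to M \to 0$.

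The main obstacle is the diagram chase just described. One has to notice that only surjectivity of $\varphi_K$ is needed, not injectivity, so the bottom row must be exact on the left; this holds because products of modules are exact. The rest is routine.
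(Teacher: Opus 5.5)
Your proof is correct. The paper gives no argument of its own for this lemma (it is quoted from Stenstr{\"o}m's book), and your route is the standard proof found there: the five lemma on a finite presentation for (a)$\Rightarrow$(b), and for (c)$\Rightarrow$(a) finite generation via Lemma~\ref{lem_stenstrom_fg}, then the chase showing $\varphi_K$ is surjective on the syzygy $K$.
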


We will need a general categorical lemma due to Krause \cite{Krause2022} about category
$\mod \CC$.

\begin{lemma} \cite{Krause2022} \label{lem_krause_wker}
Let $\CC$ be an essentially small additive category.
The category $\mod \CC$ is additive and all morphisms in 
$\mod \CC$ have cokernels. The category is abelian if and only if all 
morphisms in $\CC$ have weak kernels.
\end{lemma}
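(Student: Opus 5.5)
The statement to prove is Krause's Lemma~\ref{lem_krause_wker}: for an essentially small additive category $\CC$, the category $\mod \CC$ is additive and has cokernels, and it is abelian if and only if every morphism of $\CC$ has a weak kernel.

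Throughout I identify $\CC$ with the representable functors $\Hom_\CC(-,c)$ via Yoneda. These are finitely presented projective objects of $\Mod \CC$. Moreover, $F \in \Mod\CC$ lies in $\mod\CC$ if and only if it has a presentation
\[
\Hom(-,c_1) \to \Hom(-,c_0) \to F \to 0 .
\]
One direction is clear because representables are compact. For the other, $F$ is a filtered colimit of such cokernels, and compactness makes $F$ a retract of one of them. Since $\CC$ is additive, I use the additive closure of the representables so that retracts cause no trouble.

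\emph{Additivity and cokernels.} Finite direct sums of finitely presented objects are finitely presented, since their presentations add. For cokernels, take $\phi\colon F \to G$ in $\mod\CC$ with presentations $P_1\to P_0\to F$ and $Q_1\to Q_0\to G$. By projectivity, $\phi$ lifts to a map $P_0\to Q_0$. Then $\coker\phi$ is presented by $P_0\oplus Q_1 \to Q_0$. Alternatively one can cite Lemma~\ref{lem_ab_fp_fg_via_ker_coker}(3), since $\im\phi$ is finitely generated.

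\emph{Weak kernels imply abelian.} It suffices to show that $\mod\CC$ is closed under kernels in the abelian category $\Mod\CC$; then it is an abelian subcategory. Given $\phi\colon F\to G$ as above, I lift $\phi$ to $\phi_0\colon P_0\to Q_0$. Then $\ker\phi$ is computed from the maps $P_1\to P_0$ and $(\phi_0, Q_1\to Q_0)$ by diagram chasing. Concretely, $\ker\phi$ is a quotient of the kernel $K$ of
\[
P_0\oplus Q_1 \to Q_0 .
\]
The key point is that this kernel $K$, of a map between representables, is finitely generated. Indeed, a weak kernel $c\to a$ of $a\to b$ gives exactly an epimorphism $\Hom(-,c)\to \ker\bigl(\Hom(-,a)\to\Hom(-,b)\bigr)$. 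Now take a weak kernel $d\to c$ of $c\to a$. Its image is the kernel of $\Hom(-,c)\to\Hom(-,a)$, so
\[
\ker\bigl(\Hom(-,a)\to\Hom(-,b)\bigr)=\coker\bigl(\Hom(-,d)\to\Hom(-,c)\bigr)
\]
is finitely presented. Thus kernels of maps between representables are finitely presented. Finally, $\ker\phi$ is the image of $K\to P_0$, taken modulo the image of $P_1$. It is therefore a cokernel of a map between finitely presented objects (the image of $P_1$ in $K$ is hit by the representable $P_1$), hence finitely presented by the previous step.

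\emph{Abelian implies weak kernels.} Suppose $\mod\CC$ is abelian, and let $f\colon a\to b$. Take the kernel $K$ of $\Hom(-,f)$ inside $\mod\CC$. Because $\mod\CC$ is closed under cokernels in $\Mod\CC$, cokernels computed in $\mod\CC$ agree with those in $\Mod\CC$. Hence, by abelianness, images and therefore kernels agree too, so $K$ is the true kernel in $\Mod\CC$. Being finitely presented, $K$ receives an epimorphism from some $\Hom(-,c)$. The composite $\Hom(-,c)\to\Hom(-,a)$ is $\Hom(-,g)$ by Yoneda, and $g$ is a weak kernel of $f$.

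The step I expect to be the main obstacle is this last comparison: that the kernel in the abstract abelian structure of $\mod\CC$ coincides with the kernel in $\Mod\CC$. I would argue it via the epi-mono factorisation of $\Hom(-,f)$, using that epimorphisms and cokernels in $\mod\CC$ agree with those in $\Mod\CC$. The bookkeeping in the diagram chase for $\ker\phi$ is routine.
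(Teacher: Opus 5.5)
The paper does not prove this lemma; it is quoted from Krause. Your argument therefore has to stand on its own.

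\textbf{Weak kernels $\Rightarrow$ abelian.} This direction follows the standard route and is sound in substance. Closure of $\mod\CC$ under kernels in $\Mod\CC$ suffices, and your two-weak-kernel computation correctly shows that kernels and images of maps between representables are finitely presented.

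The last bookkeeping step is misstated, however. $P_1$ does not map into $K$. Moreover, $K$ modulo a lift of $P_1$ is not $\ker\phi$: you would also have to divide out $0\oplus\ker(Q_1\to Q_0)$. The correct statement is $\ker\phi\cong\coker(P_1\to L)$ with $L=\im(K\to P_0)$. You then still have to check that $L$ is finitely presented. It is, because $L$ is the image of the composite $\Hom(-,c)\twoheadrightarrow K\to P_0$, a map between representables, and your computation covers such images.

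\textbf{Abelian $\Rightarrow$ weak kernels: the genuine gap.} You deduce that the kernel of $\Hom(-,f)$ in $\mod\CC$ is its kernel in $\Mod\CC$ from two facts: cokernels (hence epimorphisms) agree, and $\mod\CC$ is abelian. These inputs do not imply it. Take a small abelian $\mathcal S$ and the functor $A\mapsto\Hom_{\mathcal S}(A,-)$ into the opposite of the category of additive functors $\mathcal S\to\mathbf{Ab}$. It is fully faithful, its image is an abelian full subcategory closed under cokernels with the same epimorphisms, yet it sends every non-split monomorphism to a non-monomorphism. So the monomorphism part of your epi--mono factorisation of $\Hom(-,f)$ in $\mod\CC$ has, on the grounds you list, no reason to be a monomorphism of $\Mod\CC$.

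The missing ingredient is the representables: they lie in $\mod\CC$ and detect exactness by Yoneda. Test the universal property of $k\colon K\to\Hom(-,a)$ in $\mod\CC$ against the objects $\Hom(-,c')$. By Yoneda this gives exactness of $0\to K(c')\to\Hom_\CC(c',a)\to\Hom_\CC(c',b)$ for every $c'$, so $K$ is the kernel in $\Mod\CC$. This uses only that the kernel exists in $\mod\CC$, not abelianness.

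You can also bypass the comparison entirely. Given $h\colon c'\to a$ with $fh=0$, factor $\Hom(-,h)$ through $K$ by its universal property in $\mod\CC$. Then lift along $\Hom(-,c)\twoheadrightarrow K$ using projectivity of $\Hom(-,c')$ in $\Mod\CC$. Yoneda yields $h=g\circ h'$, so $g$ is a weak kernel of $f$.
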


The following lemma characterizing subcategory $C_0 \subset R-\Mod$ (in the notation of Definition (\ref{def_MEGA})) is due to Hovey \cite{Hovey2001}.

\begin{lemma}\cite[Lemma 1.6.]{Hovey2001}
A ring $R$ is coherent if and only if $C_0 = R-\Mod^{f.p.}$.
\end{lemma}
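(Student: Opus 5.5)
We prove both directions of Hovey's lemma for $\CC_0$, the smallest full subcategory of $R-\Mod$ containing $R$ that is abelian (kernels and cokernels computed in $R-\Mod$) and closed under extensions. First we note that $\CC_0$ always sits inside a class of finitely presented modules once $R$ is coherent. Conversely, whenever $\CC_0$ equals $R-\Mod^{f.p.}$, the latter is abelian. So the statement splits into two inclusions plus the equivalence of Proposition~\ref{prop_loccoh=cohring}.

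($\Leftarrow$) Assume $\CC_0 = R-\Mod^{f.p.}$. Since $\CC_0$ is wide, it is abelian with kernels computed in $R-\Mod$. Hence $R-\Mod^{f.p.}$ is an abelian subcategory, so $R-\Mod$ is locally coherent. By Proposition~\ref{prop_loccoh=cohring}, $R$ is left coherent. The only point to check is that kernels in the subcategory agree with kernels in $R-\Mod$; this is built into the notion of a wide (exact abelian) subcategory.

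($\Rightarrow$) Assume $R$ is left coherent, so $R-\Mod^{f.p.}$ is abelian by Proposition~\ref{prop_loccoh=cohring}.
\begin{enumerate}
\item $R-\Mod^{f.p.}$ is closed under extensions by Lemma~\ref{lem_ab_fp_fg_via_ker_coker}(4). Together with coherence, this makes it a wide subcategory containing $R$, so $\CC_0 \subseteq R-\Mod^{f.p.}$.
\item For the reverse inclusion, any wide subcategory $\mathcal{W}$ containing $R$ is closed under finite direct sums, since these are split extensions. So $\mathcal{W}$ contains all $R^n$.
\item Let $F$ be finitely presented, with presentation $R^m \xrightarrow{f} R^n \to F \to 0$. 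Then $F = \coker f$ lies in $\mathcal{W}$, because $\mathcal{W}$ is closed under cokernels computed in $R-\Mod$.
\end{enumerate}
Hence $R-\Mod^{f.p.} \subseteq \CC_0$, and the two coincide. Step 3 uses no coherence; coherence enters only in step 1.

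The main obstacle is definitional rather than technical. One must pin down that ``wide subcategory generated by $R$'' means the intersection of all wide subcategories containing $R$, with kernels and cokernels computed in $R-\Mod$. With that convention, $\CC_0$ always contains $R-\Mod^{f.p.}$, and the lemma reduces to the statement that $R-\Mod^{f.p.}$ is itself wide exactly when $R$ is coherent.
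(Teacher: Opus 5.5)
Your proof is correct and follows the paper's argument (given in the proof of $2 \iff 5$ in Theorem~\ref{th_MEGA}): the direction $\Leftarrow$ holds because $\CC_0$ is abelian, and $\Rightarrow$ holds because $R-\Mod^{f.p.}$ contains $R$ and is closed under extensions, cokernels and, by coherence, kernels. Beyond the paper, you spell out the inclusion $R-\Mod^{f.p.} \subseteq \CC_0$ via presentations $R^m \to R^n \to F \to 0$, which the paper leaves implicit, and you use the minimality of $\CC_0$ where the paper describes it as built by iterating kernels, cokernels and extensions.
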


\begin{proof}[Proof of Theorem \ref{th_MEGA}]
The proof will go as follows
\begin{equation}
\begin{tikzcd}
	& 14 &&& 6 & 5 &&&&& \\
	&&&& 3 & 2 & 4 \\
	11 && 15 &&&&&&& 8 \\
	12 && 17 &&& 1 &&& 10 && 7 \\
	13 && 16 &&&&&&& 9
	\arrow[Rightarrow, from=1-2, to=3-1]
	\arrow[Rightarrow, from=1-2, to=3-3]
	\arrow[Rightarrow, from=1-5, to=1-6]
	\arrow[Rightarrow, from=2-6, to=1-5]
	\arrow[Rightarrow, 2tail reversed, from=2-6, to=1-6]
	\arrow[Rightarrow, 2tail reversed, from=2-6, to=2-5]
	\arrow[Rightarrow, 2tail reversed, from=2-6, to=2-7]
	\arrow[Rightarrow, 2tail reversed, from=3-1, to=4-1]
	\arrow[Rightarrow, from=3-3, to=4-3]
	\arrow[Rightarrow, from=3-10, to=4-9]
	\arrow[Rightarrow, from=3-10, to=4-11]
	\arrow[Rightarrow, from=4-1, to=5-1]
	\arrow[Rightarrow, from=4-3, to=4-6]
	\arrow[Rightarrow, from=4-6, to=1-2]
	\arrow[Rightarrow, 2tail reversed, from=4-6, to=2-6]
	\arrow[Rightarrow, from=4-6, to=3-10]
	\arrow[Rightarrow, from=4-6, to=5-3]
	\arrow[Rightarrow, from=4-9, to=5-10]
	\arrow[Rightarrow, from=4-11, to=5-10]
	\arrow[Rightarrow, from=5-1, to=5-3]
	\arrow[Rightarrow, from=5-3, to=4-3]
	\arrow[Rightarrow, from=5-10, to=4-6]
\end{tikzcd}.
\end{equation}

\textbf{1 $\iff$ 2} is the content of Proposition (\ref{prop_loccoh=cohring}).

\textbf{2 $\iff$ 3} is the content of Proposition (\ref{prop_loccoh=cohass}).

\textbf{2 $\Rightarrow$ 4}.
A finitely generated projective is in particular finitely presented, so \textbf{2} implies that the kernel of any map between finitely generated projectives is finitely presented.
One then only has to recall that every finitely presented module is a quotient of a finitely generated projective.

\textbf{4 $\Rightarrow$ 2}.
Every finitely presented module arises as the cokernel of a map between finitely generated projectives, which gives
\[
R - \mod \xrightarrow{\sim} \mod(R - \proj).
\]
Hence $R - \mod$ is abelian by Lemma (\ref{lem_krause_wker})

\textbf{2 $\iff$ 5}. This implication also appears in \cite[Lemma 1.6. ]{Hovey2001}.
$\CC_0$ is abelian, so \textbf{5 $\Rightarrow$ 2} is obvious.
Conversely one could observe that $\CC_0$ is obtained by sequentially taking kernels cokernels and extensions. 
Clearly $R \in R-\Mod^{f.p.}$ and $R-\Mod^{f.p.}$ is closed under extensions, cokernels.
So from \textbf{2} $R-\Mod^{f.p.}$ also closed under kernels implying $\CC_0 = R-\Mod^{f.p.}$

\textbf{6 $\Rightarrow$ 5}. This implication is from \cite[Proposition 5.1.]{k-th_of_n-coherent_rings}.
As in the above argument if $\mathcal{FP}_n(R)$ is abelian then $C_0 \subseteq \mathcal{FP}_n(R) \subseteq \mathcal{FP}_1(R)$, so $C_0 = \mathcal{FP}_1(R)$.

\textbf{2 $\Rightarrow$ 6}. The similar implication appears in \cite[Theorem 2.4.]{FPn_first}.
Since $R-\Mod^{f.p.}$ is abelian any presentation 
\[F_1 \to F_0 \to M \to 0\] of module $M$ could be extended on the left to 
\[F_2 \to F_1 \to F_0 \to M \to 0\] and thus $R-\Mod^{f.p.} = \mathcal{FP}_2(R)$.
By the same argument $\mathcal{FP}_n(R) = \mathcal{FP}_{n+1}(R)$ for any positive $n$ and
thereby $\mathcal{FP}_n(R) = \mathcal{FP}_{\infty}(R)$ is abelian.

\textbf{1 $\Rightarrow$ 8.} 
Left coherence of $R$ guarantees that each finitely presented left module $F$ has a projective resolution whose terms are finitely presented.  Realising $\Ext^n(F,-)$ as the $n$‑th cohomology of the Hom‑complex attached to such a resolution, one may invoke that direct limits are exact, hence commute with cohomology, and that they commute with $\Hom$ out of finitely presented modules; combining the two, $\xi_n$ turns out to be an isomorphism.

\textbf{8 $\Rightarrow$ 7} is obvious.

\textbf{7 $\Rightarrow$ 9.} 
Suppose $(M_i)$ is a direct system of FP‑injective left modules, so that $\Ext^1(F,M_i)=0$ for each finitely presented $F$ and therefore $\varinjlim_i\Ext^1(F,M_i)=0$.  Condition 7 then forces $\Ext^1(F,\varinjlim_i M_i)=0$, i.e.\ $\varinjlim_i M_i$ is FP‑injective.

\textbf{8 $\Rightarrow$ 10.} 
Assume each $M_i$ has FP‑injective dimension $\le n$, equivalently $\Ext^{n+1}(F,M_i)=0$ for all finitely presented $F$.  Taking the direct limit and applying 8 in degree $n+1$ yields $\Ext^{n+1}(F,\varinjlim_i M_i)=0$, whence $\varinjlim_i M_i$ too has FP‑injective dimension $\le n$.

\textbf{10 $\Rightarrow$ 9.} 
Specialise 10 to $n=0$.  Since FP‑injective dimension $\le 0$ is precisely FP‑injectivity, the case $n=0$ of 10 states exactly that a direct limit of FP‑injective modules remains FP‑injective, which is 9.

\textbf{9 $\Rightarrow$ 1.} 
Let $L$ be a finitely generated submodule of a finitely presented module $F$, and fix an arbitrary homomorphism $\alpha : L \to \varinjlim_i M_i$.

Form the direct system $\{E(M_i)\}$ of injective envelopes of the $M_i$.
Injectives are FP‑injective, so by \textbf{9} the colimit $\varinjlim_i E(M_i)$ is FP‑injective, and consequently $\alpha$ prolongs to a map $\beta : F \to \varinjlim_i E(M_i)$.  Finite presentation of $F$ lets $\beta$ factor through some $E(M_j)$.
The resulting composite $L \to F \to E(M_j) \to E(M_j)/M_j$ vanishes in the colimit, so after enlarging $j$ we may take it to be zero outright; then $L \to F \to E(M_j)$ runs through $M_j$ and provides a factorisation of $\alpha$ via $M_j$.  Hence $L$ is finitely presented, and $R$ is coherent.

\textbf{11 $\iff$ 12} is obvious due to (\ref{prop_Lambek}).

\textbf{12 $\Rightarrow$ 13}. 
 Let $M$ be a flat $R$-module. Its character module $M^{+}$ is then injective, so $M^{+++}$ is injective as well and therefore $M^{++}$ is flat.
 For the converse, if $M^{++}$ is flat then $M$, being a pure submodule of $M^{++}$ by (\ref{prop_Wurfel}), is flat too.

\textbf{13 $\Rightarrow$ 16}. Let $(M_i)_{i \in I}$ be a family of flat $R$-modules; we prove that $\prod_{i \in I} M_i$ is flat. The sum $\bigoplus_i M_i$ is flat, hence $(\bigoplus_i M_i)^{++} \cong (\prod_i M_i)^{++}$ is flat by (13). As $\bigoplus_i M_i^{++}$ embeds as a pure submodule of $\prod_i M_i^{++}$, the sequence $(\prod_i M_i^{++})^{+} \to (\bigoplus_i M_i^{++})^{+} \to 0$ splits, so $\prod_i M_i^{++} \cong (\bigoplus_i M_i^{++})^{+}$ is flat. Finally $\prod_i M_i$, being a pure submodule of $\prod_i M_i^{++}$, is flat as well.

\textbf{1 $\Rightarrow$ 14} Is immediate from (\ref{prop_CartanEilenberg+Matlis}).

\textbf{14 $\Rightarrow$ 15}.
For $N, E$ injective $R$-modules, from 14 one gets
$\Tor_R^1(\Hom_R(N, E), M) \cong \Hom_R(\Ext_R^1(M, N), E) \cong 0$. 

\textbf{15 $\Rightarrow$ 17}.
Let $E$ be a universal injective $R$-module and $H = \Hom_R(E, E)$. Since $R \subset H$ and, by assumption, $H$ is flat, we have $H/R$ flat. Let $\mathcal{U}$ be an index set, $H_\alpha = H$, $R_\alpha = R$ and $E_\alpha = E$ for all $\alpha \in \mathcal{U}$. Now
\[
\prod_{\alpha} H_\alpha \cong \Hom_R\Bigl(E, \prod_{\alpha} E_\alpha\Bigr)
\]
is flat by assumption. Hence it follows from Lemma (\ref{lem_Matlis}) that $\prod_{\alpha} R_\alpha$ is flat.

\textbf{1 $\Rightarrow$ 16}.
Suppose $(L_i)_I$ is a family of flat right $R$-modules. Lemma (\ref{lem_stenstrom_fp}) supplies, for every finitely generated left ideal $\mathfrak{a}$, monomorphisms
\[
\mathfrak{a} \otimes \bigl( \prod L_i \bigr) \to \prod \bigl( \mathfrak{a} \otimes L_i \bigr) \to \prod L_i
\]
from which $\prod L_i$ is seen to be flat.

\textbf{16 $\Rightarrow$ 17} is obvious.

\textbf{17 $\Rightarrow$ 1}.
Let $L$ be a finitely generated submodule of a finitely presented module $M$. For any index set $I$ there is a commutative square
\begin{equation}
\begin{tikzcd}
L \otimes A^I \arrow[r] \arrow[d, "\varphi^L"] & M \otimes A^I \arrow[d, "\varphi^M"] \\
L^I \arrow[r] & M^I
\end{tikzcd}
\end{equation}
whose horizontal maps are monomorphisms. As $\varphi^M$ is a monomorphism by Lemma (\ref{lem_stenstrom_fp}), so is $\varphi^L$, and therefore $L$ is finitely presented.
\end{proof}

Consider the following generalization of Chase criterion.

\begin{theorem}\cite{efimov_Th_of_heart}
Let $\CC$ be a compactly assembled additive category, and let $\mathcal{A} = \Fun^{\mathrm{cont},\mathrm{add}}(C, \mathbf{Ab})$ be the category of colimit preserving additive functors. The following are equivalent.
\begin{enumerate}
\item[(i)] $\mathcal{A}$ is coherently assembled.
\item[(ii)] $C$ has infinite products.
\end{enumerate}
\end{theorem}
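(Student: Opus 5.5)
The plan is to reduce to the compactly generated case, where the statement becomes a combination of Krause's Lemma~\ref{lem_krause_wker} with the classical fact that $\Ind$ of a small additive category has products exactly when that category has weak cokernels. The compactly assembled case is then obtained by a retract argument through $\tYY$.

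\emph{Compactly generated case.} Suppose $\CC=\Ind(\CC_0)$ for a small additive $\CC_0$. Colimit-preserving additive functors $\Ind(\CC_0)\to\mathbf{Ab}$ are the same as additive functors $\CC_0\to\mathbf{Ab}$, so
\[
\AA\simeq \Mod(\CC_0^{\mathrm{op}}).
\]
This category is locally finitely presentable and AB5. By Proposition~\ref{prop_loccoh=cohass}, $\AA$ is coherently assembled iff it is locally coherent, i.e.\ iff $\mod(\CC_0^{\mathrm{op}})$ is abelian. By Lemma~\ref{lem_krause_wker} this holds iff $\CC_0^{\mathrm{op}}$ has weak kernels, i.e.\ iff $\CC_0$ has weak cokernels. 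It remains to show that $\CC_0$ has weak cokernels iff $\Ind(\CC_0)$ has products.

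\begin{itemize}
\item[($\Rightarrow$)] Given objects $X_i\in\Ind(\CC_0)$, I would construct $\prod_i X_i$ as the ind-object corepresenting $\prod_i\Hom(c,X_i)$ on $\CC_0$. The weak cokernels are used to show that this functor $\CC_0\to\mathbf{Ab}$ is flat, i.e.\ that its category of elements is filtered: coequalising pairs of elements is exactly a weak cokernel statement.
\item[($\Leftarrow$)] Given $f\colon a\to b$ in $\CC_0$, consider the set of all $g\colon b\to c_g$ with $gf=0$, and form $\prod_g c_g$ in $\Ind(\CC_0)$. Since $\YY b$ is compact, the induced map $b\to\prod_g c_g$ factors through a compact stage, and that stage is a weak cokernel of $f$.
\end{itemize}

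This is the categorical form of the Chase equivalence \textbf{1}$\iff$\textbf{16} of Theorem~\ref{th_MEGA}, with $\CC_0=\proj$ and $\Ind(\CC_0)=$ flat modules.

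\emph{General case.} For compactly assembled $\CC$, the adjunctions $\tYY\dashv\colim\dashv\YY$ exhibit $\CC$ as a retract of $\Ind(\CC)$ in the world of colimit-preserving functors, since $\colim\circ\tYY\cong\Id$ by Remark~\ref{rem_tYY_basic}. Applying $\Fun^{\mathrm{cont},\mathrm{add}}(-,\mathbf{Ab})$, $\AA$ becomes a retract of $\Mod(\CC^{\mathrm{op}})$ (large), with both functors preserving colimits and the inclusion $\colim^*$ being fully faithful.

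\begin{itemize}
\item[($\Leftarrow$)] If $\CC$ has products, then so does $\Ind(\CC)$ restricted to the relevant $\omega_1$-small pieces (using Proposition~\ref{prop_compass_cardbound}). I would then show that $\tYY_\AA$ is computed by restricting the left exact $\tYY$ of the coherent presheaf category along the retraction, which makes $\tYY_\AA$ left exact.
\item[($\Rightarrow$)] If $\tYY_\AA$ is left exact, products in $\CC$ are built as follows. Write each $c_i$ via $\tYY c_i$, taken with compact transition maps (Lemma~\ref{lem_tYY_compact_transitions}). Take the product in $\Ind(\CC)$ of the compact stages, which exists by the argument above applied to the $\omega_1$-compact generators, and then apply $\colim$. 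Compactness of the transition morphisms is what makes this independent of the choices.
\end{itemize}

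The main obstacle is the general case, in particular transporting left exactness of $\tYY$ along the retraction and the size issues coming from $\Mod(\CC^{\mathrm{op}})$ being non-small. My first attempt will restrict to the essentially small subcategory of $\omega_1$-compact, i.e.\ compactly exhaustible, objects. I will then verify the weak-cokernel condition only for compact morphisms, replacing ``compact object'' by ``compact morphism'' throughout the $\Leftarrow$ argument above.
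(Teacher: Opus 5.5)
The paper gives no proof of this statement (it is quoted from Efimov), so your proposal has to stand on its own.

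\textbf{What works.} Your compactly generated case is fine, provided $\Fun^{\mathrm{cont},\mathrm{add}}$ is read as additive functors commuting with \emph{filtered} colimits; this is what the identification $\AA\simeq\Mod(\CC_0^{\mathrm{op}})$ uses. In the weak cokernel argument you must also pass to a later stage so that $w\circ f=0$, using compactness of $\YY a$.

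The implication (ii)$\Rightarrow$(i) in general can be completed along your lines:
\begin{enumerate}
\item Products in $\CC$ together with $\omega_1$-compactness give weak cokernels in $\CC^{\omega_1}$, by your own argument.
\item $\tYY$ lands in $\Ind(\CC^{\omega_1})$, because a compact morphism factors through a stage of any $\omega_1$-filtered presentation of its target.
\item Hence $\AA$ is a retract, via exact functors commuting with filtered colimits, of the locally coherent category $\mathcal M=\Fun^{\mathrm{add}}(\CC^{\omega_1},\mathbf{Ab})$. In particular $\AA$ is Grothendieck.
\item Proposition~\ref{prop_efimov_retraction} then concludes. Invoke it rather than asserting $\tYY_\AA\cong I(\tYY^*)\circ\tYY_{\mathcal M}\circ\colim^*$, which you have not justified.
\end{enumerate}

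\textbf{The gap: (i)$\Rightarrow$(ii).} Your construction never uses that $\tYY_\AA$ is left exact. As written it would give products in every compactly assembled additive category, which is false: take $\mathrm{Flat}\text{-}R$ for $R$ not left coherent.

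Concretely, the product in $\Ind(\CC^{\omega_1})$ of the stages exists only if $\CC^{\omega_1}$ has weak cokernels. Since $\colim\colon\Ind(\CC^{\omega_1})\to\CC$ is right adjoint to $\tYY$, it preserves products. So the existence of these products is equivalent to (ii) itself, and the step is circular.

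You cannot get weak cokernels in $\CC^{\omega_1}$ from (i) through the retraction either. Proposition~\ref{prop_efimov_retraction} pushes coherent assembly from $\mathcal M$ down to its retract $\AA$, never back up. Your closing plan, rerunning the $\Leftarrow$ argument with compact morphisms, again derives weak cokernels \emph{from} products.

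\textbf{What is missing.} You need a step where left exactness of $\tYY_\AA$ actually enters, in the spirit of \textbf{1}$\Rightarrow$\textbf{16} of Theorem~\ref{th_MEGA}. For instance, for $c_i\in\CC$ the evaluations $\mathrm{ev}_{c_i}\colon\AA\to\mathbf{Ab}$ are exact and commute with filtered colimits, since finite limits and colimits in $\AA$ are pointwise. The functor $\colim\circ I\bigl(\prod_i\mathrm{ev}_{c_i}\bigr)\circ\tYY_\AA$ inherits both properties once $\tYY_\AA$ is left exact.

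You would then still need a reconstruction result identifying $\CC$, via $c\mapsto\mathrm{ev}_c$, with such functors; the object corresponding to this one would be the product. For $\CC=\mathrm{Flat}\text{-}R$ this is the Eilenberg--Watts description of colimit-preserving functors $R\text{-}\Mod\to\mathbf{Ab}$ as $M\otimes_R-$, with exactness forcing $M$ to be flat. Nothing of this kind is in your proposal, so this direction is not proved.
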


Here the Chase criterion corresponds to the case $\CC = \mathrm{Flat} - R$, the (compactly generated) category of flat right $R$-modules, and $\mathcal{A} = R - \Mod$, the abelian category of left $R$-modules.

\subsection{Extensions of abelian categories} \label{ssec:ext}

We begin by recalling the notion of a \textit{comma category}.

\begin{definition} \label{def_comacat}
Let $\mathcal{A}$, $\mathcal{B}$, $\CC$ be categories and let 
$F: \mathcal{B} \to \mathcal{A}$ and $G: \CC \to \mathcal{A}$ be functors.

The \textit{comma category} $(F \downarrow G)$ is defined as follows:

\begin{itemize}
\item \textit{Objects}: triples $(B, C, f)$ where $B \in \mathcal{B}$, $C \in \CC$, and $f: F(B) \to G(C)$ is a morphism in $\mathcal{A}$.
\item \textit{Morphisms}: a morphism from $(B, C, f)$ to $(B', C', f')$ is a pair $(\beta: B \to B', \gamma: C \to C')$ such that the diagram

\begin{equation}
\begin{tikzcd}
F(B) \arrow[d, "F(\beta)"'] \arrow[r, "f"] & G(C) \arrow[d, "G(\gamma)"] \\
F(B') \arrow[r, "f'"] & G(C')
\end{tikzcd}
\end{equation}

commutes in $\mathcal{A}$.
\end{itemize}

Composition is defined componentwise: $(\beta',\gamma') \circ (\beta,\gamma) = (\beta'\circ\beta,\; \gamma'\circ\gamma)$.
\end{definition}

\begin{remark}\label{rem_triv_ext}
When $\mathcal{A} = \mathcal{B} = \CC$ and $G = \Id_{\mathcal{A}}$, the comma categories $(\Id_{\mathcal{A}} \downarrow F)$ and $(F \downarrow \Id_{\mathcal{A}})$ coincide with the left and right trivial extensions as developed by R. Fossum, P. Griffith and I. Reiten \cite{reiten}, where the notations are $F \rtimes \mathcal{A}$ and $\mathcal{A} \ltimes F$ respectively.

Note that due to \cite{reiten} when $\AA$ is abelian and $F$ is an additive endofunctor then 
$(\Id_{\mathcal{A}} \downarrow F)$ and $(F \downarrow \Id_{\mathcal{A}})$ are again additive.
\end{remark}

When $\AA$ is abelian and $F$ is an additive endofunctor it is immediately clear that both $(F \downarrow \Id_{\mathcal{A}})$ and $(\Id_{\mathcal{A}} \downarrow F)$ are additive categories. Furthermore, the category $(F \downarrow \Id_{\mathcal{A}})$ has kernels and $(\Id_{\mathcal{A}} \downarrow F)$ has cokernels.

\begin{proposition} \cite{reiten} \label{prop_comacat_ab}
\begin{enumerate}
\item[(a)] If $F$ is right exact, then $(F \downarrow \Id_{\mathcal{A}})$ is abelian.
\item[(b)] If $F$ is left exact, then $(\Id_{\mathcal{A}} \downarrow F)$ is abelian.
\end{enumerate}
\end{proposition}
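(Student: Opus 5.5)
We prove (a) directly; (b) is formally dual. The plan is to show that kernels and cokernels in $(F \downarrow \Id_{\AA})$ can be computed componentwise, and then that the image and coimage agree, which is again a componentwise check. Throughout, an object is a triple $(B, C, f)$ with $f\colon F(B) \to C$, and a morphism is a pair $(\beta,\gamma)$ satisfying $\gamma f = f' F(\beta)$.

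\emph{Kernels.} Given $(\beta,\gamma)\colon (B,C,f) \to (B',C',f')$, set $K_1 = \ker\beta$ and $K_2 = \ker\gamma$. The composite $F(K_1) \to F(B) \xrightarrow{f} C$ followed by $\gamma$ equals $f' F(\beta) F(\iota) = 0$. Hence it factors uniquely through $K_2$, giving a structure map $k\colon F(K_1) \to K_2$. The universal property of $(K_1,K_2,k)$ follows from the universal properties of $K_1$ and $K_2$ in $\AA$. This step needs no exactness of $F$, consistent with the paper's remark that kernels always exist.

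\emph{Cokernels.} Set $Q_1 = \coker\beta$ and $Q_2 = \coker\gamma$. We need a map $q\colon F(Q_1) \to Q_2$. Here right exactness enters: since $F$ is right exact, $F(B) \to F(B') \to F(Q_1) \to 0$ is exact. The map $F(B') \xrightarrow{f'} C' \to Q_2$ kills the image of $F(B)$, because $f' F(\beta) = \gamma f$ and $\gamma$ becomes zero in $Q_2$. So it descends uniquely to $q$. The universal property is again checked componentwise, using that $F(B') \to F(Q_1)$ is an epimorphism to get uniqueness and compatibility of induced maps.

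\emph{Image equals coimage.} With kernels and cokernels computed componentwise, the canonical map $\operatorname{coim}(\beta,\gamma) \to \operatorname{im}(\beta,\gamma)$ has components the canonical maps $\operatorname{coim}\beta \to \operatorname{im}\beta$ and $\operatorname{coim}\gamma \to \operatorname{im}\gamma$ in $\AA$. Both are isomorphisms because $\AA$ is abelian. A morphism in the comma category whose components are isomorphisms is itself an isomorphism, since the inverse pair automatically satisfies the compatibility square. Hence the category is abelian.

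\emph{Expected obstacle.} The main subtlety is in the cokernel step. Without right exactness, $F(\coker\beta)$ need not be a quotient of $F(B')$, so the structure map cannot be defined. Ensuring that the forgetful functors to $\AA$ preserve cokernels is exactly where the hypothesis is used.

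For (b), the dual argument applies to $(\Id_{\AA} \downarrow F)$, whose objects are $f\colon B \to F(C)$. Cokernels are always componentwise, and kernels require left exactness of $F$ so that $F(\ker\gamma) = \ker F(\gamma)$.
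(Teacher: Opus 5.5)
Your proof is correct. It is essentially the standard componentwise argument of Fossum--Griffith--Reiten, which the paper cites rather than reproduces: kernels are componentwise for any additive $F$, right exactness makes $F(B')\to F(\coker\beta)$ a cokernel of $F(\beta)$ so that cokernels are componentwise as well, and the canonical map $\mathrm{coim}\to\mathrm{im}$ then has isomorphisms as components (additivity comes from the paper's preceding remark). Part (b) is indeed formally dual, via $(\Id_{\AA}\downarrow F)^{\mathrm{op}}\simeq(F^{\mathrm{op}}\downarrow\Id_{\AA^{\mathrm{op}}})$, where $F^{\mathrm{op}}$ is right exact when $F$ is left exact.
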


\begin{definition} \cite{Pirashvili_recollement_ab} \label{def_recoll_situat}
A \textit{recollement situation} consists of three abelian categories $\AA,\, \BB,\, \CC$ together with six additive functors:

\begin{equation}\label{recoll_verydef}
\begin{tikzcd}
	\AA &&& \BB &&& \CC
	\arrow[""{name=0, anchor=center, inner sep=0}, "{i_*}"{description}, from=1-1, to=1-4]
	\arrow[""{name=1, anchor=center, inner sep=0}, "{i^*}"{description}, shift right=5, from=1-4, to=1-1]
	\arrow[""{name=2, anchor=center, inner sep=0}, "{i^!}"{description}, shift left=5, from=1-4, to=1-1]
	\arrow[""{name=3, anchor=center, inner sep=0}, "{j^*}"{description}, from=1-4, to=1-7]
	\arrow[""{name=4, anchor=center, inner sep=0}, "{j_*}"{description}, shift left=5, from=1-7, to=1-4]
	\arrow[""{name=5, anchor=center, inner sep=0}, "{j_!}"{description}, shift right=5, from=1-7, to=1-4]
	\arrow["\dashv"{anchor=center, rotate=-90}, shift left=5, draw=none, from=0, to=2]
	\arrow["\dashv"{anchor=center, rotate=-90}, shift left=5, draw=none, from=1, to=0]
	\arrow["\dashv"{anchor=center, rotate=-90}, shift left=5, draw=none, from=3, to=4]
	\arrow["\dashv"{anchor=center, rotate=-90}, shift left=5, draw=none, from=5, to=3]
\end{tikzcd}  
\end{equation}
that are adjoint, as shown and additionally
satisfy the following conditions:

\begin{enumerate}
\item \(i_*\) is an embedding onto the full subcategory of \(\mathcal{B}\) consisting of objects \(X\) such that \(j^*(X) = 0\).

(i.e. $i_*$ is embedding of the kernel of $j^*$)
\item The unit \(\Id_A \to i^! i_*\) and the counit \(i^* i_* \to \Id_A\) are isomorphisms.

(i.e. $i_*$ is fully faithful)
\item The unit \(\Id_C \to j^* j_!\) and the counit \(j^* j_* \to \Id_C\) are isomorphisms.

(i.e. $j_*$ and $j_!$ are fully faithful)
\end{enumerate}
\end{definition}

\begin{definition}\cite{Psaroudakis_modul} \label{def_equiv_recollements}
Two recollements $R(\mathcal{A}, \mathcal{B}, \CC)$ and $R(\mathcal{A}', \mathcal{B}', \CC')$ are \textbf{equivalent} if there are equivalence functors $\Phi : \mathcal{B} \to \mathcal{B}'$ and $\Theta : \CC \to \CC'$ such that the diagram below commutes up to natural equivalence, i.e. there is a natural equivalence of functors between $\Theta j^*$ and $j^*\Phi$.
\begin{equation}
\begin{tikzcd}
\mathcal{B} \arrow[r, "j^*"] \arrow[d, "\cong"'] & \CC \arrow[d, "\cong"] \\
\mathcal{B}' \arrow[r, "j'^*"] & \CC'
\end{tikzcd}
\end{equation}
\end{definition}

\begin{lemma} \cite{Psaroudakis_modul}
Two recollements $(\mathcal{A}, \mathcal{B}, \CC)$ and $(\mathcal{A}', \mathcal{B}', \CC')$ are equivalent if and only if there are exact equivalences $\Phi : \mathcal{B} \to \mathcal{B}'$, $\Psi : \mathcal{A} \to \mathcal{A}'$ and $\Theta : \CC \to \CC'$ such that the six diagrams associated to the six functors of the recollements commute up to natural equivalences.
\end{lemma}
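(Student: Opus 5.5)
The ``if'' direction is immediate: given exact equivalences $\Phi$, $\Psi$, $\Theta$ making all six diagrams commute up to natural isomorphism, the diagram for $j^*$ alone gives $\Theta j^* \cong j'^* \Phi$, which is exactly Definition~\ref{def_equiv_recollements}. For the ``only if'' direction, start from equivalences $\Phi$, $\Theta$ with $\Theta j^* \cong j'^*\Phi$. First note that any equivalence between abelian categories is exact, since it preserves all finite limits and colimits. So $\Phi$ and $\Theta$ are exact. The remaining work is to produce $\Psi$ and the other five commutativity isomorphisms.

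The squares for $j_!$ and $j_*$ come from uniqueness of adjoints. Choose quasi-inverses $\Phi^{-1}$, $\Theta^{-1}$. Then $\Phi j_! \Theta^{-1}$ is left adjoint to $\Theta j^* \Phi^{-1}$, and the latter is isomorphic to $j'^*$. Hence $\Phi j_! \Theta^{-1} \cong j'_!$, i.e.\ $\Phi j_! \cong j'_!\Theta$. Symmetrically, $\Phi j_* \Theta^{-1}$ is right adjoint to $\Theta j^*\Phi^{-1} \cong j'^*$, so $\Phi j_* \cong j'_*\Theta$.

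Next construct $\Psi$. If $j^*X = 0$, then $j'^*\Phi X \cong \Theta j^* X = 0$. Hence $\Phi$ carries $\Ker(j^*)$ into $\Ker(j'^*)$, and $\Phi^{-1}$ carries $\Ker(j'^*)$ back, since $\Theta^{-1} j'^* \cong j^*\Phi^{-1}$. So $\Phi$ restricts to an equivalence $\Ker(j^*) \simeq \Ker(j'^*)$. By condition (1) of Definition~\ref{def_recoll_situat}, $i_*$ and $i'_*$ are equivalences onto these kernels. Define
\[
\Psi := i'^*\,\Phi\, i_* \colon \AA \to \AA'.
\]
For $A \in \AA$ the object $\Phi i_* A$ lies in $\Ker(j'^*)$, hence in the essential image of $i'_*$. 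Because $i'_*$ is fully faithful, the adjunction morphism $\Phi i_* A \to i'_* i'^* \Phi i_* A$ is an isomorphism on this image. This gives a natural isomorphism $\Phi i_* \cong i'_* \Psi$. Moreover $\Psi$ is a composite of equivalences $\AA \simeq \Ker(j^*) \simeq \Ker(j'^*) \simeq \AA'$, so it is an equivalence, and hence exact.

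Finally, uniqueness of adjoints gives the last two squares. $\Psi i^* \Phi^{-1}$ is left adjoint to $\Phi i_* \Psi^{-1} \cong i'_*$, so $\Psi i^* \cong i'^*\Phi$. Likewise $\Psi i^! \Phi^{-1}$ is right adjoint to $i'_*$, so $\Psi i^! \cong i'^!\Phi$.

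The step needing the most care is the construction of $\Psi$: one must check that the identification of the essential image of $i'_*$ with $\Ker(j'^*)$ (closed under isomorphisms) yields a genuinely natural isomorphism $\Phi i_* \cong i'_*\Psi$. The approach is to use the unit of $i'^* \dashv i'_*$ restricted to that image, where it is invertible.
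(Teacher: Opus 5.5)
Your argument is correct. The paper gives no proof of this lemma and only cites Psaroudakis--Vit\'oria, so there is no argument in the paper to compare against. Your route is the standard one, and each step checks out:
\begin{itemize}
\item Conjugating by the equivalences and using uniqueness of adjoints gives the $j_!$ and $j_*$ squares.
\item $\Phi$ restricts to an equivalence $\Ker(j^*)\simeq\Ker(j'^*)$. This defines $\Psi = i'^*\Phi i_*$, with $\Phi i_* \cong i'_*\Psi$ given by the unit of $i'^*\dashv i'_*$. That unit is invertible on the essential image of $i'_*$, by the triangle identity and full faithfulness of $i'_*$.
\item Uniqueness of adjoints then gives the $i^*$ and $i^!$ squares.
\end{itemize}
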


Unless otherwise stated, we denote by $\mu : j_! j^* \longrightarrow \Id_{\mathcal{B}}$, resp. $\kappa : i_* i^! \longrightarrow \Id_{\mathcal{B}}$, the counit of the adjoint pair $(j_!, j^*)$, resp. $(i_*, i^!)$, and by $\lambda : \Id_{\mathcal{B}} \longrightarrow i_* i^*$, resp. $\nu : \Id_{\mathcal{B}} \longrightarrow j_* j^*$, the unit of the adjoint pair $(i^*, i_*)$, resp. $(j^*, j_*)$.

We want to mention numerous remarks due to \cite{PSAROUDAKIS_homolog_theor} about some properties of a recollement situation.

\begin{remark} \cite[Remark 2.2.]{PSAROUDAKIS_homolog_theor} \label{rem_trivia-recollement}
Every recollement situation enjoys the following properties.

\begin{enumerate}
\item Both $j^* : \mathcal{B} \to \CC$ and $i_* : \mathcal{A} \to \mathcal{B}$ are exact functors.
\item One has the vanishing $i^* j_! = i^! j_* = 0$ of the composites.
\item Through $i_*$ the category $\mathcal{A}$ becomes equivalent to the Serre subcategory $\Ker j^* = \Im i_*$ of $\mathcal{B}$; from now on we treat this equivalence as an identification.
\item As the exact functor $j^*$ carries a fully faithful left adjoint $j_!$ and a fully faithful right adjoint $j_*$, the subcategory $\mathcal{A}$ is at once localizing and colocalizing in $\mathcal{B}$, and the quotient satisfies $\mathcal{B}/\mathcal{A} \simeq \CC$. Consequently each recollement $(\mathcal{A}, \mathcal{B}, \CC)$ gives rise to a short exact sequence of abelian categories $0 \to \mathcal{A} \to \mathcal{B} \to \CC \to 0$.
\end{enumerate}
\end{remark}

The next remark records that a recollement situation $(\AA,\, \BB,\, \CC)$ is already encoded by its right-hand data, with $\AA$ recovered as $\Ker j^*$.

\begin{remark} \cite{PSAROUDAKIS_homolog_theor, ladders} 
Suppose $j^* : \mathcal{B} \to \CC$ is an exact functor of abelian categories sitting inside an adjoint triple $(j_!, j^*, j_*)$ in which at least one of $j_!$, $j_*$ is fully faithful; recall that $j_!$ is fully faithful precisely when $j_*$ is. Write $\mu : j_! j^* \to \Id_{\mathcal{B}}$ for the counit of $(j_!, j^*)$ and $\nu : \Id_{\mathcal{B}} \to j_* j^*$ for the unit of $(j^*, j_*)$. For an arbitrary $B \in \mathcal{B}$ the prescriptions
\[
i^*(B) := \Coker \mu_B, \qquad i^!(B) := \Ker \nu_B
\]
are readily seen to define functors $i^* : \mathcal{B} \to \Ker j^*$ and $i^! : \mathcal{B} \to \Ker j^*$. A direct verification shows that $(i^*, i_*, i^!)$ forms an adjoint triple, with $i_* : \Ker j^* \to \mathcal{B}$ the inclusion; thus $(\Ker j^*, \mathcal{B}, \CC)$ is itself a recollement of abelian categories.

It then remains to match this recollement against an arbitrary one. Given a recollement $(\mathcal{A}, \mathcal{B}, \CC)$ and the recollement $(\Ker j^*, \mathcal{B}, \CC)$ produced from the right-hand adjoint triple as above, the two are equivalent in the sense of Definition (\ref{def_equiv_recollements}).

In other words, up to equivalence one can rebuild the original recollement $(\mathcal{A}, \mathcal{B}, \CC)$ out of the adjoint triple $(j_!, j^*, j_*)$ alone, which supplies a second, equivalent route to the definition of a recollement of abelian categories.
\end{remark}

\begin{remark} \cite[Remark 2.4.]{PSAROUDAKIS_homolog_theor} 
Let $\mathcal{B}$ be an abelian category equipped with a Serre subcategory $\mathcal{A}$, so that there is an exact sequence of abelian categories $0 \to \mathcal{A} \to \mathcal{B} \to \mathcal{B}/\mathcal{A} \to 0$. Suppose in addition that $\mathcal{B}$ is a Grothendieck category admitting projective covers and that $\mathcal{A}$ is closed under products and coproducts. Under these hypotheses \cite{KRAUSE_spec_loc_coh_cat} ensures that the quotient functor $j^* : \mathcal{B} \to \mathcal{B}/\mathcal{A}$ possesses both a left adjoint $j_!$ and a right adjoint $j_*$. Remark (\ref{rem_trivia-recollement}) then yields the recollement of abelian categories $(\mathcal{A}, \mathcal{B}, \mathcal{B}/\mathcal{A})$.
\end{remark}

The proposition below goes back to Franjou and Pirashvili \cite{Pirashvili_recollement_ab}; we reproduce the argument from \cite{PSAROUDAKIS_homolog_theor}.

\begin{proposition} \cite{Pirashvili_recollement_ab} \label{prop_sess_of_recollement}
Retaining the recollement situation $(\AA, \BB, \CC)$ as above, fix $B \in \mathcal{B}$. Then there are exact sequences
\[
0 \to \Ker \mu_B \to j_! j^*(B) \xrightarrow{\mu_B} B \xrightarrow{\lambda_B} i_* i^*(B) \to 0
\]
and
\[
0 \to i_* i^!(B) \xrightarrow{\kappa_B} B \xrightarrow{\nu_B} j_* j^*(B) \to \Coker \nu_B \to 0,
\]
in which the objects $\Ker \mu_B$ and $\Coker \nu_B$ belong to $i_*(\mathcal{A})$.
\end{proposition}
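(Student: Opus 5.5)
We split the proof into the two exact sequences and their membership claims. For the first, we start from the counit $\mu_B\colon j_!j^*(B)\to B$ and prove two things: that $\Coker\mu_B\cong i_*i^*(B)$ with $\lambda_B$ as the cokernel map, and that $\Ker\mu_B$ lies in $i_*(\AA)=\Ker j^*$.

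The membership claim is formal. Applying the exact functor $j^*$ (Remark~\ref{rem_trivia-recollement}(1)) to $\mu_B$ gives $j^*(\mu_B)$. The triangle identity shows
\[
j^*(\mu_B)\circ(\text{unit})_{j^*B}=\Id_{j^*B},
\]
and the unit $\Id_\CC\to j^*j_!$ is an isomorphism since $j_!$ is fully faithful. Hence $j^*(\mu_B)$ is an isomorphism. By exactness of $j^*$, both $j^*(\Ker\mu_B)$ and $j^*(\Coker\mu_B)$ vanish, so both objects lie in $\Ker j^*=i_*(\AA)$ by condition (1) of Definition~\ref{def_recoll_situat}.

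To identify the cokernel, we check the universal property of $\Coker\mu_B$ in $\Ker j^*$ against that of $i^*$. For $A\in\AA$, we want
\[
\Hom_\BB(\Coker\mu_B,i_*A)\cong\Hom_\BB(B,i_*A).
\]
A morphism $B\to i_*A$ kills the image of $\mu_B$, because its composite with $\mu_B$ corresponds under adjunction to a map $j^*B\to j^*i_*A=0$. Therefore
\[
\Hom(\Coker\mu_B,i_*A)=\{g\colon B\to i_*A : g\mu_B=0\}=\Hom(B,i_*A)\cong\Hom_\AA(i^*B,A).
\]
Since $\Coker\mu_B=i_*A_0$ and $i_*$ is fully faithful, Yoneda gives $A_0\cong i^*B$. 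The induced map $B\to\Coker\mu_B\cong i_*i^*B$ corresponds to the identity, so it is the unit $\lambda_B$. This yields the first sequence.

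The second sequence is dual. We apply the same reasoning to the unit $\nu_B\colon B\to j_*j^*B$: $j^*(\nu_B)$ is an isomorphism because the counit $j^*j_*\to\Id_\CC$ is invertible, so $\Ker\nu_B$ and $\Coker\nu_B$ lie in $i_*(\AA)$. Maps $i_*A\to B$ compose to zero with $\nu_B$, so $\Ker\nu_B$ represents $\Hom(i_*A,B)\cong\Hom(A,i^!B)$, giving $\Ker\nu_B\cong i_*i^!B$ with inclusion $\kappa_B$.

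I expect the only delicate step to be confirming that the induced maps are exactly $\lambda_B$ and $\kappa_B$, rather than merely abstract isomorphisms. This should follow by tracing the identity morphism through the adjunction bijections.
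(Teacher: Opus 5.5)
Your argument is correct, but you identify $\Coker\mu_B$ by a different route from the paper.

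The paper's route is functorial. It applies the right exact functor $i_*i^*$ to $j_!j^*(B)\xrightarrow{\mu_B}B\to\Coker\mu_B\to 0$ and kills the first term using $i^*j_!=0$. It then uses that $\lambda$ is invertible on $\Coker\mu_B\in i_*(\AA)$, and naturality of $\lambda$ gives $i_*i^*(B)\cong\Coker\mu_B$ compatibly with $\lambda_B$.

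Your route is by representability. You show that the cokernel projection $c\colon B\to\Coker\mu_B$ is a reflection of $B$ into $i_*(\AA)$: every map $B\to i_*A$ kills $\Im\mu_B$ because $j^*i_*=0$. Yoneda then gives $\lambda_B=i_*(\psi)\circ c$ for an isomorphism $\psi$. This settles the step you flagged as delicate, since it is exactly what makes $\lambda_B$ an epimorphism with kernel $\Im\mu_B$.

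What each approach buys:
\begin{itemize}
\item Your argument needs only $j^*i_*=0$, which is immediate from condition (1). The paper instead uses $i^*j_!=0$, which is itself deduced from $j^*i_*=0$ by adjunction.
\item You supply the triangle-identity argument that $j^*(\mu_B)$ and $j^*(\nu_B)$ are isomorphisms. The paper uses this tacitly when it writes $j^*(\Ker\mu_B)=0=j^*(\Coker\mu_B)$.
\item The paper's version is shorter once right exactness of $i_*i^*$ and $i^*j_!=0$ are taken for granted. Yours is more self-contained and dualises verbatim to $\kappa_B$, as you indicate.
\end{itemize}
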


\begin{proof} \cite[Proposition 2.6.]{PSAROUDAKIS_homolog_theor}
The counit $\mu_B : j_! j^*(B) \to B$ of the pair $(j_!, j^*)$ yields the exact sequence
\[
0 \to \Ker \mu_B \to j_! j^*(B) \to B \to \Coker \mu_B \to 0
\]
in $\mathcal{B}$. Pushing it through the exact functor $j^* : \mathcal{B} \to \CC$ gives $j^*(\Ker \mu_B) = 0 = j^*(\Coker \mu_B)$, so that both $\Ker \mu_B$ and $\Coker \mu_B$ lie in $i_*(\mathcal{A})$.
Writing $\Coker \mu_B = i_*(A)$ with $A \in \mathcal{A}$, property (i) supplies the isomorphism $i_* i^*(\Coker \mu_B) \simeq \Coker \mu_B$.
Right exactness of $i_* i^*$ combined with $i^* j_! = 0$ now forces the claim through the commutative diagram
\begin{equation}
\begin{tikzcd}
j_! j^*(B) \arrow[r, "\mu_B"] & B \arrow[r] \arrow[d, "\lambda_B"] & \Coker \mu_B \arrow[r] \arrow[d, "\lambda_{\Coker \mu_B}"] & 0 \\
0 \arrow[r] & i_* i^*(B) \arrow[r, "\sim"] & i_* i^*(\Coker \mu_B) \arrow[r] & 0
\end{tikzcd}
\end{equation}
Exactness of the remaining sequence $0 \to i_* i^!(B) \xrightarrow{\kappa_B} B \xrightarrow{\nu_B} j_* j^*(B) \to \Coker \nu_B \to 0$ is obtained dually.
\end{proof}

In certain cases the terms $\Ker \mu_B$ and $\Coker \nu_B$ occurring in the sequences of (\ref{prop_sess_of_recollement}) admit an explicit description.

\begin{proposition} \cite{Pirashvili_recollement_ab} \label{prop_pirash_recol_sleva_iLi}
Take $A \in \Ker i^*$. Then the counit $\epsilon_A : j_! j^* A \to A$ is an epimorphism whose kernel sits in $i_* \mathcal{A}$. In fact, whenever $A$ has enough projectives one obtains a short exact sequence
\[
0 \to i_* (L_1 i^*) A \to j_! j^* A \xrightarrow{\epsilon_A} A \to 0.
\]
\end{proposition}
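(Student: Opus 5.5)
The statement is Proposition~\ref{prop_pirash_recol_sleva_iLi}: for $A \in \Ker i^*$ the counit $\epsilon_A \colon j_! j^* A \to A$ is an epimorphism with kernel in $i_*\AA$, and $\Ker \epsilon_A \cong i_*(L_1 i^*)A$ when $\BB$ has enough projectives. I would first apply Proposition~\ref{prop_sess_of_recollement} to $B = A$. Its first sequence reads $0 \to \Ker\mu_A \to j_!j^*A \xrightarrow{\mu_A} A \to i_*i^*A \to 0$. Since $i^*A = 0$, the counit $\epsilon_A = \mu_A$ is an epimorphism, and $\Ker\mu_A$ lies in $i_*\AA$, which gives the first claim. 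It remains to identify $\Ker\mu_A$ with $i_*L_1 i^*(A)$.

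I would apply the left derived functors of the right exact functor $i^*$ to the short exact sequence $0 \to K \to j_!j^*A \to A \to 0$, where $K = \Ker\mu_A$. This produces the exact sequence
\[
L_1 i^*(j_!j^*A) \to L_1 i^*(A) \to i^*K \to i^* j_! j^* A \to i^*A \to 0 .
\]
We have $i^*j_! = 0$ by Remark~\ref{rem_trivia-recollement}, and $i^*K \cong K$ because $K \in i_*\AA$ and the counit $i^*i_* \to \Id$ is an isomorphism. So the sequence gives $L_1 i^*(A) \to K \to 0$, and it suffices to show that $L_1 i^*(j_!j^*A) = 0$. Injectivity of $L_1 i^*(A)\to K$ would then follow.

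The key step, and the one I expect to be the main obstacle, is the vanishing $L_1 i^*(j_! C) = 0$ for $C = j^*A \in \CC$. This is where enough projectives enter, but a hypothesis on $\CC$ is also needed, and I would state this as the precise reading of the proposition. I would assume $\CC$ has enough projectives and that $j_!$ is exact, or at least sends some projective presentation of $C$ to a projective presentation in $\BB$; $j_!$ preserves projectives automatically, being left adjoint to the exact functor $j^*$. Choose $0 \to \Omega \to P \to C \to 0$ with $P$ projective in $\CC$ and apply $j_!$. If $j_!$ is exact, then $0 \to j_!\Omega \to j_!P \to j_!C \to 0$ is exact with $j_!P$ projective. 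The long exact sequence then gives $0 = L_1 i^*(j_!P) \to L_1 i^*(j_!C) \to i^*j_!\Omega = 0$, so the vanishing holds.

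If $j_!$ is not exact, I would instead do dimension-shifting directly in $\BB$. Take a projective cover or presentation $Q \to A$ in $\BB$ and compare it with $j_!j^*Q \to j_!j^*A$ via naturality of $\mu$, chasing the induced map of kernels. I expect this to need either the extra hypothesis above or a more careful diagram chase. With that vanishing in hand, the long exact sequence yields $\Ker\epsilon_A \cong i_*L_1 i^*(A)$, which is the claimed short exact sequence.
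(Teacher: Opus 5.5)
Your reduction is correct, and it follows the route the paper uses for the dual Proposition~\ref{prop_pirash_recol_sprava_iRi}; the paper gives no separate proof of this statement. Proposition~\ref{prop_sess_of_recollement} makes $\epsilon_A=\mu_A$ an epimorphism with kernel $K\in i_*\AA$. The long exact sequence for $L_\bullet i^*$, together with $i^*j_!=0$ and $i^*i_*\cong\Id_\AA$, then reduces $K\cong i_*L_1i^*(A)$ to the vanishing $L_1i^*(j_!C)=0$ for $C=j^*A$.

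The gap is exactly there. You do not prove this vanishing under the hypothesis of the statement, which is only enough projectives in $\BB$. Instead you add hypotheses ($\CC$ has enough projectives and $j_!$ is exact, or preserves a projective presentation of $C$) and claim that something of this kind is needed. That claim is false. Your fallback (``dimension-shifting in $\BB$ \dots\ a more careful diagram chase'') is not an argument. Since $j_!$ is not exact in general (e.g.\ $Re\otimes_{eRe}-$ in Example~\ref{ex_idemp_ring_recoll}), your proof only covers a special case.

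The missing idea is that $\Ext^1_\BB(j_!C,i_*A')=0$, i.e.\ every extension $0\to i_*A'\to E\xrightarrow{p}j_!C\to 0$ splits:
\begin{itemize}
\item $j^*p$ is an isomorphism, because $j^*$ is exact and $j^*i_*=0$.
\item By naturality of $\mu$, $p\circ\mu_E=\mu_{j_!C}\circ j_!j^*(p)$.
\item $\mu_{j_!C}$ is invertible because $j_!$ is fully faithful, so $p\circ\mu_E$ is invertible and $s:=\mu_E\circ(p\circ\mu_E)^{-1}$ is a section of $p$.
\end{itemize}

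To get the vanishing, take $0\to W\xrightarrow{\iota}Q\to j_!C\to 0$ with $Q$ projective in $\BB$, so that $L_1i^*(j_!C)=\Ker(i^*\iota)$. Push this sequence out along the unit $\lambda_W\colon W\to i_*i^*W$. The result is an extension of $j_!C$ by $i_*i^*W$, hence split, so $\lambda_W=r\circ\iota$ for some $r\colon Q\to i_*i^*W$. Now $i^*\lambda_W$ is an isomorphism, by the triangle identity and $i^*i_*\cong\Id_\AA$. Hence $i^*\iota$ is a split monomorphism and $L_1i^*(j_!C)=0$. Plugging this into your long exact sequence finishes the proof, with no condition on $\CC$ or $j_!$.

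The paper's proof of the dual statement uses $R^1i^!(j_*C)=0$ without comment; the dual splitting argument supplies it. So your instinct that this vanishing is the crux was sound.
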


\begin{proposition} \cite{Pirashvili_recollement_ab} \label{prop_pirash_recol_sprava_iRi}
Take $A \in \Ker i^!$. Then the unit $\eta_A : A \to j_* j^* A$ is a monomorphism whose cokernel sits in $i_* \mathcal{A}$. Indeed, provided $A$ has enough injectives, there is a short exact sequence
\[
0 \to A \xrightarrow{\eta_A} j_* j^* A \to i_* (R^1 i^!) A \to 0.
\]
\end{proposition}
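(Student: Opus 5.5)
The first claim comes from the second exact sequence of Proposition~\ref{prop_sess_of_recollement}, and the cokernel is then identified by applying $i^!$, dually to Proposition~\ref{prop_pirash_recol_sleva_iLi}.

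\textbf{Step 1: $\eta_A$ is a monomorphism with cokernel in $i_*\AA$.} For $A \in \Ker i^!$ the second sequence of Proposition~\ref{prop_sess_of_recollement} reads
\[
0 \to i_* i^!(A) = 0 \to A \xrightarrow{\nu_A} j_* j^*(A) \to \Coker \nu_A \to 0 .
\]
So $\eta_A = \nu_A$ is a monomorphism, and $\Coker \nu_A \cong i_*(A')$ for some $A' \in \AA$. This part needs no injectives.

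\textbf{Step 2: identify $A'$ with $R^1 i^!(A)$.} Assume $\BB$ has enough injectives, so that $R^\bullet i^!$ is defined. Apply $i^!$ to the short exact sequence $0 \to A \to j_*j^*A \to i_*A' \to 0$. Using $i^! j_* = 0$ (Remark~\ref{rem_trivia-recollement}) and $i^! i_* \cong \Id_\AA$, the long exact sequence gives
\[
0 \to i^!A \to 0 \to A' \to R^1 i^!(A) \to R^1 i^!(j_* j^* A).
\]
Since $i^!A = 0$, the proof reduces to the vanishing $R^1 i^!(j_* X) = 0$ for every $X \in \CC$. Then $A' \cong R^1 i^! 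A$, and hence $\Coker \eta_A \cong i_* R^1 i^!(A)$.

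\textbf{Step 3: the vanishing $R^1 i^!(j_*X)=0$.} I expect this to be the only real obstacle. I would first show $\Ext^1_\BB(i_*Y, j_*X) = 0$ for all $Y \in \AA$. Take an extension $0 \to j_*X \to E \to i_*Y \to 0$ and apply the exact functor $j^*$. Because $j^* i_* = 0$, the map $X \cong j^*j_*X \to j^*E$ is an isomorphism. Then $\nu_E \colon E \to j_*j^*E \cong j_*X$ is a retraction of the inclusion $j_*X \to E$, by naturality of $\nu$ and the isomorphism $\nu_{j_*X}$, so the extension splits.

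Next embed $j_*X$ into an injective, $0 \to j_*X \to I \to Q \to 0$. Since $R^1 i^!(I) = 0$ and $i^! j_* X = 0$, it suffices to show that $i^!I \to i^!Q$ is an epimorphism. By adjunction $\Hom_\AA(Y, i^!-) \cong \Hom_\BB(i_*Y, -)$, and the sequence
\[
\Hom(i_*Y, I) \to \Hom(i_*Y, Q) \to \Ext^1(i_*Y, j_*X) = 0
\]
shows that every map $i_*Y \to Q$ lifts to $I$. Taking $Y = i^!Q$ and the counit $\kappa_Q \colon i_*i^!Q \to Q$, the lift yields a section of $\Hom(i^!Q, i^!I) \to \Hom(i^!Q, i^!Q)$ at $\Id$. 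Hence $i^!I \to i^!Q$ is a split epimorphism, and so $R^1 i^!(j_*X) = 0$, which completes the argument.
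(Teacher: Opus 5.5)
Your proof is correct and follows the paper's route: collapse the second unit sequence using $i^!A = 0$, then apply $i^!$ and read off the long exact sequence. You go beyond the paper in Step 3: the paper deduces $i^!\Coker\eta_A \cong R^1 i^!(A)$ from $i^! j_* = 0$ alone, which only yields a monomorphism $i^!\Coker\eta_A \hookrightarrow R^1 i^!(A)$, and your proof of $R^1 i^!(j_*X) = 0$ (via $\Ext^1_\BB(i_*Y, j_*X) = 0$ and the split-epimorphism argument) supplies exactly the missing surjectivity.
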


\begin{proof}
If $i^! A = 0$, the unit sequence collapses to the short exact sequence
\[
0 \to A \xrightarrow{\eta_A} j_* j^* A \to \Coker \eta_A \to 0.
\]
Applying first the exact functor $j^*$ and recalling that $j^* \eta$ is an isomorphism, we obtain $j^*(\Coker \eta_A) = 0$, so $\Coker \eta_A$ belongs to $i_* \mathcal{A}$. Assume now that $A$ has enough injectives. Applying the left exact functor $i^!$, the long exact sequence of right derived functors furnishes the exact sequence
\[
0 \to i^! A \to i^! j_* j^* A \to i^! \Coker \eta_A \to (R^1 i^!) A \to (R^1 i^!) j_* j^* A.
\]
Combined with the vanishing $i^! j_* = 0$, this produces an isomorphism $i^! \Coker(\eta_A) \cong R^1 i^!(A)$.
\end{proof}

We next revisit a few standard examples of recollements of abelian categories.

\begin{example}\cite{PSAROUDAKIS_homolog_theor} \label{ex_idemp_ring_recoll}
Let $R$ be a ring and $e^2 = e \in R$ an idempotent. Then we have a recollement

\begin{equation}
\begin{tikzcd}
	{\Mod\text{-}R/ReR} &&& {\Mod\text{-}R} &&& {\Mod\text{-}eRe}
	\arrow[""{name=0, anchor=center, inner sep=0}, "{i_*}"{description}, from=1-1, to=1-4]
	\arrow[""{name=1, anchor=center, inner sep=0}, "{i^* = R/ReR \otimes_R \_}"', shift right=5, from=1-4, to=1-1]
	\arrow[""{name=2, anchor=center, inner sep=0}, "{i^! = \Hom_R(R/ReR, \_)}", shift left=5, from=1-4, to=1-1]
	\arrow[""{name=3, anchor=center, inner sep=0}, "{j^* = e(-)}"{description}, from=1-4, to=1-7]
	\arrow[""{name=4, anchor=center, inner sep=0}, "{j_* = \Hom_{eRe}(eR, \_)}", shift left=5, from=1-7, to=1-4]
	\arrow[""{name=5, anchor=center, inner sep=0}, "{j_! = Re \otimes_{eRe} \_}"', shift right=5, from=1-7, to=1-4]
	\arrow["\dashv"{anchor=center, rotate=-90}, shift left=5, draw=none, from=0, to=2]
	\arrow["\dashv"{anchor=center, rotate=-90}, shift left=5, draw=none, from=3, to=4]
	\arrow["\dashv"{anchor=center, rotate=-90}, shift left=5, draw=none, from=1, to=0]
	\arrow["\dashv"{anchor=center, rotate=-90}, shift left=5, draw=none, from=5, to=3]
\end{tikzcd}.
\end{equation}
Here the category $\Mod\text{-}R/ReR$ of (left) $R/ReR$-modules is identified with the kernel of $e(-) : \Mod\text{-}R \to \Mod\text{-}eRe$. Recall that over a semiprimary ring $R$ every idempotent ideal $I$ takes the shape $ReR$ for some idempotent $e \in R$; hence in such rings idempotent ideals invariably give rise to recollements of the associated module categories.
\end{example}

\begin{example}\cite{PSAROUDAKIS_homolog_theor} \label{ex_psa_matrix_ring}
Let $R, S$ be rings, $M$ an $S$-$R$-bimodule and $N$ an $R$-$S$-bimodule. Let  
\[
\phi : M \otimes_R N \longrightarrow S
\]
be an $S$-$S$-bimodule homomorphism and let  
\[
\psi : N \otimes_S M \longrightarrow R
\]
be an $R$-$R$-bimodule homomorphism. Then the above data allow us to define the \textbf{Morita ring}:
\[
\Lambda_{(\phi, \psi)} = 
\begin{pmatrix}
R & {}_R N_S \\
{}_S M_R & S
\end{pmatrix}
\]
where the addition of elements of $\Lambda_{(\phi, \psi)}$ is componentwise and multiplication is given by
\[
\begin{pmatrix}
r & n \\
m & s
\end{pmatrix}
\cdot
\begin{pmatrix}
r' & n' \\
m' & s'
\end{pmatrix}
=
\begin{pmatrix}
rr' + \psi(n \otimes m') & rn' + ns' \\
mr' + sm' & ss' + \phi(m \otimes n')
\end{pmatrix}.
\]

The $\Lambda_{(\phi, \psi)}$-modules are tuples $(X, Y, f, g)$ where $X \in \Mod\text{-}R$, $Y \in \Mod\text{-}S$, $f : M \otimes_R X \to Y$ and $g : N \otimes_S Y \to X$ such that the following diagrams are commutative:

\begin{equation}
\begin{tikzcd}
N \otimes_S M \otimes_R X \arrow[r, "1_N \otimes f"] \arrow[dr, "\psi \otimes 1_X"'] & N \otimes_S Y \arrow[d, "g"] \\
& X
\end{tikzcd}
\qquad
\begin{tikzcd}
M \otimes_R N \otimes_S Y \arrow[r, "1_M \otimes g"] \arrow[dr, "\phi \otimes 1_Y"'] & M \otimes_R X \arrow[d, "f"] \\
& Y
\end{tikzcd}
\end{equation}

If $(X, Y, f, g)$ and $(X', Y', f', g')$ are $\Lambda_{(\phi, \psi)}$-modules, then a morphism $(X, Y, f, g) \to (X', Y', f', g')$ in $\Mod\text{-}\Lambda_{(\phi, \psi)}$ is a pair of homomorphisms $(a, b)$ where $a : X \to X'$ is a morphism in $\Mod\text{-}R$ and $b : Y \to Y'$ is a morphism in $\Mod\text{-}S$ such that the following diagrams are commutative:

\begin{equation}
\begin{tikzcd}
M \otimes_R X \arrow[r, "f"] \arrow[d, "1_M \otimes a"'] & Y \arrow[d, "b"] \\
M \otimes_R X' \arrow[r, "f'"] & Y'
\end{tikzcd}
\qquad
\begin{tikzcd}
N \otimes_S Y \arrow[r, "g"] \arrow[d, "1_N \otimes b"'] & X \arrow[d, "a"] \\
N \otimes_S Y' \arrow[r, "g'"] & X'
\end{tikzcd}
\end{equation}

Then from Example (\ref{ex_idemp_ring_recoll}) using the idempotent elements \( e_1 = \begin{pmatrix} 1_R & 0 \\ 0 & 0 \end{pmatrix} \) and \( e_2 = \begin{pmatrix} 0 & 0 \\ 0 & 1_S \end{pmatrix} \) of \( \Lambda(\phi, \psi) \), we derive the following recollements of abelian categories:
\begin{equation}
\begin{tikzcd}
	{\Mod-S/\im\phi} &&& {\Mod-\Lambda_{(\phi,\psi)}} &&& {\Mod-R}
	\arrow[""{name=0, anchor=center, inner sep=0}, "{i_*=\mathrm{inc}}"{description}, from=1-1, to=1-4]
	\arrow[""{name=1, anchor=center, inner sep=0}, "{i^* = \Lambda/\Lambda e_1\Lambda \otimes_\Lambda \_}"', shift right=5, from=1-4, to=1-1]
	\arrow[""{name=2, anchor=center, inner sep=0}, "{i^! = \Hom_\Lambda(\Lambda/\Lambda e_1\Lambda, \_)}", shift left=5, from=1-4, to=1-1]
	\arrow[""{name=3, anchor=center, inner sep=0}, "{j^* = e_1(-)}"{description}, from=1-4, to=1-7]
	\arrow[""{name=4, anchor=center, inner sep=0}, "{j_* = \Hom_{e_1\Lambda e_1}(e_1\Lambda, \_)}", shift left=5, from=1-7, to=1-4]
	\arrow[""{name=5, anchor=center, inner sep=0}, "{j_! = \Lambda e_1 \otimes_{e_1\Lambda e_1} \_}"', shift right=5, from=1-7, to=1-4]
	\arrow["\dashv"{anchor=center, rotate=-90}, shift left=5, draw=none, from=0, to=2]
	\arrow["\dashv"{anchor=center, rotate=-90}, shift left=5, draw=none, from=3, to=4]
	\arrow["\dashv"{anchor=center, rotate=-90}, shift left=5, draw=none, from=1, to=0]
	\arrow["\dashv"{anchor=center, rotate=-90}, shift left=5, draw=none, from=5, to=3]
\end{tikzcd},
\end{equation} \\
\begin{equation}
\begin{tikzcd}
	{\Mod-R/\im\psi} &&& {\Mod-\Lambda_{(\phi,\psi)}} &&& {\Mod-S}
	\arrow[""{name=0, anchor=center, inner sep=0}, "{i_*=\mathrm{inc}}"{description}, from=1-1, to=1-4]
	\arrow[""{name=1, anchor=center, inner sep=0}, "{i^* = \Lambda/\Lambda e_2\Lambda \otimes_\Lambda -}"', shift right=5, from=1-4, to=1-1]
	\arrow[""{name=2, anchor=center, inner sep=0}, "{i^! = \Hom_\Lambda(\Lambda/\Lambda e_2\Lambda, -)}", shift left=5, from=1-4, to=1-1]
	\arrow[""{name=3, anchor=center, inner sep=0}, "{j^* = e_2(-)}"{description}, from=1-4, to=1-7]
	\arrow[""{name=4, anchor=center, inner sep=0}, "{j_* = \Hom_{e_2\Lambda e_2}(e_2\Lambda, -)}", shift left=5, from=1-7, to=1-4]
	\arrow[""{name=5, anchor=center, inner sep=0}, "{j_! = \Lambda e_2 \otimes_{e_2\Lambda e_2} -}"', shift right=5, from=1-7, to=1-4]
	\arrow["\dashv"{anchor=center, rotate=-90}, shift left=5, draw=none, from=0, to=2]
	\arrow["\dashv"{anchor=center, rotate=-90}, shift left=5, draw=none, from=3, to=4]
	\arrow["\dashv"{anchor=center, rotate=-90}, shift left=5, draw=none, from=1, to=0]
	\arrow["\dashv"{anchor=center, rotate=-90}, shift left=5, draw=none, from=5, to=3]
\end{tikzcd}.
\end{equation}

\end{example}

\begin{example} \label{ex_comacat_recoll}
Let $G : \mathcal{B} \to \mathcal{A}$ be a right exact functor of abelian categories and form the comma category $\CC = (G \downarrow \Id_{\mathcal{A}})$ as in Definition (\ref{def_comacat}). Right exactness of $G$ makes $\CC$ abelian by Proposition (\ref{prop_comacat_ab}).

Suppose furthermore that $G$ admits a right adjoint $G' : \mathcal{A} \to \mathcal{B}$, and denote by $\varepsilon: G G' \to \Id_{\mathcal{A}}$ and $\eta: \Id_{\mathcal{B}} \to G' G$ the counit and the unit of the adjoint pair $(G, G')$.
The relevant functors are the following:

\begin{enumerate}
\item The functor $T_{\mathcal{B}} : \mathcal{B} \to \CC$ is defined on objects $Y \in \mathcal{B}$ by $T_{\mathcal{B}}(Y) = (G(Y), Y, \Id_{GY})$ and given a morphism $\beta : Y \to Y'$ in $\mathcal{B}$ then $T_{\mathcal{B}}(\beta) = (G(\beta), \beta)$ is a morphism in $\CC$.

\item The functor $U_{\mathcal{B}} : \CC \to \mathcal{B}$ is defined on objects $(A, B, f) \in \CC$ by $U_{\mathcal{B}}(A, B, f) = B$ and given a morphism $(\alpha, \beta) : (A, B, f) \to (A', B', f')$ in $\CC$ then $U_{\mathcal{B}}(\alpha, \beta) = \beta$ is a morphism in $\mathcal{B}$. Similarly we define the functor $U_{\mathcal{A}} : \CC \to \mathcal{A}$.

\item The functor $Z_{\mathcal{B}} : \mathcal{B} \to \CC$ is defined on objects $Y \in \mathcal{B}$ by $Z_{\mathcal{B}}(Y) = (0, Y, 0)$ and given a morphism $\beta : Y \to Y'$ in $\mathcal{B}$ then $Z_{\mathcal{B}}(\beta) = (0, \beta)$ is a morphism in $\CC$. Similarly we define the functor $Z_{\mathcal{A}} : \mathcal{A} \to \CC$.

\item The functor $q : \CC \to \mathcal{A}$ is defined on objects $(A, B, f) \in \CC$ by $q(A, B, f) = \Coker f$ and if $(\alpha, \beta) : (A, B, f) \to (A', B', f')$ is a morphism in $\CC$ then we have the induced morphism $q(\alpha, \beta) : \Coker f \to \Coker f'$.

\item The functor $H_{\mathcal{A}} : \mathcal{A} \to \CC$ is defined on objects $X \in \mathcal{A}$ by $H_{\mathcal{A}}(X) = (X, G'(X), \varepsilon_X)$ and given a morphism $\alpha : X \to X'$ in $\mathcal{A}$ then $H_{\mathcal{A}}(\alpha) = (\alpha, G'(\alpha))$ is a morphism in $\CC$.

\item The functor $p : \CC \to \mathcal{B}$ is defined on objects $(A, B, f) \in \CC$ by $p(A, B, f) = \Ker (\eta_B \circ G'(f))$ and if $(\alpha, \beta) : (A, B, f) \to (A', B', f')$ is a morphism in $\CC$ then we have the induced morphism $p(\alpha, \beta) : \Ker (\eta_B \circ G'(f)) \to \Ker (\eta_{B'} \circ G'(f'))$.
\end{enumerate}

It is easy to check that the above data define the following recollements:

\begin{equation} \label{recoll_coma_acb}
\begin{tikzcd}
	{\mathcal{A}} &&& {\CC} &&& {\mathcal{B}}
	\arrow[""{name=0, anchor=center, inner sep=0}, "{Z_{\mathcal{A}}}"{description}, from=1-1, to=1-4]
	\arrow[""{name=1, anchor=center, inner sep=0}, "{q}"'{description}, shift right=5, from=1-4, to=1-1]
	\arrow[""{name=2, anchor=center, inner sep=0}, "{U_{\mathcal{A}}}"{description}, shift left=5, from=1-4, to=1-1]
	\arrow[""{name=3, anchor=center, inner sep=0}, "{U_{\mathcal{B}}}"{description}, from=1-4, to=1-7]
	\arrow[""{name=4, anchor=center, inner sep=0}, "{Z_{\mathcal{B}}}"{description}, shift left=5, from=1-7, to=1-4]
	\arrow[""{name=5, anchor=center, inner sep=0}, "{T_{\mathcal{B}}}"'{description}, shift right=5, from=1-7, to=1-4]
	\arrow["\dashv"{anchor=center, rotate=-90}, shift left=5, draw=none, from=0, to=2]
	\arrow["\dashv"{anchor=center, rotate=-90}, shift left=5, draw=none, from=1, to=0]
	\arrow["\dashv"{anchor=center, rotate=-90}, shift left=5, draw=none, from=3, to=4]
	\arrow["\dashv"{anchor=center, rotate=-90}, shift left=5, draw=none, from=5, to=3]
\end{tikzcd},
\end{equation} \\

\begin{equation}\label{recoll_coma_bca}
\begin{tikzcd}
	{\mathcal{B}} &&& {\CC} &&& {\mathcal{A}}
	\arrow[""{name=0, anchor=center, inner sep=0}, "{Z_{\mathcal{B}}}"{description}, from=1-1, to=1-4]
	\arrow[""{name=1, anchor=center, inner sep=0}, "{U_{\mathcal{B}}}"{description}, shift right=5, from=1-4, to=1-1]
	\arrow[""{name=2, anchor=center, inner sep=0}, "{p}"{description}, shift left=5, from=1-4, to=1-1]
	\arrow[""{name=3, anchor=center, inner sep=0}, "{U_{\mathcal{A}}}"{description}, from=1-4, to=1-7]
	\arrow[""{name=4, anchor=center, inner sep=0}, "{H_{\mathcal{A}}}"{description}, shift left=5, from=1-7, to=1-4]
	\arrow[""{name=5, anchor=center, inner sep=0}, "{Z_{\mathcal{A}}}"{description}, shift right=5, from=1-7, to=1-4]
	\arrow["\dashv"{anchor=center, rotate=-90}, shift left=5, draw=none, from=0, to=2]
	\arrow["\dashv"{anchor=center, rotate=-90}, shift left=5, draw=none, from=1, to=0]
	\arrow["\dashv"{anchor=center, rotate=-90}, shift left=5, draw=none, from=3, to=4]
	\arrow["\dashv"{anchor=center, rotate=-90}, shift left=5, draw=none, from=5, to=3]
\end{tikzcd}.
\end{equation}

Observe that the composite $U_\AA \circ T_\BB$ recovers exactly the original functor
$G: \BB \to \AA$.
\end{example}

In the example above, whenever $\CC$ is a comma category, the functors $U_\AA, Z_\BB$ (respectively $U_\BB, Z_\AA$) occurring in the recollements (\ref{recoll_coma_acb}) (resp.\ (\ref{recoll_coma_bca})) are right (resp.\ left) exact, being left (resp.\ right) adjoints. No such requirement is placed on a general recollement situation (\ref{def_recoll_situat}).

As it happens, granting the mild hypotheses of enough projectives (resp.\ injectives) in $\BB$ and $\CC$, exactness of $i^!$ (resp.\ $i^*$) in the recollement (\ref{recoll_verydef}) turns out to be a {\bf criterion} for that recollement to be equivalent, in the sense of Definition (\ref{def_equiv_recollements}), to one arising from a comma category as in Example (\ref{ex_comacat_recoll}).
Precisely, we have the following.

\begin{proposition}\cite{ladders}
Let $(\mathcal{A}, \mathcal{B}, \CC)$ be a recollement of abelian categories with $i^!$ exact and with $\mathcal{B}$ and $\CC$ possessing enough projective objects. Then, as recollements, $(\mathcal{A}, \mathcal{B}, \CC)$ and $(\mathcal{A}, (i^! j_! \downarrow \Id_{\AA}), \CC)$ are equivalent.
\end{proposition}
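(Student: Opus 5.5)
The plan is to write down an explicit exact comparison functor from $\BB$ to the comma category and prove it is an equivalence by checking it on a generating family of projectives. Put $G := i^! j_! \colon \CC \to \AA$. It is right exact: $j_!$ is a left adjoint, and $i^!$ is exact by hypothesis. So $\DD := (G \downarrow \Id_{\AA})$ is abelian by Proposition~\ref{prop_comacat_ab}.

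By Example~\ref{ex_comacat_recoll}, with $\CC$ in the role of $\BB$ there, the recollement (\ref{recoll_coma_acb}) gives a recollement $(\AA, \DD, \CC)$. In it, $Z_\AA$ is the inclusion of $\AA$, $U_\CC$ is the quotient functor, $T_\CC \dashv U_\CC$, and $Z_\AA \dashv U_\AA$.

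Define
\[
\Phi \colon \BB \to \DD, \qquad \Phi(B) = \bigl(j^*B,\ i^!B,\ i^!(\mu_B) \colon i^! j_! j^* B \to i^! B\bigr),
\]
with $\Phi$ acting on morphisms componentwise. It is immediate that $\Phi$ is exact (since $j^*$ and $i^!$ are) and that $U_\CC \Phi = j^*$. By Definition~\ref{def_equiv_recollements}, it therefore suffices to prove that $\Phi$ is an equivalence.

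\textbf{Step 1: a generating family of projectives in $\BB$.} Because $i^!$ is exact, its left adjoint $i_*$ preserves projectives. Likewise $j_!$ preserves projectives ($j^*$ is exact), and $i^*$ preserves projectives ($i_*$ is exact). Consequently $\AA$ has enough projectives: for $A \in \AA$, take a projective $P \twoheadrightarrow i_*A$ in $\BB$; then $i^*P \twoheadrightarrow i^*i_*A = A$.

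For $B \in \BB$, choose a projective $P \twoheadrightarrow j^*B$ in $\CC$ and a projective $Q \twoheadrightarrow i^*B$ in $\AA$. Lift $i_*Q \to i_*i^*B$ along $\lambda_B$, which is possible since $i_*Q$ is projective. Using the exact sequence of Proposition~\ref{prop_sess_of_recollement}, the induced map $j_!P \oplus i_*Q \to B$ is an epimorphism. Hence the objects $j_!P \oplus i_*Q$ form a generating family of projectives in $\BB$.

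\textbf{Step 2: agreement with the comma-category projectives.} Using $j^*j_! \cong \Id$, $j^* i_* = 0$ and $i^! i_* \cong \Id$, one computes
\[
\Phi(j_!P) \cong (P,\ G P,\ \Id) = T_\CC P, \qquad \Phi(i_*Q) \cong (0,\ Q,\ 0) = Z_\AA Q .
\]
For every $X \in \BB$ the adjunctions give natural isomorphisms
\[
\Hom_\BB(j_!P, X) \cong \Hom_\CC(P, j^*X) \cong \Hom_\DD(T_\CC P, \Phi X),
\]
and similarly
\[
\Hom_\BB(i_*Q, X) \cong \Hom_\AA(Q, i^!X) \cong \Hom_\DD(Z_\AA Q, \Phi X).
\]
The point to verify is that these composite isomorphisms are exactly the maps induced by $\Phi$. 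This is the step I expect to be the main obstacle. The check is a diagram chase through the units and counits: the component $i^!(\mu_X)$ of $\Phi X$ must be matched with the counit of $T_\CC \dashv U_\CC$, and the triangle identities for $j_! \dashv j^*$ are used.

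\textbf{Step 3: conclusion.} Once Step 2 is established, $\Phi$ is bijective on $\Hom(P', X)$ for every projective $P'$ of the generating family and every $X \in \BB$. Now present an arbitrary source object as $P_1 \to P_0 \to Y \to 0$ with $P_0, P_1$ in the family. Since $\Phi$ is exact, applying $\Hom(-,X)$ and $\Hom_\DD(\Phi(-), \Phi X)$ to this presentation and using the five lemma shows that $\Phi$ is fully faithful.

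For essential surjectivity, note that $\DD$ has enough projectives of the form $T_\CC P \oplus Z_\AA Q$; this is the standard description of projectives in a comma category when $\CC$ and $\AA$ have enough projectives. Every object of $\DD$ is then the cokernel of a morphism between such projectives. By Step 2 these projectives lie in the image of $\Phi$, and by fullness so does the morphism between them. Since $\Phi$ preserves cokernels, the object itself lies in the essential image of $\Phi$. Hence $\Phi$ is an equivalence compatible with $j^*$ and $U_\CC$, so the two recollements are equivalent.
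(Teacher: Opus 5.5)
Your proposal is correct and gives a complete proof. The paper does not prove this proposition; it only quotes it from \cite{ladders}, so there is no internal argument to compare yours with. Your route is the natural self-contained one: the explicit exact comparison functor $\Phi(B)=(j^*B,\,i^!B,\,i^!\mu_B)$ with $U_\CC\Phi=j^*$, checked on the projective generators $j_!P\oplus i_*Q$ and then extended by presentations and the five lemma. Three remarks.

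\textbf{Step 2 is routine.} The step you flag as the main obstacle is a one-line check. Let $\eta_P\colon P\xrightarrow{\sim} j^*j_!P$ be the unit.
\begin{itemize}
\item The pair $(\eta_P,\Id)\colon T_\CC P\to\Phi(j_!P)$ is a morphism of $\DD$ precisely by the triangle identity $\mu_{j_!P}\circ j_!\eta_P=\Id$.
\item For $\beta\colon j_!P\to X$, the composite $\Phi(\beta)\circ(\eta_P,\Id)$ has first component $j^*\beta\circ\eta_P$. This is the $(j_!\dashv j^*)$-adjunct of $\beta$.
\item A morphism out of $T_\CC P$ is determined by its first component, so $\beta\mapsto\Phi(\beta)$ is bijective.
\item Likewise, for $\beta\colon i_*Q\to X$ one has $\Phi(\beta)=(0,\,i^!\beta)$. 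Composed with the unit $Q\xrightarrow{\sim} i^!i_*Q$, this is the $(i_*\dashv i^!)$-adjunct of $\beta$.
\end{itemize}
This bijectivity holds for arbitrary $P$ and $Q$. Projectivity is needed only to produce the epimorphism $j_!P\oplus i_*Q\to B$ in Step 1.

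\textbf{Exactness of $\Phi$ needs one more sentence.} Saying $\Phi$ is exact ``because $j^*$ and $i^!$ are'' tacitly uses that kernels and cokernels in $(G\downarrow\Id_\AA)$ are computed componentwise. This holds because $G=i^!j_!$ is right exact: $G(\coker\gamma)\cong\coker G(\gamma)$. Say this explicitly. Together with the projectivity of $i_*Q$, it is exactly where the exactness of $i^!$ is used.

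\textbf{Hypotheses.} Your argument shows that enough projectives in $\BB$ is used only to give $\AA$ enough projectives. Conversely, by Step 1, enough projectives in $\AA$ and $\CC$ already force them in $\BB$.
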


\begin{proposition}\cite{ladders}
Let $(\mathcal{A}, \mathcal{B}, \CC)$ be a recollement of abelian categories with $i^*$ exact and with $\mathcal{B}$ and $\CC$ possessing enough injective objects. Then, as recollements, $(\mathcal{A}, \mathcal{B}, \CC)$ and $(\AA, (\Id_{\AA} \downarrow i^*j_*), \CC)$ are equivalent.
\end{proposition}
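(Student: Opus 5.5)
The plan is to write down an explicit comparison functor and to check that it is an equivalence of abelian categories compatible with $j^*$. By Definition~\ref{def_equiv_recollements} this suffices. Set $F := i^* j_* \colon \CC \to \AA$. It is right exact as a composite of the right exact $i^*$ (exact, by hypothesis) and the left exact $j_*$. The comma category $\DD := (\Id_\AA \downarrow F)$ has objects $(A, C, f \colon A \to F C)$.

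\textbf{Step 1: the recollement on the comma side.} This is the formal dual of Example~\ref{ex_comacat_recoll}. The embedding $\AA \to \DD$ is $A \mapsto (A, 0, 0)$, and the quotient functor $\DD \to \CC$ is $(A, C, f) \mapsto C$. Its right adjoint is $C \mapsto (FC, C, \Id_{FC})$, and its left adjoint is $C \mapsto (0, C, 0)$. Abelianness of $\DD$ requires left exactness of $F$ in Proposition~\ref{prop_comacat_ab}(b). This is exactly where exactness of $i^*$ enters: if $i^*$ is exact, then $F = i^* j_*$ is left exact, so $\DD$ is abelian.

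\textbf{Step 2: the comparison functor.} Define
\[
\Phi \colon \BB \to \DD, \qquad \Phi(B) = \bigl(i^* B,\ j^* B,\ i^*(\nu_B) \colon i^* B \to i^* j_* j^* B\bigr),
\]
where $\nu$ is the unit of $(j^* \dashv j_*)$. Then $\Phi$ is additive and exact, since $i^*$, $j^*$ and $j_* j^*$-followed-by-$i^*$ are handled componentwise and $i^*$ and $j^*$ are exact. It commutes with the quotient functors on the nose, which gives the square of Definition~\ref{def_equiv_recollements}. It remains to prove that $\Phi$ is an equivalence.

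The natural candidate for a quasi-inverse is the pullback
\[
\Psi(A, C, f) := i_* A \times_{i_* F C} j_* C,
\]
taken along $i_* f$ and the unit $\lambda_{j_* C} \colon j_* C \to i_* i^* j_* C$.

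\textbf{Step 3: the two composites.} First check $\Phi\Psi \cong \Id$. Applying $j^*$ to the pullback kills the $i_*$-terms and gives $C$. Applying the exact functor $i^*$ preserves the pullback, and since $i^* \lambda_{j_* C}$ is an isomorphism this returns $A$, with structure map $f$.

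Next, for $\Psi\Phi \cong \Id$, consider the canonical map $\theta_B \colon B \to P_B := i_* i^* B \times_{i_* i^* j_* j^* B} j_* j^* B$. Since $j^*\theta_B$ is an isomorphism, $\ker\theta_B$ and $\coker\theta_B$ lie in $\AA$.

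\emph{Kernel.} $X := \ker\theta_B$ lies in $\AA$, and $X \to B \to i_* i^* B$ is zero. Exactness of $i^*$ makes $X = i^* X \to i^* B$ a monomorphism, while this map is the adjoint of the zero composite, so $X = 0$.

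\emph{Cokernel.} Apply $i^*$ to $0 \to B \to P_B \to \coker\theta_B \to 0$. Because $i^*$ is exact and $i^*\theta_B$ is an isomorphism (computed as in the first check), we get $i^*\coker\theta_B = 0$. Since $\coker\theta_B \in \AA$, this forces $\coker\theta_B = 0$.

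\textbf{Main obstacle.} I expect the hardest point to be the careful identification of $i^* P_B$, and the verification that all the maps involved are the adjunction units, so that the isomorphisms are natural. This is where the hypothesis of enough injectives should be used: via Proposition~\ref{prop_pirash_recol_sprava_iRi}, to identify $\Coker\nu_B$ with $i_* R^1 i^!$-terms, and to confirm that exactness of $i^*$ makes $i^*(\nu_B)$ carry exactly the gluing data. Full faithfulness of $\Phi$ on morphisms then follows from the pullback description, and essential surjectivity from $\Phi\Psi \cong \Id$.
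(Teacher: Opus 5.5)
The paper gives no proof of this proposition; it only quotes it from \cite{ladders}, so there is no internal argument to compare with. Taken on its own, your explicit construction is correct and complete, apart from two points.

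First, a slip in your opening paragraph: $F=i^*j_*$ is \emph{left} exact (an exact functor after a left exact one), not right exact. Left exactness is exactly what Proposition~\ref{prop_comacat_ab}(b) requires, and you state it correctly in Step~1.

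Second, your ``main obstacle'' is not an obstacle, and the enough-injectives hypothesis is never used. Write $p_1\colon P\to i_*A$ and $p_2\colon P\to j_*C$ for the projections of $P=\Psi(A,C,f)$, and likewise for $P_B$.

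\begin{itemize}
\item \emph{Identifying $i^*P_B$.} This is the same computation you already made for $\Phi\Psi$. Since $i^*$ is exact, it preserves the pullback. Moreover $i^*\lambda_X$ is invertible for every $X\in\BB$: the triangle identity gives $\epsilon_{i^*X}\circ i^*\lambda_X=\Id$, and the counit $\epsilon\colon i^*i_*\to\Id_\AA$ is invertible. Hence $i^*p_1\colon i^*P_B\to i^*i_*i^*B$ is an isomorphism, and $i^*p_1\circ i^*\theta_B=i^*\lambda_B$ shows that $i^*\theta_B$ is an isomorphism.

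\item \emph{Naturality.} The map $\theta=(\lambda,\nu)$ is natural because $\lambda$ and $\nu$ are. The isomorphism $\Phi\Psi(A,C,f)\cong(A,C,f)$ is $(\epsilon_A\circ i^*p_1,\ \epsilon'_C\circ j^*p_2)$, where $\epsilon'\colon j^*j_*\to\Id_\CC$ is the counit. By naturality and the triangle identities, both sides of the required square $f\circ\epsilon_A\circ i^*p_1=F(\epsilon'_C\circ j^*p_2)\circ i^*\nu_P$ reduce to $i^*p_2$.
\end{itemize}

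Neither Proposition~\ref{prop_pirash_recol_sprava_iRi} nor $R^1i^!$ enters. Once both composites are naturally isomorphic to the identities, $\Phi$ is an equivalence without a separate full-faithfulness check. Together with the fact that $\Phi$ followed by the projection $(A,C,f)\mapsto C$ equals $j^*$, this gives equivalence of recollements in the sense of Definition~\ref{def_equiv_recollements}, with $\Theta=\Id_\CC$.

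In particular, your argument proves the statement for every recollement with $i^*$ exact. The hypothesis that $\BB$ and $\CC$ have enough injectives, inherited from \cite{ladders}, is superfluous here.
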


\section{Local coherency theorems} \label{sec:thms}

As shown in Proposition (\ref{prop_sess_of_recollement}), while {\it decomposing} $\Id_\BB$ into parts, coming 
from $\AA$ and from $\CC$, one is really interested in some partial data from a recollement situation, 
namely either two pairs 
$(i^* \dashv i_*),\,(j_! \dashv j^*)$ of adjoint functors or $(i_* \dashv i^!),\,(j^* \dashv j_*)$.
So from now on we are going to impose only the specified conditions rather than general six-functor data of recollement.

\subsection{Direct course} \label{ssec:direct}

\begin{theorem} \label{th_coh_B_straightforward}
Let $\AA$, $\BB$, $\CC$ be AB5 locally finitely presentable abelian categories and the following adjoint 
functors between them
\begin{equation}
\begin{tikzcd}
	\AA &&& \BB &&& \CC
	\arrow[""{name=0, anchor=center, inner sep=0}, "{{i_*}}"{description}, from=1-1, to=1-4]
	\arrow[""{name=1, anchor=center, inner sep=0}, "{{i^*}}"{description}, shift right=5, from=1-4, to=1-1]
	\arrow[""{name=2, anchor=center, inner sep=0}, "{{j^*}}"{description}, from=1-4, to=1-7]
	\arrow[""{name=3, anchor=center, inner sep=0}, "{{j_!}}"{description}, shift right=5, from=1-7, to=1-4]
	\arrow["\dashv"{anchor=center, rotate=-90}, shift left=5, draw=none, from=1, to=0]
	\arrow["\dashv"{anchor=center, rotate=-90}, shift left=5, draw=none, from=3, to=2]
\end{tikzcd}.
\end{equation}

Where \(i_*\) is a fully faithful embedding of the $\Ker(j^*)$ and $j_!$ is fully faithful.
Moreover let $i_*$, $j^*$ be exact, commute with filtered colimits and preserve finitely presented objects.
Suppose, in addition, that $\mathbb{L}_1 i^*$ sends finitely generated subobjects of finitely presented objects to finitely generated objects.

Then, if $\AA$ is locally Noetherian and $\CC$ is locally coherent, $\BB$ is locally coherent.
\end{theorem}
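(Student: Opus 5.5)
The plan is to use Proposition~\ref{prop_enough_to_check_on_epi}. Since $\BB^{f.p.}$ is closed under cokernels and extensions by Lemma~\ref{lem_ab_fp_fg_via_ker_coker}, it suffices to prove one thing: if $X \in \BB^{f.p.}$ and $L \subset X$ is a finitely generated subobject, then $L$ is finitely presented. Indeed, the kernel of an epimorphism $X_1 \twoheadrightarrow X_2$ of finitely presented objects is finitely generated (item~\ref{item_almostalmost_fp} of Theorem~\ref{th_fg_fg_fp}). The idea is to split $L$ into a piece coming from $\CC$ and a piece coming from $\AA$. We use the exact sequence of Proposition~\ref{prop_sess_of_recollement},
\[
0 \to K \to j_! j^* L \xrightarrow{\mu_L} L \to i_* i^* L \to 0,
\]
with $K \in i_*\AA$. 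Only the adjunctions $(i^* \dashv i_*)$ and $(j_! \dashv j^*)$ are needed for this, together with $i^* j_! = 0$, which holds because $\Hom(i^* j_! C, A) = \Hom(C, j^* i_* A) = 0$. Let $I = \im \mu_L$.

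\textbf{Step 1: the outer terms.} The left adjoints $j_!$ and $i^*$ preserve finitely presented objects, because their right adjoints $j^*$ and $i_*$ commute with filtered colimits. Since $j^*$ is exact and preserves filtered colimits and finitely presented objects, $j^* L$ is a finitely generated subobject of $j^* X \in \CC^{f.p.}$. Local coherence of $\CC$ then makes $j^*L$ finitely presented, so $j_! j^* L \in \BB^{f.p.}$. Next, $i^* L$ is a quotient of $i^* F$ for some $F \in \BB^{f.p.}$ mapping onto $L$ (Theorem~\ref{th_locpres_fg}), so $i^*L$ is finitely generated. In the locally Noetherian category $\AA$, a finitely generated object is Noetherian, so every kernel of an epimorphism onto it from a finitely generated object is finitely generated. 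By Theorem~\ref{th_fg_fg_fp} this makes $i^*L$ finitely presented, and therefore $i_* i^* L \in \BB^{f.p.}$.

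\textbf{Step 2: the middle terms.} From $0 \to I \to L \to i_*i^*L \to 0$, with $L$ finitely generated and $i_*i^*L$ finitely presented, we get that $I$ is finitely generated (again by Theorem~\ref{th_fg_fg_fp}). Moreover $I \subset L \subset X$, so $I$ is a finitely generated subobject of a finitely presented object. Now apply $i^*$ to $0 \to K \to j_!j^*L \to I \to 0$. This gives the exact sequence
\[
\mathbb{L}_1 i^* (I) \to i^* K \to i^* j_! j^* L = 0 .
\]
Since $K = i_* K'$ and the counit $i^* i_* \to \Id$ is an isomorphism, $i^* K \cong K'$. Hence $K'$ is a quotient of $\mathbb{L}_1 i^*(I)$, which is finitely generated by hypothesis. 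So $K'$ is finitely generated, and $K = i_*K'$ is finitely generated in $\BB$ because $i_*$ is exact and preserves finitely presented objects.

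\textbf{Step 3: assembling.} By item (3) of Lemma~\ref{lem_ab_fp_fg_via_ker_coker}, $I = j_!j^*L / K$ is finitely presented. By item (4), $L$ is then finitely presented as an extension of $i_*i^*L$ by $I$.

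The main obstacle is Step~2. There one must make sense of $\mathbb{L}_1 i^*$ and the long exact sequence for the right exact functor $i^*$ without assuming enough projectives. I would define it through the exact sequence itself, via a presentation of $I$ by finitely presented objects, in the spirit of Proposition~\ref{prop_pirash_recol_sleva_iLi}. This is the only place where the hypothesis on $\mathbb{L}_1 i^*$ enters. The Noetherian assumption on $\AA$ is used only in Step~1, to upgrade the finitely generated object $i^*L$ to a finitely presented one.
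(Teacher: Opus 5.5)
Your proof is correct, but it decomposes a different object than the paper does.

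\textbf{The paper's route.} It takes the kernel $I$ of an epimorphism $B_1 \rightarrowdbl B_2$ and chooses a presentation $0 \to K \to B_3 \to I \to 0$ with $B_3$ finitely presented. It then shows that the syzygy $K$ is finitely generated by applying $j_!j^* \to \Id_\BB \to i_*i^* \to 0$ to $K$. On the $\AA$-side it uses $\mathbb{L}_1 i^*(I) \to i^*K \to \ker(i^*B_3 \to i^*I) \to 0$. Noetherianity is invoked twice: once to make $\ker(i^*B_3 \to i^*I)$ finitely generated, and again to upgrade $i^*K$ to finitely presented.

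\textbf{Your route.} You split the finitely generated subobject $L$ itself by the four-term sequence $0 \to \Ker\mu_L \to j_!j^*L \to L \to i_*i^*L \to 0$. You prove directly that both outer terms are finitely presented. You then only need $\Ker\mu_L$ to be finitely generated, which you read off from $\mathbb{L}_1 i^*(\im\mu_L)$. Items (3) and (4) of Lemma~\ref{lem_ab_fp_fg_via_ker_coker} close the argument.

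\textbf{What each buys.} Your version makes explicit that the four-term sequence needs only the two adjunctions together with $i^*j_! = 0$, which holds formally. The paper instead takes this sequence over from Proposition~\ref{prop_sess_of_recollement}, which is stated for a full recollement. Your version also shows that $\im\mu_L \in \Ker i^*$, so $\Ker\mu_L$ is exactly $i_*\mathbb{L}_1 i^*(\im\mu_L)$ whenever Proposition~\ref{prop_pirash_recol_sleva_iLi} applies. The paper's version is slightly more economical: it never uses closure of finitely presented objects under extensions (item (4)), only items (1) and (3).

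Both arguments rely on the same exactness property of $\mathbb{L}_1 i^*$ on a single short exact sequence, which the paper also takes for granted. So your caveat about defining $\mathbb{L}_1 i^*$ without enough projectives applies equally to the paper's proof and is not a gap specific to yours.
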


As a special case of the previous result, one obtains a statement about the coherence of an algebra over a Noetherian ring, deduced from the coherence of the endomorphism algebra of a special augmentation ideal.

\begin{corollary}\label{cor_B_B+_situation}
Let $R$ be an associative ring, let $I \subset R$ be a finitely generated, projective, idempotent two-sided ideal and let $k := R/I$, $E:= \End_R(I)$.

Then, if $k$ is left (right) Noetherian and $E$ is left (right) coherent, $R$ is left (right) coherent.
\end{corollary}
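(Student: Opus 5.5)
Corollary~\ref{cor_B_B+_situation} should follow from Theorem~\ref{th_coh_B_straightforward} applied to the idempotent-ideal recollement. We treat the left case; the right case is symmetric. Put $\BB = R-\Mod$, $\AA = k-\Mod$ with $i_*$ the restriction of scalars along $R \to k$ and $i^* = k \otimes_R -$, and $\CC = E^{op}-\Mod$ (or $E-\Mod$, according to the side convention for endomorphism rings). The adjoint pair on the right is
\[
j^* = \Hom_R(I,-), \qquad j_! = I \otimes_E -.
\]
All three are module categories, hence AB5 and locally finitely presentable.

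First we check the structural hypotheses.
\begin{itemize}
\item $i_*$ is exact, commutes with colimits, and identifies $k-\Mod$ with the modules annihilated by $I$.
\item $k = R/I$ is finitely presented as a left $R$-module, because $I$ is finitely generated. Hence $i_*$ preserves finitely presented objects.
\item Since $I$ is finitely generated projective, $j^* = \Hom_R(I,-)$ is exact and commutes with filtered colimits.
\item Write $I$ as a direct summand of $R^n$. Then $j^*(R)$ is a summand of $\Hom_R(R^n,R)$ over $E$. One checks via the Morita-type equivalence between $\operatorname{add}(I)$ and projective $E$-modules that $j^*$ sends $R$, and hence every finitely presented module, to a finitely presented $E$-module. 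This uses that $R \to \Hom_R(I,\,\cdot\,)$ is right exact on presentations.
\item $j_!$ is fully faithful because $\Hom_R(I, I \otimes_E P) \cong P$ for projective $P$ (as $I$ is f.g.\ projective), and this extends by right exactness to all $P$.
\item For $\Ker j^* = \AA$: $\Hom_R(I,M) = 0$ iff $IM = 0$. This uses $I = I^2$ and that $I$ is generated by images of maps $I \to I$; idempotence gives $I \cdot M = \sum \operatorname{im}(I \to M)$.
\end{itemize}

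The remaining hypothesis is the one on $\mathbb{L}_1 i^* = \Tor_1^R(k,-)$. Take $N$ a finitely generated submodule of a finitely presented $M$. From
\[
0 \to I \to R \to k \to 0
\]
we get $\Tor_1^R(k,N) \cong \ker(I \otimes_R N \to N)$, because $R$ is flat. Now $I \otimes_R N$ is a $k$-module? Not quite. The plan is instead to note that $\Tor_1^R(k,N)$ is a $k$-module: it is killed by $I$, since $\Tor$ carries the $R/I$-module structure from the first variable. It is also a subquotient of $I \otimes_R N$, which is a finitely generated left $R$-module since $I_R$ is finitely generated and $N$ is finitely generated. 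More cleanly, $\Tor_1^R(k,N)$ is a submodule of $I \otimes_R N$, annihilated by $I$.

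This is the main obstacle: finite generation of submodules in general needs Noetherianity. I would handle it as follows. $I\otimes_R N$ surjects from $I\otimes_R R^m \cong I^m$, so the kernel lies in a finitely generated module. Then $\Tor_1^R(k,N) = \ker(I\otimes N \to N)$ sits inside the $I$-torsion-free-killed part. The $I$-annihilated submodule of a finitely generated $R$-module $X$ is $\Hom_R(k,X)$, and this should be a $k$-submodule of $i^!$ of a finitely generated module. To conclude finite generation I would use the long exact sequence for $0\to N \to M \to M/N \to 0$: $\Tor_1^R(k,N)$ is a quotient of $\Tor_2^R(k,M/N)$ modulo $\Tor_1^R(k,M)$. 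Since $R$'s projective dimension of $k$ is at most $1$ ($I$ projective), $\Tor_2^R(k,-) = 0$. So $\Tor_1^R(k,N)$ injects into $\Tor_1^R(k,M)$, and $\Tor_1^R(k,M)$ is a $k$-module.

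Next, $\Tor_1^R(k,M)$ is computed from a finite presentation $R^a \to R^b \to M$ together with $0 \to I \to R \to k$. It is a subobject of the finitely generated $k$-module $I\otimes_R M/I(\dots)$, or more directly it is the homology of a complex of finitely generated $k$-modules $k\otimes$(finite free resolution); pd$\,k \le 1$ keeps this finite. Since $k$ is left Noetherian, $\Tor_1^R(k,M)$ is finitely generated, hence so is its submodule $\Tor_1^R(k,N)$.

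Finally, $k$ left Noetherian means $\AA$ is locally Noetherian, and $E$ left coherent means $\CC$ is locally coherent by Proposition~\ref{prop_loccoh=cohring}. Theorem~\ref{th_coh_B_straightforward} then gives that $R-\Mod$ is locally coherent, i.e.\ $R$ is left coherent.
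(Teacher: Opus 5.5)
Your route is the paper's. You apply Theorem~\ref{th_coh_B_straightforward} to $k-\Mod$, $R-\Mod$, $E-\Mod$ with $j^*=\Hom_R(I,-)$ and $j_!=I\otimes_E-$. You then check the condition on $\mathbb{L}_1 i^*=\Tor_1^R(k,-)$ by embedding $\Tor_1^R(k,N)$ into $\Tor_1^R(k,M)$ via $\Tor_2^R(k,-)=0$, and bounding $\Tor_1^R(k,M)$ by a finitely generated $k$-module; your subquotient of $k\otimes_R F_1\cong k^a$ plays the role of the paper's submodule of $k\otimes_R Z$.

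The step that does not hold up is the verification that $j^*$ preserves finitely presented objects. Since $j^*$ is exact, this is equivalent to $\Hom_R(I,R)$ being finitely presented over $E$, and neither of your reasons gives it. Splitting $\Hom_R(I,R)$ off $\Hom_R(R^n,R)\cong R^n$ gives no control over its size as an $E$-module. The equivalence between $\operatorname{add}(I)$ and finitely generated projective $E$-modules says nothing about $R$, which does not lie in $\operatorname{add}(I)$. In fact no argument using only that $I$ is finitely generated projective can work. Take $R=\bigl(\begin{smallmatrix} K & V\\ 0 & K\end{smallmatrix}\bigr)$ with $K$ a field and $\dim_K V=\infty$, and $P=Re_{11}$ for the matrix unit $e_{11}$. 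Then $\End_R(P)\cong K$, while $\Hom_R(P,R)\cong e_{11}R$ has dimension $1+\dim_K V$.

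What is missing is the use of idempotence, which is how the paper argues. By Lemma~\ref{lem_idempotent}, which you already invoke to identify $\Ker j^*$, the equality $Ik=0$ forces $\Hom_R(I,k)=0$. Applying the exact functor $\Hom_R(I,-)$ to $0\to I\to R\to k\to 0$ then gives $\Hom_R(I,R)\cong\Hom_R(I,I)=E$, free of rank one. Hence $j^*$ turns a presentation $R^a\to R^b\to M\to 0$ into a presentation $E^a\to E^b\to j^*M\to 0$.

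A smaller point, which the paper shares: on left modules, $\Tor^R_\bullet(k,-)$ is computed with $k$ as a right module. So $\Tor_2^R(k,-)\cong\Tor_1^R(I,-)$ vanishes identically exactly when $I$ is flat as a right $R$-module. Left projectivity of $I$ bounds only the projective dimension of ${}_Rk$, which does not control this $\Tor_2$, so ``projective'' has to be understood on both sides.
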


From now on, we say {\it module} for {\it left module} and prove the result in the left-module case; the right-module case is dual.

Consider the following functors between categories $k - \Mod$, $R- \Mod$, $E - \Mod$, adjoint in the obvious way. 
\begin{equation}\label{def_k_R_E_func}
\begin{tikzcd}
	{k-\Mod} &&&& {R-\Mod} &&&& {E-\Mod}
	\arrow["{\res_{\psi}}"{description}, from=1-1, to=1-5]
	\arrow["{{{k \otimes_R\, \_}}}"', shift right=4, from=1-5, to=1-1]
	\arrow["{{\Hom_R(k,\,\_)}}", shift left=4, from=1-5, to=1-1]
	\arrow["{\Hom_R(I,\,\_) \cong I^\vee \otimes_R\, \_}"{description}, from=1-5, to=1-9]
	\arrow["{{{I\otimes_E\,\_}}}"', shift right=4, from=1-9, to=1-5]
	\arrow["{\Hom_E(I^\vee,\, \_)}", shift left=4, from=1-9, to=1-5]
\end{tikzcd}.
\end{equation}

Where $\res_\psi$ is a restriction of scalars along $\psi$ for the short exact sequence 
\begin{equation} \label{ses_I_R_k}
0 \to I \xrightarrow{} R \xrightarrow{\psi} k \to 0
\end{equation}
in $R-\Mod$ and $I^\vee := \Hom_R(I,\, R)$ is an $(E, R)$ - bimodule. Since $I$ is finitely generated and projective as a left $R$-module, the functor $\Hom_R(I,\,\_)$ preserves all colimits, so by the Eilenberg--Watts theorem~\cite{Watts60,Eilenberg60} $\Hom_R(I,\,\_) \cong I^\vee \otimes_R\, \_$ holds.

We now recall some simple remarks on idempotent modules that are needed in the proof of Corollary~(\ref{cor_B_B+_situation}).

\begin{lemma} \label{lem_idempotent}
Let $R$ be a unital ring, $I$ an ideal, $M \in R$-$\Mod$.
\begin{enumerate}
\item $\Hom_R(I, M) = 0$ implies $IM = 0$, and $I \otimes M = 0$ implies $IM = 0$.
\item If $I$ is an idempotent ideal, then the converse holds: \
$IM = 0$ implies $\Hom_R(I, M) = 0$, and $IM = 0$ implies $I \otimes M = 0$.
\end{enumerate}
\end{lemma}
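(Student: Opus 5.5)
Both parts are elementwise and I will verify them directly from the bilinear structure, writing $IM$ for the submodule generated by the products $am$ with $a \in I$, $m \in M$.

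For part (1), first suppose $\Hom_R(I,M)=0$ and fix $m \in M$. The right multiplication map $\rho_m \colon I \to M$, $a \mapsto am$, is left $R$-linear, since $\rho_m(ra) = (ra)m = r(am)$. It must therefore vanish, so $am=0$ for all $a\in I$. As $m$ was arbitrary, $IM=0$.

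Next suppose $I\otimes_R M=0$. The multiplication map $I \otimes_R M \to M$, $a\otimes m \mapsto am$, is well defined because $(ar)m = a(rm)$. Its image is exactly $IM$. Since its source is zero, $IM=0$.

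For part (2), assume $I^2=I$ and $IM=0$. To get $\Hom_R(I,M)=0$, take any $f\colon I\to M$. Because $I=I^2$, every $x\in I$ can be written $x=\sum_k a_k b_k$ with $a_k,b_k\in I$. Left $R$-linearity of $f$ gives $f(x)=\sum_k a_k f(b_k)$, and each term lies in $IM=0$. Hence $f=0$.

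To get $I\otimes_R M=0$, it suffices to kill the generators $x\otimes m$. Write $x=\sum_k a_k b_k$ as before. The tensor relation $(ar)\otimes m = a\otimes rm$ applies with $r=b_k\in R$, so
\[x\otimes m=\sum_k a_kb_k\otimes m=\sum_k a_k\otimes b_k m .\]
Each $b_k m$ lies in $IM=0$, so every generator vanishes and $I\otimes_R M=0$.

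The only step that needs any care is this last rewriting. It uses idempotence to move a factor $b_k \in I$ across the tensor sign into $M$; without $I^2=I$ there is no such factorisation, and the converse genuinely fails (for example $I=2\mathbb{Z}$ and $M=\mathbb{Z}/2$ over $\mathbb{Z}$). Nothing beyond these elementary identities is needed.
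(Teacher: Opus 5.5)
Your proof is correct and complete. The maps $a\mapsto am$ and $a\otimes m\mapsto am$ settle part (1). For part (2), writing $x=\sum_k a_kb_k$ via $I=I^2$ lets you move $b_k$ through $f$ or across the tensor sign. Your counterexample $I=2\mathbb{Z}$, $M=\mathbb{Z}/2$ is also right.

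The paper states this lemma without proof, as one of its ``simple remarks'', so there is no alternative route to compare with. One point worth making explicit is that you use $I$ as a two-sided ideal. You need the left module structure for $\Hom_R(I,M)$ and the right module structure for $I\otimes_R M$. This is the setting in which the paper applies the lemma.
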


\begin{remark} \label{remark_Hom(I,R)=E_pres_fp,lim}  
In particular (\ref{lem_idempotent}) implies that $I^\vee \cong E$ in $E-\Mod$. 
Indeed, applying $\Hom_R(I,\, \_)$ to sequence (\ref{ses_I_R_k}) results in the sequence
\begin{equation}
0 \to \Hom_R(I, I) \to \Hom_R(I, R) \to \Hom_R(I, k) \to 0,
\end{equation}
where exactness follows from projectivity of $I$.
Now idempotence and flatness of $I$ imply $I \otimes_R k = 0$ which implies $Ik = 0$ which implies $\Hom_R(I, k) = 0$ which concludes the argument.

So compactness of $I^\vee$ implies that $\Hom_R(I,\, \_): R-\Mod \to E-\Mod$ preserves finitely presented modules.

Clearly $\Hom(I,\, \_)$ preserves colimits as a left adjoint.
\end{remark}

\begin{proof}[Proof of Corollary \ref{cor_B_B+_situation}]
To apply Theorem~(\ref{th_coh_B_straightforward}) to the Diagram~(\ref{def_k_R_E_func}), it remains to check that the conditions on $\res_\psi$ and $\Tor_1^R(k,\,\_)$ are satisfied.

Note that $\res_\psi$ is left adjoint to $\Hom_R(k,\,\_)$, and $k$ is a finitely presented $R$-module by~(\ref{ses_I_R_k}); hence $\res_\psi$ preserves finitely presented objects and commutes with filtered colimits.

For a finitely generated submodule $X \hookrightarrow Y$ of a finitely presented module $Y$, consider the exact sequence
\begin{equation}
\Tor_2^R(k, Y/X) \to \Tor_1^R(k, X) \to \Tor_1^R(k, Y).
\end{equation}

$\Tor_2^R(k, Y/X) = 0$ since, by~(\ref{ses_I_R_k}), $k$ has flat dimension $1$. For a presentation $0 \to Z \to R^n \to Y \to 0$ we have
\begin{equation}
0 \to \Tor_1^R(k,\,Y) \to k \otimes Z \to \dots\,\,\,.
\end{equation}
Hence $\Tor_1^R(k, Y)$ is finitely generated, being a submodule of the finitely generated module $k \otimes Z$ over a Noetherian ring. Therefore $\Tor_1^R(k, X)$ is finitely generated as a submodule of the finitely generated module $\Tor_1^R(k, Y)$, as desired.
\end{proof}

\begin{remark}
Note that in Corollary~(\ref{cor_B_B+_situation}) one can specialise to the setting of {\it homotopes} as developed in~\cite{Bondal_Zhdanovskiy_21}.

Namely, let $A$ be an associative $k$-algebra and let $\Delta \in A$. The {\it homotope} is the associative $k$-algebra $A_\Delta$ (in general without a unit), with the same underlying set and the same $k$-linear structure as $A$, $A_\Delta \cong A$ in $k - \Mod$, but with multiplication defined by $x \times_{A_\Delta} y = x \Delta y$ for all $x, y \in A_\Delta$. We write $B_+ = A_\Delta$.

A unit can be adjoined to any associative algebra functorially, so set $B = \hat A_\Delta := A_\Delta \oplus k$. Then $B_+ \hookrightarrow B$ is the augmentation ideal. We say that $\Delta$ is {\it well-tempered} if $B_+$ is projective and idempotent. One can show that $\End_B(B_+) \cong A$, and as a corollary of Corollary~(\ref{cor_B_B+_situation}) we obtain:
\end{remark}

\begin{theorem}\cite[Theorem 49.]{Bondal_Zhdanovskiy_21} \label{th_bondal_zhdanovskiy}
Let $\Delta \in A$ be well-tempered and let $B = \hat A_\Delta$ . Assume that $A$ is
a coherent algebra over a Noetherian commutative ring $k$. Then $B$ is also coherent.
\end{theorem}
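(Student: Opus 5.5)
The plan is to deduce Theorem~\ref{th_bondal_zhdanovskiy} from Corollary~\ref{cor_B_B+_situation}, applied to the unital ring $R = B = \hat A_\Delta$, the ideal $I = B_+$, and the quotient $R/I = k$. The Noetherianness of $k$ is a hypothesis, and $E = \End_B(B_+) \cong A$ is coherent by hypothesis. So the work is to check that $I$ satisfies the hypotheses of the corollary: it should be a two-sided ideal that is finitely generated, projective and idempotent.

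\emph{Step 1: $I$ is an idempotent projective two-sided ideal with $R/I = k$.} $B_+ = A_\Delta$ is the augmentation ideal of the unitization $A_\Delta \oplus k$, so it is two-sided and $B/B_+ \cong k$. Projectivity and idempotence are exactly the definition of $\Delta$ being well-tempered.

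\emph{Step 2: $I$ is finitely generated.} I expect this to be the main point. Here I would use projectivity and idempotence together. Since $I$ is projective, it is a direct summand of a free module $B^{(J)}$, so it has a dual basis $\{x_j, f_j\}$ with $x = \sum_j f_j(x) x_j$. By idempotence, $I = I^2$, and the trace ideal of $I$ then contains $I$. Write $1_A \in B_+$ and use idempotence to express $1_A$ (equivalently the element $\Delta$-related generator) as a finite sum $\sum a_s \Delta b_s$. From this one should extract finitely many generators. Concretely, $B_+$ is generated as a left $B$-module by the elements $a_s\Delta$: any $x \in B_+$ satisfies $x = x \cdot_\Delta$-multiples via $1_A$.

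If this identity-based argument fails, I would fall back on the statement in \cite{Bondal_Zhdanovskiy_21} that $B_+$ is a finitely generated projective (indeed, well-tempered elements are defined so that $B_+$ is a direct summand of $B^n$). The identification $\End_B(B_+) \cong A$, recorded in the preceding remark, takes care of $E$.

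\emph{Step 3: apply the corollary.} Corollary~\ref{cor_B_B+_situation} now gives that $B$ is left coherent, and the right-handed version gives right coherence, since the hypotheses on $A$ and $k$ are symmetric ($k$ is commutative Noetherian). The only genuinely nontrivial input is finite generation in Step 2. Everything else is a direct translation of definitions.
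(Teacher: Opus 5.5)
Your route is the paper's. The paper proves Theorem~\ref{th_bondal_zhdanovskiy} by applying Corollary~\ref{cor_B_B+_situation} with $R = B$, $I = B_+$, $R/I = k$ and $E = \End_B(B_+) \cong A$. It says nothing about finite generation of $B_+$, so you are right to isolate that hypothesis.

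However, Step~2 is wrong as written. The left $B$-action on $B_+$ is $(c,\lambda)\cdot y = c\Delta y + \lambda y$. So the submodule generated by the elements $a_s\Delta$ lies in the left ideal $A\Delta$, which is proper whenever $\Delta$ has no left inverse. For example, $A = M_2(k)$ with $\Delta = e_{11}$ is well-tempered: $(e_{11},0)$ is an idempotent of $B$, $B_+ \cong \bigl(B\,(e_{11},0)\bigr)^{2}$ and $Ae_{11}A = A$. Yet $A\Delta = Ae_{11} \neq A$.

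The generators must be the right-hand factors. Idempotence of $B_+$ in $B$ means exactly $A\Delta A = A$, because $(x,0)(y,0) = (x\Delta y,0)$. So $1_A = \sum_{s=1}^{n} a_s\Delta b_s$, and every $x \in A$ satisfies $x = x\cdot 1_A = \sum_s (xa_s)\Delta b_s = \sum_s (xa_s,0)\cdot b_s$. Hence $B_+ = \sum_s B\, b_s$. The dual basis, the trace ideal and the fallback to \cite{Bondal_Zhdanovskiy_21} are unnecessary. The same identity shows that a $B$-endomorphism $\phi$ of $B_+$ is right multiplication by $\sum_s a_s\Delta\phi(b_s)$, which is what gives $\End_B(B_+) \cong A$.

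The right-handed half of Step~3 needs more than the symmetry of the hypotheses on $A$ and $k$. The right version of Corollary~\ref{cor_B_B+_situation} requires $B_+$ to be finitely generated projective as a \emph{right} $B$-module. It also requires $\End\bigl((B_+)_B\bigr) \cong A$, now acting by left multiplications, to be right coherent. The projectivity is true but must be argued, for instance as follows.
\begin{itemize}
\item By Lemma~\ref{lem_idempotent}, $\Hom_B(B_+, k) = 0$, so every $f \in \Hom_B(B_+, B)$ lands in $B_+$.
\item By the computation above, such an $f$ is $f_c \colon x \mapsto xc$ for a unique $c \in A$.
\item Since $f_c \cdot b = f_{cb}$ for $b \in B$, the map $c \mapsto f_c$ identifies $(B_+)_B$ with $\Hom_B(B_+, B)$.
\item This is the dual of the finitely generated projective left module $B_+$, hence finitely generated projective on the right.
\end{itemize}
If ``coherent'' is read as left coherent, per the paper's convention, your Steps~1 and~2, with the correction above, already suffice.
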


Now we get back to the proof of general result.

\begin{proof}[Proof of Theorem \ref{th_coh_B_straightforward}]
Let $B_1, B_2 \in \BB^{f.p.}$ and let $\varphi \in \Hom_\BB(B_1, B_2)$. It suffices to show that $I = \ker(\varphi) \in \BB^{f.p.}$; by Proposition~(\ref{prop_enough_to_check_on_epi}), we may assume that $\varphi$ is an epimorphism. Thus we have a short exact sequence
\begin{equation}\label{ses_I_B2_B1}
0 \to I \to B_1 \xrightarrow{\varphi} B_2 \to 0.
\end{equation}
By Theorem~(\ref{th_fg_fg_fp}), $I$ is finitely generated; by Theorem~(\ref{th_locpres_fg}), there is an epimorphism $B_3 \xrightarrowdbl{\psi} I$ from some $B_3 \in \BB^{f.p.}$. Setting $K = \ker(\psi)$, we have
\begin{equation}\label{ses_K_B3_I}
0 \to K \to B_3 \xrightarrow{\psi} I \to 0.
\end{equation}
It suffices to show that $K \in \BB^{f.g.}$. Applying the exact sequence of functors $j_! j^* \to \Id_\BB \to i_* i^* \to 0$ to $K$, we obtain
\begin{equation}
j_! j^*(K) \longrightarrow K \longrightarrow i_* i^*(K) \longrightarrow 0.
\end{equation}
By Lemma~(\ref{lem_ab_fp_fg_via_ker_coker}), it suffices to show that $j_! j^*(K), i_* i^*(K) \in \BB^{f.g.}$.

We begin with $j_! j^*(K)$. Applying $j^*$ to~(\ref{ses_I_B2_B1}) and~(\ref{ses_K_B3_I}) we obtain
\begin{equation}
0 \to j^*I \to j^*B_1 \to j^*B_2 \to 0,
\end{equation}
\begin{equation}
0 \to j^*K \to j^*B_3 \to j^*I \to 0.
\end{equation}
Since $j^*$ preserves finitely presented objects, $j^*B_1, j^*B_2, j^*B_3 \in \CC^{f.p.}$. Local coherence of $\CC$ implies $j^*I \in \CC^{f.p.}$ and hence $j^*K \in \CC^{f.p.}$. Since $j_!$ preserves finitely presented objects, $j_! j^*(K) \in \BB^{f.p.}$.

We now study $i_* i^*(K)$. Since $i^*$ is not exact, applying $i^*$ to~(\ref{ses_K_B3_I}) we only obtain
\begin{equation} \label{ses_Li*I_i*K_i*B3_i*I}
\mathbb{L}_1i^*I \to i^*K \to i^*B_3 \xrightarrow{i^*\psi} i^*I \to 0.
\end{equation}
Denote $X = \ker(i^*\psi)$; then~(\ref{ses_Li*I_i*K_i*B3_i*I}) splits into two exact sequences:

\begin{equation} 
0 \to X \to  i^*B_3 \to i^*I \to 0,
\end{equation}
\begin{equation} 
\mathbb{L}_1i^*I \to i^*K \to X \to 0.
\end{equation}
Since $i^*$ preserves finitely presented objects, $i^*B_3 \in \AA^{f.p.}$. Local Noetherianness of $\AA$ implies $X \in \AA^{f.g.}$ as a subobject of a finitely generated object. Since $I$ is a finitely generated subobject of a finitely presented object, the hypothesis on $\mathbb{L}_1 i^*$ gives $\mathbb{L}_1 i^*(I) \in \AA^{f.g.}$. By Lemma~(\ref{lem_ab_fp_fg_via_ker_coker}), $i^*K \in \AA^{f.g.}$, which is equivalent to $i^*K \in \AA^{f.p.}$ in a locally Noetherian category. Since $i_*$ preserves finitely presented objects, $i_* i^*(K) \in \BB^{f.p.}$, as desired.
\end{proof}

Note that in Theorem~(\ref{th_coh_B_straightforward}) the local Noetherianness assumption on $\AA$ is used only because of the non-exactness of $i^*$. If we instead require $i^*$ to be exact, then the local Noetherianness assumption can be weakened.

\begin{theorem} \label{th_coh_B_i*exact}
Let $\AA$, $\BB$, $\CC$ be AB5 locally finitely presentable abelian categories and the following adjoint 
functors between them
\begin{equation}
\begin{tikzcd}
	\AA &&& \BB &&& \CC
	\arrow[""{name=0, anchor=center, inner sep=0}, "{{i_*}}"{description}, from=1-1, to=1-4]
	\arrow[""{name=1, anchor=center, inner sep=0}, "{{i^*}}"{description}, shift right=5, from=1-4, to=1-1]
	\arrow[""{name=2, anchor=center, inner sep=0}, "{{j^*}}"{description}, from=1-4, to=1-7]
	\arrow[""{name=3, anchor=center, inner sep=0}, "{{j_!}}"{description}, shift right=5, from=1-7, to=1-4]
	\arrow["\dashv"{anchor=center, rotate=-90}, shift left=5, draw=none, from=1, to=0]
	\arrow["\dashv"{anchor=center, rotate=-90}, shift left=5, draw=none, from=3, to=2]
\end{tikzcd}.
\end{equation}

Where $i_*$ is a fully faithful embedding of the $\Ker(j^*)$ and $j_!$ is fully faithful.
Moreover let $i_*$, $j^*$ be exact, commute with filtered colimits and preserve finitely presented objects, and $i^*$ is exact.

Then, if $\AA$ and $\CC$ are locally coherent, $\BB$ is locally coherent.
\end{theorem}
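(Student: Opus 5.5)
The plan is to repeat the proof of Theorem~\ref{th_coh_B_straightforward} verbatim, using exactness of $i^*$ in place of local Noetherianness of $\AA$ and of the hypothesis on $\mathbb{L}_1 i^*$.

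Start with $B_1, B_2 \in \BB^{f.p.}$ and an epimorphism $\varphi\colon B_1 \to B_2$; by Proposition~\ref{prop_enough_to_check_on_epi} this case suffices. Put $I = \ker\varphi$. By Theorem~\ref{th_fg_fg_fp} the object $I$ is finitely generated. Theorem~\ref{th_locpres_fg} then gives an epimorphism $\psi\colon B_3 \twoheadrightarrow I$ with $B_3 \in \BB^{f.p.}$. Set $K = \ker\psi$; it remains to show $K \in \BB^{f.g.}$. As before, the exact sequence $j_!j^*K \to K \to i_*i^*K \to 0$ from Proposition~\ref{prop_sess_of_recollement}, together with Lemma~\ref{lem_ab_fp_fg_via_ker_coker}(1), reduces this to showing that $j_!j^*K$ and $i_*i^*K$ are finitely generated.

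For $j_!j^*K$ the argument is unchanged. The functor $j^*$ is exact and preserves finitely presented objects, and $\CC$ is locally coherent, so $j^*I$ and hence $j^*K$ lie in $\CC^{f.p.}$. The functor $j_!$ preserves finitely presented objects because its right adjoint $j^*$ commutes with filtered colimits. Hence $j_!j^*K \in \BB^{f.p.}$.

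For $i_*i^*K$ we use the exactness of $i^*$. Applying $i^*$ to $0\to I\to B_1\to B_2\to 0$ and to $0\to K\to B_3\to I\to 0$ gives short exact sequences in $\AA$. The functor $i^*$ preserves finitely presented objects, since its right adjoint $i_*$ commutes with filtered colimits. So $i^*B_1, i^*B_2, i^*B_3 \in \AA^{f.p.}$. Local coherence of $\AA$ then gives $i^*I = \ker(i^*B_1\to i^*B_2) \in \AA^{f.p.}$, and applying it again, $i^*K = \ker(i^*B_3 \to i^*I) \in \AA^{f.p.}$. Since $i_*$ preserves finitely presented objects, $i_*i^*K \in \BB^{f.p.}$.

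The only point that needs care is the exactness of $j_!j^* \to \Id \to i_*i^* \to 0$ under these partial hypotheses, since we are not assuming a full recollement. It holds by the argument of Proposition~\ref{prop_sess_of_recollement}, which uses only that $j^*$ is exact, that $j^*\mu$ is an isomorphism (because $j_!$ is fully faithful), that $\Ker j^* = i_*\AA$, and that $i^*j_! = 0$. The last identity follows by adjunction: $\Hom(i^*j_!C, A) = \Hom(C, j^*i_*A) = 0$. This was already used in Theorem~\ref{th_coh_B_straightforward}, so there is no real obstacle.
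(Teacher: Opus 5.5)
Your proposal is correct and follows the paper's route: the paper proves this theorem by rerunning the proof of Theorem~\ref{th_coh_B_straightforward}, with exactness of $i^*$ letting local coherence of $\AA$ handle $i^*K$ exactly as local coherence of $\CC$ handles $j^*K$. Your additional check that $j_!j^*K \to K \to i_*i^*K \to 0$ is exact under only the partial adjoint data (via $i^*j_! = 0$ and invertibility of $j^*\mu$) supplies a step the paper leaves implicit.
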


\begin{proof}
The proof is much like the first part of the proof of Theorem (\ref{th_coh_B_straightforward}).
\end{proof}




\subsubsection{Local coherency of trivial extension}

The goal of this subsection is to recall the result of Fossum, Griffith, and Reiten~\cite{reiten} on the coherence of the trivial extension (in the sense of Remark~(\ref{rem_triv_ext})) of an abelian category. We first recall the relevant definitions.

 \begin{definition}\cite{reiten}
Let $\mathcal{A}$ be an abelian category endowed with a small full additive subcategory $\mathcal{P}$ consisting of projective objects, and let $A \in \mathcal{A}$.
\begin{enumerate}
\item[(a)] We say $A$ has \textit{finite $\mathcal{P}$-type} when some $P \in \mathcal{P}$ admits an epimorphism $P \to A$.
\item[(b)] We say $A$ has \textit{finite $\mathcal{P}$-presentation} when there is an exact sequence $Q \to P \to A \to 0$ with $P, Q \in \mathcal{P}$.
\item[(c)] We call $A$ \textit{$\mathcal{P}$-coherent} when it has finite $\mathcal{P}$-type and, moreover, each of its subobjects of finite $\mathcal{P}$-type has finite $\mathcal{P}$-presentation.
\item[(d)] We call $A$ \textit{pseudo $\mathcal{P}$-coherent} when every subobject of finite $\mathcal{P}$-type is coherent.
\end{enumerate}
\end{definition}

Write $\Coh_{\mathcal{P}}(\mathcal{A})$ for the full additive subcategory of $\mathcal{P}$-coherent objects of $\mathcal{A}$; it is abelian.

Define a functor $h: \mathcal{A} \to \Fun(\mathcal{P}^{\mathrm{op}}, \mathbf{Ab})$ by $h_A = \mathcal{A}(-, A)$ for $A \in \mathcal{A}$. Projectivity of the objects of $\mathcal{P}$ makes $h$ exact.

Once again Yoneda's lemma lets $h$ match the objects of finite $\mathcal{P}$-presentation in $\mathcal{A}$ with those of finite $\mathcal{P}$-presentation in $\Fun(\mathcal{P}^{\mathrm{op}}, \mathbf{Ab})$. Indeed, if $P \to Q \to A \to 0$ is exact in $\mathcal{A}$, then $h_P \to h_Q \to h_A \to 0$ is exact in $\Fun(\mathcal{P}^{\mathrm{op}}, \mathbf{Ab})$, so $h_A$ has finite $\mathcal{P}$-presentation. In the reverse direction, for $P, Q \in \mathcal{P}$ the identity $[h_P, h_Q] = \mathcal{A}(P, Q)$ shows that the cokernel of a map $h_P \to h_Q$ is represented by the cokernel of the underlying $P \to Q$ in $\mathcal{A}$. Moreover $A$ is $\mathcal{P}$-coherent precisely when $h_A$ is.

It follows that the categories $\Coh_{\mathcal{P}}(\mathcal{A})$ and $\Coh_{\mathcal{P}}(\Fun(\mathcal{P}^{\mathrm{op}}, \mathbf{Ab}))$ are equivalent.

\begin{definition}
We call $\mathcal{P}$ {\it left coherent} when $\mathcal{P} \subset \Coh_{\mathcal{P}}(\Fun(\mathcal{P}^{\mathrm{op}}, \mathbf{Ab}))$; dually, $\mathcal{P}$ is {\it right coherent} when $\mathcal{P}^{\mathrm{op}} \subset \Coh_{\mathcal{P}^{\mathrm{op}}}(\Fun(\mathcal{P}, \mathbf{Ab}))$, and {\it coherent} when it is at once left and right coherent.
\end{definition}

\begin{theorem}\cite[Theorem 2.2]{reiten} \label{th_reiten_TP_coherent}
Let $P$ be a small additive full subcategory of projectives in $\mathcal{A}$ and let $F: \mathcal{A} \to \mathcal{A}$ be right exact. Denote by $T(P)$ the small additive full subcategory obtained by extending $P$ to the comma category $(F \downarrow \Id_{\mathcal{A}})$. Then $T(P)$ is coherent in $(F \downarrow \Id_{\mathcal{A}})$ if and only if:
\begin{enumerate}
\item[(a)] $P$ is coherent in $\mathcal{A}$, and, for every $P$-coherent object $A$ in $\mathcal{A}$,
\item[(b)] the objects $L_i F(A)$ are $P$-coherent in each degree $i > 0$, and
\item[(c)] whenever $B$ is a subobject of $F(A)$ of finite $P$-type, $B$ is coherent and $F(B)$ has finite $P$-type.
\end{enumerate}
\end{theorem}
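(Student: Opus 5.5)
We would prove Theorem~\ref{th_reiten_TP_coherent} by working inside $\BB:=(F\downarrow \Id_{\AA})$, whose objects are triples $(A,A',f\colon F(A)\to A')$. We use the recollement~(\ref{recoll_coma_acb}) of Example~\ref{ex_comacat_recoll} with $G=F$. In it, $T_\AA(A)=(A,F(A),\Id)$ is left adjoint to the exact functor $U_\AA$, so $T(\mathcal P)=T_\AA(\mathcal P)$ consists of projectives. Every object sits in a short exact sequence
\[
0\to (0,A',0)\to (A,A',f)\to (A,0,0)\to 0,
\]
and there is also a right exact sequence $T_\AA(A)\to (A,A',f)\to Z_\AA(\coker f)\to 0$, where $Z_\AA$ denotes the zero-section embedding $X\mapsto(0,X,0)$.

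\textbf{Step 1: finite $T(\mathcal P)$-type and presentation.} First we establish that $M=(A,A',f)$ has finite $T(\mathcal P)$-type iff $A$ and $\coker f$ have finite $\mathcal P$-type. This holds because the functor $q=\coker(-)$ and $U_\AA$ are right exact and send $T_\AA(P)$ to $P$ and $0$ respectively, while $T_\AA(P)\oplus T_\AA(Q)\to M$ can be assembled from lifts. For finite presentation we compute the kernel of an epimorphism $T_\AA(P)\to M$ by the snake lemma on the two components. The kernel of $F(P)\to A'$ involves $\ker\bigl(F(K)\to F(P)\bigr)$, where $K=\ker(P\to A)$, and hence involves $L_1F(A)$ via the long exact sequence of left derived functors. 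This produces a dictionary translating finite $T(\mathcal P)$-presentation of $M$ into conditions on $A$, $\coker f$, and the groups $L_iF(A)$.

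\textbf{Step 2: necessity.} Assuming $T(\mathcal P)$ coherent, we test coherence on specific objects. Condition (a) follows by applying $U_\AA$ and $Z_\AA$ to finitely presented objects of the form $Z_\AA(A)$ and $T_\AA(A)$: a $\mathcal P$-presentation of $A$ yields a $T(\mathcal P)$-presentation of $T_\AA(A)$ and of $Z_\AA(A)$. Taking kernels of maps $T_\AA(Q)\to T_\AA(P)$ for a $\mathcal P$-resolution of a $\mathcal P$-coherent $A$ exhibits $L_iF(A)$ as the second component of a coherent object, which must then be $\mathcal P$-coherent; this gives (b), by induction on $i$ via dimension shifting. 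For (c), a finite-type subobject $B\subset F(A)$ yields the subobject $(0,B,0)$, together with a suitable pullback subobject of $T_\AA(A)$. Finite presentation of these subobjects forces $B$ to be coherent and $F(B)$ to be of finite type.

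\textbf{Step 3: sufficiency (main obstacle).} We must show that the kernel of any map between objects of finite $T(\mathcal P)$-presentation again has finite $T(\mathcal P)$-presentation. Using the exact sequence above, we reduce this to kernels of maps $T_\AA(P_1)\to T_\AA(P_0)$, which are of the form $(K,\ker F\alpha,\dots)$. The first component $K$ is handled by (a). The second component is an extension of a subobject of $F(\coker)$-type terms by a quotient of $L_1F(\coker\alpha)$; its finite presentation, and finiteness of $F$ applied to it, is exactly what (b) and (c) supply. The delicate point is bookkeeping the non-exactness of $F$: we must show that the map $F(\text{kernel})\to$ (second component) has finite-type cokernel. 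For this we would first try to reduce, via Lemma~\ref{lem_ab_fp_fg_via_ker_coker} and Proposition~\ref{prop_enough_to_check_on_epi}, to the case of an epimorphism, mirroring the proof of Theorem~\ref{th_coh_B_straightforward}. In that proof $L_1F$ plays the role of $\mathbb L_1 i^*$ and condition (c) replaces local Noetherianity.
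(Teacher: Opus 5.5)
The paper gives no proof of this statement; it is quoted from \cite{reiten}. Your proposal therefore has to stand on its own, and it has a genuine gap. In fact, read literally in the comma category, the ``only if'' direction you are trying to prove is false.

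First, a set-up issue. Suppose $T(\mathcal P)$ consists only of the objects $T_\AA(P)=(P,F(P),\Id)$. Then every quotient of an object of $T(\mathcal P)$ has epimorphic structure map, since $q$ is right exact and $q(T_\AA(P))=0$. So your Step~1 equivalence fails. Moreover $T_\AA$ is fully faithful, so $T(\mathcal P)\simeq\mathcal P$ and its coherence is just (a). Step~1 and the sequence $T_\AA(A)\to(A,A',f)\to Z_\AA(\coker f)\to0$ tacitly use $T_\AA(P)\oplus Z_\AA(Q)$ as generators, not $T_\AA(P)\oplus T_\AA(Q)$.

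With that fixed, the decisive failure is the last claim of Step~2. In $(F\downarrow\Id_\AA)$ the functor $F$ is only ever applied to first components. The object $(0,B,0)$ is presented by $Z_\AA(Q_1)\to Z_\AA(Q_0)$, so its finite presentation says nothing about $F(B)$. Indeed the clause is not necessary, as the following example shows.
\begin{itemize}
\item Take $\AA=k\text{-}\Mod$ for a field $k$, let $\mathcal P$ be the finite-dimensional spaces, and let $F=V\otimes_k(-)$ with $\dim V=\infty$.
\item Then $(F\downarrow\Id_\AA)$ is the category of left modules over $\Lambda=\left(\begin{smallmatrix}k&0\\V&k\end{smallmatrix}\right)$.
\item The kernel of any morphism $T_\AA(k^a)\oplus Z_\AA(k^b)\to T_\AA(k^c)\oplus Z_\AA(k^d)$ is again of the form $T_\AA(K)\oplus Z_\AA(W)$ with $K,W$ finite-dimensional. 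So the generators form a coherent subcategory, on both sides since $\Lambda^{\mathrm{op}}\cong\Lambda$.
\item But for $A=k$ and a line $B\subset F(A)=V$, the object $F(B)\cong V$ is not of finite $\mathcal P$-type.
\end{itemize}
This is consistent with Corollary~\ref{cor_reiten_matrix} for $N=0$, where the corresponding clause concerns $N\otimes_S C=0$.

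The clause belongs to the genuine trivial extension $\AA\ltimes F$ of \cite{reiten}. Its objects are $(A,f\colon F(A)\to A)$ with $f\circ F(f)=0$, and its free objects are $T(P)=\bigl(P\oplus F(P),\left(\begin{smallmatrix}0&0\\1&0\end{smallmatrix}\right)\bigr)$. This is not a comma category; Remark~\ref{rem_triv_ext} conflates the two. Rather, $(F\downarrow\Id_\AA)$ is the trivial extension of $\AA\times\AA$ by $(A_1,A_2)\mapsto(0,F(A_1))$. For that functor the $F(B)$-clause becomes vacuous, and that is the criterion your set-up can actually prove.

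In $\AA\ltimes F$, a finite-type $B\subset F(A)$ gives a subobject $(B,0)$ of $T(A)$ through the summand $F(A)$. The kernel of $T(B)\to(B,0)$ is $(F(B),0)$. This is what makes finite type of $F(B)$ necessary, and it is what the sufficiency argument must use when second components are fed back through $F$. In that category the recollement (\ref{recoll_coma_acb}) is unavailable, and objects of finite type are those with $\coker f$ of finite $\mathcal P$-type, so Step~1 must be redone.

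Step~3 is, in any case, only a sketch.
\begin{itemize}
\item Reducing to maps $T_\AA(P_1)\to T_\AA(P_0)$ discards exactly the components $Q_1\to F(P_0)$. Those are the components through which finite-type subobjects of $F(A)$, and hence condition (c), enter.
\item The analogy with Theorem~\ref{th_coh_B_straightforward} is off. In (\ref{recoll_coma_acb}) one has $\mathbb L_1 i^*(A,A',f)=\mathbb L_1 q(A,A',f)\cong\ker f$, rather than anything built from $L_1F$. Condition (c) also does not replace the local Noetherianity used there.
\end{itemize}
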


As a corollary of this theorem one gets a result of 

\begin{corollary} \cite{reiten} \label{cor_reiten_matrix}
The ring $\begin{pmatrix} R & N \\ M & S \end{pmatrix}$ is left coherent precisely when:
\begin{enumerate}
\item[(a)] $R$ and $S$ are both left coherent, and, for every finitely presented left $R$-module $A$ and finitely presented left $S$-module $B$,
\item[(b)] the left modules $\Tor_i^R( {}_S M, A)$ and $\Tor_i^S( {}_R N, B)$ are finitely presented in each degree $i > 0$, and
\item[(c)] whenever $C$ is a finitely generated left $S$-submodule of $M \otimes_R A$ (respectively a finitely generated left $R$-submodule of $N \otimes_S B$), then $C$ is finitely presented and $N \otimes_S C$ (resp.\ $M \otimes_R C$) stays finitely generated.
\end{enumerate}
\end{corollary}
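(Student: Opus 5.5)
We deduce Corollary~\ref{cor_reiten_matrix} from Theorem~\ref{th_reiten_TP_coherent}, by realising the module category of the triangular-type ring $\Lambda = \bigl(\begin{smallmatrix} R & N \\ M & S \end{smallmatrix}\bigr)$ (with $\phi = \psi = 0$) as a trivial extension. Put $\AA = R\text{-}\Mod \times S\text{-}\Mod$ and let $\mathcal{P}$ be the full subcategory of finitely generated projectives, i.e.\ direct summands of $(R^m, S^n)$. Define the right exact endofunctor
\[
F(A,B) = (N \otimes_S B,\; M \otimes_R A).
\]
Comparing with the description of Morita-ring modules in Example~\ref{ex_psa_matrix_ring}, a left $\Lambda$-module is a tuple $(X, Y, f, g)$ with $f\colon M\otimes_R X \to Y$ and $g\colon N \otimes_S Y \to X$. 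When $\phi=\psi=0$, these are exactly objects $((X,Y),\,F(X,Y) \to (X,Y))$ of $(F \downarrow \Id_\AA)$, and the two compatibility squares hold automatically. This identification rests on the convention that modules over such rings are studied when $M \otimes N$ and $N \otimes M$ act by zero, as in \cite{reiten}; we take it as the intended setting.

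Under this equivalence, $T(\mathcal{P})$ corresponds to the finitely generated projective $\Lambda$-modules. Indeed, $T(P) = (F(P), P \oplus F(P))$ with its canonical structure, and for $P = (R,0)$ and $P=(0,S)$ these are the two column projectives $\Lambda e_1$ and $\Lambda e_2$. Hence coherence of $T(\mathcal{P})$ in $(F\downarrow\Id_\AA)$, read through the Yoneda comparison $\Coh_{\mathcal P}(\AA) \simeq \Coh_{\mathcal P}(\Fun(\mathcal{P}^{op},\mathbf{Ab}))$ preceding Theorem~\ref{th_reiten_TP_coherent}, is exactly left coherence of $\Lambda$. Here we use Theorem~\ref{th_MEGA} (item 4: $\Lambda\text{-}\proj$ has weak kernels) to pass between coherence of $\mathcal{P}$ and coherence of the ring.

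It remains to translate (a)--(c) of Theorem~\ref{th_reiten_TP_coherent}.

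For (a): $\mathcal{P}$ is coherent in the product $\AA$ iff both factors are, i.e.\ $R$ and $S$ are left coherent. Moreover, $\mathcal{P}$-coherent objects of $\AA$ are the pairs $(A,B)$ of finitely presented modules.

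For (b): since $F$ splits componentwise, $L_iF(A,B) = (\Tor_i^S(N,B), \Tor_i^R(M,A))$. Requiring this to be $\mathcal{P}$-coherent means these modules are finitely presented, given (a).

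For (c): a subobject of $F(A,B)$ of finite $\mathcal{P}$-type is a pair $(C',C)$, where $C'$ is a finitely generated submodule of $N\otimes_S B$ and $C$ is a finitely generated submodule of $M \otimes_R A$. The condition becomes: $C, C'$ are finitely presented, and $F(C',C) = (N\otimes_S C, M \otimes_R C')$ is finitely generated.

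Since every condition splits into its two components, varying $A$ and $B$ independently (one may take the other to be zero) yields exactly the stated symmetric list.

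The main obstacle is the first step: checking carefully that the equivalence $\Lambda\text{-}\Mod \simeq (F\downarrow \Id_\AA)$ identifies $T(\mathcal{P})$ with $\Lambda\text{-}\proj$ up to summands. One must compute $T$ on the generators and verify that direct summands are preserved, so that the coherence of $T(\mathcal{P})$ is genuinely left coherence of $\Lambda$. The remaining translation is bookkeeping.
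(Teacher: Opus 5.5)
Your route is the intended one: the paper gives no argument beyond presenting the statement as a consequence of Theorem~\ref{th_reiten_TP_coherent}. In \cite{reiten} it is deduced as you propose, with $\AA = R\text{-}\Mod \times S\text{-}\Mod$ and $F(A,B) = (N\otimes_S B,\, M\otimes_R A)$. Your translation of (a)--(c) via $L_iF(A,B) = (\Tor_i^S(N,B), \Tor_i^R(M,A))$ and componentwise subobjects is correct.

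\textbf{The gap is in the identification step.} With $\phi = \psi = 0$, the squares of Example~\ref{ex_psa_matrix_ring} say $g \circ (1_N \otimes f) = 0$ and $f \circ (1_M \otimes g) = 0$. For $h = (g,f) \colon F(X,Y) \to (X,Y)$ this is exactly $h \circ F(h) = 0$. Since $F^2(A,B) = (N \otimes_S M \otimes_R A,\ M \otimes_R N \otimes_S B)$ need not vanish, this is a genuine restriction, not automatic. It is automatic only when $F^2 = 0$, for instance in the triangular cases $M = 0$ or $N = 0$.

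Without this condition you get the wrong category. The category of all pairs $(Z, h \colon F(Z) \to Z)$ is the module category of the tensor ring $T_{R\times S}(M \oplus N)$, not of $\Lambda$. For $R = S = M = N = k$ this is the infinite-dimensional path algebra of the quiver with two opposite arrows, not the $4$-dimensional $\Lambda$. The literal comma category of Definition~\ref{def_comacat}, with objects triples $(Z_1, Z_2, F(Z_1) \to Z_2)$, is different again.

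What is true is $\Lambda\text{-}\Mod \simeq \AA \ltimes F$, the trivial extension of \cite{reiten}, whose objects are the $(Z,h)$ with $h \circ F(h) = 0$. This is the category Theorem~\ref{th_reiten_TP_coherent} is about; its notation $(F\downarrow \Id_\AA)$ must be read through Remark~\ref{rem_triv_ext}, not Definition~\ref{def_comacat}. Your own computation $T(R,0) = \Lambda e_1$, $T(0,S) = \Lambda e_2$ depends on this condition. The left adjoint of the forgetful functor is $P \mapsto P \oplus F(P)$ only because $h \circ F(h) = 0$ is imposed; without it one gets $\bigoplus_{n \geq 0} F^n(P)$. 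Once you state the identification this way, the rest of your argument goes through.

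Two smaller points. First, $T(P)$ is the object $P \oplus F(P)$ with structure map $F(P) \oplus F^2(P) \to P \oplus F(P)$ given by projection onto $F(P)$ followed by the inclusion, not ``$(F(P), P \oplus F(P))$''. Second, the obstacle you single out is not really one. Since $\Lambda^n = T(R^n, S^n)$ lies in $T(\mathcal{P})$ and every $T(P)$ is a summand of some $\Lambda^n$, finite $T(\mathcal{P})$-type and finite $T(\mathcal{P})$-presentation are just finite generation and finite presentation. So coherence of $T(\mathcal{P})$ says directly that finitely generated submodules of finitely generated projective $\Lambda$-modules are finitely presented, i.e.\ left coherence of $\Lambda$. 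No summand bookkeeping or appeal to Theorem~\ref{th_MEGA} is needed.
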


\begin{example} \cite{reiten} \label{ex_reiten_kxi}
To illustrate the corollary, let $k$ be a field and write $A$ for the coherent $k$-algebra $k[x_i \mid i \in S]$ on an infinite index set $S$.
Regard $k$ as an $A$-module through the standard augmentation $A \to k$ that annihilates each $x_i$.
Then the ring $T = \begin{pmatrix} A & 0 \\ k & k \end{pmatrix}$ is left coherent but fails to be right coherent.
\end{example}

Note that in the previous example the commutative ring $A$ is coherent as a polynomial ring in infinitely many variables over the Noetherian ring $k$. The category $\Mod - T$ is still an extension of the categories $\Mod - A$ and $\Mod - k$ as in the recollement of Example~(\ref{ex_psa_matrix_ring}).

\begin{remark} \label{rem_generality}
Note that in Theorem~(\ref{th_coh_B_i*exact}) all the assumptions on the functors $i_*$ and $j^*$
\begin{equation}
\begin{tikzcd}
	\AA &&& \BB &&& \CC
	\arrow[""{name=0, anchor=center, inner sep=0}, "{{i_*}}"{description}, from=1-1, to=1-4]
	\arrow[""{name=1, anchor=center, inner sep=0}, "{{i^*}}"{description}, shift right=5, from=1-4, to=1-1]
	\arrow[""{name=2, anchor=center, inner sep=0}, "{{j^*}}"{description}, from=1-4, to=1-7]
	\arrow[""{name=3, anchor=center, inner sep=0}, "{{j_!}}"{description}, shift right=5, from=1-7, to=1-4]
	\arrow["\dashv"{anchor=center, rotate=-90}, shift left=5, draw=none, from=1, to=0]
	\arrow["\dashv"{anchor=center, rotate=-90}, shift left=5, draw=none, from=3, to=2]
\end{tikzcd}
\end{equation}
are fulfilled automatically in any recollement situation (i.e.\ when $i_*$ and $j^*$ possess suitable right adjoints), except for the preservation of finitely presented objects. The previous example shows that this assumption cannot be omitted.
\end{remark}

\subsection{Reverse course} \label{ssec:reverse}

Let $\BB$, $\CC$ be AB5 locally finitely presentable abelian categories.

As is known from Proposition (\ref{prop_loccoh=cohass}), to establish the local coherence of an AB5 locally finitely presentable category it is enough to show that it is coherently assembled.
For $\CC$, this amounts to checking that $\tYY_\CC$ is left exact.

\begin{theorem}\label{th_revcourse_coh_j^*j_*}
Consider the pair of adjoint functors $(j^* \dashv j_*)$
\begin{equation}
\begin{tikzcd}
	\BB &&& \CC
	\arrow[""{name=0, anchor=center, inner sep=0}, "{j^*}"{description}, from=1-1, to=1-4]
	\arrow[""{name=1, anchor=center, inner sep=0}, "{j_*}"{description}, shift left=5, from=1-4, to=1-1]
	\arrow["\dashv"{anchor=center, rotate=-90}, shift left=5, draw=none, from=0, to=1]
\end{tikzcd},
\end{equation}
such that $j_*$ is fully faithful and preserves filtered colimits, $j^*$ is exact.

Then local coherency of $\BB$ implies local coherency of $\CC$.
\end{theorem}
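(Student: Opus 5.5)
By Proposition~\ref{prop_loccoh=cohass}, it suffices to show that $\tYY_\CC$ is left exact, knowing that $\tYY_\BB$ is left exact. The plan is to express $\tYY_\CC$ through $\tYY_\BB$. To do this, apply Lemma~\ref{lem_commutativity_of_ind(spec)} to the adjoint pair $(j^* \dashv j_*)$ and read it in the direction matching that lemma. There $H$ is the fully faithful left adjoint, and $F$ is its right adjoint preserving filtered colimits. Here the fully faithful functor $j_*$ is a \emph{right} adjoint, so the lemma does not apply verbatim.

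I would instead argue directly, using compactly assembled language. First, $j^*$ preserves compact objects: its right adjoint $j_*$ preserves filtered colimits. Also $j^*$ preserves all colimits, being a left adjoint. Lemma~\ref{lem_commutativity_of_ind} therefore gives
\[
\tYY_\CC(j^*b)\cong Ij^*\,\tYY_\BB(b) \qquad \text{for } b\in\BB.
\]
Fully faithfulness of $j_*$ gives $j^*j_*\cong \Id_\CC$, so for $c\in\CC$ we get
\[
\tYY_\CC(c)\cong \tYY_\CC(j^*j_*c)\cong Ij^*\,\tYY_\BB(j_*c),
\]
naturally in $c$. This yields a natural isomorphism of functors $\tYY_\CC\cong Ij^*\circ\tYY_\BB\circ j_*$. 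For naturality, I would check that the isomorphism of Lemma~\ref{lem_commutativity_of_ind} is natural. This should be formal: both sides are left adjoint to compatible functors, namely $\colim\circ Ij_*$ versus $j_*\circ\colim$, and these agree because $j_*$ preserves filtered colimits.

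Left exactness then follows by composing. The functor $j_*$ is left exact as a right adjoint. The functor $\tYY_\BB$ is left exact by the hypothesis on $\BB$ via Proposition~\ref{prop_loccoh=cohass}. The functor $Ij^*$ is exact because $j^*$ is exact (the remark on ind-extensions of functors). Hence $\tYY_\CC$ is left exact, so $\CC$ is coherently assembled. Before applying Proposition~\ref{prop_loccoh=cohass} to $\CC$, we need $\CC$ to be AB5 and locally finitely presentable; this is the standing assumption of the subsection.

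The main obstacle is the naturality, and more precisely the cleanest identification $\tYY_\CC \cong Ij^*\,\tYY_\BB\, j_*$. The first thing I would try is the adjunction argument. Since $j^*\dashv j_*$ and both $j^*$ and $j_*$ commute with filtered colimits, we have $Ij^*\dashv Ij_*$ and $\colim\circ Ij_*\cong j_*\circ\colim$. Taking left adjoints gives $Ij^*\circ\tYY_\BB\cong\tYY_\CC\circ j^*$ naturally. Precomposing with $j_*$ and using $j^*j_*\cong\Id$ then finishes. This route avoids the explicit choice of systems in Lemma~\ref{lem_commutativity_of_ind}, and it also shows the argument works for compactly assembled $\BB$ in general.
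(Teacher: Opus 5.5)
Your argument is correct and is the paper's proof. You apply Lemma~\ref{lem_commutativity_of_ind} to $j^*$, which preserves compact objects because $j_*$ preserves filtered colimits, then precompose with $j_*$ using $j^*j_*\cong\Id_\CC$, and conclude because $Ij^*\circ\tYY_\BB\circ j_*$ is a composite of left exact functors. Your adjunction argument, that $Ij^*\tYY_\BB$ and $\tYY_\CC j^*$ are left adjoint to the isomorphic functors $\colim\circ Ij_*\cong j_*\circ\colim$, is a useful addition: it supplies the naturality of the isomorphism, which the objectwise Lemma~\ref{lem_commutativity_of_ind} leaves implicit.
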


\begin{proof}
Note that, by full faithfulness,
\begin{equation}
\Id_\CC \cong j^*j_*.
\end{equation}
So now from Lemma (\ref{lem_commutativity_of_ind}) follows
\begin{equation}
\tYY_\CC \cong \tYY_\CC j^* j_* \cong Ij^* \tYY_\BB j_*
\end{equation}
Since the right-hand side is a composition of left exact functors, $\tYY_\CC$ is left exact, as desired.
\end{proof}

\begin{theorem}\label{th_revcourse_coh_j_!j^*}
Consider the pair of adjoint functors $(j_! \dashv j^*)$
\begin{equation}
\begin{tikzcd}
	\BB &&& \CC
	\arrow[""{name=0, anchor=center, inner sep=0}, "{{{j^*}}}"{description}, from=1-1, to=1-4]
	\arrow[""{name=1, anchor=center, inner sep=0}, "{{j_!}}"{description}, shift right=5, from=1-4, to=1-1]
	\arrow["\dashv"{anchor=center, rotate=-90}, shift left=5, draw=none, from=1, to=0]
\end{tikzcd},
\end{equation}
such that $j_!$ is fully faithful and exact, $j^*$ preserves filtered colimits.

Then local coherency of $\BB$ implies local coherency of $\CC$.
\end{theorem}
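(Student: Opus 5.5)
Mirroring the proof of Theorem~\ref{th_revcourse_coh_j^*j_*}, the plan is to show that $\CC$ is coherently assembled and then invoke Proposition~\ref{prop_loccoh=cohass}. Since $\BB$ is locally coherent, $\tYY_\BB$ is left exact by the same proposition. Since $j_!$ is fully faithful, the unit gives $\Id_\CC \cong j^* j_!$. Hence $\tYY_\CC \cong \tYY_\CC\, j^* j_!$, and we want to move $\tYY$ past $j^*$.

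Lemma~\ref{lem_commutativity_of_ind} cannot be used directly for $j^*$, because $j^*$ need not preserve compact objects. Instead we use Lemma~\ref{lem_commutativity_of_ind(spec)} for the adjoint pair $(j_! \dashv j^*)$, with $H = j_!$ fully faithful and $F = j^*$ preserving filtered colimits. These are exactly the hypotheses of the statement. The lemma gives
\[
\tYY_\CC(j^* j_!(c)) \cong Ij^*\, \tYY_\BB\, j_!(c)
\]
naturally in $c \in \CC$. Therefore $\tYY_\CC \cong Ij^* \circ \tYY_\BB \circ j_!$.

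It remains to check that each factor is left exact. The functor $j_!$ is exact by hypothesis. The functor $\tYY_\BB$ is left exact by the coherence of $\BB$. The functor $j^*$ is a right adjoint, so it is left exact, and hence $Ij^*$ is left exact by the remark on ind-extensions of functors. A composite of left exact functors is left exact, so $\tYY_\CC$ is left exact. Thus $\CC$ is coherently assembled, hence locally coherent.

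The main obstacle is naturality. Lemma~\ref{lem_commutativity_of_ind(spec)} is stated objectwise, but left exactness is a statement about morphisms. I would check that the isomorphism is natural by the adjunction description of Remark~\ref{rem_tYY_basic}: a morphism out of $\tYY_\CC(x)$ is determined by its image under $\colim$, and $\colim \circ Ij^* \cong j^* \circ \colim$ because $j^*$ preserves filtered colimits. Both functors then correspond to the same data on morphisms, which gives the required naturality. The exactness of $j_!$ is used precisely so that $\tYY_\BB \circ j_!$ stays left exact; this is the difference from the setting of Theorem~\ref{th_revcourse_coh_j^*j_*}.
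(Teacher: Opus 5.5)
Your proposal is correct and is essentially the paper's proof. Both apply Lemma~\ref{lem_commutativity_of_ind(spec)} with $H = j_!$ and $F = j^*$ to get $\tYY_\CC \cong \tYY_\CC\, j^*j_! \cong Ij^*\,\tYY_\BB\, j_!$, and conclude because $j_!$, $\tYY_\BB$ and $Ij^*$ are left exact. Your extra care about naturality, which the paper passes over, is well placed; it is settled most cleanly by uniqueness of adjoints. Indeed, $\tYY_\BB\, j_!$ and $Ij_!\,\tYY_\CC$ are both left adjoint to $j^*\colim \cong \colim\, Ij^*$, hence $Ij^*\,\tYY_\BB\, j_! \cong I(j^*j_!)\,\tYY_\CC \cong \tYY_\CC$ naturally.
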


\begin{proof}
Note that, by full faithfulness,
\begin{equation}
\Id_\CC \cong j^*j_!.
\end{equation}
So now from Lemma (\ref{lem_commutativity_of_ind(spec)}) follows
\begin{equation}
\tYY_\CC \cong \tYY_\CC j^* j_!  \cong Ij^* \tYY_\BB j_!
\end{equation}
Since the right-hand side is a composition of left exact functors, $\tYY_\CC$ is left exact, as desired.
\end{proof}

Restriction of local coherence from $\BB$ to $\AA$ is symmetric to the previous case.

\begin{remark} \label{rem_krause_comparison}
Theorems~\ref{th_revcourse_coh_j^*j_*} and~\ref{th_revcourse_coh_j_!j^*} should be compared with a theorem of Krause \cite[Theorem 2.6]{KRAUSE_spec_loc_coh_cat}: if $\BB$ is locally coherent and $\AA \subseteq \BB$ is a localizing Serre subcategory of finite type, i.e.\ the section functor $s \colon \BB/\AA \to \BB$ (equivalently, by \cite[Lemma 2.4]{KRAUSE_spec_loc_coh_cat}, the right adjoint $t \colon \BB \to \AA$ of the inclusion) commutes with filtered colimits, then $\AA$ and $\BB/\AA$ are locally coherent and $\BB^{f.p.}/\AA^{f.p.} \simeq (\BB/\AA)^{f.p.}$.

In Theorem~\ref{th_revcourse_coh_j^*j_*} the exact functor $j^*$ with fully faithful right adjoint is the Gabriel quotient of $\BB$ by $\Ker(j^*)$, $j_* = s$, and the hypothesis on $j_*$ says that $\Ker(j^*)$ is of finite type; so this theorem is the statement of \cite[Theorem 2.6]{KRAUSE_spec_loc_coh_cat} about $\BB/\AA$, proved through $\tYY$ instead of finitely presented objects, which is why it extends to the compactly assembled setting (Proposition~\ref{prop_efimov_retraction}). Theorem~\ref{th_revcourse_coh_j_!j^*}, applied to $(i_* \dashv i^!)$, recovers the statement about $\AA$ without requiring the image of $j_!$ to be closed under extensions. We do not recover the finer statement on finitely presented objects, nor the converse \cite[Theorem 2.8, Corollaries 2.9, 2.10]{KRAUSE_spec_loc_coh_cat} describing all finite type localizing subcategories of a locally coherent category as closures $\overrightarrow{\mathcal{S}}$ of Serre subcategories $\mathcal{S} \subseteq \BB^{f.p.}$. Note also that for locally coherent $\BB$ the hypotheses ``$i^!$ preserves filtered colimits'' and ``$j_*$ preserves filtered colimits'' are equivalent by \cite[Lemma 2.4]{KRAUSE_spec_loc_coh_cat}.
\end{remark}

Note that the requirement of the functors in the previous theorems to be adjoint is not essential. One simply needs the appropriate retraction of the larger category onto the smaller one. Namely
\begin{proposition}\cite[Proposition 1.46.]{efimov_Th_of_heart} \label{prop_efimov_retraction}
Let $\mathcal{A}$ be a Grothendieck abelian category and let $\mathcal{B}$ be a coherently assembled abelian category. Suppose that we have a retraction $\mathcal{A} \xrightarrow{F} \mathcal{B} \xrightarrow{G} \mathcal{A}$, such that both $F$ and $G$ are left exact and commute with filtered colimits. Then $\mathcal{A}$ is coherently assembled.
\end{proposition}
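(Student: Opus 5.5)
The statement to prove is Proposition~\ref{prop_efimov_retraction}: $\AA$ is coherently assembled given a retraction $\AA \xrightarrow{F} \BB \xrightarrow{G} \AA$, with $GF \cong \Id_\AA$, $F$ and $G$ left exact and commuting with filtered colimits, and $\BB$ coherently assembled. The plan has two parts: first show that $\AA$ is compactly assembled, by writing down an explicit left adjoint to $\colim_\AA$; then show that this left adjoint is left exact.

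\textbf{Candidate for $\tYY_\AA$.} Set
\[
\Phi := IG \circ \tYY_\BB \circ F \colon \AA \to \Ind(\AA).
\]
Since $G$ commutes with filtered colimits, $\colim_\AA \circ IG \cong G \circ \colim_\BB$. Together with the unit isomorphism $\colim_\BB \tYY_\BB \cong \Id$ from Remark~\ref{rem_tYY_basic}, this gives
\[
\colim_\AA \Phi \cong GF \cong \Id_\AA .
\]
So $\Phi$ produces, for each $x$, an ind-presentation of $x$. It remains to check that $\Phi \dashv \colim_\AA$, i.e.\ that for every $x \in \AA$ and every $M \in \Ind(\AA)$ the map
\[
\Hom_{\Ind(\AA)}(\Phi x, M) \to \Hom_\AA(x, \colim M), \qquad f \mapsto \colim f,
\]
is bijective. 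Left exactness of $\Phi$ is then immediate: $F$ is left exact, $\tYY_\BB$ is left exact by hypothesis, and $IG$ is left exact because $G$ is (the ind-extension of a left exact functor is left exact). A left adjoint of $\colim_\AA$ that is left exact is exactly coherent assembly.

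\textbf{Proving the adjunction.} I would use the characterisation in Proposition~\ref{prop_compass_via_exhaustible}(ii) together with the criterion of Proposition~\ref{prop_compact_via_tYY}. Write $\tYY_\BB F x = \{b_j\}$, with compact transition morphisms by Lemma~\ref{lem_tYY_compact_transitions}. Then $x \cong GFx = \indlim G b_j$. The key claim is that $G$ sends compact morphisms of $\BB$ to compact morphisms of $\AA$. To see this, let $\phi\colon b \to b'$ be compact, and let $g\colon Gb' \to \indlim z_i$ be given in $\AA$. Because $F$ commutes with filtered colimits, $F(\indlim z_i) = \indlim F z_i$. Transfer $g$ to the morphism
\[
b' \to \indlim F z_i .
\]
This needs a map $b' \to FG b'$, which is not available in general: there is no adjunction between $F$ and $G$. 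This is the main obstacle.

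\textbf{Handling the obstacle.} Instead I would take maps out of $F$-images. Consider $Fx = \indlim b_j$ along compact maps. Since $x \cong GFx$, each map $x \to \indlim z_i$ corresponds to a map $Fx \to F(\indlim z_i) = \indlim F z_i$. By compactness of the transition morphisms, the composite
\[
b_j \to Fx \to \indlim F z_i
\]
factors through some $F z_i$ after passing along a transition morphism. Applying $G$ then factors
\[
G b_j \to x \to \indlim z_i
\]
through $GF z_i = z_i$. This yields the surjectivity part of the bijection for $\Phi$, when tested against $M = \{z_i\}$.

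Injectivity is checked similarly. Two maps $\Phi x \to M$ with the same $\colim$ agree after composing with a compact transition morphism, because compactness controls factorisations modulo the colimit. I would verify this via the ind-Hom formula
\[
\Hom_{\Ind(\AA)}(\{x_j\}, \{z_i\}) = \lim_j \colim_i \Hom(x_j, z_i).
\]
Essentially this says that $\Phi x$ is a filtered system with compact transitions and colimit $x$, and for such systems one checks directly that $\Phi x \cong \tYY_\AA x$. An alternative route, if the direct check is messy, is to invoke Proposition~\ref{prop_compass_via_exhaustible} to obtain compact assembly of $\AA$ first, then identify $\tYY_\AA x$ with $\Phi x$ using the ind-Hom formula, and conclude left exactness as above.
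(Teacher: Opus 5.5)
The paper does not prove this statement; it only cites Efimov, so I judge your argument on its own terms. Your surjectivity half is fine. The injectivity half fails, and with it the identification $\tYY_\AA \cong \Phi = IG \circ \tYY_\BB \circ F$. The transition maps of $\Phi x = \{G b_j\}$ are $G$ applied to compact morphisms, and $G$ need not preserve compact morphisms. This is exactly the obstacle you flagged; your workaround sidesteps it only for surjectivity.

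Here is a counterexample satisfying all the hypotheses:
\begin{itemize}
\item Take $\AA = k - \Mod$ for a field $k$ and $\BB = R - \Mod$ with $R = k[t_1, t_2, \dots]$, which is coherent and hence coherently assembled.
\item Let $F$ be restriction of scalars along the augmentation $R \to k$ and $G$ the forgetful functor. Both are exact and cocontinuous, and $GF = \Id$.
\item Then $\tYY_\BB F(k) = \{R/(t_1, \dots, t_n)\}_n$, so $\Phi(k) = \{k[t_{n+1}, t_{n+2}, \dots]\}_n$, whose transition maps are the infinite-rank surjections $t_{n+1} \mapsto 0$. Meanwhile $\tYY_\AA(k) = \YY k$.
\item The levelwise constant-term projection $e$ is an endomorphism of $\Phi(k)$ with $\colim e = \Id_k = \colim \Id$.
\item Yet $e \neq \Id$ in $\Ind(\AA)$, because $t_{n'+1}$ survives every transition map.
\end{itemize}
So $\colim \colon \End(\Phi k) \to \End(k)$ is not injective, and $\Phi$ is not left adjoint to $\colim$. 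Your alternative route fails for the same reason: $x = \indlim G b_j$ is not a colimit along compact morphisms.

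Your surjectivity construction is nevertheless exactly what repairs the proof.
\begin{itemize}
\item For $g \colon x \to \colim M$, let $\widehat{Fg} \colon \tYY_\BB F x \to IF M$ be adjoint to $Fg \colon Fx \to F \colim M \cong \colim IF M$. Put $\Psi(g) := IG(\widehat{Fg}) \colon \Phi x \to IG\, IF M \cong M$.
\item Then $\colim \Psi(g) = GFg = g$.
\item Uniqueness in the $\tYY_\BB \dashv \colim$ adjunction shows that $\Psi$ is natural in $M$. Hence $\Psi(\colim f) = f \circ e_x$ for every $f \colon \Phi x \to M$, where $e_x := \Psi(\Id_x)$; in particular $e_x^2 = e_x$.
\item Split $e_x$ in the abelian category $\Ind(\AA)$. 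This gives $\Hom(\Im e_x, M) \cong \{ f \mid f e_x = f \} \cong \Hom(x, \colim M)$, naturally in $M$. So $\tYY_\AA$ exists, with $\tYY_\AA x = \Im e_x$.
\item $\tYY_\AA$ is a natural retract of $\Phi$. Use the map $\tYY_\AA \to \Phi$ adjoint to $\Id_x$, and $IG$ applied to the comparison $\tYY_\BB F \to IF\, \tYY_\AA$. Their composite has $\colim$ equal to $\Id$, hence is $\Id$.
\item $\Phi$ is left exact, and a natural retract of a left exact functor is left exact, since a retract of an exact complex is exact. So $\tYY_\AA$ is left exact, i.e.\ $\AA$ is coherently assembled.
\end{itemize}
In the example above, $e_k$ is precisely the constant-term projection and $\Im e_k = \YY k$.
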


\subsection{Quasiprojectivity condition} \label{ssec:qproj}

All theorems in the previous subsection give a one-sided implication: from the local coherence of the larger category $\BB$ we deduce the local coherence of the smaller category $\CC$.

These are far from being criteria. The results of~\cite{PARDO_HERNANDEZ} on the coherence of endomorphism rings of quasiprojective modules motivate us to strengthen the assumptions in Theorem~(\ref{th_revcourse_coh_j_!j^*}) so that it becomes a criterion.

We begin with a review of the original results of~\cite{PARDO_HERNANDEZ}.

Let $R$ be a ring and $M,\, N$ be left $R$-modules.

\begin{definition}
A module $N$ is called \textit{$M$-projective} if for every quotient module $L$ of $M$, the canonical homomorphism 
\[
\Hom_R(N, M) \to \Hom_R(N, L)
\] 
is surjective.

A module $M$ is called \textit{quasiprojective} if it is $M$-projective, i.e., for every quotient $L$ of $M$, the map
\[
\Hom_R(M, M) \to \Hom_R(M, L)
\]
is surjective.

A module $M$ is called \textit{$\Sigma$-quasiprojective} if every direct sum $M^{(I)}$ is quasiprojective.
\end{definition}

For the category of modules over the endomorphism ring of a quasiprojective module, the following holds.

Let $M$ be a quasiprojective left $R$-module, and let $S = \End(M)$.

\begin{equation}
\begin{tikzcd}
	{R-\Mod} &&& {S-\Mod}
	\arrow[""{name=0, anchor=center, inner sep=0}, "{j^* = \Hom(M,\,\_)}"', from=1-1, to=1-4]
	\arrow[""{name=1, anchor=center, inner sep=0}, "{j_! = M\bigotimes\limits_ S\_}"', shift right=5, from=1-4, to=1-1]
	\arrow["\dashv"{anchor=center, rotate=-90}, draw=none, from=1, to=0]
\end{tikzcd}
\end{equation}

Let $\eta \colon \Id_S \to j^* j_!$ be the unit of the adjunction.

\begin{lemma} \cite[Lemma 1.1]{PARDO_HERNANDEZ}
 If $X$ is either a finitely presented left $S$-module or a finitely generated submodule of a free left $S$-module, then $\eta_X$ is an isomorphism.
\end{lemma}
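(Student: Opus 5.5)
The plan is to verify the claim first for free modules and then reduce both cases to that one. Two ingredients are needed: the naturality of $\eta$, and the standard fact that if $N$ is $M_i$-projective for finitely many $M_i$ then $N$ is $\bigoplus_i M_i$-projective. In particular the quasiprojective module $M$ is $M^k$-projective for every finite $k$. This means $\Hom_R(M,\,\_)$ sends every epimorphism $M^k \rightarrowdbl L$ to a surjection.

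\emph{Free modules.} For $X = S$ we have $j^* j_!(S) = \Hom_R(M, M \otimes_S S) \cong \Hom_R(M,M) = S$. Under this identification $\eta_S$ is the identity. Both functors commute with finite direct sums, so $\eta_{S^n}$ is an isomorphism for every finite $n$.

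\emph{Finitely generated $X$: surjectivity of $\eta_X$.} Choose an epimorphism $\pi\colon S^k \rightarrowdbl X$. Right exactness of $M \otimes_S \_$ gives an epimorphism $M^k = M\otimes_S S^k \rightarrowdbl M \otimes_S X$. Given $g \in \Hom_R(M, M\otimes_S X)$, $M^k$-projectivity of $M$ lifts $g$ to some $h \in \Hom_R(M, M^k) = S^k$. By naturality of $\eta$ and $\eta_{S^k} = \Id$, the element $\pi(h) \in X$ satisfies $\eta_X(\pi(h)) = (M\otimes\pi)\circ h = g$. Hence $\eta_X$ is surjective whenever $X$ is finitely generated.

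\emph{Injectivity, case 1: $X \subseteq$ a free module, $X$ finitely generated.} Since $X$ is finitely generated, it lies in a finite free submodule $\iota\colon X \hookrightarrow S^n$. Naturality gives $j^*j_!(\iota)\circ \eta_X = \eta_{S^n}\circ\iota$. The right-hand side is injective because $\eta_{S^n}$ is an isomorphism, so $\eta_X$ is injective. Combined with surjectivity, $\eta_X$ is an isomorphism.

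\emph{Injectivity, case 2: $X$ finitely presented.} Take a presentation $S^m \xrightarrow{a} S^n \to X \to 0$. Applying $M\otimes_S\_$ gives an exact sequence $M^m \to M^n \to M\otimes_S X \to 0$. Applying $\Hom_R(M,\,\_)$ to it, I would check that the resulting row
\[
\Hom_R(M,M^m)\to\Hom_R(M,M^n)\to\Hom_R(M,M\otimes_S X)\to 0
\]
is exact. Exactness on the right follows from $M^n$-projectivity of $M$. In the middle, the kernel of the second map is $\Hom_R(M,K)$, where $K$ is the image of $M^m \to M^n$. Since $K$ is a quotient of $M^m$, $M^m$-projectivity shows that $\Hom_R(M,M^m)\to\Hom_R(M,K)$ is surjective, which gives exactness there. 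The map $\eta$ now connects the presentation row to this row. It is an isomorphism on the two free terms, so the five lemma (or simply the universal property of cokernels) shows that $\eta_X$ is an isomorphism.

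The main point requiring care is this exactness of $\Hom_R(M,\,\_)$ on the tensored presentation. It is the only place where quasiprojectivity is used beyond surjectivity, and it relies on extending $M$-projectivity to finite powers $M^k$. I would record that extension as a preliminary remark: given $M^k = \bigoplus M \rightarrowdbl L$, reduce by induction on $k$ via the short exact sequence of the images of the summands.
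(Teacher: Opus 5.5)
The paper does not prove this lemma. It is quoted from \cite[Lemma 1.1]{PARDO_HERNANDEZ} and used only to motivate condition (1) of Definition~\ref{def_qproj_functors}, so there is no in-paper argument to compare yours with.

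Your proof is correct and complete. Each step checks out:
\begin{itemize}
\item the free case;
\item surjectivity of $\eta_X$ for finitely generated $X$, via $M^k$-projectivity and naturality;
\item injectivity for finitely generated submodules of free modules. You embed into a finite free summand $S^n$ and use $\Hom_R(M, M\otimes_S\iota)\circ\eta_X=\eta_{S^n}\circ\iota$, which correctly requires no injectivity of $M\otimes_S\iota$;
\item the finitely presented case, via exactness of $\Hom_R(M,-)$ on the tensored presentation. Here $M^n$-projectivity gives exactness on the right, and $M^m$-projectivity applied to $K=\operatorname{im}(M^m\to M^n)$ gives exactness in the middle.
\end{itemize}
Your induction for finite direct sums also works as sketched. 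With $L_1$ the image of $M^{k-1}$ in $L$, first lift $M\to L\to L/L_1$ through the last summand. The difference between the original map and this lift then lands in $L_1$, and you lift it through $M^{k-1}$.

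One thing your argument makes visible is that the only input is finite $M^k$-projectivity. That is, you only use surjectivity of $\Hom_R(M,M^k)\to\Hom_R(M,L)$ for quotients $L$ of $M^k$; no vanishing of $\Ext^1_R(M,-)$ enters. This is strictly weaker than the condition $R^1j^*(X)=0$ that Definition~\ref{def_qproj_functors} uses to encode quasiprojectivity. For instance, $M=\mathbb{Z}/p$ over $\mathbb{Z}$ is quasiprojective, while $\Ext^1_{\mathbb{Z}}(\mathbb{Z}/p,\mathbb{Z}/p)\neq 0$. So your proof confirms that, in the module setting, condition (1) of that definition follows from quasiprojectivity in the classical Hom-theoretic sense alone, independently of the $\Ext^1$-condition (2).
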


In particular, $j_!$ is fully faithful on the full subcategory of finitely presented left $S$-modules. This property, together with the quasiprojectivity of $M$ expressed through the vanishing of $R^1 j^* = \Ext^1_R(M, -)$ on the relevant submodules, is what is retained in the general Definition~\ref{def_qproj_functors} below.

\begin{theorem} \cite[Theorem 1.2]{PARDO_HERNANDEZ}
Let \( M \) be a quasiprojective module and let \( S = \End(M) \), the following are equivalent:
\begin{enumerate}
    \item \( S \) is left coherent
    \item For every homomorphism \( f: M^k \to M^r \), 
there exist \( n \geq 0 \) and a homomorphism \( g: M^n \to \ker f \) such that 
\( \Hom_R(M, \coker g) = 0 \).
\end{enumerate}
\end{theorem}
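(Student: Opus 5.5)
We have to prove the theorem of Gomez Pardo and Martinez Hernandez: for a quasiprojective $M$ with $S=\End(M)$, $S$ is left coherent if and only if every kernel $\ker(f\colon M^k\to M^r)$ admits $g\colon M^n\to\ker f$ with $\Hom_R(M,\coker g)=0$. The plan is to translate everything through the adjunction $(j_!\dashv j^*)$, $j^*=\Hom_R(M,-)$, using two facts. First, $j^*(M^k)\cong S^k$ and $\Hom_R(M^k,M^r)\cong\Hom_S(S^k,S^r)$, so maps $f\colon M^k\to M^r$ correspond exactly to maps $\varphi=j^*f\colon S^k\to S^r$. Second, $j^*$ is left exact, hence $j^*(\ker f)=\ker\varphi$.

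Left coherence of $S$ is equivalent to: for every $\varphi\colon S^k\to S^r$, $\ker\varphi$ is finitely generated. Indeed, this condition makes finitely generated submodules of free modules finitely presented, which is the standard reformulation and matches item 4 of Theorem~\ref{th_MEGA}. So it suffices to show, for each $f$,
\[
\ker(j^*f)\ \text{finitely generated}\iff \exists\, g\colon M^n\to\ker f \text{ with } j^*(\coker g)=0 .
\]

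For ($\Leftarrow$), take $g$ and set $L=\im g\subseteq\ker f\subseteq M^k$. The sequence $0\to L\to\ker f\to\coker g\to0$ gives, by left exactness of $j^*$, $j^*L\cong j^*(\ker f)$, since $j^*\coker g=0$. Now $M^n\twoheadrightarrow L$ with $L$ a quotient of $M^n$ and $M^n$ quasiprojective. Quasiprojectivity of a finitely generated direct sum of copies of $M$ is inherited from $M$ by the standard argument on $M$-projectivity of finite sums, so $j^*(M^n)=S^n\to j^*L$ is surjective. Hence $\ker(j^*f)$ is finitely generated.

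For ($\Rightarrow$), choose generators, giving a surjection $S^n\twoheadrightarrow\ker(j^*f)=j^*(\ker f)$. By adjunction this is $j^*$ of a map $g\colon M^n\to\ker f$: elements of $\Hom_R(M,\ker f)$ assemble into $g$, and $j^*g$ is the given surjection. It remains to show $j^*(\coker g)=0$. Write $L=\im g$. From $0\to L\to\ker f\to\coker g\to0$ we get
\[
0\to j^*L\to j^*\ker f\to j^*\coker g\to \Ext^1_R(M,L).
\]
The first map is surjective because $j^*g$ factors through it, so $j^*\coker g$ embeds into the relevant image in $\Ext^1$, and the task is to show that connecting map vanishes. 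This is the main obstacle. I would not use $\Ext$ vanishing abstractly, but argue directly: given $h\colon M\to\coker g$, we must lift it to $\ker f$.

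Here quasiprojectivity enters through $\ker f\subseteq M^k$ and $M^k/L$, a quotient of $M^k$. Consider $M\xrightarrow{h}\coker g=\ker f/L\subseteq M^k/L$. By $M$-projectivity of $M$ relative to the quotient $M^k\to M^k/L$ (again using that $M$ is $M^k$-projective, since $M$-projectivity passes to finite sums of the target), $h$ lifts to $\tilde h\colon M\to M^k$ with image landing in $\ker f+L=\ker f$. Thus $h$ lifts to $\ker f$, and then $h$ is the image of an element of $j^*\ker f=\im(j^*L\to j^*\ker f)$, i.e. it factors through $L$. Hence $h=0$ and $j^*(\coker g)=0$.

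The remaining routine checks are the two finite-sum inheritance statements for $M$-projectivity, both standard for modules (Azumaya/Wisbauer).
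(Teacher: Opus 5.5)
Your proof is correct, but it does not follow the paper. The paper does not reprove this theorem; it proves the categorical generalisation, Theorem~\ref{th_quasiproj_criterion}.

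\textbf{The paper's route.} In $(1\Rightarrow 2)$ the map $g$ is the counit $\epsilon_K\colon j_!j^*K\to K$, and $j^*\epsilon_K$ is invertible because $j_!$ is fully faithful on $\CC^{f.p.}$. Every surjectivity is then read off the long exact sequence by assuming $R^1j^*(X)=0$ for all subobjects $X$ of objects of $\BB_{\CC^{f.p.}}$ (Definition~\ref{def_qproj_functors}). In $(2\Rightarrow 1)$ the same vanishing, with $j^*(\ker p)=j^*(\coker p)=0$, gives $j^*N\cong X_3$.

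\textbf{Your route.} You build $g$ from a finite generating set of $\ker(j^*f)$. You then replace every use of $\Ext^1$-vanishing by the lifting property itself. First, $\Hom_R(M,M^n)\to\Hom_R(M,L)$ is onto. Second, a map $M\to\ker f/L\subseteq M^k/L$ lifts to $M^k$, hence to $\ker f$. Equivalently, $\ker f$ is the pull-back of $\ker f/L$ along $M^k\to M^k/L$, and $j^*$ preserves pull-backs.

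\textbf{What your route buys.} It uses exactly the hypothesis of the statement. Quasiprojectivity only kills certain connecting maps into $\Ext^1_R(M,-)$; it does not make $\Ext^1_R(M,X)$ vanish. For example, $M=\mathbb{Z}/p$ over $\mathbb{Z}$ is quasiprojective but $\Ext^1_{\mathbb{Z}}(M,M)\neq 0$. So $(M\otimes_S-\dashv\Hom_R(M,-))$ is not a quasiprojective pair in the paper's sense, and Theorem~\ref{th_quasiproj_criterion} does not literally cover this module, whereas your argument does. You also never need the unit to be invertible on finitely presented $S$-modules, only on free ones. Conversely, your pull-back argument would let one weaken condition (2) of Definition~\ref{def_qproj_functors} to: $j^*$ takes epimorphisms out of objects of $\BB_{\CC^{f.p.}}$ to epimorphisms.

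\textbf{What the paper's route buys.} It works for arbitrary Grothendieck categories. It also gives the extra conclusion $j^*(\ker g)=0$, which the statement does not ask for. Your $g$ does not satisfy this in general, since its $j^*$-kernel is the module of relations among your chosen generators.
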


We now reformulate this setting for arbitrary Grothendieck abelian categories.

\begin{definition} \label{def_qproj_functors}
Consider the pair of adjoint functors $(j_! \dashv j^*)$ 
\begin{equation}
\begin{tikzcd}
	\BB &&& \CC
	\arrow[""{name=0, anchor=center, inner sep=0}, "{j^*}"', from=1-1, to=1-4]
	\arrow[""{name=1, anchor=center, inner sep=0}, "{j_!}"', shift right=5, from=1-4, to=1-1]
	\arrow["\dashv"{anchor=center, rotate=-90}, draw=none, from=1, to=0]
\end{tikzcd}
\end{equation}
between Grothendieck abelian categories.

Denote the image of $\CC^{f.p.}$ under $j_!$ by $\BB_{\CC^{f.p.}}$. We call the pair $(j_! \dashv j^*)$ {\it quasiprojective} if
\begin{enumerate}
    \item $j_!$ is fully faithful on $\CC^{f.p.}$;
    \item the right derived functor satisfies $R^1 j^*(X) = 0$ for every subobject $X$ of an object of $\BB_{\CC^{f.p.}}$.
\end{enumerate}
\end{definition}

\begin{theorem} \label{th_quasiproj_criterion}
Let $(j_! \dashv j^*)$ be a pair of quasiprojective adjoint functors as in Definition~(\ref{def_qproj_functors}). The following are equivalent:
\begin{enumerate}
    \item $\CC$ is locally coherent;
    \item for every morphism $f \colon B_1 \to B_2$ in $\BB_{\CC^{f.p.}}$, there exists a morphism $g \colon B_3 \to \ker f$ with $B_3 \in \BB_{\CC^{f.p.}}$ such that $j^*(\coker g) = 0$ and $j^*(\ker g) = 0$.
\end{enumerate}
\end{theorem}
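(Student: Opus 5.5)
The plan is to translate everything through the adjunction, using two facts. First, since $j^*$ is a right adjoint it is left exact, so it preserves kernels. Second, condition (1) of Definition~\ref{def_qproj_functors} gives the unit isomorphism $C \cong j^* j_! C$ for $C \in \CC^{f.p.}$; equivalently, $j_!$ restricts to an equivalence $\CC^{f.p.} \simeq \BB_{\CC^{f.p.}}$.

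Every morphism $f \colon B_1 \to B_2$ in $\BB_{\CC^{f.p.}}$ is therefore $j_! h$ for a unique $h \colon C_1 \to C_2$ in $\CC^{f.p.}$. Left exactness then gives $j^*(\ker f) \cong \ker(j^* j_! h) \cong \ker h$. By Proposition~\ref{prop_enough_to_check_on_epi}, local coherence of $\CC$ amounts to $\ker h \in \CC^{f.p.}$ for all such $h$. I take as part of the setting that $\CC$ is locally finitely presentable, so that only this kernel condition is at stake.

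The technical core is a lemma I would prove first. Let $g \colon B_3 \to \ker f$ be any morphism with $B_3 \in \BB_{\CC^{f.p.}}$. Factor $g$ through its image:
\[
0 \to \ker g \to B_3 \to \im g \to 0, \qquad 0 \to \im g \to \ker f \to \coker g \to 0 .
\]
Both $\ker g$ and $\im g$ are subobjects of objects of $\BB_{\CC^{f.p.}}$: $\ker g$ sits in $B_3$, and $\im g$ sits in $\ker f \subseteq B_1$. Condition (2) of Definition~\ref{def_qproj_functors} therefore kills $R^1 j^*$ on both. The long exact sequences of $R^\bullet j^*$ then become the short exact sequences
\[
0 \to j^*\ker g \to j^*B_3 \to j^*\im g \to 0, \qquad 0 \to j^*\im g \to j^*\ker f \to j^*\coker g \to 0 .
\]
It follows that $j^*g$ is an isomorphism if and only if $j^*(\ker g) = 0$ and $j^*(\coker g) = 0$. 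This bookkeeping of exactly which objects need $R^1 j^*$ to vanish is the only delicate point, and it is precisely why the definition demands vanishing on arbitrary subobjects rather than only on objects of $\BB_{\CC^{f.p.}}$.

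For (1)$\Rightarrow$(2), local coherence gives $C_3 := \ker h \in \CC^{f.p.}$. Put $B_3 := j_! C_3$, and let $g \colon j_! C_3 \to \ker f$ be the adjoint of the isomorphism $C_3 \cong j^*(\ker f)$. Under the unit isomorphism $C_3 \cong j^* j_! C_3$, the map $j^* g$ is identified with that isomorphism, so it is invertible. By the lemma, $j^*(\ker g) = j^*(\coker g) = 0$.

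For (2)$\Rightarrow$(1), take $f = j_! h$ and choose $g$ as in (2), with $B_3 = j_! C_3$ and $C_3 \in \CC^{f.p.}$. The lemma makes $j^* g$ an isomorphism, so
\[
\ker h \cong j^*(\ker f) \cong j^* B_3 \cong j^* j_! C_3 \cong C_3 \in \CC^{f.p.} .
\]
Hence $\CC$ is locally coherent.
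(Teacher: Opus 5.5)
Your proof is correct and follows the paper's argument. The paper likewise takes $g=\epsilon_K\colon j_!j^*K\to K$, which is your adjoint of $C_3\cong j^*(\ker f)$; it factors $g$ through its image, uses the vanishing of $R^1j^*$ on $\ker g$ and $\im g$ as subobjects of objects of $\BB_{\CC^{f.p.}}$, and for the converse reads off $\ker h\cong j^*N\cong j^*j_!X_3\cong X_3$. Stating ``$j^*g$ is an isomorphism iff $j^*(\ker g)=j^*(\coker g)=0$'' as a separate lemma, and saying explicitly that $\CC$ is assumed locally finitely presentable (the paper leaves this implicit), are only presentational differences.
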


The core idea underlying this proof is, in large part, indebted to \cite{PARDO_HERNANDEZ}.

\begin{proof}
$(1 \implies 2)$:

Suppose that $\CC$ is locally coherent. Consider the kernel of a morphism as in (2):
\begin{equation}
0 \to K \to j_!X_1 \to j_!X_2
\end{equation}
Applying the left exact functor $j^*$ we obtain an exact sequence in $\CC$:
\begin{equation}
0 \to j^*K \to X_1 \to X_2.
\end{equation}

Hence $j^*K \in \CC^{f.p.}$, by local coherence of $\CC$.

Consider
\begin{equation}
0 \to \ker(g) \to j_!j^*K \xrightarrow{g := \epsilon_K} K \to \coker(g) \to 0.
\end{equation}

It remains to check that $j^*(\coker g) = 0$ and $j^*(\ker g) = 0$. Set $X = \im(g)$:
\begin{equation}
0 \to \ker(g) \to j_!j^*K \to X \to 0
\end{equation}
\begin{equation}
0 \to X \to K \to \coker(g) \to 0.
\end{equation}

Applying $j^*$ to these short exact sequences we obtain

\begin{equation}
0 \to j^*\ker(g) \to j^*K \to j^*X \to R^1j^*\ker(g) \to \dots
\end{equation}
\begin{equation}
0 \to j^*X \to j^*K \to j^*\coker(g) \to R^1j^*X \to \dots.
\end{equation}

Since $\ker(g) \hookrightarrow j_! j^*K$ and $X \hookrightarrow K \hookrightarrow j_! X_1$ are subobjects, and $j^*K, X_1 \in \BB_{\CC^{f.p.}}$, quasiprojectivity implies
that $R^1 j^*\ker(g) = 0$ and $R^1 j^*X = 0$. Hence the above exact sequences take the form

\begin{equation}
0 \to j^*\ker(g) \to j^*K \to j^*X \to 0
\end{equation}
\begin{equation}
0 \to j^*X \to j^*K \to j^*\coker(g) \to 0.
\end{equation}

Note that $j^*(g) = j^*\epsilon_K = (\eta_{j^*K})^{-1}$ is an isomorphism by quasiprojectivity, since $j^*K \in \CC^{f.p.}$. Therefore the maps $j^*K \to j^*X$ and $j^*X \to j^*K$ in the exact sequences above are mono and epi, respectively. This completes the implication $(1 \Rightarrow 2)$.

$(2 \implies 1)$:

Consider the kernel of a morphism
\begin{equation}
0 \to \ker(h) \to X_1 \xrightarrow{h} X_2
\end{equation}
between $X_1, X_2 \in \CC^{f.p.}$. It suffices to show that $\ker(h) \in \CC^{f.p.}$.

Consider the kernel of $j_!(h)$:
\begin{equation}
0 \to N \to j_!X_1 \xrightarrow{j_!(h)} j_!X_2.
\end{equation}
By the assumption in~(2), there exist $X_3 \in \CC^{f.p.}$ and a morphism
\begin{equation}
0 \to \ker(p) \to j_!X_3 \xrightarrow{p} N \to \coker(p) \to 0
\end{equation}
such that $j^*(\ker p) = j^*(\coker p) = 0$.

Let $Y$ be the image of $p$:
\begin{equation}
0 \to \ker(p) \to j_!X_3 \to Y \to 0
\end{equation}
\begin{equation}
0 \to Y \to N \to \coker(p) \to 0
\end{equation}

Applying $j^*$ to these short exact sequences we obtain

\begin{equation}
0 \to j^*\ker(p) \to j^*j_!X_3 \to j^*Y \to R^1j^*(\ker(p))
\end{equation}
\begin{equation}
0 \to j^*Y \to j^*N \to j^*\coker(p) \to \dots.
\end{equation}

Quasiprojectivity implies $R^1 j^*(\ker p) = 0$, since $\ker p$ is a subobject of an object of $\BB_{\CC^{f.p.}}$: $\ker p \hookrightarrow j_! X_3$. Therefore the above exact sequences yield

\begin{equation}
X_3 \cong j^*j_!X_3 \cong j^*Y \cong j^*N.
\end{equation}
Hence $j^*N \in \CC^{f.p.}$. It remains to show that $\ker(h) \cong j^*N$. Indeed:

\begin{equation}
\begin{tikzcd}[sep=small]
	0 &&& {\ker(h)} &&& {X_1} &&& {X_2} \\
	\\
	\\
	0 &&& {j^*N} &&& {j^*j_!X_1} &&& {j^*j_!X_2}
	\arrow[from=1-1, to=1-4]
	\arrow[from=1-4, to=1-7]
	\arrow["h", from=1-7, to=1-10]
	\arrow["\cong", from=1-7, to=4-7]
	\arrow["\cong", from=1-10, to=4-10]
	\arrow[from=4-1, to=4-4]
	\arrow[from=4-4, to=4-7]
	\arrow["{j^*j_!(h)}", from=4-7, to=4-10]
\end{tikzcd}.
\end{equation}
\end{proof}

\subsection{FP-injective objects} \label{ssec:fpinj}

\begin{definition}
Let $\AA$ be an $\omega$-accessible abelian category. An object $M \in \AA$ is called {\it FP-injective} if $\Ext^1(F,M) = 0$ for any finitely presented object $F$.
\end{definition}

Taking $\AA = R - \Mod$ in the previous definition recovers the original definition of~\cite{Stenstrom-1970}.

\begin{proposition}\cite{Stenstrom-1970}
Let $\{X_i\}_{i \in I}$ be a filtered system of FP-injective modules with all transition maps monomorphisms. Then $\indlim X_i$ is FP-injective.
\end{proposition}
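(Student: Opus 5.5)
We need to show that $\indlim X_i$ is FP-injective when the $X_i$ are FP-injective and the transition maps are monomorphisms. Put $X = \indlim X_i$. Since the transition maps are monomorphisms and filtered colimits of modules are exact, each canonical map $X_i \to X$ is a monomorphism. So we may regard the $X_i$ as a directed family of submodules of $X$ whose union is $X$.

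The approach is to verify FP-injectivity through the extension criterion. Let $F$ be finitely presented. Choose a presentation $0 \to K \to R^n \to F \to 0$ with $K$ finitely generated, which is possible by Theorem~\ref{th_fg_fg_fp}. The long exact sequence for $\Hom(-,X)$ gives
\[
\Hom(R^n, X) \to \Hom(K, X) \to \Ext^1(F, X) \to \Ext^1(R^n, X) = 0 .
\]
Hence $\Ext^1(F,X)=0$ if and only if every homomorphism $\alpha \colon K \to X$ extends to $R^n$. The same equivalence holds with $X_i$ in place of $X$.

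Now fix $\alpha \colon K \to X$. Since $K$ is finitely generated, Theorem~\ref{th_locpres_fg}(2) applies: the system has monomorphic transitions, so $t_K$ is bijective. Therefore $\alpha$ factors as $K \xrightarrow{\alpha_i} X_i \hookrightarrow X$ for some $i$. Concretely, the images of the finitely many generators of $K$ all lie in a single $X_i$.

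Because $X_i$ is FP-injective, $\Ext^1(F, X_i)=0$, and by the equivalence above $\alpha_i$ extends to some $\beta_i \colon R^n \to X_i$. The composite $R^n \to X_i \hookrightarrow X$ then extends $\alpha$. Thus $\Ext^1(F,X)=0$ for every finitely presented $F$, which is exactly FP-injectivity of $X$.

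The only point that needs care is the factorization of $\alpha$ through a single $X_i$. This is where the monomorphism hypothesis is used: it lets us apply Theorem~\ref{th_locpres_fg}(2), which requires only that $K$ be finitely generated, not finitely presented. Everything else is the formal long exact sequence.
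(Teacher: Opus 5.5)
Your argument is correct and complete. The presentation $0 \to K \to R^n \to F \to 0$ with $K$ finitely generated reduces $\Ext^1(F,-)=0$ to extending maps $K \to -$ along $K \hookrightarrow R^n$. Finite generation of $K$ together with the monic transitions then lets any $\alpha \colon K \to \indlim X_i$ factor through a single $X_i$.

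The paper gives no proof of this proposition; it only cites Stenstr{\"o}m. Your proof is the standard argument: in effect it shows that $\xi_1 \colon \varinjlim_i \Ext^1(F, X_i) \to \Ext^1(F, \indlim X_i)$ is surjective for systems with monomorphic transitions, which is all that is needed.
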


\begin{theorem}\cite[Theorem 3.2.]{Stenstrom-1970}
The following assertions are equivalent for the ring $R$:
\begin{itemize}
    \item[(a)] $R$ is right coherent.
    \item[(b)] $\xi_1 : \varinjlim \Ext^1(F, M_i) \to \Ext^1(F, \varinjlim M_i)$ is an isomorphism for every finitely presented module $F$ and direct system $(M_i)$.
    \item[(c)] $\xi_n : \varinjlim \Ext^n(F, M_i) \to \Ext^n(F, \varinjlim M_i)$ are isomorphisms for all $n \geqslant 0$, for every finitely presented right module $F$ and direct system $(M_i)_I$.
    \item[(d)] Every direct limit of FP-injective right modules is FP-injective.
\end{itemize}
\end{theorem}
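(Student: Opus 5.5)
The plan is to prove the cycle (a)$\Rightarrow$(c)$\Rightarrow$(b)$\Rightarrow$(d)$\Rightarrow$(a). The argument is the right-module version of items 1, 7, 8, 9 of Theorem~\ref{th_MEGA}, with left and right interchanged.

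For (a)$\Rightarrow$(c), use that right coherence makes $\Mod-R^{f.p.}$ abelian (Proposition~\ref{prop_loccoh=cohring}). Hence every finitely presented right module $F$ has a projective resolution $P_\bullet \to F$ by finitely generated free modules: at each stage the kernel of a map between finitely presented modules is again finitely presented. Then $\Ext^n(F,-)$ is the $n$-th cohomology of $\Hom(P_\bullet,-)$. Each $\Hom(P_k,-)$ commutes with direct limits because $P_k$ is finitely presented, and direct limits of abelian groups are exact, so they commute with cohomology. This gives $\xi_n$ an isomorphism for all $n\ge 0$. The step (c)$\Rightarrow$(b) is the special case $n=1$.

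For (b)$\Rightarrow$(d), let $M_i$ be FP-injective. Then $\varinjlim\Ext^1(F,M_i)=0$, and surjectivity of $\xi_1$ alone forces $\Ext^1(F,\varinjlim M_i)=0$.

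The main step is (d)$\Rightarrow$(a). I will show that any finitely generated submodule $L$ of a finitely presented module $F$ is finitely presented; together with Lemma~\ref{lem_ab_fp_fg_via_ker_coker} and Proposition~\ref{prop_enough_to_check_on_epi} this gives coherence. By Theorem~\ref{th_fg_fg_fp} (or directly the definition), it suffices to show that $\varinjlim\Hom(L,M_i)\to\Hom(L,\varinjlim M_i)$ is surjective, since injectivity already holds because $L$ is finitely generated. Take $\alpha\colon L\to\varinjlim M_i$ and embed $M_i\hookrightarrow E(M_i)$ functorially; for example, use the functorial embedding into the injective module $\prod\mathbb{Q}/\mathbb{Z}$-dual construction $M\hookrightarrow (M^{+})$-free-cover dual, so that the $E(M_i)$ form a direct system. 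By (d), $\varinjlim E(M_i)$ is FP-injective, so the composite $L\to\varinjlim E(M_i)$ extends over $F$, since $\Ext^1(F/L,-)=0$ with $F/L$ finitely presented. Because $F$ is finitely presented, this extension factors through some $E(M_j)$. The composite $L\to E(M_j)\to E(M_j)/M_j$ becomes zero in $\varinjlim E(M_i)/M_i$. Since $L$ is finitely generated, it is already zero at some stage $j'$, so $L\to E(M_{j'})$ lands in $M_{j'}$, giving the required preimage of $\alpha$.

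I expect the main obstacle to be the functoriality of the injective embeddings, which is needed so that $\{E(M_i)\}$ is a direct system compatible with $\{M_i\}$. I would handle it with the canonical embedding $M\hookrightarrow M^{++}$ followed by the functorial embedding of the dual into a character module of a free module; alternatively, any functorial monomorphism into an FP-injective object suffices. I would also check that injectivity of the comparison map for finitely generated $L$ is used correctly.
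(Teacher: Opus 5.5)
Your proposal is correct and takes the same route as the paper. The paper proves the left-module version of this exact cycle as items 1 $\Rightarrow$ 8 $\Rightarrow$ 7 $\Rightarrow$ 9 $\Rightarrow$ 1 of Theorem~\ref{th_MEGA}, including Stenstr\"om's argument for (d) $\Rightarrow$ (a). Your insistence on a functorial embedding, cleanly $M \hookrightarrow M^{++} \hookrightarrow (R^{(M^{+})})^{+} \cong (R^{+})^{M^{+}}$, improves on the paper's appeal to injective envelopes, which do not canonically form a direct system. To check that your final map $L \to M_{j'}$ actually lifts $\alpha$, you also need that $\varinjlim M_i \to \varinjlim E(M_i)$ is a monomorphism, which holds by exactness of direct limits.
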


The preceding result admits an immediate generalisation to AB5 locally finitely presentable abelian categories.

\begin{theorem}\label{th_fpinj_ab}
Let $\AA$ be an AB5 locally finitely presentable abelian category. The following are equivalent:
\begin{itemize}
    \item[(a)] $\AA$ is locally coherent.
    \item[(b)] $\xi_1 \colon \varinjlim_i \Ext^1(F, M_i) \to \Ext^1(F, \varinjlim_i M_i)$ is an isomorphism for every finitely presented object $F$ and every direct system $(M_i)$.
    \item[(c)] $\xi_n \colon \varinjlim_i \Ext^n(F, M_i) \to \Ext^n(F, \varinjlim_i M_i)$ is an isomorphism for every $n \geqslant 0$, every finitely presented object $F$, and every direct system $(M_i)$.
    \item[(d)] Every filtered colimit of FP-injective objects is FP-injective.
\end{itemize}
\end{theorem}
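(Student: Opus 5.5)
I will follow Stenstr\"om's module-theoretic argument, proving (a)$\Rightarrow$(c)$\Rightarrow$(b)$\Rightarrow$(d)$\Rightarrow$(a). The only new ingredient is replacing ``finitely generated free module'' by ``finitely presented object''. Throughout I use that $\AA$ is Grothendieck: AB5, with a generating set $\AA^{f.p.}$. Hence $\AA$ has enough injectives, and $\Ext^n$ may be computed either by injective resolutions or by Yoneda extensions.

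\textbf{(a)$\Rightarrow$(c).} Let $F \in \AA^{f.p.}$. By local coherence and Theorem~\ref{th_locpres_fg}, I choose an epimorphism $F_0 \to F$ with $F_0 \in \AA^{f.p.}$. Its kernel is finitely presented because $\AA^{f.p.}$ is abelian, so iterating gives a resolution $\dots \to F_1 \to F_0 \to F \to 0$ by finitely presented objects. These $F_k$ need not be projective, so I cannot simply take cohomology of $\Hom(F_\bullet, M)$. Instead I argue by dimension shifting, by induction on $n$. The case $n=0$ is the definition of finite presentation.

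For $n=1$, injectivity of $\xi_1$ is the Stenstr\"om argument already quoted in the proof of Lemma~\ref{lem_ab_fp_fg_via_ker_coker}(4). For surjectivity, take the sequence $0 \to K \to F_0 \to F \to 0$ with $K$ finitely presented. Its long exact sequences give a commutative diagram
\[
\varinjlim \Hom(F_0,M_i)\to\varinjlim\Hom(K,M_i)\to\varinjlim\Ext^1(F,M_i)\to\varinjlim\Ext^1(F_0,M_i)
\]
mapping to the corresponding sequence for $\varinjlim M_i$; both rows are exact since filtered colimits are exact in $\mathbf{Ab}$. The first two vertical maps are isomorphisms and the fourth is injective, so the five lemma makes the third surjective. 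The same diagram in degree $n$, with $\xi_{n-1}$ known for $F$, $F_0$, $K$ and $\xi_n$ injective for $F_0$, gives the induction step. To get injectivity of $\xi_n$ for all finitely presented objects simultaneously, I run the induction on the statement ``$\xi_n$ is bijective for every finitely presented $F$''. The delicate point is the injectivity part of each step. There I would follow Stenstr\"om's trick: embed each $M_i$ functorially into an injective, for instance via $M_i \hookrightarrow \prod$ of injective envelopes indexed compatibly. Then I shift degree on the $M$-variable, using that $\varinjlim$ of the resulting short exact sequences stays exact by AB5.

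\textbf{(c)$\Rightarrow$(b)} is trivial.

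\textbf{(b)$\Rightarrow$(d).} If every $M_i$ is FP-injective, then $\varinjlim \Ext^1(F,M_i)=0$, so (b) gives $\Ext^1(F,\varinjlim M_i)=0$.

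\textbf{(d)$\Rightarrow$(a).} I transcribe the proof of 9$\Rightarrow$1 in Theorem~\ref{th_MEGA}. By Proposition~\ref{prop_enough_to_check_on_epi} and Theorem~\ref{th_fg_fg_fp}, it suffices to show that a finitely generated subobject $L \subset F$ with $F \in \AA^{f.p.}$ is finitely presented. I verify this by showing that $\Hom(L,-)$ commutes with filtered colimits; injectivity of $t_L$ holds already because $L$ is finitely generated. Given $\alpha: L \to \varinjlim M_i$, I use the injective envelopes $E(M_i)$. Any map $M_i \to M_k$ extends to $E(M_i)\to E(M_k)$, but this extension is non-functorial. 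So I would rather use a functorial monomorphism into injectives, e.g.\ via Grothendieck's functorial injective embedding in a Grothendieck category, to obtain a filtered system $\{E_i\}$ of injectives with a compatible monomorphism $M_i \hookrightarrow E_i$. By (d), $\varinjlim E_i$ is FP-injective. Since $\Ext^1(F/L,\varinjlim E_i)=0$ and $F/L$ is finitely presented by Lemma~\ref{lem_ab_fp_fg_via_ker_coker}(3), $\alpha$ extends to $F$. Since $F$ is finitely presented, the extension factors through some $E_j$. The composite $L \to E_j \to E_j/M_j$ vanishes in the colimit, and $L$ is finitely generated, so it vanishes at some stage $k \ge j$. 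Hence $\alpha$ factors through $M_k$.

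The main obstacle I expect is the functoriality of the injective embedding in (d)$\Rightarrow$(a), together with the injectivity of $\xi_n$ in (a)$\Rightarrow$(c). Both are handled by using functorial injective embeddings available in Grothendieck categories.
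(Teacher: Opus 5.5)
Your steps (c)$\Rightarrow$(b)$\Rightarrow$(d)$\Rightarrow$(a) and your proof that $\xi_1$ is injective are fine. In (d)$\Rightarrow$(a) you are right to insist on a functorial injective embedding. The gap is in (a)$\Rightarrow$(c), already at the surjectivity of $\xi_1$; write $\xi_n^F$ for $\xi_n$ at $F$. In your diagram the maps at $\Hom(F_0,-)$ and $\Hom(K,-)$ are isomorphisms and the map at $\Ext^1(F_0,-)$ is injective. The four lemma then gives that $\xi_1^F$ is \emph{injective}, not surjective. A zero top row over $0\to0\to\mathbb{Z}\xrightarrow{\Id}\mathbb{Z}$ satisfies all your hypotheses with a non-surjective third map. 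Surjectivity of the third map would need $\xi_1^{F_0}$ surjective (together with $\xi_1^K$ injective). But $F_0$ is just some finitely presented object, not a projective one, so $\Ext^1(F_0,-)$ does not vanish and the argument is circular. Your degree-$n$ step has the same defect: it assumes $\xi_n^{F_0}$ injective, which is the statement being proved. Shifting in the second variable via $0\to M_i\to J_i\to C_i\to0$ with $J_i$ injective does not rescue it. The colimit $\varinjlim J_i$ is not injective, and the vanishing of $\Ext^{\geq1}(F,\varinjlim J_i)$ you would need is (a)$\Rightarrow$(d) plus higher vanishing, i.e.\ part of what is at stake.

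The missing idea is to let the finitely presented cover of $F$ depend on the class, instead of fixing a resolution in advance. Let $\zeta\in\Ext^n(F,N)$, $n\geq1$, be a Yoneda extension ending in $E_0\to F$, and write $E_0$ as a filtered colimit of finitely presented objects. By AB5 the images of the stages in $F$ form a directed family of subobjects exhausting $F$. Since $F$ is finitely generated, some stage $P\to E_0$ with $P$ finitely presented makes $p\colon P\to E_0\to F$ an epimorphism. Because $p$ lifts through $E_0$, we get $p^*\zeta=0$. Hence $\zeta=\delta(\eta)$ for some $\eta\in\Ext^{n-1}(K,N)$, where $K=\ker p$ with inclusion $\iota\colon K\hookrightarrow P$, and $K$ is finitely presented by local coherence. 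Now induct on $n$, starting from $n=0$, where the claim is the definition of finite presentation.
\begin{itemize}
\item Surjectivity of $\xi_n^F$ follows from surjectivity of $\xi_{n-1}^K$ applied to $\eta$.
\item For injectivity, efface a class $\zeta_i\in\Ext^n(F,M_i)$ that dies in the colimit, and write $\zeta_i=\delta(\eta_i)$. The image of $\eta_i$ in $\Ext^{n-1}(K,\varinjlim M_i)$ is $\iota^*\theta$ for some $\theta\in\Ext^{n-1}(P,\varinjlim M_i)$. Lift $\theta$ to a stage using surjectivity of $\xi_{n-1}^P$, and finish with injectivity of $\xi_{n-1}^K$.
\end{itemize}

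This is the syzygy mechanism of Lemma~\ref{lem_cm_ext_ind}(ii)--(iii), transported from $\Ind(\AA)$ to $\AA$, with coherence guaranteeing that $K$ is finitely presented. The paper gives no separate proof of this theorem; it only calls it an immediate generalisation of Stenstr{\"o}m. Both (a)$\Leftrightarrow$(d) and the surjectivity of $\xi_1$ can be read off from Theorem~\ref{th_cminj} together with Propositions~\ref{prop_loccoh=cohass} and~\ref{prop_cminj_vs_fpinj} (see Remark~\ref{rem_cminj_final}). Condition (c), however, needs an argument like the one above.
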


\begin{remark}
Every Grothendieck abelian category, and hence every AB5 locally finitely presentable abelian category, has enough injective objects.
\end{remark}

\begin{theorem}
Let $\AA$, $\BB$, $\CC$ be AB5 locally finitely presentable abelian categories and the following adjoint 
functors between them
\begin{equation}
\begin{tikzcd}
	\AA &&& \BB &&& \CC
	\arrow[""{name=0, anchor=center, inner sep=0}, "{i_*}"{description}, from=1-1, to=1-4]
	\arrow[""{name=1, anchor=center, inner sep=0}, "{i^!}"{description}, shift left=5, from=1-4, to=1-1]
	\arrow[""{name=2, anchor=center, inner sep=0}, "{j^*}"{description}, from=1-4, to=1-7]
	\arrow[""{name=3, anchor=center, inner sep=0}, "{j_*}"{description}, shift left=5, from=1-7, to=1-4]
	\arrow["\dashv"{anchor=center, rotate=-90}, shift left=5, draw=none, from=0, to=1]
	\arrow["\dashv"{anchor=center, rotate=-90}, shift left=5, draw=none, from=2, to=3]
\end{tikzcd}
\end{equation}

Suppose that $i_*$ is the fully faithful embedding of $\Ker(j^*)$, that $j_*$ is fully faithful, that $i_*$ is exact, that $j_*$ preserves filtered colimits, that $R^1 i^!(X) = 0$ for every FP-injective $X \in \BB$, and that all four functors preserve FP-injective objects.

Then, if $\AA$ and $\CC$ are locally coherent, $\BB$ is locally coherent.
\end{theorem}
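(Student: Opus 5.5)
We use criterion (d) of Theorem~\ref{th_fpinj_ab}. It suffices to show that for a filtered system $\{M_k\}$ of FP-injective objects of $\BB$, the colimit $M=\indlim M_k$ is FP-injective. The tool is the second exact sequence of Proposition~\ref{prop_sess_of_recollement}, adapted to the present half of the data. The unit $\nu\colon \Id_\BB \to j_*j^*$ and the counit $\kappa\colon i_*i^!\to \Id_\BB$ give, for every $B$,
\[
0 \to i_*i^!B \to B \to j_*j^*B \to \Coker \nu_B \to 0 .
\]
Here $\Coker\nu_B\in\Ker j^* = i_*(\AA)$, because $j^*\nu_B$ is an isomorphism by full faithfulness of $j_*$. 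The proof of Proposition~\ref{prop_sess_of_recollement} only uses these two adjunctions, so the sequence is available.

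\textbf{Step 1: the sequence for FP-injective objects.} Let $X$ be FP-injective. We show that $\nu_X$ is an epimorphism, i.e.\ $\Coker\nu_X=0$. Put $C=\im \nu_X$. From $0\to i_*i^!X\to X\to C\to0$ and $i^!i_*\cong\Id$, applying $i^!$ gives
\[
i^!X\xrightarrow{\ \sim\ } i^!X \to i^!C\to R^1i^!(i_*i^!X).
\]
Since $i^!j_*=0$ and $C\subset j_*j^*X$, we get $i^!C=0$. Next apply $i^!$ to $0\to C\to j_*j^*X\to \Coker\nu_X\to 0$. This yields
\[
\Coker\nu_X \cong i^!\Coker\nu_X \hookrightarrow R^1 i^!(C).
\]
Now $R^1i^!C$ sits between $R^1i^!X$, which vanishes by hypothesis, and a term coming from $R^2 i^!(i_*i^!X)$. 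This is the delicate point. Instead, I would argue with $R^1i^!$ of $j_*j^*X$, which is FP-injective because $j^*$ and $j_*$ preserve FP-injectives, so $R^1i^!(j_*j^*X)=0$. The aim is to obtain $0\to i_*i^!X\to X\to j_*j^*X\to0$ exact for all FP-injective $X$, mimicking Proposition~\ref{prop_pirash_recol_sprava_iRi}. I expect this bookkeeping, showing $\Coker\nu_X=0$ from the $R^1i^!$-vanishing, to be the main obstacle. If it fails directly, I would restrict to $X$ injective (where it is standard) and pass through injective envelopes as in the proof of 9$\Rightarrow$1 in Theorem~\ref{th_MEGA}.

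\textbf{Step 2: passage to the colimit.} Apply the sequence to each $M_k$. The objects $i^!M_k$ are FP-injective in $\AA$ and $j^*M_k$ are FP-injective in $\CC$, since the functors preserve FP-injectives. By local coherence and Theorem~\ref{th_fpinj_ab}(d), the colimits $\indlim i^!M_k$ and $\indlim j^*M_k$ are FP-injective. Since $i_*$, $j_*$ preserve FP-injectives, so are $i_*\indlim i^!M_k$ and $j_*\indlim j^*M_k$.

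Filtered colimits are exact (AB5). Also $j^*$ (a left adjoint), $j_*$ (by hypothesis) and $i_*$ commute with filtered colimits; for $i_*$ I would check that $\Ker j^*$ is closed under colimits in $\BB$. Hence the colimit of the sequences is a short exact sequence
\[
0\to i_*\Bigl(\indlim i^!M_k\Bigr)\to M\to j_*\Bigl(\indlim j^*M_k\Bigr)\to0 .
\]
For finitely presented $F$, the long exact $\Ext(F,-)$ sequence then gives $\Ext^1(F,M)=0$, since FP-injectives are closed under extensions. So $M$ is FP-injective, and $\BB$ is locally coherent by Theorem~\ref{th_fpinj_ab}.

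Note that Step 2 needs only the left-hand sequence to be exact termwise, so a weaker Step 1 (just $\Coker\nu_{M_k}=0$) suffices.
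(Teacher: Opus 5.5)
Your route is the paper's: Step~2 is essentially the paper's whole proof. It uses the termwise sequence $0\to i_*i^!E\to E\to j_*j^*E\to 0$, passes to the filtered colimit because $i_*$ and $j_*$ commute with filtered colimits, and concludes by closure of FP-injectives under extensions together with Theorem~\ref{th_fpinj_ab}(d) in $\AA$ and $\CC$. For $i_*$ nothing needs checking, since it is left adjoint to $i^!$.

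\textbf{The gap is Step~1.} You leave it open, and without it the argument does not go through. Your bookkeeping there is correct and isolates the missing input. For $C=\im\nu_X\cong X/i_*i^!X$ you have $i^!C=0$, and using $R^1i^!(j_*j^*X)=0$ you get $\Coker\nu_X\cong i_*R^1i^!(C)$.

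The paper treats the termwise sequence as immediate from Proposition~\ref{prop_pirash_recol_sprava_iRi} and the hypothesis on $R^1i^!$. That proposition concerns objects of $\Ker i^!$, however, so it applies to $C$ rather than to $X$. What it returns is exactly your identification $\Coker\nu_X\cong i_*R^1i^!(C)$.

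What must be supplied is therefore $R^1i^!(C)=0$, and nothing you wrote supplies it:
\begin{enumerate}
\item The vanishing hypothesis only covers FP-injective objects, and $C$ is not known to be one. From $0\to i_*i^!X\to X\to C\to 0$ one gets $\Ext^1(F,C)\cong\Ker\bigl(\Ext^2(F,i_*i^!X)\to\Ext^2(F,X)\bigr)$ for finitely presented $F$. Vanishing of $\Ext^2$ on FP-injectives is the kind of statement one normally derives from local coherence of $\BB$, which is what is being proved.
\item The other route, $0=R^1i^!X\to R^1i^!C\to R^2i^!(i_*i^!X)$, needs control of $R^2i^!$ on $i_*(\AA)$. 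That is not among the assumptions.
\end{enumerate}
To finish, you would have to prove from the hypotheses that $R^1i^!(X/i_*i^!X)=0$ (equivalently, that $\nu_X$ is an epimorphism) for FP-injective $X$, or add such a condition.

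\textbf{Your fallback does not help.} Testing only filtered colimits of injectives is legitimate in itself. Stenstr{\"o}m's argument for 9$\Rightarrow$1 in Theorem~\ref{th_MEGA}, like Proposition~\ref{prop_cm_reverse}, uses nothing more. But the sequence is not ``standard'' for injective $X$ either. There $R^1i^!X=R^2i^!X=0$, so under the present hypotheses your computation gives $\Coker\nu_X\cong i_*R^2i^!(i_*i^!X)$, with no reason for this to vanish.

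Indeed, $\nu_X$ need not be an epimorphism for injective $X$ in a recollement. In the recollement of Example~\ref{ex_idemp_ring_recoll} one has $\nu_X=\Hom_R(\mu,X)$ for the multiplication map $\mu\colon Re\otimes_{eRe}eR\to R$. Hence $\Coker\nu_X\cong\Hom_R(\ker\mu,X)$ for injective $X$, which is non-zero for an injective cogenerator whenever $\ker\mu\neq 0$. A concrete case is $R=\End_\Lambda(k\oplus\Lambda)$ with $\Lambda=k[x]/(x^2)$ and $e$ the projection onto $\Lambda$, where $\dim_k Re\otimes_{eRe}eR=5>4=\dim_k ReR$. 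So in either version the proof needs an argument for the vanishing of $R^1i^!$ on $\im\nu_X$, and the proposal does not contain one.
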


\begin{proof}
Let $\{E_i\}_{i \in I}$ be a filtered system of FP-injective objects in $\BB$. By Proposition~(\ref{prop_pirash_recol_sprava_iRi}), for every $i \in I$ we have a decomposition
\begin{equation}
0 \to i_*i^! E_i \to E_i \to j_*j^*E_i \to 0.
\end{equation}
Since $j_*$ and $i_*$ preserve filtered colimits, we obtain
\begin{equation}
0 \to i_* \indlim i^! E_i \to \indlim E_i \to j_* \indlim j^*E_i \to 0.
\end{equation}

The left and right terms are FP-injective by the local coherence assumption; the middle term is then FP-injective as an extension. By Theorem~(\ref{th_fpinj_ab}), $\BB$ is locally coherent, as desired.
\end{proof}

\subsection{CM-injective objects} \label{ssec:cminj}

Theorem~\ref{th_fpinj_ab} characterises local coherence of an AB5 locally finitely presentable abelian category by the behaviour of FP-injective objects under filtered colimits. Both sides of this characterisation have natural counterparts in the compactly assembled world: local coherence is replaced by coherent assembly (Definition~\ref{def_compass_cohass}, Proposition~\ref{prop_loccoh=cohass}), and the role of finitely presented objects is taken over by compact morphisms (Definition~\ref{def_compact_morphism}). The purpose of this subsection is to show that, once FP-injectivity is generalised accordingly, the equivalence (a)$\iff$(d) of Theorem~\ref{th_fpinj_ab} survives for arbitrary compactly assembled Grothendieck abelian categories (Theorem~\ref{th_cminj}). The naive generalisation does not work: a compactly assembled category may have no non-zero compact objects at all (for instance sheaves of vector spaces on the real line, \cite[\S 8]{efimov_Th_of_heart}), so that the condition ``$\Ext^1(F, M) = 0$ for every compact object $F$'' is vacuous. Instead we ask that extensions be killed by compact \emph{morphisms}; objects with this property are called \emph{CM-injective} (Definition~\ref{def_cminj}), and in the locally finitely presentable case they are exactly the FP-injective objects (Proposition~\ref{prop_cminj_vs_fpinj}). The direct implication is then proved by transporting the problem to $\Ind(\AA)$, which is always ``formally locally coherent'' (Lemma~\ref{lem_cm_ext_ind}); the converse is a categorical version of the argument of Stenstr{\"o}m for the implication \textbf{9 $\Rightarrow$ 1} of Theorem~\ref{th_MEGA}. The authors learned about the possibility of such a generalisation from Alexander Efimov in a private conversation. Thus the main goal of this subsection is to prove the following criterion.

\begin{theorem} \label{th_cminj}
Let $\AA$ be a compactly assembled Grothendieck abelian category. The following are equivalent:
\begin{itemize}
    \item[(a)] $\AA$ is coherently assembled.
    \item[(b)] For every compact morphism $\psi \colon w \to z$, every filtered system $\{x_i\}_{i \in I}$ in $\AA$ with colimit $x$, and every $\xi \in \Ext^1(z, x)$, the class $\psi^* \xi$ lies in the image of $\varinjlim_i \Ext^1(w, x_i) \to \Ext^1(w, x)$.
    \item[(c)] Every filtered colimit of CM-injective objects is CM-injective.
\end{itemize}
\end{theorem}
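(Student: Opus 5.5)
The plan is to prove the cycle (a)$\Rightarrow$(b)$\Rightarrow$(c)$\Rightarrow$(a). Throughout I use two facts. First, $\colim\colon \Ind(\AA)\to\AA$ is exact, because $\AA$ is Grothendieck. Second, $\tYY$ is right exact, being a left adjoint, so under (a) it is exact.

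\emph{(a)$\Rightarrow$(b).} Let $\xi$ be represented by $0\to x\to e\to z\to 0$. Applying the exact functor $\tYY$ gives an extension $0\to\tYY x\to\tYY e\to\tYY z\to 0$ in $\Ind(\AA)$. I pull it back along a lift $\hat\psi\colon \YY w\to\tYY z$ of $\psi$ (Proposition~\ref{prop_compact_via_tYY}, Remark~\ref{rem_compact_lifts}). I push it out along the morphism $\tYY x\to\{x_i\}_{i\in I}$ adjoint to $\Id_x$, which exists because $\colim\{x_i\}=x$. The result is a class in $\Ext^1_{\Ind(\AA)}(\YY w,\colim_i\YY x_i)$. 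Here I would use the (announced) Lemma~\ref{lem_cm_ext_ind}: for every $w\in\AA$ one has $\Ext^1_{\Ind(\AA)}(\YY w,\colim_i \YY x_i)\cong\varinjlim_i\Ext^1_\AA(w,x_i)$. This holds because $\YY w$ is compact in $\Ind(\AA)$, extensions of $\YY w$ by a filtered colimit are detected at a finite stage, and filtered colimits in $\Ind(\AA)$ are exact. Applying the exact functor $\colim$ returns the class $\psi^*\xi\in\Ext^1(w,x)$, since $\colim\hat\psi=\psi$ and $\colim$ of the pushout map is $\Id_x$. So $\psi^*\xi$ lies in the required image.

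\emph{(b)$\Rightarrow$(c).} Let the $x_i$ be CM-injective with colimit $x$, and let $\phi\colon y\to z$ be compact and $\xi\in\Ext^1(z,x)$. I first factor $\phi=\phi_2\circ\phi_1$ with both factors compact. To do this, take a lift $\hat\phi\colon\YY y\to\tYY z$ and write $\tYY z$ with compact transition maps (Lemma~\ref{lem_tYY_compact_transitions}). Then $\hat\phi$ factors through some $\YY z_j$, and one can take $\phi_1\colon y\to z_j$ and $\phi_2=\sigma_j$, enlarging $j$ along a compact transition map so that $\phi_1$ becomes compact. By (b), $\phi_2^*\xi$ comes from some $\xi_i\in\Ext^1(z_j,x_i)$. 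Hence $\phi^*\xi=\phi_1^*\phi_2^*\xi$ is the image of $\phi_1^*\xi_i$, which vanishes because $x_i$ is CM-injective.

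\emph{(c)$\Rightarrow$(a).} This is the main obstacle; it is a categorical version of Stenstr\"om's argument for \textbf{9 $\Rightarrow$ 1}. Since $\tYY$ is right exact, it suffices to show that $\tYY a$ is a monomorphism for every monomorphism $a\colon u\hookrightarrow v$. Take $\tYY u=\{u_j\}$ and test the kernel against the generators $\YY t$. An element of the kernel is represented by some $f\colon t\to u_j$ whose image in $\tYY v$ vanishes. Using Remark~\ref{rem_compact_lifts} and Lemma~\ref{lem_tYY_compact_transitions}, after enlarging $j$ this can be phrased as follows: a compact morphism $c=\sigma_j f\colon t\to u$ such that $a\circ c$ factors through a compact morphism into $v$. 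I must show that $c$ then factors through a compact morphism into $u$ in a way that kills $f$ in $\varinjlim_{j}\Hom(t,u_j)$.

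Now write $u=\indlim u_j$ and embed each $u_j$ into an injective envelope $E(u_j)$, functorially along the system (or using a functorial injective embedding). By (c), $\indlim E(u_j)$ is CM-injective. The extension data coming from $u\hookrightarrow v$, combined with CM-injectivity applied to the compact morphism into $v$, extend $c$ to a map into $\indlim E(u_j)$. Compactness then lets this map factor through a finite stage $E(u_{j'})$. The composite to $E(u_{j'})/u_{j'}$ vanishes in the colimit, so it vanishes after enlarging $j'$, and the map lands in $u_{j'}$. This yields the required factorization, hence the vanishing of $f$ in $\tYY u$.

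The delicate points are the following. One must keep track of lifts rather than of objects, since there may be no compact objects at all. One must also check that each ``enlarge the index'' step preserves compactness, which follows from the ideal property in Remark~\ref{rem_compact_morphism_basic}.
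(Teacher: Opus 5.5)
Your (a)$\Rightarrow$(b) and (b)$\Rightarrow$(c) follow the paper's arguments, and your direct proof of the factorisation into two compact morphisms is correct. One caveat: compactness of $\YY w$ alone gives only injectivity in Lemma~\ref{lem_cm_ext_ind}. Surjectivity also needs that the kernel of an epimorphism $\YY e\to\YY w$ is again some $\YY\kappa$, hence compact.

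The gap is in (c)$\Rightarrow$(a). Suppose $f\colon t\to u_j$ represents an element of $\Hom(\YY t,\tYY u)$ killed by $\tYY a$. Applying the exact functor $\colim$ gives $a\sigma_j f=0$, hence $c=\sigma_j f=0$ because $a$ is a monomorphism. So $\Ker(\tYY a)$ is invisible to $\colim$. Your reformulation in terms of $c$ and of factorisations of $a\circ c$ is therefore vacuous. What must actually be shown to vanish is the lift $\mathrm{in}_j\circ\YY f$ of the zero morphism, and it never enters your transcription of Stenstr\"om's argument. The steps are then empty or unjustified:
\begin{itemize}
\item there is nothing to extend;
\item CM-injectivity tested on a compact morphism that factors through $v$ gives nothing, since the pull-back of $[0\to u\to v\to v/u\to 0]$ along $v\to v/u$ already splits; the useful compact morphisms are those into the quotient that do \emph{not} lift;
\item there is no compact source, so neither ``factors through a finite stage'' nor ``vanishes in the colimit, hence at some stage'' follows;
\item nothing links a factorisation through $u_{j'}$ to $f_{jj'}f=0$.
\end{itemize}

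The missing idea is how CM-injectivity is transported into $\Ind(\AA)$. In the paper this is Lemma~\ref{lem_cm_ses_cminj}: if $x$ is CM-injective, $\tYY$ keeps $0\to x\to e\to z\to 0$ exact. The reason is that its pull-backs along the compact structure maps $\sigma_j\colon z_j\to z$ of $\tYY z$ split, and $\tYY$ preserves split sequences and filtered colimits.

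The paper also does not test on $\YY t$. For $\Sigma\colon 0\to x\to e\xrightarrow{\pi}z\to 0$ it shows that the epimorphism $q\colon\tYY x\to\Ker(\tYY\pi)$ splits. It does so by proving that $q^*\colon\Hom(\Ker(\tYY\pi),N)\to\Hom(\tYY x,N)$ is surjective for every $N$. By adjunction this becomes a lifting problem for morphisms $g\colon x\to\colim N$ in $\AA$, where Stenstr\"om's argument does have content:
\begin{itemize}
\item Embed $N$ levelwise into injectives via a functorial injective embedding $J$, giving $N\hookrightarrow E=\{J(n_i)\}$ with cokernel $\beta\colon E\to E/N$.
\item Push $\Sigma$ out along $x\to\colim E$, which is CM-injective by (c).
\item Obtain $h'\colon\Ker(\tYY\pi)\to E$ from Lemma~\ref{lem_cm_ext_compare}(ii).
\item Conclude that $h'$ factors through $N$: $\colim(\beta h'q)=0$, so $\beta h'q=0$ by the $\tYY\dashv\colim$ adjunction, and $\beta h'=0$ since $q$ is an epimorphism.
\end{itemize}
This adjunction step is what replaces ``vanishes after enlarging the index''.
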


Throughout this subsection $\AA$ is a compactly assembled Grothendieck abelian category; by \cite[Remark 1.42]{efimov_Th_of_heart} this is the same as a Grothendieck abelian category satisfying (AB6). We freely use the notation and the standard facts collected in Remarks~\ref{rem_ind_basic}, \ref{rem_tYY_basic} and \ref{rem_compact_lifts}; in particular $\colim \colon \Ind(\AA) \to \AA$ is exact, $\tYY$ is right exact and preserves filtered colimits, and $\AA$ is coherently assembled if and only if $\tYY$ is exact, if and only if $\tYY$ preserves monomorphisms.

\begin{definition} \label{def_cminj}
An object $x \in \AA$ is called \emph{CM-injective} (injective with respect to compact morphisms) if for every compact morphism $\phi \colon y \to z$ in $\AA$ the induced map
\[
\phi^* \colon \Ext^1_{\AA}(z, x) \longrightarrow \Ext^1_{\AA}(y, x)
\]
is zero. Equivalently, every short exact sequence $0 \to x \to e \to z \to 0$ splits after pull-back along any compact morphism $y \to z$.
\end{definition}

Injective objects are CM-injective, and CM-injective objects are closed under products and retracts. In a locally finitely presentable category the notion reduces to the familiar one.

\begin{proposition} \label{prop_cminj_vs_fpinj}
Let $\AA$ be an AB5 locally finitely presentable abelian category, compactly assembled by Proposition~\ref{prop_omega_acc_compass}. A morphism $\phi \colon y \to z$ of $\AA$ is compact if and only if it factors through a finitely presented object. Consequently, an object of $\AA$ is CM-injective if and only if it is FP-injective.
\end{proposition}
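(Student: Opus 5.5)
The plan has two parts. First we characterise compact morphisms, and then we compare CM-injectivity with FP-injectivity.

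\emph{Compact morphisms factor through finitely presented objects.} One direction is already in Remark~\ref{rem_compact_morphism_basic}. A finitely presented $F$ has $\Id_F$ compact, and compact morphisms form an ideal, so any $\phi = b\circ a$ with $a\colon y\to F$, $b\colon F\to z$ and $F\in\AA^{f.p.}$ is compact. For the converse, let $\phi\colon y\to z$ be compact. Since $\AA$ is locally finitely presentable, write $z=\indlim[j] z_j$ as a filtered colimit of finitely presented objects with canonical maps $\sigma_j$. Apply Definition~\ref{def_compact_morphism} with $g=\Id_z$. Then $\phi=\Id_z\circ\phi$ lies in the image of $\varinjlim_j\Hom(y,z_j)\to\Hom(y,z)$, so $\phi=\sigma_{j_0}\circ a$ for some $a\colon y\to z_{j_0}$. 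This is a factorisation through the finitely presented object $z_{j_0}$. (Equivalently, one can use Proposition~\ref{prop_compact_via_tYY} with the explicit $\tYY$ of Proposition~\ref{prop_omega_acc_compass}.)

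\emph{FP-injective implies CM-injective.} Let $x$ be FP-injective and $\phi\colon y\to z$ compact. By the first part, $\phi=b\circ a$ through some $F\in\AA^{f.p.}$. Hence $\phi^*=a^*\circ b^*$ factors through $\Ext^1(F,x)=0$, so $\phi^*=0$.

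\emph{CM-injective implies FP-injective.} Let $x$ be CM-injective and $F$ finitely presented. Then $\Id_F$ is a compact morphism, so $\Id_F^*=\Id$ on $\Ext^1(F,x)$ is zero, i.e.\ $\Ext^1(F,x)=0$. This is exactly FP-injectivity.

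There is no genuine obstacle here. The only point needing care is the correct use of the definition of a compact morphism, namely specialising to $g=\Id_z$ and a presentation of $z$ by finitely presented objects. This presentation exists because $\AA$ is locally finitely presentable; the AB5 hypothesis is not even needed for this step.
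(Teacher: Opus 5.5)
Your proof is correct. The second half, comparing CM-injectivity with FP-injectivity, is exactly the paper's argument. The difference lies in how you characterise compact morphisms.

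\textbf{The paper's route.} It sends both directions through Proposition~\ref{prop_compact_via_tYY}, using $\tYY z = \{z_j\}_{j \in J}$ from Proposition~\ref{prop_omega_acc_compass}. For ``compact implies factors'', it factors a lift $\hat{\phi} \colon \YY y \to \colim[j] \YY z_j$ through some $\mathrm{in}_{j_0}$, using compactness of $\YY y$ in $\Ind(\AA)$, and then applies $\rho_z$. For the converse, it checks that $\mathrm{in}_{j_0} \circ \YY(c a)$ is a lift when $\phi = b \circ a$ factors through a finitely presented object.

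\textbf{Your route.} You work directly from Definition~\ref{def_compact_morphism}. Testing with $g = \Id_z$ against a presentation of $z$ by finitely presented objects gives the factorisation. This is precisely the first step of the proof of Proposition~\ref{prop_compact_via_tYY}, so the paper's passage through the lift adds nothing in that direction. Your converse uses the ideal property and the compactness of $\Id_F$ for finitely presented $F$. Both are immediate from the definition, so there is no circularity with the forward reference in Remark~\ref{rem_compact_morphism_basic}.

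\textbf{Comparison.} Your argument is shorter. It also makes plain that the characterisation of compact morphisms holds in any locally finitely presentable category, needing neither $\tYY$ nor the AB5 hypothesis. The paper's version stays in the language of lifts along $\rho_z$. That is how compact morphisms are handled in the rest of the section, for instance in Remark~\ref{rem_compact_lifts} and Proposition~\ref{prop_cm_descent}.
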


\begin{proof}
By Proposition~\ref{prop_omega_acc_compass}, $\tYY z = \{z_j\}_{j \in J}$ is a filtered system of finitely presented objects with $\indlim z_j = z$; we use the description of compact morphisms of Proposition~\ref{prop_compact_via_tYY}. Let $\phi$ be compact with lift $\hat{\phi} \colon \YY y \to \tYY z = \colim[j] \YY z_j$. Since $\YY y$ is compact in $\Ind(\AA)$, $\hat{\phi} = \mathrm{in}_{j_0} \circ \YY a$ for some $j_0$ and $a \colon y \to z_{j_0}$; applying $\rho_z$ we get $\YY \phi = \YY(\sigma_{j_0} a)$, so $\phi = \sigma_{j_0} \circ a$ factors through the finitely presented object $z_{j_0}$. Conversely, let $\phi = b \circ a$ with $a \colon y \to w$, $b \colon w \to z$ and $w$ finitely presented. As $z = \indlim z_j$, the morphism $b$ factors as $b = \sigma_{j_0} \circ c$ for some $c \colon w \to z_{j_0}$, and $\mathrm{in}_{j_0} \circ \YY(c a)$ is a lift of $\phi$.

If $x$ is FP-injective and $\phi = b \circ a$ as above, then $\phi^* = a^* \circ b^*$ factors through $\Ext^1(w, x) = 0$. If $x$ is CM-injective and $w$ is finitely presented, then $\Id_w$ is compact, so $\Id_w^* = \Id$ is zero on $\Ext^1(w, x)$, i.e.\ $\Ext^1(w, x) = 0$.
\end{proof}

\begin{lemma}[{\cite[Corollary 1.41]{efimov_Kth_large_cat}, \cite[\S 1.6]{efimov_Th_of_heart}}] \label{lem_cm_factor}
Let $\CC$ be a compactly assembled category (not necessarily abelian). Every compact morphism $\phi \colon y \to z$ in $\CC$ factors as $\phi = \psi \circ \chi$, where $\chi \colon y \to w$ and $\psi \colon w \to z$ are compact. Consequently a compact morphism is a composition of any finite number of compact morphisms.
\end{lemma}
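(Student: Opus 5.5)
The plan is to work entirely through the characterisation of compact morphisms in Proposition~\ref{prop_compact_via_tYY} and the structure of $\tYY z$ from Lemma~\ref{lem_tYY_compact_transitions}. Write $\tYY z = \{z_j\}_{j \in J}$, with all transition morphisms $f_{jk}$ compact and canonical morphisms $\sigma_j \colon z_j \to z$. Let $\phi \colon y \to z$ be compact with a lift $\hat{\phi} \colon \YY y \to \tYY z$.

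First, $\YY y$ is compact in $\Ind(\CC)$, so $\hat{\phi}$ factors as $\mathrm{in}_{j_0} \circ \YY a$ for some $j_0 \in J$ and some $a \colon y \to z_{j_0}$. Applying $\rho_z$ and using $\rho_z \circ \mathrm{in}_{j_0} = \YY \sigma_{j_0}$ from Remark~\ref{rem_tYY_basic}, we get $\phi = \sigma_{j_0} \circ a$.

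Next, choose $k \geq j_0$. Then $\sigma_{j_0} = \sigma_k \circ f_{j_0 k}$, and hence
\[
\phi = \sigma_k \circ (f_{j_0 k} \circ a).
\]
We set $w := z_k$, $\psi := \sigma_k$ and $\chi := f_{j_0 k} \circ a$.

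It remains to check that both factors are compact:
\begin{itemize}
\item[(i)] $\psi = \sigma_k$ is compact by Remark~\ref{rem_compact_lifts}, with lift $\mathrm{in}_k$.
\item[(ii)] $\chi$ is compact because $f_{j_0 k}$ is compact (Lemma~\ref{lem_tYY_compact_transitions}) and compact morphisms form a two-sided ideal (Remark~\ref{rem_compact_morphism_basic}), so $f_{j_0 k} \circ a$ is compact.
\end{itemize}

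The main subtlety lies in the first step: we need the factorisation through some $z_{j_0}$ to exist, and we need the transition morphisms themselves, not merely $\sigma$, to be compact. Lemma~\ref{lem_tYY_compact_transitions} provides the latter. The former is just compactness of $\YY y$ in $\Ind(\CC)$. Note that no abelian structure is used, only filtered colimits in $\CC$ and $\Ind(\CC)$, which matches the statement's generality.

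For the consequence, we induct on the number of factors. Given a factorisation of $\phi$ into $n$ compact morphisms, apply the first part to one of the factors, for instance the last one, which produces $n+1$ compact factors. Since a single compact morphism is the case $n = 1$, this shows that $\phi$ is a composition of any prescribed finite number of compact morphisms.
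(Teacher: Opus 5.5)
Your proof is correct, but it obtains the compactness of the first factor differently from the paper. The paper, following Efimov, uses that $\tYY$ preserves filtered colimits. From $z=\indlim[j] z_j$ one has $\tYY z\cong\colim[j]\tYY z_j$ with structure maps $\tYY\sigma_j$, so compactness of $\YY y$ factors the lift as $\hat\phi=\tYY(\sigma_j)\circ\lambda$ with $\lambda\colon\YY y\to\tYY z_j$. Then $\chi$ is defined by $\YY\chi=\rho_{z_j}\lambda$; it is compact with lift $\lambda$ by Proposition~\ref{prop_compact_via_tYY}, and naturality of $\rho$ gives $\phi=\sigma_j\circ\chi$.

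You instead factor $\hat\phi$ through $\YY z_{j_0}$, which only yields $\phi=\sigma_{j_0}\circ a$ with $a$ arbitrary. You then make the first factor compact by post-composing with a compact transition morphism and using the ideal property. Both routes produce the same second factor $\psi=\sigma_j\colon z_j\to z$.

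The difference is largely packaging. The compactness of your $f_{j_0k}$ comes from Step~1 of the proof of Lemma~\ref{lem_tYY_compact_transitions}, and that step is proved by exactly the paper's argument (factoring $\mathrm{in}_i$ through some $\tYY x_k$). So your version is modular but leans on a heavier lemma, while the paper's is self-contained and needs no reindexing.

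You only need that Step~1 (for each $j_0$ there is $k\ge j_0$ with $f_{j_0k}$ compact), and it is better to cite it directly. Read literally for $j=j'$, the lemma's statement would make every $z_j$ compact, and its proof only gives compactness of transition maps for strict inequalities. So ``choose $k\ge j_0$'' should really be ``choose $k\ge j_0$ with $f_{j_0k}$ compact''. The inductive deduction of the consequence is fine.
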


The proof in \cite[Corollary 1.41]{efimov_Kth_large_cat} is written for dualizable categories, but, as remarked in \cite[\S 1.6]{efimov_Th_of_heart}, it uses only compact assembly: one writes $\tYY z = \colim[j] \tYY z_j$ and factors a lift of $\phi$ through a stage using the compactness of $\YY y$ in $\Ind(\CC)$.

\begin{remark} \label{rem_cminj_ext_closed}
It follows that CM-injective objects are closed under extensions. Indeed, let $0 \to x' \to x \to x'' \to 0$ be exact with $x'$, $x''$ CM-injective, let $\xi \in \Ext^1(z, x)$ and let $\phi = \psi \chi$ be a compact morphism factored as in Lemma~\ref{lem_cm_factor}. The image of $\psi^* \xi$ in $\Ext^1(w, x'')$ vanishes, so $\psi^* \xi$ comes from $\Ext^1(w, x')$, and hence $\phi^* \xi = \chi^* \psi^* \xi$ lies in the image of $\chi^* \colon \Ext^1(w, x') \to \Ext^1(y, x')$, which is zero.
\end{remark}

\medskip
The finitely presented objects of $\Ind(\AA)$ are the retracts of the objects $\YY a$, $a \in \AA$; since $\AA$ is abelian, kernels of morphisms between them stay finitely presented. This is the reason why the analogue of condition (b) of Theorem~\ref{th_fpinj_ab} holds in $\Ind(\AA)$ \emph{unconditionally}. We give a self-contained proof of the two facts we need.

\begin{lemma} \label{lem_cm_ext_yoneda}
For $a, b \in \AA$ the functor $\YY$ induces an isomorphism $\Ext^1_{\AA}(a, b) \xrightarrow{\sim} \Ext^1_{\Ind(\AA)}(\YY a, \YY b)$, natural in both arguments.
\end{lemma}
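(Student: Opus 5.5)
We identify $\Ext^1$ with Yoneda extensions, i.e.\ isomorphism classes of short exact sequences, in both $\AA$ and $\Ind(\AA)$. The map is $[0 \to b \to e \to a \to 0] \mapsto [0 \to \YY b \to \YY e \to \YY a \to 0]$. It is well defined because $\YY$ is exact (Remark~\ref{rem_ind_basic}). It is a group homomorphism and natural in $a$ and $b$, since $\YY$ is exact and hence preserves the pullbacks and pushouts defining Baer sums and the functoriality of $\Ext^1$.

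\emph{Injectivity.} Suppose $\YY$ applied to an extension gives a split sequence in $\Ind(\AA)$. A splitting $\YY a \to \YY e$ is a morphism between objects of $\AA$, so by full faithfulness it equals $\YY s$ for some $s \colon a \to e$. Faithfulness then shows that $s$ splits the original sequence. Since the map is a homomorphism, this gives injectivity.

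\emph{Surjectivity.} This is the main step. Take an extension $0 \to \YY b \to E \to \YY a \to 0$ in $\Ind(\AA)$ and write $E = \colim[j] \YY e_j$ as a filtered colimit in $\Ind(\AA)$.
\begin{enumerate}
\item Since $\YY a$ is compact in $\Ind(\AA)$ and $E \to \YY a$ is an epimorphism, filtered colimits in $\Ind(\AA)$ are exact. Hence the identity of $\YY a$ lifts through some stage: the epimorphism $E \to \YY a$ is the colimit of the compatible maps $\YY e_j \to \YY a$, so $\colim[j] \Hom(\YY a, \YY e_j) \to \Hom(\YY a, E)$ is bijective.
\item Concretely, the images of $\YY e_j$ in $\YY a$ form a filtered system of subobjects with union $\YY a$. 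By compactness of $\YY a$ one of them is everything, so some $\YY e_{j_0} \to \YY a$ is an epimorphism.
\item Form the pullback $P = \YY b \times_E \YY e_{j_0}$, or more simply consider the map $\YY b \oplus \YY e_{j_0} \to E$. Its image $E'$ is a subobject of $E$ containing $\YY b$ and surjecting onto $\YY a$, hence $E' = E$.
\item Thus $E$ is a quotient of $\YY(b \oplus e_{j_0})$, and the kernel $K$ of $\YY(b \oplus e_{j_0}) \to E$ is a subobject of an object of $\AA$. The key fact is that $E$ is a cokernel of a morphism in the essential image of $\YY$.
\item $K$ is a filtered colimit of $\YY k_i$. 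Each $\YY k_i \to \YY(b \oplus e_{j_0})$ comes from a morphism in $\AA$, with image $\YY(\im)$.
\item The extension $\YY b \to E$ being a monomorphism, and $E = \colim[i] \YY\bigl(\coker(k_i \to b \oplus e_{j_0})\bigr)$, compactness of $\YY b$ and $\YY a$ lets us find a stage $i$ at which $\YY b \to \YY c_i$ is mono and $\YY c_i \to \YY a$ has kernel $\YY b$. Here $c_i$ denotes that cokernel.
\item Monicity at a finite stage uses that $\ker(\YY b \to \YY c_i)$ is a filtered system of subobjects of $\YY b$ with colimit $0$. Since $\YY b$ is compact this alone does not force vanishing, but enlarging $i$ to kill the generating section $\YY(\ker) \to \YY b$, which is a morphism from a compact object to a colimit that is zero, does.
\end{enumerate}
The resulting stage gives an extension $0 \to b \to c_i \to a \to 0$ in $\AA$ whose image under $\YY$ maps to the given one, hence is isomorphic to it as an extension, by the five lemma.

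The main obstacle is this finite-stage argument in steps 6 and 7. To keep it clean, I would phrase everything via the exact colimit functor and the isomorphisms $\colim[i] \Hom(\YY t, \YY c_i) \cong \Hom(\YY t, E)$ for $t \in \AA$, where the $c_i$ run over cokernels.
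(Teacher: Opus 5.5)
Your proposal is correct in outline, but it follows a different route from the paper.

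\textbf{The paper's route.} The paper never looks inside $E$. Because $\AA$ is Grothendieck, $\colim$ is exact, so $\YY \circ \colim$ carries $0 \to \YY b \to E \to \YY a \to 0$ to an exact row $0 \to \YY b \to \YY \colim E \to \YY a \to 0$. The unit of $\colim \dashv \YY$ is a morphism of extensions between the two rows and is invertible at both ends. The five lemma then gives $E \cong \YY(\colim E)$. This takes two lines, but it uses AB5 in $\AA$.

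\textbf{Your route.} You use only three facts: exactness of $\YY$, exactness of filtered colimits in $\Ind(\AA)$, and compactness of representables. So your argument shows that $\AA$ is closed under extensions in $\Ind(\AA)$ for an arbitrary abelian $\AA$, which is more general. It is essentially the surjectivity argument of Lemma~\ref{lem_cm_ext_ind}, steps (i)--(ii), specialised to $M = \YY b$.

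\textbf{Shortcut for steps 4--7.} One observation replaces steps 4--7; it is how the paper handles the general case. Let $f_{j_0} \colon e_{j_0} \to a$ be the epimorphism of your step 2, let $\kappa := \ker f_{j_0}$, and let $m \colon \YY b \to E$ be the given monomorphism. The kernel $K$ of $\YY b \oplus \YY e_{j_0} \to E$ projects monomorphically onto $\YY e_{j_0}$, because $m$ is mono. Its image there is $\Ker(\YY f_{j_0}) = \YY \kappa$. Hence $K$ is representable, and the map $K \to \YY(b \oplus e_{j_0})$ comes from $\AA$ by full faithfulness. Therefore $E \cong \YY\bigl(\coker(\kappa \to b \oplus e_{j_0})\bigr)$, with no finite-stage argument at all.

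\textbf{If you keep the approximation $K = \colim_i \YY k_i$,} three justifications need fixing.
\begin{enumerate}
\item In step 1, the identity of $\YY a$ cannot factor through a stage $\YY e_j$, since that would split the extension. Only step 2's claim about images is right.
\item In step 6, the homology of $b \to c_i \to a$ at $c_i$ is a quotient of $\kappa$. It vanishes at some stage by compactness of $\YY \kappa$, not by compactness of $\YY a$ or $\YY b$.
\item In step 7, no enlargement is needed. The subobjects $\ker(b \to c_i)$ form an increasing family in $b$ whose colimit is $\Ker m = 0$. Exactness of filtered colimits in $\Ind(\AA)$ then forces each of them to be zero.
\end{enumerate}
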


\begin{proof}
The map is well defined since $\YY$ is exact. It is injective: if the image of an extension $0 \to b \to e \to a \to 0$ splits by a section $s \colon \YY a \to \YY e$, then $s = \YY s_0$ by full faithfulness, and $s_0$ splits the original extension. It is surjective: let $0 \to \YY b \to M \to \YY a \to 0$ be exact in $\Ind(\AA)$. The functor $\YY \circ \colim$ is exact, so the row $0 \to \YY b \to \YY \colim M \to \YY a \to 0$ is exact, and the unit $\eta$ gives a morphism of short exact sequences from the former row to the latter which is an isomorphism at both ends ($\eta_{\YY a}$ is invertible for every $a$). By the five lemma $\eta_M \colon M \xrightarrow{\sim} \YY \colim M$, i.e.\ the given extension is the $\YY$-image of $0 \to b \to \colim M \to a \to 0$.
\end{proof}

\begin{lemma} \label{lem_cm_ext_ind}
Let $w \in \AA$ and let $\{M_t\}_{t \in T}$ be a filtered system in $\Ind(\AA)$ with colimit $M = \colim[t] M_t$. Then the canonical map
\[
\varinjlim_{t} \Ext^1_{\Ind(\AA)}(\YY w, M_t) \longrightarrow \Ext^1_{\Ind(\AA)}(\YY w, M)
\]
is an isomorphism.
\end{lemma}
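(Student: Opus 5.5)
We prove injectivity and surjectivity separately by Yoneda extensions in $\Ind(\AA)$, using two facts: $\YY w$ is compact in $\Ind(\AA)$ (Remark~\ref{rem_ind_basic}), and filtered colimits in $\Ind(\AA)$ are exact. Write $\mathrm{in}_t \colon M_t \to M$ for the structure maps.

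\emph{Surjectivity.} Let $0 \to M \to E \xrightarrow{p} \YY w \to 0$ be exact in $\Ind(\AA)$. The object $E$ is a filtered colimit of objects $\YY e_s$, $e_s \in \AA$. Since $\YY w$ is compact, the epimorphism $p$ gives a stage $s$ with $\YY e_s \to \YY w$, and we must make it an epimorphism. Here we use that $\YY$ is fully faithful and exact and that $\YY w$ is compact. Write $\YY w = \colim_s \im(\YY e_s \to \YY w)$. Each image is a subobject of $\YY w$ of the form $\YY(\text{subobject of } w)$, by full faithfulness and exactness; this identification needs a little checking. Compactness of $\YY w$ then forces some stage to be all of $\YY w$. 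So we get $q \colon \YY e \to E$ with $pq \colon \YY e \twoheadrightarrow \YY w$ epi, say $\YY(\pi)$ with $\pi \colon e \twoheadrightarrow w$.

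Let $k = \ker \pi \in \AA$. The restriction $q|_{\YY k} \colon \YY k \to M$ factors, by compactness of $\YY k$, through some $M_t$ via a map $\alpha \colon \YY k \to M_t$. Form the pushout of $0 \to \YY k \to \YY e \to \YY w \to 0$ along $\alpha$. This is an extension $\xi_t \in \Ext^1(\YY w, M_t)$. Pushing $\xi_t$ further along $\mathrm{in}_t$ gives the pushout along $q|_{\YY k}$, and $q$ identifies that pushout with the original extension $E$ (standard pushout criterion for extensions). Hence the class of $E$ is the image of $\xi_t$.

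\emph{Injectivity.} Let $\xi_t \in \Ext^1(\YY w, M_t)$, represented by $0 \to M_t \to E_t \to \YY w \to 0$, and suppose its pushout to $M$ splits. For $t' \ge t$ let $E_{t'}$ be the pushout along $M_t \to M_{t'}$. These form a filtered system, and by exactness of filtered colimits $\colim_{t'} E_{t'} = E$ is the pushed-out extension. A splitting $\YY w \to E$ factors through some $E_{t'}$ by compactness of $\YY w$; call the factored map $\sigma$. Composing $\sigma$ with $E_{t'} \to \YY w$ gives a map $\YY w \to \YY w$ that equals $\Id$ after passing to the colimit. Since $\YY w$ is compact, this means the two maps agree after enlarging $t'$ further. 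So $\sigma$ is a genuine section at a finite stage, and $\xi_t$ dies in $\Ext^1(\YY w, M_{t'})$.

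\emph{Main obstacle.} The delicate step is the surjectivity part, namely lifting an epimorphism onto $\YY w$ to some $\YY e \to E$ that is still epi. If the argument through images of $\YY$ is awkward, there is a fallback: replace $\YY e \to \YY w$ by $\YY e \oplus \YY w \to \YY w$ only if some map $\YY w \to E$ is available, which is circular. Instead we would rely on the fact that every epimorphism onto a compact $\YY w$ in $\Ind(\AA)$ is a filtered colimit of epimorphisms $\YY e_s \to \YY w$ after replacing the presentation of $E$ by the pullback system $\{E \times_{\YY w} \ldots\}$. This uses that the pullback of a filtered colimit is computed levelwise, by exactness.
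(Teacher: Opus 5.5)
Your proposal is correct and follows the paper's proof essentially step for step. The shared steps are:
\begin{itemize}
\item an epimorphic stage $\YY e_{k_0} \to \YY w$, obtained from $\YY w = \colim[k] \YY(\im f_k)$ and compactness of $\YY w$;
\item identification of $E$ as the push-out of $0 \to \YY \kappa \to \YY e_{k_0} \to \YY w \to 0$ along $\YY \kappa \to M$ (the paper checks your ``standard pushout criterion'' by hand);
\item descent of $\YY\kappa \to M$ to some $M_{t_0}$ by compactness of $\YY \kappa$;
\item injectivity via the push-outs $E_{t'}$.
\end{itemize}

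Two small remarks. The image argument you doubt in the ``main obstacle'' paragraph is exactly the one the paper uses, so no fallback is needed. In the injectivity step no further enlargement of $t'$ is required: the projection $E_{t'} \to \YY w$ factors through $E$, so your $\sigma$ is already a section.
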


\begin{proof}
\emph{Surjectivity.} Let $0 \to M \xrightarrow{m} E \xrightarrow{p} \YY w \to 0$ be exact in $\Ind(\AA)$. Write $E = \{e_k\}_{k \in K}$, so $E = \colim[k] \YY e_k$, and define $f_k \colon e_k \to w$ by $\YY f_k = p \circ \mathrm{in}_k$.

\emph{(i) Some $f_{k_0}$ is an epimorphism.} The morphism $p$ is the filtered colimit of the morphisms $p \circ \mathrm{in}_k \colon \YY e_k \to \YY w$ with constant target. Images commute with filtered colimits in $\Ind(\AA)$, and $\YY$ is exact, hence
\[
\YY w = \im p = \colim[k] \im(p \circ \mathrm{in}_k) = \colim[k] \YY(\im f_k).
\]
Since $\YY w$ is compact in $\Ind(\AA)$, the identity of $\YY w$ factors through some $\YY(\im f_{k_0}) \hookrightarrow \YY w$; a monomorphism admitting a right inverse is an isomorphism, so $\im f_{k_0} = w$.

\emph{(ii) $E$ is a push-out.} Put $\kappa := \ker f_{k_0} \in \AA$; by exactness of $\YY$ the row $0 \to \YY \kappa \to \YY e_{k_0} \to \YY w \to 0$ is exact. The composite $\mathrm{in}_{k_0} \circ \YY(\kappa \hookrightarrow e_{k_0})$ is killed by $p$, hence factors uniquely through $m$: we obtain $\gamma \colon \YY \kappa \to M$. Consider $v := (m, \mathrm{in}_{k_0}) \colon M \oplus \YY e_{k_0} \to E$. Its image contains $M = \ker p$ and maps onto $\YY w$ (because $p \circ \mathrm{in}_{k_0} = \YY f_{k_0}$ is an epimorphism), so $v$ is an epimorphism. Its kernel is identified with $\YY \kappa$, embedded via $(-\gamma, \YY(\kappa \hookrightarrow e_{k_0}))$: indeed, the second component of $\ker v \to \YY e_{k_0}$ is a monomorphism since $m$ is, it factors through $\ker(\YY f_{k_0}) = \YY \kappa$, and on $\YY \kappa$ the first component is forced to be $-\gamma$. Hence $E$ is the push-out of $\YY e_{k_0}$ and $M$ along $\YY \kappa$, and
\[
[E] = \gamma_* \bigl[\, 0 \to \YY \kappa \to \YY e_{k_0} \to \YY w \to 0 \,\bigr] \in \Ext^1_{\Ind(\AA)}(\YY w, M).
\]

\emph{(iii) Descent to a stage.} The object $\YY \kappa$ is compact in $\Ind(\AA)$ (here we use that the ``syzygy'' $\kappa$ lies again in $\AA$), hence $\gamma$ factors as $\gamma = (M_{t_0} \to M) \circ \gamma'$ for some $t_0 \in T$ and $\gamma' \colon \YY \kappa \to M_{t_0}$. Therefore $[E] = (M_{t_0} \to M)_* \, \gamma'_* [\YY e_{k_0}]$ lies in the image of $\Ext^1(\YY w, M_{t_0})$.

\emph{Injectivity.} Let $\zeta \in \Ext^1(\YY w, M_t)$ be represented by $E_t$ and suppose that its image in $\Ext^1(\YY w, M)$ vanishes. The push-outs $E_{t'} := M_{t'} \sqcup_{M_t} E_t$, $t' \geq t$, form a filtered system whose colimit is $E_M := M \sqcup_{M_t} E_t$, because filtered colimits commute with push-outs. A section $s \colon \YY w \to E_M$ factors through some $E_{t'}$ by compactness of $\YY w$, as $s = (E_{t'} \to E_M) \circ s'$; then $s'$ is a section of $E_{t'} \to \YY w$, so the image of $\zeta$ in $\Ext^1(\YY w, M_{t'})$ vanishes.
\end{proof}

\begin{corollary} \label{cor_cm_ext_ind}
Let $w \in \AA$ and let $\{x_i\}_{i \in I}$ be a filtered system in $\AA$ with formal colimit $\tilde X := \{x_i\}_{i \in I} \in \Ind(\AA)$. Then the canonical map
\[
\varinjlim_{i} \Ext^1_{\AA}(w, x_i) \xrightarrow{\ \sim\ } \Ext^1_{\Ind(\AA)}(\YY w, \tilde X), \qquad [\zeta] \mapsto (\mathrm{in}_i)_* \YY(\zeta),
\]
is an isomorphism.
\end{corollary}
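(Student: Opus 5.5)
The plan is to combine Lemma~\ref{lem_cm_ext_yoneda} with Lemma~\ref{lem_cm_ext_ind}. The system $\{x_i\}_{i \in I}$ defines the ind-object $\tilde X = \colim[i] \YY x_i$, a filtered colimit in $\Ind(\AA)$ with structure morphisms $\mathrm{in}_i$. Apply Lemma~\ref{lem_cm_ext_ind} with $T = I$ and $M_t = \YY x_i$. This gives an isomorphism
\[
\varinjlim_i \Ext^1_{\Ind(\AA)}(\YY w, \YY x_i) \xrightarrow{\ \sim\ } \Ext^1_{\Ind(\AA)}(\YY w, \tilde X),
\]
induced by the pushforwards $(\mathrm{in}_i)_*$.

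Next we identify the source. Lemma~\ref{lem_cm_ext_yoneda} gives isomorphisms $\Ext^1_\AA(w, x_i) \xrightarrow{\sim} \Ext^1_{\Ind(\AA)}(\YY w, \YY x_i)$, $\zeta \mapsto \YY(\zeta)$, which are natural in the second argument. For a transition morphism $u \colon x_i \to x_{i'}$ we have $\YY(u_* \zeta) = (\YY u)_* \YY(\zeta)$, since $\YY$ is exact and so preserves push-outs. These isomorphisms therefore form an isomorphism of filtered systems of abelian groups. Passing to the colimit, we get
\[
\varinjlim_i \Ext^1_\AA(w, x_i) \cong \varinjlim_i \Ext^1_{\Ind(\AA)}(\YY w, \YY x_i).
\]

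Composing the two isomorphisms gives $[\zeta] \mapsto (\mathrm{in}_i)_* \YY(\zeta)$, which is the stated map. The map is well defined on the colimit because $\mathrm{in}_{i'} \circ \YY u = \mathrm{in}_i$.

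The substantive work was already done in Lemma~\ref{lem_cm_ext_ind}, in particular the compactness of the syzygy $\YY\kappa$. The only remaining checks are bookkeeping: that $\tilde X$ really is the colimit in $\Ind(\AA)$ of the $\YY x_i$ (Remark~\ref{rem_ind_basic}), and the naturality of the Yoneda comparison. The naturality check is where care is needed, but it is routine.
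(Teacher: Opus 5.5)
Your proposal is correct and follows the paper's proof exactly: the paper also applies Lemma~\ref{lem_cm_ext_ind} with $M_t = \YY x_i$ and $\tilde X = \colim[i] \YY x_i$, then identifies the source via Lemma~\ref{lem_cm_ext_yoneda}. Your compatibility checks with the transition morphisms ($\YY(u_*\zeta) = (\YY u)_*\YY(\zeta)$ and $\mathrm{in}_{i'} \circ \YY u = \mathrm{in}_i$) are the bookkeeping the paper leaves implicit.
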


\begin{proof}
Apply Lemma~\ref{lem_cm_ext_ind} to $M_t = \YY x_i$, using $\tilde X = \colim[i] \YY x_i$, and Lemma~\ref{lem_cm_ext_yoneda}.
\end{proof}

We now turn to the ingredients of the converse implication. The first lemma is the point where CM-injectivity is converted into a statement about $\tYY$.

\begin{lemma} \label{lem_cm_ses_cminj}
Let $x \in \AA$ be CM-injective and let $0 \to x \to e \to z \to 0$ be exact in $\AA$. Then $0 \to \tYY x \to \tYY e \to \tYY z \to 0$ is exact in $\Ind(\AA)$.
\end{lemma}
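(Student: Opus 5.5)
The plan is to reduce everything to showing that $\tYY x \to \tYY e$ is a monomorphism. This suffices because $\tYY$ is a left adjoint and hence right exact, as recorded at the start of this subsection. So $\tYY x \to \tYY e \to \tYY z \to 0$ is already exact. The idea is to write the given extension as a filtered colimit of extensions that are \emph{split}, using CM-injectivity of $x$. Then $\tYY$ can be applied termwise, using additivity and preservation of filtered colimits.

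\textbf{Step 1: present $z$ by compact morphisms.} Write $\tYY z = \{z_k\}_{k \in K}$, with canonical morphisms $\sigma_k \colon z_k \to z$ and $z = \indlim[k] z_k$. By Remark~\ref{rem_compact_lifts}, each $\sigma_k$ is a compact morphism, with lift $\mathrm{in}_k$. (Lemma~\ref{lem_tYY_compact_transitions} gives such a presentation, but only the compactness of the $\sigma_k$ is needed.)

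\textbf{Step 2: pull back and split.} For each $k$ set $e_k := e \times_z z_k$. This gives a short exact sequence $0 \to x \to e_k \to z_k \to 0$, representing $\sigma_k^* [e] \in \Ext^1(z_k, x)$. Since $\sigma_k$ is compact and $x$ is CM-injective (Definition~\ref{def_cminj}), this class vanishes, so the sequence splits and $e_k \cong x \oplus z_k$. The $e_k$ form a filtered system in which every transition morphism restricts to $\Id_x$ on $x$. In a Grothendieck category filtered colimits are exact and commute with pull-backs, so $\indlim[k] e_k \cong e \times_z \indlim[k] z_k = e$. Thus the original sequence is the filtered colimit of the split sequences $0 \to x \to e_k \to z_k \to 0$.

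\textbf{Step 3: apply $\tYY$.} $\tYY$ is additive, so it preserves split short exact sequences. Hence each $0 \to \tYY x \to \tYY e_k \to \tYY z_k \to 0$ is (split) exact in $\Ind(\AA)$. Since $\tYY$ preserves filtered colimits, we have $\tYY e \cong \colim[k] \tYY e_k$ and $\tYY z \cong \colim[k] \tYY z_k$, and the constant term $\tYY x$ stays $\tYY x$. Filtered colimits in $\Ind(\AA)$ are exact (Remark~\ref{rem_ind_basic}), so the colimit sequence $0 \to \tYY x \to \tYY e \to \tYY z \to 0$ is exact. This gives the claim.

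\textbf{Main obstacle.} The delicate point is the bookkeeping in Step 2. One must check that the pulled-back sequences genuinely form a filtered system of short exact sequences, with $x$ fixed and the maps $x \to e_k$ compatible. One must also check that its colimit recovers the given sequence, including the maps, and not merely the objects. Both follow from the universal property of pull-backs and AB5, but they need to be verified before Step 3 can be invoked.
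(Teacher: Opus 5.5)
Your proposal is correct and follows the paper's proof essentially step for step. You pull back along the compact canonical morphisms $\sigma_k \colon z_k \to z$ coming from $\tYY z$, split the pulled-back rows by CM-injectivity, recover $e$ as their filtered colimit by AB5, and then use that $\tYY$ is additive and preserves filtered colimits, together with exactness of filtered colimits in $\Ind(\AA)$.
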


\begin{proof}
Let $\tYY z = \{z_j\}_{j \in J}$ and $e_j := e \times_z z_j$. Each row $0 \to x \to e_j \to z_j \to 0$ is exact, being the pull-back of the given sequence along $\sigma_j$, and it splits because $\sigma_j$ is compact and $x$ is CM-injective. Since $\AA$ is AB5, filtered colimits commute with pull-backs, so $\indlim[j] e_j = e \times_z \indlim[j] z_j = e$. The functor $\tYY$ is additive and is a left adjoint, so it preserves split exact sequences and filtered colimits; hence $0 \to \tYY x \to \tYY e \to \tYY z \to 0$ is the filtered colimit over $j$ of the split exact rows $0 \to \tYY x \to \tYY e_j \to \tYY z_j \to 0$, and filtered colimits in $\Ind(\AA)$ are exact.
\end{proof}

\begin{lemma} \label{lem_cm_ext_compare}
Let $M \in \Ind(\AA)$ and $m := \colim M$.
\begin{enumerate}
    \item[(i)] For every $x \in \AA$ the map $\gamma_x \colon \Ext^1_{\Ind(\AA)}(\tYY x, M) \to \Ext^1_{\AA}(x, m)$ induced by $\colim$ is injective.
    \item[(ii)] Let $\Sigma \colon 0 \to x \xrightarrow{\iota} e \xrightarrow{\pi} z \to 0$ be exact in $\AA$, let $K := \Ker(\tYY \pi)$ and let $q \colon \tYY x \to K$ be the epimorphism induced by $\tYY \iota$ (right exactness of $\tYY$). If $g \colon x \to m$ is such that $g_* [\Sigma] \in \Ext^1_{\AA}(z, m)$ lies in the image of $\gamma_z$, then there exists $h \colon K \to M$ with $\colim(h \circ q) = g$.
\end{enumerate}
\end{lemma}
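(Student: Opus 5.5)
The plan is to work entirely with the adjunction $\tYY \dashv \colim$ and the fact from Remark~\ref{rem_tYY_basic}(1) that a morphism out of $\tYY x$ is determined by its image under $\colim$. Exactness of $\colim$ is what lets us transport extensions. Concretely, $\gamma_x$ sends a class $[0 \to M \xrightarrow{\mu} E \xrightarrow{p} \tYY x \to 0]$ to $[0 \to m \to \colim E \to x \to 0]$. Here we use exactness of $\colim$ and the identification $\colim \tYY x = x$ via the unit $u$.

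For (i), suppose the image extension splits, with a section $s \colon x \to \colim E$ of $\colim p$. By adjunction there is a unique $\hat s \colon \tYY x \to E$ with $\colim \hat s = s$. Then $p \circ \hat s$ and $\Id_{\tYY x}$ are two endomorphisms of $\tYY x$ whose $\colim$ equals $\colim p \circ s = \Id_x$. By uniqueness in Remark~\ref{rem_tYY_basic}(1) they coincide, so $\hat s$ splits the original extension. Hence $\gamma_x$ has zero kernel; since it is additive (induced by an exact functor), it is injective.

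For (ii), choose $\theta \in \Ext^1_{\Ind(\AA)}(\tYY z, M)$ with $\gamma_z(\theta) = g_*[\Sigma]$, and represent it by $0 \to M \xrightarrow{\mu} F \xrightarrow{r} \tYY z \to 0$. The equality of classes in $\Ext^1_\AA(z, m)$ produces a morphism of short exact sequences from $\Sigma$ to $0 \to m \to \colim F \to z \to 0$. Its components are $g$ on the left, some $\tilde g \colon e \to \colim F$ in the middle, and $\Id_z$ on the right. This is the standard characterisation of the push-out $g_*[\Sigma]$.

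Next we lift $\tilde g$ by adjunction to $\hat g \colon \tYY e \to F$ with $\colim \hat g = \tilde g$. Both $r \circ \hat g$ and $\tYY \pi$ are maps $\tYY e \to \tYY z$ with $\colim$ equal to $\colim r \circ \tilde g = \pi$, so they agree. Consequently $\hat g$ restricted to $K = \Ker(\tYY\pi)$ lands in $\Ker r = M$, which defines $h \colon K \to M$ with $\mu h = \hat g|_K$. To check the formula, note that $\mu \circ h \circ q = \hat g \circ \tYY\iota$. Applying $\colim$ gives $\colim\mu \circ \colim(hq) = \tilde g \circ \iota = \colim\mu \circ g$. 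Since $\colim \mu$ is a monomorphism by exactness of $\colim$, we get $\colim(h \circ q) = g$.

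The only delicate point is keeping track of the identifications $\colim \tYY(-) \cong \Id$, so that "determined by $\colim$" is applied to the correct morphisms. Once these are fixed as in Remark~\ref{rem_tYY_basic}, both parts are formal.
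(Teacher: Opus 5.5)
Your part (i) is the paper's argument. For (ii) your proof is correct but takes a different route.

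The paper works with a commutative ladder between two exact sequences:
\[
\Hom(\tYY e, M) \to \Hom(K, M) \xrightarrow{\partial} \Ext^1(\tYY z, M) \to \Ext^1(\tYY e, M)
\]
and
\[
\Hom(e, m) \to \Hom(x, m) \xrightarrow{\delta} \Ext^1(z, m) \to \Ext^1(e, m).
\]
The vertical maps are induced by $\colim$, and the paper chases through this diagram in two steps.
\begin{enumerate}
\item Injectivity of $\gamma_e$, i.e.\ part (i) applied to $e$, gives $(\tYY\pi)^*\theta = 0$. Hence $\theta = \partial h_1$ for some $h_1 \colon K \to M$.
\item $g - \colim(h_1 q)$ is then killed by $\delta$, so it equals $f \iota$ for some $f \colon e \to m$. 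Restricting the adjoint of $f$ to $K$ gives a correction term $h_2$, and $h = h_1 + h_2$.
\end{enumerate}

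You instead pick a representative $F$ of $\theta$ and realise $g_*[\Sigma] = \gamma_z(\theta)$ as a morphism of extensions $(g, \tilde g, \Id_z)$ from $\Sigma$ to $\colim F$. You then lift $\tilde g$ to $\hat g \colon \tYY e \to F$ and check $r \hat g = \tYY \pi$ by comparing colimits. This does both jobs of the chase at once:
\begin{itemize}
\item It exhibits the splitting of $(\tYY\pi)^*\theta$ explicitly, so you never need part (i) inside (ii).
\item The restriction of $\hat g$ to $K$ already satisfies $\colim(hq) = g$, so no correction term is needed. It also satisfies $h_*[0 \to K \to \tYY e \to \tYY z \to 0] = \theta$.
\end{itemize}
Your version is shorter and makes (ii) logically independent of (i). The paper's version is more formal: it isolates the structural input as a comparison of Hom--Ext sequences whose vertical maps are injective.
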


\begin{proof}
(i) Let $0 \to M \to \mathcal{E} \xrightarrow{p} \tYY x \to 0$ represent a class with $\gamma_x$-image zero, i.e.\ the row $0 \to m \to \colim \mathcal{E} \xrightarrow{\colim p} x \to 0$ splits, by a section $s$ say. Let $\hat{s} \colon \tYY x \to \mathcal{E}$ correspond to $s$ under the $\tYY \dashv \colim$ adjunction. Then $\colim(p \hat{s}) = \colim(p) \circ s = \Id_x = \colim(\Id_{\tYY x})$, hence $p \hat{s} = \Id_{\tYY x}$, and the class is zero.

(ii) Since $\colim$ is exact, it takes the exact row $0 \to K \to \tYY e \to \tYY z \to 0$ to $\Sigma$, with $\colim q = \Id_x$. Consider the diagram with exact rows
\begin{equation}
\begin{tikzcd}[column sep=small]
	{\Hom(\tYY e, M)} & {\Hom(K, M)} & {\Ext^1(\tYY z, M)} & {\Ext^1(\tYY e, M)} \\
	{\Hom(e, m)} & {\Hom(x, m)} & {\Ext^1(z, m)} & {\Ext^1(e, m)}
	\arrow[from=1-1, to=1-2]
	\arrow["\cong"', from=1-1, to=2-1]
	\arrow["\partial", from=1-2, to=1-3]
	\arrow["r"', from=1-2, to=2-2]
	\arrow["{(\tYY \pi)^*}", from=1-3, to=1-4]
	\arrow["{\gamma_z}"', from=1-3, to=2-3]
	\arrow["{\gamma_e}", from=1-4, to=2-4]
	\arrow["{\iota^*}", from=2-1, to=2-2]
	\arrow["\delta", from=2-2, to=2-3]
	\arrow["{\pi^*}", from=2-3, to=2-4]
\end{tikzcd}
\end{equation}
in which the vertical maps are induced by $\colim$; explicitly $r(h) = \colim(h q)$. The diagram commutes because connecting homomorphisms are compatible with exact functors. The first vertical map is a bijection by the $\tYY \dashv \colim$ adjunction, $r$ is injective (if $\colim(hq) = 0$ then $hq = 0$, so $h = 0$ as $q$ is an epimorphism), and $\gamma_z$, $\gamma_e$ are injective by (i).

Let $\delta g = g_* [\Sigma] = \gamma_z \theta$. Then $\gamma_e ((\tYY \pi)^* \theta) = \pi^* \delta g = 0$, so $(\tYY \pi)^* \theta = 0$ and $\theta = \partial h_1$ for some $h_1 \colon K \to M$. Now $\delta (g - r(h_1)) = \gamma_z \theta - \gamma_z \partial h_1 = 0$, hence $g - r(h_1) = f \circ \iota$ for some $f \colon e \to m$. Let $\hat{f} \colon \tYY e \to M$ correspond to $f$ under the $\tYY \dashv \colim$ adjunction and let $h_2$ be its restriction to $K$; then $h_2 q = \hat{f} \circ \tYY \iota$ and $r(h_2) = f \iota$. Thus $h := h_1 + h_2$ satisfies $r(h) = g$.
\end{proof}

The following proposition is the only place where coherent assembly is used in the direct implication.

\begin{proposition} \label{prop_cm_descent}
Let $\AA$ be coherently assembled, let $\psi \colon w \to z$ be a compact morphism, let $\{x_i\}_{i \in I}$ be a filtered system in $\AA$ and $x = \indlim[i] x_i$. Then for every $\xi \in \Ext^1_{\AA}(z, x)$
\[
\psi^* \xi \in \Im \Bigl( \varinjlim_i \Ext^1_{\AA}(w, x_i) \longrightarrow \Ext^1_{\AA}(w, x) \Bigr),
\]
the map being the push-forward along the canonical morphisms $x_i \to x$.
\end{proposition}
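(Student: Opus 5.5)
The plan is to move the problem into $\Ind(\AA)$, where Corollary~\ref{cor_cm_ext_ind} gives the descent of $\Ext^1$ classes unconditionally. Coherent assembly will be used only to lift the extension $\xi$ to an extension of $\tYY z$ by the formal colimit $\tilde X := \{x_i\}_{i \in I}$. Then we pull back along a lift of $\psi$ and come back to $\AA$.

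\emph{Step 1 (lifting $\xi$).} Represent $\xi$ by $\Sigma \colon 0 \to x \xrightarrow{\iota} e \xrightarrow{\pi} z \to 0$. Since $\AA$ is coherently assembled, $\tYY$ is exact, so
\[
\tYY\Sigma \colon 0 \to \tYY x \to \tYY e \to \tYY z \to 0
\]
is exact. Next we need a morphism $\tYY x \to \tilde X$ whose $\colim$ is $\Id_x$. Under the adjunction $\tYY \dashv \colim$ it is the morphism $\hat u$ corresponding to $\Id_x \colon x \to \colim \tilde X = x$. Pushing forward gives a class
\[
\Theta := \hat u_* [\tYY \Sigma] \in \Ext^1_{\Ind(\AA)}(\tYY z, \tilde X).
\]
Because $\colim$ is exact and $\colim \hat u = \Id_x$, the image of $\Theta$ under $\colim$ is $\xi$.

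\emph{Step 2 (pulling back along a lift of $\psi$).} By Proposition~\ref{prop_compact_via_tYY}, choose a lift $\hat\psi \colon \YY w \to \tYY z$ with $\rho_z \hat\psi = \YY\psi$; then $\colim \hat\psi = \psi$. Set
\[
\hat\psi^*\Theta \in \Ext^1_{\Ind(\AA)}(\YY w, \tilde X).
\]
By Corollary~\ref{cor_cm_ext_ind}, this class equals $(\mathrm{in}_{i})_* \YY(\zeta)$ for some $i$ and some $\zeta \in \Ext^1_\AA(w, x_i)$.

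\emph{Step 3 (applying $\colim$).} The functor $\colim$ is exact, so it is compatible with pull-backs and push-outs of extensions. It sends $\hat\psi^*\Theta$ to $\psi^*\xi$, since $\colim\hat\psi=\psi$ and $\colim\Theta=\xi$. It also sends $(\mathrm{in}_i)_*\YY\zeta$ to $(\mathrm{can}_i)_*\zeta$, using $\colim \YY \cong \Id$ and $\colim \mathrm{in}_i = \mathrm{can}_i$. Hence $\psi^*\xi = (\mathrm{can}_i)_*\zeta$ lies in the required image.

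The main obstacle is the bookkeeping in Step 3: checking that $\colim$ really carries the $\Ind$-extension classes to the stated classes in $\AA$, with the identifications $\colim\tYY\cong\Id$ and $\colim\YY\cong\Id$ matching up. Concretely, one applies $\colim$ to the pushout and pullback diagrams and uses exactness. No injectivity statement such as Lemma~\ref{lem_cm_ext_compare} is needed here, since we only need existence. Coherent assembly enters solely through the exactness of $\tYY\Sigma$ in Step 1.
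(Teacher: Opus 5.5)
Your proposal is correct and follows the paper's proof essentially step for step. Both arguments use the exactness of $\tYY\Sigma$, the pull-back along a lift $\hat\psi$, the push-out along the adjoint $\tYY x \to \tilde X$ of $\Id_x$, descent via Corollary~\ref{cor_cm_ext_ind}, and then the exact functor $\colim$; the only difference is that you push out before pulling back, which is immaterial because push-out and pull-back commute on $\Ext^1_{\Ind(\AA)}$.
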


\begin{proof}
Let $\xi$ be represented by $\Sigma \colon 0 \to x \to e \to z \to 0$.

\emph{Step 1.} Since $\tYY$ is exact, the row $\tYY \Sigma \colon 0 \to \tYY x \to \tYY e \to \tYY z \to 0$ is exact in $\Ind(\AA)$.

\emph{Step 2.} Choose a lift $\hat{\psi} \colon \YY w \to \tYY z$ and pull $\tYY \Sigma$ back along it; we obtain an exact row $0 \to \tYY x \to \tYY e \times_{\tYY z} \YY w \to \YY w \to 0$ representing $\hat{\psi}^* [\tYY \Sigma]$.

\emph{Step 3.} Let $\tilde X = \{x_i\}_{i \in I} \in \Ind(\AA)$, so $\colim \tilde X = x$, and let $\alpha \colon \tYY x \to \tilde X$ correspond to $\Id_x$ under the $\tYY \dashv \colim$ adjunction; thus $\colim \alpha = \Id_x$. Pushing out along $\alpha$ we get
\[
[E'] := \alpha_* \hat{\psi}^* [\tYY \Sigma] \in \Ext^1_{\Ind(\AA)}(\YY w, \tilde X).
\]

\emph{Step 4.} By Corollary~\ref{cor_cm_ext_ind} there are $i_0 \in I$ and $\zeta \in \Ext^1_{\AA}(w, x_{i_0})$ with $(\mathrm{in}_{i_0})_* \YY(\zeta) = [E']$.

\emph{Step 5.} Apply the exact functor $\colim$. It preserves extensions and commutes with pull-backs and push-outs, and under our identifications
\[
\colim \tYY \Sigma = \Sigma, \qquad \colim \hat{\psi} = \psi, \qquad \colim \alpha = \Id_x, \qquad \colim(\mathrm{in}_{i_0}) = (x_{i_0} \to x), \qquad \colim \YY(\zeta) = \zeta .
\]
Applying $\colim$ to the equality of Step 4 gives $(x_{i_0} \to x)_* \zeta = \colim [E'] = \psi^* \xi$.
\end{proof}

\begin{proposition} \label{prop_cm_reverse}
Let $\AA$ be a compactly assembled Grothendieck abelian category in which every filtered colimit of CM-injective objects is CM-injective. Then $\AA$ is coherently assembled.
\end{proposition}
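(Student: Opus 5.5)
The plan is to show that $\tYY$ preserves monomorphisms, which by the conventions of this subsection is equivalent to coherent assembly. The argument is a categorical version of Stenstr\"om's proof of \textbf{9 $\Rightarrow$ 1} in Theorem~\ref{th_MEGA}. It has two steps: first a specific monomorphism $x \hookrightarrow E$ into a CM-injective object is handled using Lemma~\ref{lem_cm_ext_compare}; then an arbitrary monomorphism is reduced to it using Lemma~\ref{lem_cm_ses_cminj}.

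\emph{Step 1: a good embedding.} Fix $x \in \AA$ and write $\tYY x = \{x_i\}_{i \in I}$, so that $x = \indlim[i] x_i$. Since $\AA$ is Grothendieck, it has a functorial embedding $u_y \colon y \hookrightarrow E(y)$ into an injective object, for instance via the small object argument. Applying it levelwise gives a filtered system of short exact sequences $0 \to x_i \to E_i \to C_i \to 0$ with $E_i := E(x_i)$ injective. Their colimit is an exact row
\[
\Sigma \colon 0 \to x \xrightarrow{\iota} E \xrightarrow{\pi} C \to 0
\]
in $\AA$ (AB5), and $E = \indlim[i] E_i$ is CM-injective by the hypothesis, since injective objects are CM-injective. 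In $\Ind(\AA)$ the same data give an exact row $0 \to \{x_i\} \to \{E_i\} \to \{C_i\} \to 0$, whose image under $\colim$ is $\Sigma$.

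Let $\alpha \colon \tYY C \to \{C_i\}$ correspond to $\Id_C$ under $\tYY \dashv \colim$. Pulling the Ind-row back along $\alpha$ produces a class $\theta \in \Ext^1_{\Ind(\AA)}(\tYY C, \tYY x)$. Because $\colim$ is exact and $\colim \alpha = \Id_C$, we get $\gamma_C(\theta) = [\Sigma]$.

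Now apply Lemma~\ref{lem_cm_ext_compare}(ii) with $M = \tYY x$, $m = x$ and $g = \Id_x$. It yields $h \colon K \to \tYY x$, where $K = \Ker(\tYY \pi)$, such that $\colim(h \circ q) = \Id_x$. Since morphisms out of $\tYY x$ are determined by their $\colim$ (Remark~\ref{rem_tYY_basic}), this forces $h \circ q = \Id_{\tYY x}$. So the epimorphism $q$ is also a split monomorphism, hence an isomorphism. Consequently $\tYY \iota \colon \tYY x \to \tYY E$ is a monomorphism.

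\emph{Step 2: arbitrary monomorphisms.} Let $f \colon x \hookrightarrow e$ be any monomorphism, and let $\iota \colon x \hookrightarrow E$ be as in Step~1. Form the push-out $Q := E \sqcup_x e$. Then $E \to Q$ is a monomorphism with cokernel $e/x$, and $E$ is CM-injective, so Lemma~\ref{lem_cm_ses_cminj} shows that $\tYY E \to \tYY Q$ is a monomorphism. The composite $\tYY x \to \tYY E \to \tYY Q$ is therefore a monomorphism. This composite equals $\tYY x \xrightarrow{\tYY f} \tYY e \to \tYY Q$, so $\tYY f$ is a monomorphism, and $\AA$ is coherently assembled.

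\emph{Expected obstacle.} The delicate point is Step~1, specifically the verification that $\gamma_C(\theta) = [\Sigma]$. This requires checking that $\colim$ sends the pull-back along $\alpha$ of the levelwise Ind-extension to the actual extension $\Sigma$. It also relies on a functorial choice of injective embeddings, which is needed so that the $E_i$ form a filtered system at all. If functoriality causes trouble, the fallback is to take $E_i$ to be the injective hull construction made functorial via Baer's criterion with respect to a generating set. Either way, the only properties used are that the rows are exact and compatible with the transition maps.
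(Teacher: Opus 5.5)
Your proof is correct, and it takes a different route from the paper's.

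\emph{The paper's route.} It fixes an arbitrary exact sequence $\Sigma$ and shows that $q\colon\tYY x\to K=\Ker(\tYY\pi)$ is an isomorphism. To do so it proves that $q^*\colon\Hom(K,N)\to\Hom(\tYY x,N)$ is surjective for every $N\in\Ind(\AA)$:
\begin{enumerate}
\item it embeds $N$ levelwise into injectives $E=\{J(n_i)\}$ and uses the hypothesis to make $\colim E$ CM-injective;
\item it pushes $\Sigma$ out along $g'\colon x\to\colim E$ and applies Lemma~\ref{lem_cm_ses_cminj} to that push-out;
\item it invokes Lemma~\ref{lem_cm_ext_compare}(ii) with $M=E$ and finally descends the resulting $h'\colon K\to E$ to $N$.
\end{enumerate}

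\emph{Your route.} You use the hypothesis only once, to produce for each $x$ a single embedding $\iota\colon x\hookrightarrow E$ into a CM-injective object with $\tYY\iota$ a monomorphism. You then handle an arbitrary monomorphism $f$ out of $x$ through the push-out $E\sqcup_x e$ and Lemma~\ref{lem_cm_ses_cminj}. This is the classical pattern: $\tYY$ preserves monomorphisms out of CM-injective objects, and every object embeds $\tYY$-compatibly into one.

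\emph{A simplification of Step~1.} Step~1 can be shortened, and then Lemma~\ref{lem_cm_ext_compare} is not needed at all. Let $\alpha_E\colon\tYY E\to\{E_i\}$ correspond to $\Id_E$ under $\tYY\dashv\colim$. Then $\alpha_E\circ\tYY\iota$ and the levelwise monomorphism $\{x_i\}\to\{E_i\}$ are both morphisms out of $\tYY x$ with colimit $\iota$. By the adjunction they coincide, so $\tYY\iota$ is a monomorphism. In particular, CM-injectivity of $E$ plays no role in Step~1; it is used only in Step~2.

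\emph{Comparison.} Your argument is shorter and more elementary. The paper's version stays closer to the Ext-lifting form of Stenstr\"om's proof of \textbf{9 $\Rightarrow$ 1}, at the price of the diagram chase in Lemma~\ref{lem_cm_ext_compare}.

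\emph{Your flagged obstacles.} Neither is a real difficulty. Functorial injective embeddings exist in any Grothendieck category, and the paper uses one as well. The identity $\gamma_C(\theta)=[\Sigma]$ follows at once from exactness of $\colim$.
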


\begin{proof}
Since $\tYY$ is a left adjoint, it is right exact, so it suffices to show that it preserves kernels of epimorphisms. So let $\Sigma \colon 0 \to x \xrightarrow{\iota} e \xrightarrow{\pi} z \to 0$ be exact in $\AA$, let $K = \Ker(\tYY \pi)$ and let $q \colon \tYY x \to K$ be the induced epimorphism; we must show that $q$ is an isomorphism. For this it is enough to prove that for every $N \in \Ind(\AA)$ the injective map $q^* \colon \Hom(K, N) \to \Hom(\tYY x, N)$ is surjective: taking $N = \tYY x$ we then find $h$ with $h q = \Id$, so that the epimorphism $q$ is also a split monomorphism. Writing $N = \{n_i\}_{i \in I}$ and $n = \colim N$, we have to show that every $g \colon x \to n$ is of the form $\colim(h q)$ for some $h \colon K \to N$.

Since $\AA$ is a Grothendieck category, it admits a functorial injective embedding $\varepsilon \colon \Id_{\AA} \to J$. Put $E := \{J(n_i)\}_{i \in I} \in \Ind(\AA)$ and let $\varepsilon \colon N \to E$ be the induced morphism; it is the filtered colimit of the monomorphisms $\YY(\varepsilon_{n_i})$, hence a monomorphism in $\Ind(\AA)$, and we write $\beta \colon E \to E/N$ for its cokernel. Let $E^{\infty} := \colim E = \indlim[i] J(n_i)$. By assumption, $E^{\infty}$ is CM-injective, being a filtered colimit of injective objects; moreover $\colim \varepsilon \colon n \to E^{\infty}$ is a monomorphism and $\colim(E/N) = E^{\infty} / n$, by exactness of $\colim$.

Consider $g' := \colim(\varepsilon) \circ g \colon x \to E^{\infty}$ and the push-out $\Sigma' := g'_* \Sigma \colon 0 \to E^{\infty} \to e' \to z \to 0$. By Lemma~\ref{lem_cm_ses_cminj} the row $\tYY \Sigma'$ is exact in $\Ind(\AA)$. Let $\alpha \colon \tYY E^{\infty} \to E$ correspond to $\Id_{E^{\infty}}$ under the $\tYY \dashv \colim$ adjunction, so that $\colim \alpha = \Id_{E^{\infty}}$, and put $\theta := \alpha_* [\tYY \Sigma'] \in \Ext^1_{\Ind(\AA)}(\tYY z, E)$. As $\colim$ is exact and commutes with push-outs,
\[
\gamma_z (\theta) = (\colim \alpha)_* [\colim \tYY \Sigma'] = [\Sigma'] = g'_* [\Sigma] .
\]
By Lemma~\ref{lem_cm_ext_compare}(ii), applied to $M = E$, there is $h' \colon K \to E$ with $\colim(h' q) = g'$.

It remains to see that $h'$ takes values in $N$. We have
\[
\colim(\beta \circ h' \circ q) = \colim(\beta) \circ g' = (E^{\infty} \to E^{\infty}/n) \circ \colim(\varepsilon) \circ g = 0,
\]
so $\beta h' q = 0$ by the $\tYY \dashv \colim$ adjunction, and $\beta h' = 0$ because $q$ is an epimorphism. Hence $h' = \varepsilon \circ h$ for a unique $h \colon K \to N = \Ker \beta$. Finally $\colim(\varepsilon) \circ \colim(hq) = g' = \colim(\varepsilon) \circ g$, and $\colim \varepsilon$ is a monomorphism, so $\colim(hq) = g$.
\end{proof}

\begin{proof}[Proof of Theorem~\ref{th_cminj}]
\textbf{(a) $\Rightarrow$ (b)} is Proposition~\ref{prop_cm_descent}.

\textbf{(b) $\Rightarrow$ (c).} Let $\{x_i\}_{i \in I}$ be a filtered system of CM-injective objects with colimit $x$, let $\phi \colon y \to z$ be compact and $\xi \in \Ext^1(z, x)$. Factor $\phi = \psi \circ \chi$ with $\chi \colon y \to w$ and $\psi \colon w \to z$ compact (Lemma~\ref{lem_cm_factor}). By (b) there are $i_0 \in I$ and $\zeta \in \Ext^1(w, x_{i_0})$ with $\psi^* \xi = (x_{i_0} \to x)_* \zeta$. Since $x_{i_0}$ is CM-injective and $\chi$ is compact, $\chi^* \zeta = 0$, and by bifunctoriality of $\Ext^1$
\[
\phi^* \xi = \chi^* \psi^* \xi = \chi^* (x_{i_0} \to x)_* \zeta = (x_{i_0} \to x)_* \chi^* \zeta = 0 .
\]

\textbf{(c) $\Rightarrow$ (a)} is Proposition~\ref{prop_cm_reverse}.
\end{proof}

\begin{remark} \label{rem_cminj_final}
\begin{enumerate}
    \item If $\AA$ is locally finitely presentable, then coherent assembly is local coherence (Proposition~\ref{prop_loccoh=cohass}) and CM-injectivity is FP-injectivity (Proposition~\ref{prop_cminj_vs_fpinj}), so Theorem~\ref{th_cminj} recovers the equivalence (a)$\iff$(d) of Theorem~\ref{th_fpinj_ab}. Note that the proof of (a)$\Rightarrow$(c) does not use projective objects: the classical argument of Stenstr{\"o}m resolves a finitely presented module by finitely generated free modules and uses coherence to keep the syzygies finitely presented, whereas here this finiteness of syzygies is built into $\Ind(\AA)$ (Lemma~\ref{lem_cm_ext_ind}(iii)), and coherent assembly enters only through the exactness of $\tYY$ in Step~1 of Proposition~\ref{prop_cm_descent}. Condition (b) is a version of the surjectivity of the map $\xi_1$ of Theorem~\ref{th_fpinj_ab}(b), relative to compact morphisms; for $\AA$ locally finitely presentable, $w$ finitely presented and $\psi = \Id_w$ it is exactly the surjectivity of $\xi_1$.

    \item In view of Remark~\ref{rem_cminj_ext_closed}, the coherence transfer theorem at the end of Subsection~\ref{ssec:fpinj} admits a verbatim analogue for compactly assembled Grothendieck abelian categories $\AA$, $\BB$, $\CC$: if the four functors preserve CM-injective objects, $R^1 i^!$ vanishes on CM-injective objects and the remaining hypotheses are kept, then $\BB$ is coherently assembled as soon as $\AA$ and $\CC$ are. The proof is the same, with Theorem~\ref{th_cminj} in place of Theorem~\ref{th_fpinj_ab}.

    \item One expects an analogue of condition (c) of Theorem~\ref{th_fpinj_ab} for higher $\Ext^n$: a compact morphism factors into a composition of $n+1$ compact morphisms (Lemma~\ref{lem_cm_factor}), and one would like to kill $\phi^* \xi$, $\xi \in \Ext^n(z, \indlim x_i)$, one step at a time. This is closely related to the higher derived functors $L^i \tYY$ and the notion of $n$-coherent assembly; we do not pursue it here.
\end{enumerate}
\end{remark}

\subsubsection{Kernels of compact morphisms} \label{sssec:kernels_compact}

One could expect coherent assembly to admit a description of a more elementary kind, parallel to the definition of local coherence through kernels of morphisms between finitely presented objects: since compact morphisms replace finitely presented objects in the compactly assembled world, the natural candidate is the requirement that kernels of morphisms between compact morphisms be compact. In this subsection we show that this requirement is strictly stronger than coherent assembly, and that in the locally finitely presentable case it characterises local Noetherianity rather than local coherence. Throughout, $\AA$ is a compactly assembled Grothendieck abelian category, and we keep the notation of Remarks~\ref{rem_ind_basic}, \ref{rem_tYY_basic} and \ref{rem_compact_lifts}.

\begin{definition} \label{def_NA}
Consider the following two conditions on $\AA$.
\begin{itemize}
    \item[(NA)] For every morphism of arrows $(g_1, g_3) \colon (\beta \colon B_1 \to B_3) \to (\gamma \colon C_1 \to C_3)$ in $\Fun([1], \AA)$ with $\beta$ and $\gamma$ compact, the induced morphism $\ker g_1 \to \ker g_3$ is compact.
    \item[(NA$'$)] Compact morphisms are stable under base change along monomorphisms: if $\psi \colon w \to e$ is compact and $\kappa \hookrightarrow e$ is a monomorphism, then the projection $w \times_e \kappa \to \kappa$ is compact.
\end{itemize}
\end{definition}

\begin{lemma} \label{lem_NA_equiv}
Conditions (NA) and (NA$'$) are equivalent.
\end{lemma}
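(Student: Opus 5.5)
We prove the two implications separately. Both reduce to elementary manipulations with kernels, pull-backs and the ideal property of compact morphisms (Remark~\ref{rem_compact_morphism_basic}).

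\textbf{(NA) $\Rightarrow$ (NA$'$).} Let $\psi \colon w \to e$ be compact and let $\kappa \hookrightarrow e$ be a monomorphism. Write $c \colon e \to e/\kappa$ for the cokernel. Consider the morphism of arrows
\[
(\psi,\, 0) \colon (\Id_w \circ ? ) \to \ldots
\]
More precisely, take the source arrow to be $\beta = \psi \colon w \to e$ and the target arrow to be $\gamma = \Id_{\ldots}$. Identities are not compact, so this naive choice has to be avoided. Instead use the morphism of arrows $(g_1, g_3) = (c\psi,\, c) \colon (\psi \colon w \to e) \to (0 \colon 0 \to 0)$? This does not typecheck either, since $g_1$ must go from $B_1$ to $C_1$.

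So set up the square correctly. Put $B_1 = w$, $B_3 = e$, $C_1 = C_3 = e/\kappa$, with $\beta = \psi$, $\gamma = \Id_{e/\kappa}$, $g_1 = c\psi$ and $g_3 = c$. Then $\ker g_1 = w \times_e \kappa$ and $\ker g_3 = \kappa$, and the induced map $\ker g_1 \to \ker g_3$ is exactly the projection we want to show is compact. The problem is again that $\gamma$ must be compact, and $\Id_{e/\kappa}$ generally is not.

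The fix is to factor $\psi = \psi_2 \psi_1$ into compact morphisms $w \xrightarrow{\psi_1} w' \xrightarrow{\psi_2} e$ (Lemma~\ref{lem_cm_factor}). Then take $\beta = \psi_1$, $\gamma = \psi_2$, $g_1 = \psi_1$... Here one should choose the arrows so that both horizontal maps are compact, the vertical maps are the cokernel composites, and the kernel map is the desired projection. The main obstacle is arranging this so that the resulting kernel map is literally $w \times_e \kappa \to \kappa$ rather than something factoring through it.

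A cleaner choice is the following. Take $\beta = \psi \colon w \to e$, and let $\gamma$ be the zero arrow $0 \to e/\kappa$, which is trivially compact. For $g_1 \colon w \to 0$ the kernel is $w$, and that again gives the wrong kernel. So the first route I would actually commit to is: apply (NA) to $\beta = \psi$, $\gamma = \psi' \colon e/\kappa \to e/\kappa$ where $\psi'$ is any compact map through which we can factor. If no workable square of this kind emerges, I would fall back on Proposition~\ref{prop_compact_via_tYY}. In that approach one shows directly that the projection $w \times_e \kappa \to \kappa$ lifts through $\tYY \kappa$, using the (NA)-compact map on kernels.

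\textbf{(NA$'$) $\Rightarrow$ (NA).} This direction is cleaner. Given compact $\beta$ and $\gamma$ with vertical maps $g_1$ and $g_3$, the map $\ker g_1 \to \ker g_3$ factors through the base change $\ker g_1 \times_{B_3} \ker g_3$. More precisely, $\beta$ restricted to $\ker g_1$ lands in $\ker g_3$, so the kernel map is the composite
\[
\ker g_1 \to B_1 \times_{B_3} \ker g_3 \to \ker g_3,
\]
where the second arrow is the base change of $\beta$ along the monomorphism $\ker g_3 \hookrightarrow B_3$. By (NA$'$) this second arrow is compact, and by the ideal property the composite is compact. Note that only the compactness of $\beta$ is used here, which suggests that in the first direction one may likewise take $\gamma$ to be something harmless, for instance a zero arrow.

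Accordingly, for (NA) $\Rightarrow$ (NA$'$) the key step I would aim for is a square with $\beta = \psi$ and $\gamma$ a zero arrow $0 \to e/\kappa$. The hope is that the kernel of $B_3 = e \to e/\kappa$ gives $\kappa$ and the kernel on the source side gives $w \times_e \kappa$. This requires $B_1 \to C_1$ to be $w \to w/(w \times_e \kappa) \hookrightarrow e/\kappa$, with $C_1 = e/\kappa$. In that configuration $\gamma$ would be the identity, so the obstacle is precisely replacing the identity by a compact arrow. I expect this to be the main difficulty, and I would try to resolve it by factoring $\psi$ via Lemma~\ref{lem_cm_factor} and placing one compact factor in the role of $\gamma$.
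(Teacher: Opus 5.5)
Your proof of (NA$'$)$\Rightarrow$(NA) is correct and is the paper's argument, once the slip $\ker g_1 \times_{B_3} \ker g_3$ is read as $B_1 \times_{B_3} \ker g_3$. The converse (NA)$\Rightarrow$(NA$'$), however, is not proved. You try several squares and correctly note that each one either needs $\gamma = \Id_{e/\kappa}$, which is not compact in general, or fails. You then stop at an unexecuted plan: factor $\psi$ via Lemma~\ref{lem_cm_factor}, or argue through $\tYY$. Neither route is developed far enough to check. Your square with $\beta = \psi$, $g_1 \colon w \to 0$ and $\gamma = 0 \colon 0 \to e/\kappa$ also has a more basic problem than giving ``the wrong kernel'': it does not commute unless $\pi\psi = 0$, where $\pi \colon e \to e/\kappa$ is the quotient map.

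The missing idea is to shrink the source of $\beta$ rather than enlarge $\gamma$. Let $p_1 \colon w \times_e \kappa \to w$ and $p_2 \colon w \times_e \kappa \to \kappa$ be the projections. By the ideal property, $\psi' := \psi p_1 \colon w \times_e \kappa \to e$ is compact. Moreover $\pi\psi' = 0$, because $\psi p_1 = (\kappa \hookrightarrow e) \circ p_2$ factors through $\kappa$. Hence $(0, \pi)$ is a morphism of compact arrows from $(\psi' \colon w \times_e \kappa \to e)$ to the zero arrow $(0 \colon 0 \to e/\kappa)$. The zero arrow is trivially compact, and the square commutes precisely because of the shrinking. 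Its kernel arrow is the restriction of $\psi'$ from $\ker(w \times_e \kappa \to 0) = w \times_e \kappa$ to $\ker \pi = \kappa$, which is exactly $p_2$. So (NA) gives compactness of $p_2$ directly, with no need for Lemma~\ref{lem_cm_factor} or $\tYY$. Your remark that only the compactness of $\beta$ matters, so that $\gamma$ may be a zero arrow, was the right instinct. What you missed is that replacing $w$ by $w \times_e \kappa$ makes the source-side kernel the whole source object, which is what makes the square with the zero arrow commute.
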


\begin{proof}
(NA$'$)$\Rightarrow$(NA). Let $(g_1, g_3) \colon \beta \to \gamma$ be as in (NA). Since $g_3 \beta = \gamma g_1$, the morphism $\beta$ restricts to $\ker g_1 \to \ker g_3$, and this restriction factors as $\ker g_1 \hookrightarrow B_1 \times_{B_3} \ker g_3 \to \ker g_3$, where the second morphism is the base change of $\beta$ along $\ker g_3 \hookrightarrow B_3$, compact by (NA$'$). Precomposition with a monomorphism preserves compactness (Remark~\ref{rem_compact_morphism_basic}).

(NA)$\Rightarrow$(NA$'$). Let $\psi \colon w \to e$ be compact, $\kappa \hookrightarrow e$ a monomorphism with quotient $\pi \colon e \to e/\kappa$, and $p_1, p_2$ the projections of $w \times_e \kappa$. The morphism $\psi' := \psi p_1 \colon w \times_e \kappa \to e$ is compact and $\pi \psi' = 0$. Hence the pair $(w \times_e \kappa \to 0,\ \pi)$ is a morphism of compact arrows $(\psi' \colon w \times_e \kappa \to e) \to (0 \colon 0 \to e/\kappa)$, and its kernel arrow is $\ker(w \times_e \kappa \to 0) = w \times_e \kappa \to \ker \pi = \kappa$, i.e.\ $p_2$. By (NA) it is compact.
\end{proof}

To relate (NA) to coherent assembly we need two complements to Lemma~\ref{lem_tYY_compact_transitions}: a presentation of $\tYY x$ with compact transition morphisms is unique, and a morphism $\tYY g$ can be presented levelwise.

\begin{lemma} \label{lem_compact_transitions_are_tYY}
Let $\CC$ be a compactly assembled category and let $\{x_i\}_{i \in I}$ be a filtered system in $\CC$ with compact transition morphisms $f_{ii'}$ and colimit $x$, with canonical morphisms $\sigma_i \colon x_i \to x$. Then the morphism $\alpha \colon \tYY x \to \{x_i\}_{i \in I}$ corresponding to $\Id_x$ under the $\tYY \dashv \colim$ adjunction is an isomorphism in $\Ind(\CC)$.
\end{lemma}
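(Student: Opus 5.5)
We construct an explicit inverse $\beta \colon \{x_i\}_{i \in I} \to \tYY x$ and check both composites by using that a morphism out of $\tYY x$ is determined by its $\colim$ (Remark~\ref{rem_tYY_basic}(1)). Since each transition $f_{ii'}$ is compact, Proposition~\ref{prop_compact_via_tYY} and Remark~\ref{rem_compact_lifts} give lifts, that is, morphisms $\hat f_{ii'} \colon \YY x_i \to \tYY x_{i'}$ with $\rho_{x_{i'}} \hat f_{ii'} = \YY f_{ii'}$. Composing with $\tYY \sigma_{i'} \colon \tYY x_{i'} \to \tYY x$, we set
\[
\beta_i := \tYY\sigma_{i'} \circ \hat f_{ii'} \colon \YY x_i \to \tYY x,
\]
which is a lift of $\sigma_{i'} f_{ii'} = \sigma_i$. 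By Remark~\ref{rem_compact_lifts} it is therefore a lift of the compact morphism $\sigma_i$.

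The first step is to show that $\beta_i$ does not depend on the choice of $i' \geq i$ and of the lift, and that it is compatible with transitions. For this I would use the identity $\tYY\phi = \hat\phi \circ \rho$ from Remark~\ref{rem_compact_lifts}. Take $i \leq i' \leq i''$ and write $f_{ii''} = f_{i'i''} f_{ii'}$. Then $\tYY\sigma_{i''}\circ \hat f_{i'i''} \circ \YY f_{ii'} = \tYY\sigma_{i''}\circ\hat f_{i'i''}\circ\rho_{x_{i'}}\circ\hat f_{ii'} = \tYY\sigma_{i''}\circ\tYY f_{i'i''}\circ \hat f_{ii'} = \tYY\sigma_{i'}\circ\hat f_{ii'}$. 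This gives $\beta_{i'}\circ \YY f_{ii'} = \beta_i$ once independence is known. Independence of the lift is the delicate point, because lifts are not unique. The trick is to always pass one more compact step: any two lifts $\lambda, \lambda'$ of $f_{ii'}$ satisfy $\tYY f_{i'i''}\circ\lambda = \hat f_{i'i''}\rho_{x_{i'}}\lambda = \hat f_{i'i''}\YY f_{ii'} = \tYY f_{i'i''}\circ\lambda'$. So after composing with $\tYY\sigma_{i''}$, which factors through $\tYY f_{i'i''}$, the two choices agree. Thus the $\beta_i$ form a cocone and define $\beta \colon \{x_i\} = \colim[i]\YY x_i \to \tYY x$.

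Next come the composites. By construction $\colim\beta_i = \sigma_i$, so $\colim(\beta) = \Id_x$. It follows that $\colim(\beta\alpha) = \Id_x$, and hence $\beta\alpha = \Id_{\tYY x}$ by Remark~\ref{rem_tYY_basic}(1).

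For $\alpha\beta = \Id$ on $\{x_i\}$, it suffices to check $\alpha\beta_i = \mathrm{in}_i$ for each $i$. Using $\beta_i = \tYY\sigma_{i'}\hat f_{ii'}$, we compute $\alpha\circ\tYY\sigma_{i'}$. This is the morphism $\tYY x_{i'} \to \{x_i\}$ whose $\colim$ is $\sigma_{i'}$, so it equals $\mathrm{in}_{i'}\circ\rho_{x_{i'}}$, because both have colimit $\sigma_{i'}$ and maps out of $\tYY x_{i'}$ are determined by their $\colim$. Hence $\alpha\beta_i = \mathrm{in}_{i'}\rho_{x_{i'}}\hat f_{ii'} = \mathrm{in}_{i'}\YY f_{ii'} = \mathrm{in}_i$.

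The main obstacle is the well-definedness of $\beta$, namely the non-uniqueness of lifts. The extra-step argument above is the step I would verify most carefully. Everything else is formal from the adjunction $\tYY\dashv\colim$.
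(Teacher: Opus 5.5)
Your proposal is correct and follows the paper's proof essentially step for step: the same inverse $\beta_i = \tYY\sigma_{i'}\circ\hat f_{ii'}$, the same ``one more compact step'' argument via $\tYY f_{i'i''} = \hat f_{i'i''}\circ\rho_{x_{i'}}$ to remove the dependence on the choice of lift, and the same two verifications ($\beta\alpha = \Id$ by comparing colimits, and $\alpha\beta = \Id$ via $\alpha\circ\tYY\sigma_{i'} = \mathrm{in}_{i'}\circ\rho_{x_{i'}}$). One small slip in wording: it is $\tYY\sigma_{i'} = \tYY\sigma_{i''}\circ\tYY f_{i'i''}$ that factors through $\tYY f_{i'i''}$, not $\tYY\sigma_{i''}$.
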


\begin{proof}
By the convention of Remark~\ref{rem_filtered_convention} we may assume that $I$ is a directed poset. We construct an inverse $\beta \colon \{x_i\} = \colim[i] \YY x_i \to \tYY x = \colim[i] \tYY x_i$. For $i \in I$ choose $i' > i$ and a lift $\hat{f}_{ii'} \colon \YY x_i \to \tYY x_{i'}$ of $f_{ii'}$, and put $\beta_i := \tYY(\sigma_{i'}) \circ \hat{f}_{ii'} \colon \YY x_i \to \tYY x$.

This does not depend on the choices. Two lifts of $f_{ii'}$ differ by a morphism $\theta \colon \YY x_i \to \Ker \rho_{x_{i'}}$. Choose $i'' > i'$; by Remark~\ref{rem_compact_lifts}, $\tYY f_{i'i''} = \hat{f}_{i'i''} \circ \rho_{x_{i'}}$ kills $\Ker \rho_{x_{i'}}$, hence so does $\tYY \sigma_{i'} = \tYY \sigma_{i''} \circ \tYY f_{i'i''}$, and $\beta_i$ is independent of the lift; independence of $i'$ follows by composing lifts. Since $\hat{f}_{jj'} \circ \YY f_{ij}$ is a lift of $f_{ij'}$, we have $\beta_j \circ \YY f_{ij} = \beta_i$, so the $\beta_i$ define $\beta$.

By the adjunction, $\alpha \circ \tYY \sigma_{i'} = \mathrm{in}_{i'} \circ \rho_{x_{i'}}$, both sides having colimit $\sigma_{i'}$. Hence $\alpha \beta \, \mathrm{in}_i = \mathrm{in}_{i'} \rho_{x_{i'}} \hat{f}_{ii'} = \mathrm{in}_{i'} \YY f_{ii'} = \mathrm{in}_i$, i.e.\ $\alpha \beta = \Id$. Finally $\colim(\beta \, \mathrm{in}_i) = \sigma_{i'} f_{ii'} = \sigma_i$, so $\colim \beta = \Id_x = \colim \alpha$, and $\beta \alpha = \Id$ by the adjunction, both being endomorphisms of $\tYY x$ with colimit $\Id_x$.
\end{proof}

\begin{lemma} \label{lem_tYY_common_presentation}
Let $\CC$ be a compactly assembled category and $g \colon y \to z$ a morphism in $\CC$. There exist a directed poset $K$ and filtered systems $\{y_k\}_{k \in K}$, $\{z_k\}_{k \in K}$ with compact transition morphisms, isomorphisms $\{y_k\} \cong \tYY y$ and $\{z_k\} \cong \tYY z$ in $\Ind(\CC)$, and morphisms $g_k \colon y_k \to z_k$ commuting with the transition morphisms, such that $\tYY g$ is represented by $\{g_k\}_{k \in K}$.
\end{lemma}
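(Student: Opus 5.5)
We start from the two presentations supplied by Lemma~\ref{lem_tYY_compact_transitions}: write $\tYY y = \{y_i\}_{i \in I}$ and $\tYY z = \{z_j\}_{j \in J}$ with $I$, $J$ directed posets and all transition morphisms compact. The morphism $\tYY g$ is then an element of $\lim_i \colim_j \Hom(y_i, z_j)$. For each $i$, it gives a class of morphisms $y_i \to z_j$ for $j$ large, compatible with the transitions up to further enlargement in $J$.

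The plan is the standard reindexing that turns a morphism of ind-objects into a levelwise morphism. We do this while keeping the transitions compact, in the spirit of Step~2 in the proof of Lemma~\ref{lem_tYY_compact_transitions}. Let $K$ be the set of triples $(i, j, a)$ with $a \colon y_i \to z_j$ a representative of the component of $\tYY g$ at $i$; equivalently, $\mathrm{in}_j \circ \YY a = \tYY g \circ \mathrm{in}_i$ in $\Ind(\CC)$. Order $K$ by $(i,j,a) \le (i',j',a')$ if either the triples are equal, or $i < i'$, $j < j'$ and $a' \circ f_{ii'} = h_{jj'} \circ a$ in $\CC$. Here $f$ and $h$ denote the transitions of the two systems.

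Transitivity of this order is clear. Directedness is the main point. Given two triples, choose $i'$ above both $i$-indices. Pick some representative $a' \colon y_{i'} \to z_{j'}$ of the component at $i'$. The two composites $a' f_{ii'}$ and $h_{jj'} a$ agree after passing to $\colim_j$, so they become equal after enlarging $j'$, replacing $a'$ by $h_{j'j''} a'$. Doing this for both given triples yields a common upper bound.

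Setting $y_k := y_i$, $z_k := z_j$ and $g_k := a$ for $k = (i,j,a)$ gives the required levelwise morphisms. Strictness of the order forces $i<i'$ and $j<j'$ for distinct comparable triples, so every non-identity transition is one of the original transitions $f_{ii'}$ or $h_{jj'}$, hence compact. The projections $K \to I$ and $K \to J$ are order preserving. They are cofinal: any $i$ has some representative $a$, and any $j$ can be exceeded by enlarging the $J$-index. Therefore $\{y_k\} \cong \{y_i\} \cong \tYY y$ and $\{z_k\} \cong \tYY z$ in $\Ind(\CC)$.

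Finally, we check that under these cofinality isomorphisms $\{g_k\}$ corresponds to $\tYY g$. It suffices to compare the two morphisms after composing with each $\mathrm{in}_k$, and this agreement is exactly the defining condition $\mathrm{in}_j \YY a = \tYY g\, \mathrm{in}_i$ on the triples.

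The only delicate step is directedness: one must make sure the equality $a' f_{ii'} = h_{jj'} a$ can be achieved \emph{in $\CC$}, not merely in the colimit. The filtered-colimit description of $\Hom_{\Ind(\CC)}$ provides exactly this after enlarging the $J$-index. Alternatively, one could avoid all choices by citing Lemma~\ref{lem_compact_transitions_are_tYY}. Any levelwise representation of $\tYY g$ on reindexed systems automatically has $\tYY$-objects as source and target, provided its transitions are compact.
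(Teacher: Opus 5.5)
Your construction is the reindexing the paper uses: triples $(i,j,\phi)$ with $\phi$ representing the $i$-th component of $\tYY g$, cofinal projections to $I$ and $J$, and $g_k := \phi$. The one difference is your strict order, and it improves on the paper's order ($i \le i'$, $j \le j'$ plus compatibility). Under the paper's order one has, for instance, $(i,j,\phi) < (i,j',h_{jj'}\phi)$, whose $y$-transition is $\Id_{y_i}$, and this is not compact in general. With your order every non-identity transition is some $f_{ii'}$ or $h_{jj'}$ with strict indices. That is what Lemma~\ref{lem_compact_transitions_are_tYY} and Proposition~\ref{prop_NA_cohass} actually need.

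The price is a small gap in the directedness step. You need $i'$ strictly above both $i$-indices and $j''$ strictly above both $j$-indices, and these need not exist. Suppose $y$ is compact and you use the one-point presentation $\tYY y \cong \YY y$, whose transitions are vacuously compact. Then $(\ast, j, a)$ and $(\ast, j', h_{jj'}a)$ with $j<j'$ have no common upper bound, so $K$ is not directed.

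The repair is easy. In a directed poset any maximal element is a maximum, so if $I$ has no maximum, strict upper bounds always exist. If $I$ has a maximum $m$, then $\mathrm{in}_m \colon \YY y_m \to \tYY y$ and $\sigma_m \colon y_m \to y$ are isomorphisms. Hence $\rho_y = \YY\sigma_m \circ \mathrm{in}_m^{-1}$ is invertible, and $\Id_{y_m}$ is compact by Proposition~\ref{prop_compact_via_tYY}. Replace that presentation by the constant $\mathbb{N}$-indexed system on $y_m$ with identity transitions, and do the same for $J$. Once $I$ and $J$ have no maximal elements, your argument goes through as written.
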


\begin{proof}
By Lemma~\ref{lem_tYY_compact_transitions} choose presentations $\tYY y = \{y_i\}_{i \in I}$ and $\tYY z = \{z_j\}_{j \in J}$ over directed posets with compact transition morphisms $y_{ii'}$, $z_{jj'}$. The morphism $\tYY g$ is an element of $\varprojlim_i \varinjlim_j \Hom(y_i, z_j)$; let $K$ be the set of triples $(i, j, \phi)$, $\phi \colon y_i \to z_j$ representing the $i$-th component of $\tYY g$, ordered by $(i, j, \phi) \leq (i', j', \phi')$ iff $i \leq i'$, $j \leq j'$ and $\phi' y_{ii'} = z_{jj'} \phi$. This is the usual reindexing of a morphism of ind-objects over a common filtered index category (\cite[Expos\'e I, \S 8.1]{SGA4}, \cite[\S\S 6.1, 6.4]{Kashiwara_Schapira}): $K$ is directed and the projections $K \to I$, $K \to J$ are order preserving and cofinal. Setting $y_k := y_i$, $z_k := z_j$ and $g_k := \phi$ for $k = (i, j, \phi)$ gives the required data; the transition morphisms of the new systems are transition morphisms of the old ones, hence compact.
\end{proof}

\begin{proposition} \label{prop_NA_cohass}
If $\AA$ satisfies (NA), then $\AA$ is coherently assembled.
\end{proposition}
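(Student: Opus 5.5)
The plan is to show directly that $\tYY$ preserves kernels, which is left exactness. Let $g \colon y \to z$ be a morphism in $\AA$. By Lemma~\ref{lem_tYY_common_presentation}, I present $\tYY g$ levelwise as $\{g_k \colon y_k \to z_k\}_{k \in K}$. Here $K$ is directed, and the transition morphisms $y_{kk'}$ and $z_{kk'}$ are compact. Kernels in $\Ind(\AA)$ of a levelwise morphism of ind-objects over a common index category are computed levelwise, because filtered colimits in $\Ind(\AA)$ are exact (Remark~\ref{rem_ind_basic}). Hence
\[
\Ker(\tYY g) \cong \{\ker g_k\}_{k \in K},
\]
and the inclusion into $\tYY y \cong \{y_k\}$ is induced by the levelwise inclusions $\ker g_k \hookrightarrow y_k$.

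Next I check the two hypotheses of Lemma~\ref{lem_compact_transitions_are_tYY} for the system $\{\ker g_k\}$.

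\emph{Compact transitions.} For $k \leq k'$, the pair $(y_{kk'}, z_{kk'})$ is a morphism of arrows from $g_k$ to $g_{k'}$. Equivalently, it is a morphism from the compact arrow $y_{kk'}$ to the compact arrow $z_{kk'}$, in the shape required by (NA), with $g_1 = g_k$ and $g_3 = g_{k'}$. By (NA), the induced morphism $\ker g_k \to \ker g_{k'}$ is compact; this is exactly the transition morphism of $\{\ker g_k\}$.

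\emph{Colimit.} The category $\AA$ is Grothendieck, so filtered colimits are exact. Since $\indlim y_k = y$, $\indlim z_k = z$ and $\indlim g_k = \colim \tYY g = g$, we get $\indlim \ker g_k = \ker g$.

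Lemma~\ref{lem_compact_transitions_are_tYY} then gives an isomorphism $\alpha \colon \tYY(\ker g) \xrightarrow{\sim} \{\ker g_k\} = \Ker(\tYY g)$ with $\colim \alpha = \Id_{\ker g}$.

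The step I expect to need the most care is checking that this abstract isomorphism is compatible with the canonical comparison map $\tYY(\ker g) \to \Ker(\tYY g)$ induced by $\tYY$ of the inclusion $\ker g \hookrightarrow y$. For this I compose both morphisms with the monomorphism $\Ker(\tYY g) \hookrightarrow \tYY y$. The two composites are morphisms $\tYY(\ker g) \to \tYY y$, and both have $\colim$ equal to the inclusion $\ker g \hookrightarrow y$: for the $\alpha$-composite, apply $\colim$ levelwise and use $\colim \alpha = \Id$. By Remark~\ref{rem_tYY_basic}(1), a morphism out of $\tYY(\ker g)$ is determined by its $\colim$, so the two composites coincide. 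Since $\Ker(\tYY g) \hookrightarrow \tYY y$ is a monomorphism, the canonical comparison map equals $\alpha$ and is therefore an isomorphism.

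Thus $\tYY$ preserves kernels, and together with right exactness it is exact. By Definition~\ref{def_compass_cohass}, $\AA$ is coherently assembled. A minor point to verify along the way is that the isomorphisms $\{y_k\} \cong \tYY y$ from Lemma~\ref{lem_tYY_common_presentation} are the ones with $\colim = \Id_y$. If this is not automatic, I would arrange it by composing with the isomorphism of Lemma~\ref{lem_compact_transitions_are_tYY}.
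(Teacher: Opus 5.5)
Your proposal is correct and follows the paper's proof essentially step for step. The steps are: a levelwise presentation of $\tYY g$ via Lemma~\ref{lem_tYY_common_presentation}, kernels computed levelwise, (NA) applied to the squares $(g_k, g_{k'})$ to make the transitions compact, (AB5) to get $\indlim[k] \ker g_k = \ker g$, and Lemma~\ref{lem_compact_transitions_are_tYY} to identify $\{\ker g_k\}$ with $\tYY(\ker g)$. Your check that this identification is the canonical comparison map, by composing with the monomorphism into $\tYY y$, spells out the paper's remark that both maps have colimit $\Id_{\ker g}$; note only that computing kernels levelwise also uses that $\YY$ preserves kernels, not just the exactness of filtered colimits in $\Ind(\AA)$.
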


\begin{proof}
Let $g \colon y \to z$ be a morphism in $\AA$; we show that the canonical morphism $\tYY(\ker g) \to \Ker(\tYY g)$ is an isomorphism. Present $\tYY g$ levelwise by $g_k \colon y_k \to z_k$ as in Lemma~\ref{lem_tYY_common_presentation}. Kernels in $\Ind(\AA)$ of levelwise morphisms are computed levelwise, since $\YY$ is exact and filtered colimits in $\Ind(\AA)$ are exact; thus $\Ker(\tYY g) = \{\ker g_k\}_{k \in K}$. For $k \leq k'$ the pair $(g_k, g_{k'})$ is a morphism of compact arrows $(y_k \to y_{k'}) \to (z_k \to z_{k'})$, so by (NA) the transition morphism $\ker g_k \to \ker g_{k'}$ is compact. By (AB5), $\colim \{\ker g_k\} = \indlim[k] \ker g_k = \ker g$. Lemma~\ref{lem_compact_transitions_are_tYY} now identifies $\{\ker g_k\}$ with $\tYY(\ker g)$, and the identification is the canonical comparison morphism, both having colimit $\Id_{\ker g}$. Hence $\tYY$ preserves kernels.
\end{proof}

Note that (AB5) enters only through the identification $\indlim[k] \ker g_k = \ker g$; the remaining steps use compact assembly alone. The converse to Proposition~\ref{prop_NA_cohass} fails, and in the locally finitely presentable case (NA) is a Noetherian condition.

\begin{proposition} \label{prop_NA_lfp}
Let $\AA$ be an AB5 locally finitely presentable abelian category. The following are equivalent:
\begin{itemize}
    \item[(i)] $\AA$ satisfies (NA);
    \item[(ii)] $\AA$ satisfies (NA$'$);
    \item[(iii)] every subobject of a finitely presented object of $\AA$ is finitely presented;
    \item[(iv)] $\AA$ is locally Noetherian.
\end{itemize}
\end{proposition}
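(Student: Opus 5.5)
We prove the cycle (i)$\Leftrightarrow$(ii) and (ii)$\Rightarrow$(iii)$\Rightarrow$(ii), together with (iii)$\Leftrightarrow$(iv). The equivalence (i)$\Leftrightarrow$(ii) is Lemma~\ref{lem_NA_equiv}. Throughout we use Proposition~\ref{prop_cminj_vs_fpinj}: a morphism is compact if and only if it factors through a finitely presented object.

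\textbf{(iii)$\Leftrightarrow$(iv).} Assume (iii). A subobject of a finitely presented object is then finitely presented, hence finitely generated. Since every finitely generated object is a quotient of a finitely presented one (Theorem~\ref{th_locpres_fg}), every subobject of a finitely generated object is finitely generated, and in particular finitely presented objects are Noetherian. Conversely, assume (iv) and let $F$ be finitely presented. Every subobject $S \subseteq F$ is finitely generated, and the same holds for kernels of epimorphisms from finitely generated objects onto $S$, since those sit inside Noetherian objects (quotients of finitely presented ones). Hence $S$ is finitely presented by Theorem~\ref{th_fg_fg_fp}.

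\textbf{(iii)$\Rightarrow$(ii).} Let $\psi \colon w \to e$ be compact, written as $\psi = b \circ a$ with $a \colon w \to F$, $b \colon F \to e$ and $F$ finitely presented, and let $\kappa \hookrightarrow e$ be a monomorphism. The projection $w \times_e \kappa \to \kappa$ factors as
\[
w \times_e \kappa \to F \times_e \kappa \to \kappa .
\]
The object $F \times_e \kappa = b^{-1}(\kappa)$ is a subobject of $F$, hence finitely presented by (iii). Therefore the projection factors through a finitely presented object and is compact.

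\textbf{(ii)$\Rightarrow$(iii).} Let $S \hookrightarrow F$ with $F$ finitely presented, and apply (NA$'$) to $\psi = \Id_F$, which is compact, and the monomorphism $\kappa = S$. The projection $F \times_F S = S \to S$ is $\Id_S$, so $\Id_S$ is compact, i.e.\ $S$ is compact, hence finitely presented.

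The main point to check carefully is this last step: that $\Id_F$ is a legitimate compact morphism in (NA$'$), and that compactness of $\Id_S$ means $S$ is finitely presented (Remark~\ref{rem_compact_morphism_basic}). Both are immediate, so the whole proof is short. The only mildly delicate part is (iv)$\Rightarrow$(iii), where finite generation of syzygies must be propagated through Theorem~\ref{th_fg_fg_fp}.
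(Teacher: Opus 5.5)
Your proof is correct and follows the paper's argument. You get (i)$\Leftrightarrow$(ii) from Lemma~\ref{lem_NA_equiv}, (iii)$\Rightarrow$(ii) by factoring the base change through the subobject $F \times_e \kappa \subseteq F$, and (ii)$\Rightarrow$(iii) by applying (NA$'$) to a compact morphism whose base change along $S \hookrightarrow F$ is $\Id_S$; the paper takes the inclusion itself rather than $\Id_F$, which makes no difference. Your treatment of (iii)$\Leftrightarrow$(iv) checks, via Theorem~\ref{th_fg_fg_fp}, that kernels of epimorphisms onto $S$ lie in Noetherian quotients of finitely presented objects, and this is a more careful version of the paper's one-line justification.
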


\begin{proof}
(i)$\Leftrightarrow$(ii) is Lemma~\ref{lem_NA_equiv}, and (iii)$\Leftrightarrow$(iv) follows from Theorem~\ref{th_fg_fg_fp}: a finitely presented object is Noetherian iff all its subobjects are finitely generated, and a finitely generated subobject $U$ of a finitely presented object $X$ is finitely presented iff $X/U$ is, by Lemma~\ref{lem_ab_fp_fg_via_ker_coker}. Recall from Proposition~\ref{prop_cminj_vs_fpinj} that a morphism of $\AA$ is compact iff it factors through a finitely presented object.

(ii)$\Rightarrow$(iii). Let $X$ be finitely presented and $\iota \colon \kappa \hookrightarrow X$ any subobject. The morphism $\iota$ is compact, its target being finitely presented, and its base change along the monomorphism $\iota$ itself is $\Id_\kappa$. By (NA$'$), $\Id_\kappa$ is compact, so $\kappa$ is a retract of a finitely presented object and hence finitely presented.

(iii)$\Rightarrow$(ii). Let $\psi \colon w \to e$ be compact, factored as $w \to v \to e$ with $v$ finitely presented, and let $\kappa \hookrightarrow e$. Then $w \times_e \kappa \to \kappa$ factors through $v \times_e \kappa \to \kappa$, and $v \times_e \kappa$ is a subobject of $v$, finitely presented by (iii). Hence $w \times_e \kappa \to \kappa$ factors through a finitely presented object and is compact.
\end{proof}

\begin{example} \label{ex_NA_fails}
Let $R = k[t_1, t_2, \dots]$ be the polynomial ring in countably many variables over a field, and $\mathfrak{m} = (t_1, t_2, \dots)$. The ring $R$ is coherent but not Noetherian, so $R - \Mod$ is coherently assembled (Proposition~\ref{prop_loccoh=cohass}) but, by Proposition~\ref{prop_NA_lfp}, does not satisfy (NA). Explicitly, consider the morphism of arrows
\[
(\beta \colon \mathfrak{m} \xrightarrow{\ \iota\ } R) \xrightarrow{\ (u,\, q)\ } (\gamma \colon \mathfrak{m}/\mathfrak{m}^2 \xrightarrow{\ 0\ } k),
\]
where $\iota$ is the inclusion, $u$ and $q$ are the quotient maps and $k = R/\mathfrak{m}$. The square commutes, $\beta$ is compact because its target is finitely presented, and $\gamma = 0$ is compact. The kernel arrow is the inclusion $\mathfrak{m}^2 \hookrightarrow \mathfrak{m}$, and it is not compact: a factorisation through a finitely presented module would produce a finitely generated submodule $L$ with $\mathfrak{m}^2 \subseteq L \subseteq \mathfrak{m}$, but any finitely generated $L \subseteq \mathfrak{m}$ is contained in $(t_1, \dots, t_n)$ for some $n$, whereas $t_{n+1}^2 \in \mathfrak{m}^2$. In the form (NA$'$): $\Id_{\mathfrak{m}}$ is the base change of the compact morphism $\iota$ along $\mathfrak{m} \subseteq R$, and $\mathfrak{m}$ is not finitely presented.
\end{example}

\begin{remark}
Proposition~\ref{prop_NA_lfp} is consistent with \cite[Proposition 1.44]{efimov_Th_of_heart}: a Grothendieck abelian category is locally Noetherian iff it is compactly assembled and $\rho_x \colon \tYY x \to \YY x$ is a monomorphism for every $x$, i.e.\ iff lifts of compact morphisms are unique. Uniqueness of lifts is exactly what one needs to pass from compactness of the components of a morphism of arrows to compactness of the morphism in $\Fun([1], \AA)$, and this is the mechanism behind (NA). Thus (NA) is best viewed as a Noetherian-type finiteness condition, and Theorem~\ref{th_cminj} shows that the correct generalisation of coherence to compactly assembled categories is the injective-side condition (c) rather than a kernel condition on compact morphisms.
\end{remark}

\bibliography{references}

\bigskip
\footnotesize
\noindent\textsc{Mikhail Dusman}\par
\noindent\textit{Email address}: \href{mailto:michail.dusman@gmail.com}{dusman.ma@phystech.edu}

\medskip
\noindent\textsc{Alexander Pavlov}\par
\noindent\textit{Email address}: \href{mailto:sasha.pavlov@gmail.com}{abpavlov@hse.ru}

\end{document}